\newtheorem{thm}{Theorem}[section]
\newtheorem{prop}[thm]{Proposition}
\newtheorem{lem}[thm]{Lemma}
\newtheorem{cor}[thm]{Corollary}
\theoremstyle{definition}
\newtheorem{defn}[thm]{Definition}
\newtheorem{rem}[thm]{Remark}
\newtheorem{exmp}[thm]{Example}
\newtheorem{ques}[thm]{Question}
\newcommand{\abs}[1]{\lvert{#1}\rvert}
\renewcommand{\bar}[1]{\overline{#1}}
\newcommand{\bigset}[2]{ \bigl\{ \, {#1} \bigm| {#2} \, \bigr\} }
\renewcommand{\emptyset}{\varnothing}
\newcommand{\field}[1]{\mathbb{#1}}
\newcommand{\PP}{\field{P}}
\newcommand{\EE}{\field{E}}
\newcommand{\NN}{\field{N}}
\newcommand{\RR}{\field{R}}
\DeclareMathOperator{\CAT}{CAT}
\newcommand{\ball}[2]{B ( {#1}, {#2} )}
\newcommand{\nbd}[2]{\mathcal{N}_{#2}({#1})} 
\newcommand{\Set}[1]{\mathcal{#1}}
\DeclareMathOperator{\Cayley}{Cayley}
\DeclareMathOperator{\diam}{diam}
\DeclareMathOperator{\Hull}{Hull}
\begin{document}

\title[Relative divergence of finitely generated groups]{Relative divergence of finitely generated groups}

\author{Hung Cong Tran}
\address{Dept.\ of Mathematical Sciences\\
University of Wisconsin--Milwaukee\\
P.O.~Box 413\\
Milwaukee, WI 53201\\
USA}
\email{hctran@uwm.edu}

\date{\today}

\begin{abstract}
We generalize the concept of divergence of finitely generated groups by introducing the upper and lower relative divergence of a finitely generated group with respect to a subgroup. Upper relative divergence generalizes Gersten's notion of divergence, and lower relative divergence generalizes a definition of Cooper-Mihalik. While the lower divergence of Cooper-Mihalik can only be linear or exponential, relative lower divergence can be any polynomial or exponential function. In this paper, we examine the relative divergence (both upper and lower) of a group with respect to a normal subgroup or a cyclic subgroup. We also explore relative divergence of $\CAT(0)$ groups and relatively hyperbolic groups with respect to various subgroups to better understand geometric properties of these groups.
\end{abstract}

\subjclass[2000]{%
20F67, 
20F65} 
\maketitle

\section{Introduction}
Two different notions of divergence of a finitely generated group are introduced by Cooper-Mihalik \cite{MR1170363} and Gersten \cite{MR1254309}. We refer to Cooper-Mihalik's notion as lower divergence and Gersten's notion as upper divergence. The lower divergence of a one-ended group $G$ is exponential if $G$ is hyperbolic and linear otherwise (see Cooper-Mihalik \cite{MR1170363} and Sisto \cite{Sisto}). Therefore, lower divergence only detects hyperbolicity. Upper divergence is more diverse since the upper divergence of a finitely generated group can be any polynomial or exponential function (see Macura \cite{MR3032700} and Sisto \cite {Sisto}). Upper divergence has been studied by Macura \cite{MR3032700}, Behrstock-Charney \cite{MR2874959}, Duchin-Rafi \cite {MR2563768}, Dru{\c{t}}u-Mozes-Sapir \cite{MR2584607}, Sisto \cite {Sisto} and others. Moreover, upper divergence is a quasi-isometry invariant, and it is therefore a useful tool to classify finitely generated groups up to quasi-isometry. Motivated by Gersten and Cooper-Mihalik's notions, we introduce two types of relative divergence of a finitely generated group with respect to a subgroup: upper relative divergence and lower relative divergence. 

We sketch the idea of relative divergence by the simplified definition and we refer readers Section~ \ref{Concepts} for the exact definition. We first introduce some notations and we will work on them for the concept of relative divergence. Let $(X,d)$ be a geodesic space and $A$ a subspace. For each positive $r$, let $d_{r,A}$ be the induced length metric on the complement of the $r$--neighborhood of $A$ in $X$. We now define the relative divergence of the space $X$ with respect to the subspace $A$ (both upper relative divergence and lower relative divergence). Fix some number $\rho$ in $(0,1]$ and some positive integer $n$.

For each positive $r$, let $\delta(r)=\sup d_{\rho r,A}(x,y)$ where the supremum is taken over all $x, y$ which lie in $\partial N_r(A)$ such that $d_{r,A}(x, y)<\infty$ and $d(x,y)\leq nr$ (see Figure~ \ref{afirst}).

Similarly, let $\sigma(r)=\inf d_{\rho r,A}(x,y)$ where the infimum is taken over all $x, y$ which lie in $\partial N_r(A)$ such that $d_{r,A}(x, y)<\infty$ and $d(x,y)\geq nr$ (see Figure~ \ref{asecond}). 

\begin{figure}

\centering
\begin{subfigure}{.5\textwidth}
\centering
\scalebox{.3}{\includegraphics{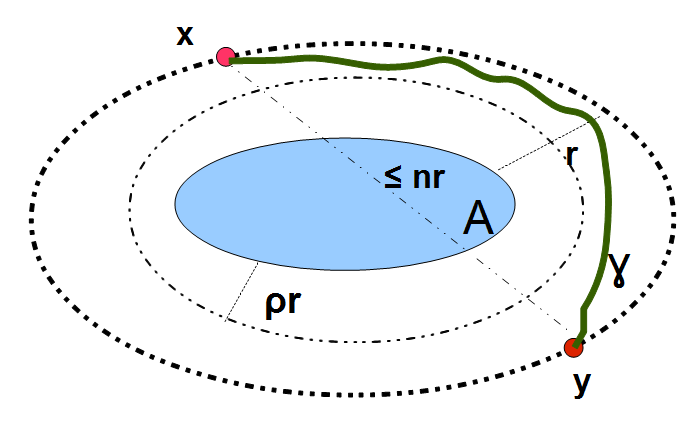}}
\caption{}
\label{afirst}
\end{subfigure}%
\begin{subfigure}{.5\textwidth}
\centering
\scalebox{.3}{\includegraphics{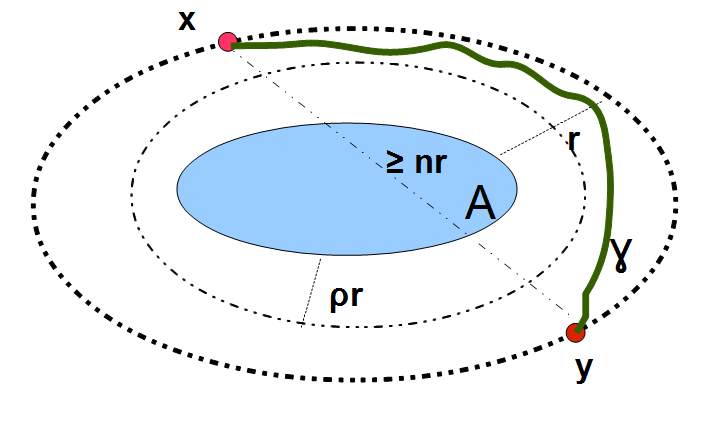}}
\caption{}
\label{asecond}
\end{subfigure}
\caption{The picture illustrates the idea of upper and lower relative divergence of a geodesic space $X$ with respect to a subspace $A$}
\label{a}
\end{figure}

The function $\delta$ is the upper relative divergence of the pair $(X,A)$, denoted by \emph{$Div(X,A)$}, and the function $\sigma$ is the lower relative divergence of the pair $(X,A)$, denoted by \emph{$div(X,A)$}.

In Section~ \ref{Concepts}, we show that both upper relative divergence and lower relative divergence depend only on the quasi-isometry type of $(X,A)$. Therefore, we can define both the upper and the lower relative divergence of a pair $(G,H)$, denoted by \emph{$Div(G,H)$} and \emph{$div(G,H)$}, where $G$ is a finitely generated group and $H$ is a subgroup. While upper relative divergence generalizes upper divergence introduced by Gersten \cite{MR1254309}, lower relative divergence generalizes lower divergence defined by Cooper-Mihalik \cite{MR1170363}. The relative divergence of a pair $(G,H)$ measures the distance distortion of the complement of the $r$--neighborhood of $H$ in the Cayley graph of $G$ when $r$ increases. 	

\subsection{Upper relative divergence}

The following theorem describes the upper relative divergence of a finitely generated group with respect to a finitely generated normal subgroup. 

\begin{thm}
\label{i1}
Let $G$ be a finitely generated group and $H$ a finitely generated normal subgroup of $G$ such that $G/H$ is one-ended. Then \[Div(G/H,e)\preceq Div(G,H)\preceq Dist^H_G \circ Div(G/H,e)\] where $Dist^H_G$ is the upper distortion of $H$ in $G$.
\end{thm}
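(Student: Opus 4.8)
\emph{Proof proposal.} The plan is to treat the quotient map $\pi\colon G\to G/H$ as a distance-controlling projection. First I would fix a finite generating set $S$ of $G$ that contains a finite generating set $T$ of $H$; then $\pi(S)$ generates $G/H$, and this costs nothing since the relative divergence and the distortion function do not depend, up to $\preceq$, on the finite generating set nor on the auxiliary constants $\rho$ and $n$. The central observation is the \emph{fibre identity} $d_G(g,H)=d_{G/H}(\pi(g),e)$ for every $g\in G$: projecting a $G$--geodesic from $g$ to its nearest point of $H$ gives "$\ge$", and lifting a $G/H$--geodesic from $\pi(g)$ to $e$ (based at $g$) gives "$\le$". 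Two consequences drive everything. First, $N_r(H)=\pi^{-1}\bigl(\ball{e}{r}\bigr)$, so $\partial N_r(H)$ is the full $\pi$--preimage of the $r$--sphere of $G/H$, and the complement of $N_{\rho r}(H)$ is the $\pi$--preimage of the complement of $\ball{e}{\rho r}$. Second, because $T\subseteq S$, each fibre $\pi^{-1}(\bar y)$ is \emph{horizontal}: moving inside it along $T$--generators is a concatenation of single $S$--edges that never leaves the fibre, hence stays at constant distance $d_{G/H}(\bar y,e)$ from $H$.

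For the lower bound $Div(G/H,e)\preceq Div(G,H)$ I would start from a near-optimal pair $\bar x,\bar y$ on the $r$--sphere of $G/H$ with $d_{G/H}(\bar x,\bar y)\le nr$, with $d_{r,e}(\bar x,\bar y)<\infty$, and with $d_{\rho r,e}(\bar x,\bar y)\ge Div(G/H,e)(r)-1$. Choose any lift $x$ of $\bar x$ and let $y$ be the endpoint of the lift, based at $x$, of a $G/H$--geodesic $\bar x\to\bar y$; then $\pi(y)=\bar y$, both $x,y\in\partial N_r(H)$, and $d_G(x,y)\le nr$. The finiteness $d_{r,H}(x,y)<\infty$ follows by lifting a path realizing $d_{r,e}(\bar x,\bar y)<\infty$ and closing up the endpoint discrepancy horizontally inside the fibre over $\bar y$, which lies at depth exactly $r$ and hence in the complement of the open neighborhood $N_r(H)$. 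Finally, projecting \emph{any} path from $x$ to $y$ avoiding $N_{\rho r}(H)$ yields a path from $\bar x$ to $\bar y$ avoiding $\ball{e}{\rho r}$ of no greater length, so $d_{\rho r,e}(\bar x,\bar y)\le d_{\rho r,H}(x,y)\le Div(G,H)(r)$, which gives the inequality.

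For the upper bound $Div(G,H)\preceq Dist^H_G\circ Div(G/H,e)$ I would run the projection in reverse. Given an admissible pair $x,y\in\partial N_r(H)$ with $d_G(x,y)\le nr$ and $d_{r,H}(x,y)<\infty$, the projected pair $\bar x,\bar y$ is admissible for $Div(G/H,e)(r)$, so there is a path $\bar\gamma$ from $\bar x$ to $\bar y$ avoiding $\ball{e}{\rho r}$ with $\ell(\bar\gamma)\le Div(G/H,e)(r)$. Lift $\bar\gamma$ based at $x$ to a path $\gamma$ avoiding $N_{\rho r}(H)$ and ending at a point $y'$ with $\pi(y')=\bar y$, so $y'=yh$ for some $h\in H$. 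The triangle inequality gives $\abs{h}_G=d_G(y,y')\le nr+Div(G/H,e)(r)$, hence $\abs{h}_H\le Dist^H_G\bigl(nr+Div(G/H,e)(r)\bigr)$; transporting a $T$--geodesic for $h$ horizontally from $y'$ back to $y$ produces a correction path of this length that stays in the fibre over $\bar y$, hence avoids $N_{\rho r}(H)$. Concatenating, $d_{\rho r,H}(x,y)\le Div(G/H,e)(r)+Dist^H_G\bigl(nr+Div(G/H,e)(r)\bigr)$, and routine $\preceq$--bookkeeping (absorbing the additive $nr$ and the constant rescalings, using $Div(G/H,e)\succeq r$ and $Dist^H_G\succeq\mathrm{id}$) collapses the right-hand side to $\bigl(Dist^H_G\circ Div(G/H,e)\bigr)(r)$.

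The main obstacle is exactly the distortion term in the upper bound: the lift of the optimal quotient path is forced into the correct $H$--coset but lands at the \emph{wrong} vertex of that coset, and this discrepancy cannot be repaired in $G/H$ — it must be repaired along $H$, at a cost governed by $Dist^H_G$, which is precisely why the distortion appears on the right. The horizontal-fibre device (arranged by taking $T\subseteq S$) keeps the correction path exactly at depth $r$, hence in the complement of the open neighborhood $N_{\rho r}(H)$ for every $\rho\in(0,1]$; had one instead expanded $H$--generators as $S$--words, intermediate vertices could dip to depth $r-O(1)$ and the argument would survive only for $\rho<1$ and large $r$. One-endedness of $G/H$ enters only to guarantee that the relevant complements are connected, so that $d_{r,e}(\bar x,\bar y)<\infty$ for the pairs used in the lower bound and $Div(G/H,e)$ is the honest (Gersten) divergence rather than being forced infinite.
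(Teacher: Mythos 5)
Your proposal is correct and is essentially the paper's own proof (Theorem~\ref{udns}): your fibre identity and horizontal-fibre device are precisely the paper's Lemma~\ref{nmsg} combined with the identification of $\Gamma(G/H,\bar{S})$ as the quotient of $\Gamma(G,S)$, and both arguments project/lift admissible pairs and paths, repair the endpoint discrepancy by a $T$--geodesic run inside the coset at constant depth (with cost controlled by $Dist^H_G$), and use one-endedness exactly to get $\delta_\rho(r)\geq 2r$ and thus absorb the additive $nr$ term. The only difference is cosmetic: the paper first records the sharper unconditional bound $Div(G,H)\preceq\{Dist^H_G\circ\bar{\delta^n_\rho}\}$ with $\bar{\delta^n_\rho}(r)=\delta_\rho(r)+nr$ and then specializes under one-endedness, which is the same bookkeeping you describe.
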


In the above theorem, we use the well-known concept of distortion of subgroups. This concept, in some sense, measures the ``upper bound'' of the distance distortion of a subgroup in comparison with the distance of a whole group. However, we realize that we also need the concept of ``lower bound'' of the distance distortion of subgroups to better understand how a subgroup is embedded into a whole group. Therefore, we introduce the concept of lower distortion and we refer to the traditional concept of distortion as upper distortion (see Section~ \ref {sd}).
 
The upper divergence of a one-ended relative hyperbolic group is at least exponential by Sisto \cite{Sisto}. The following theorem strengthens the result of Sisto. 

\begin{thm}
\label{i2}
Let $(G,\PP)$ be a relatively hyperbolic group and $H$ a subgroup of $G$ such that the number of filtered ends of $H$ in $G$ is finite. We assume that $H$ is not conjugate to an infinite index subgroup of any peripheral subgroup. Then $Div(G,H)$ is at least exponential.
\end{thm}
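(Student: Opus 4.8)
The plan is to prove the lower bound by exhibiting, for each large $r$, a single \emph{bad pair} of points on $\partial N_r(H)$ whose distance in the complement of $N_{\rho r}(H)$ is exponential in $r$. Since $Div(G,H)$ is a supremum over admissible pairs, one exponential pair for each $r$ forces the whole function to be at least exponential, so it suffices to construct such pairs. Throughout I would work with the hyperbolicity supplied by the relatively hyperbolic structure --- either via the Groves--Manning cusped space $X$, which is $\delta$--hyperbolic, or via the tree-graded geometry of asymptotic cones --- and adapt the techniques Sisto \cite{Sisto} uses for the absolute divergence of a one-ended relatively hyperbolic group to the relative setting.

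First I would record the geometric role of the two hypotheses. Finiteness of the number of filtered ends of $H$ in $G$ is used to control the complement of $N_r(H)$: for all large $r$ the set $G \setminus N_r(H)$ has only boundedly many deep components, and points lying in a common deep component can be joined outside $N_r(H)$. This is exactly what guarantees the existence of admissible pairs, i.e.\ pairs $x,y \in \partial N_r(H)$ with $d_{r,H}(x,y) < \infty$, and keeps the width of the complement bounded. The hypothesis that $H$ is not conjugate into a peripheral subgroup with infinite index is the essential one: if $H$ were an infinite-index subgroup of some peripheral $P$, then $N_r(H) \subseteq N_r(P)$ would be a genuinely thinner set, and one could detour around $H$ while remaining near the coset of $P$, producing only polynomial --- or even linear --- relative divergence. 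Excluding this forces every detour off $N_{\rho r}(H)$ into the hyperbolic part of $G$, where length is expensive.

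Second, I would construct the bad pair. The hypothesis leaves two cases: $H$ conjugate to a finite-index subgroup of a peripheral subgroup, or $H$ not contained up to conjugacy in any peripheral subgroup and hence containing a loxodromic element $g$. In either case I can produce a bi-infinite quasigeodesic $\alpha$ at bounded Hausdorff distance from $H$ --- the axis of $g$ in the loxodromic case, the peripheral coset in the finite-index case. Fixing a basepoint on $H$, I take $x_r, y_r \in \partial N_r(H)$ to be the points where a geodesic crossing $\alpha$ transversally meets $\partial N_r(H)$ on opposite sides, chosen so that $d(x_r,y_r) \le nr$ while the short connection between them runs through $N_{\rho r}(H)$. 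Passing to the hyperbolic model, two points at depth $\sim r$ on opposite sides of $\alpha$ cannot be joined by a path avoiding the radius-$\rho r$ neighborhood of $\alpha$ without length exponential in $r$; this is the standard exponential divergence of geodesics in a $\delta$--hyperbolic space, applied with $\alpha$ (coarsely $H$) as the avoided set. Transporting the estimate back to the Cayley graph gives $d_{\rho r, H}(x_r, y_r) \ge e^{cr}$ for a suitable $c > 0$.

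The step I expect to be hardest is the transfer between the Cayley graph of $G$ and the hyperbolic model, because a path in $G$ that avoids $N_{\rho r}(H)$ may attempt to shortcut through peripheral cosets, which are exponentially distorted in the cusped space $X$. One must show these shortcuts cannot reduce the detour below exponential length, and this is precisely where the non-peripheral hypothesis re-enters quantitatively: a path descending into the horoball over a peripheral coset $gP$ either re-emerges near $H$, and so re-enters the forbidden $N_{\rho r}(H)$, or must travel a distance in $gP$ comparable to the horoball depth. Controlling the interplay between horoball penetration, the parameter $\rho r$, and the fact that $H$ is not infinite-index in $gP$ is the crux. Making the bookkeeping of transition points along the detour uniform in $r$, so that the exponential lower bound survives the projection from $X$ to $G$, is the main technical burden of the argument.
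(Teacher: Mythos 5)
Your overall strategy --- one exponentially bad admissible pair on $\partial N_r(H)$ for each large $r$, powered by a hyperbolic (loxodromic) element of $H$ and the hyperbolicity of the relative structure --- is the same as the paper's (Proposition~\ref{sche}, inside the case analysis of Theorem~\ref{urdorhp}), but three of your steps fail as written. First, your claim that the axis $\alpha$ of $g\in H$ is at bounded Hausdorff distance from $H$ is false: one only gets the one-sided inclusion $\alpha\subseteq N_C(H)$, since $H$ can be much larger than $\langle g\rangle$ (take $H$ a quasiconvex free subgroup of rank $2$ in a hyperbolic group). This breaks your construction of the bad pair: a geodesic ``crossing $\alpha$ transversally'' may exit $N_r(H)$ at points having nothing to do with the two sides of $\alpha$, and, worse, nothing guarantees that your two exit points satisfy $d_{r,H}(x_r,y_r)<\infty$; inadmissible pairs contribute nothing to the supremum defining $\delta^n_\rho$. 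The paper ensures admissibility by taking $H$--perpendicular rays (Lemma~\ref{pr}) based at the points $h^{L(6inr+L)}$, $i=0,\dots,\tilde{e}(G,H)$, spread along the axis, and pigeonholing two of them into a common deep component of $C_r(H)$; the role of your ``crossing point'' is instead played by a $G$--vertex of the coned-off geodesic joining $x$ and $y$, shown to be $d_S$--close to $H$ by strong relative quasiconvexity of $\langle h\rangle$ together with a thin-quadrilateral argument (Lemma~\ref{q}).

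Second --- and this is the decisive gap --- the transfer you call the ``main technical burden'' does not just need bookkeeping; the cusped-space argument you sketch gives the wrong order of magnitude. A Cayley-graph point outside the $d_S$--neighborhood $N_{\rho r}(H)$ can lie at cusped-space distance only about $2\log_2(\rho r)$ from $H$ (ride the horoball over a peripheral coset through a point of $H$), so a path avoiding $N_{\rho r}(H)$ need only avoid an $O(\log r)$--neighborhood of $\alpha$ in the hyperbolic model; exponential divergence of hyperbolic geodesics then yields a lower bound of the form $e^{c\log r}$, i.e.\ polynomial, not exponential. The paper closes precisely this hole with Lemma~\ref{mainlemma}: any path $p$ sharing endpoints with a geodesic $q$ of $\hat{\Gamma}(G,S,\PP)$ passes $d_S$--within $K\log_2\abs{p}$ of each vertex of $q$. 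Because the distance there is measured in $d_S$ (not in the coned-off or cusped metric), the inequality $\rho r/2\leq d_S(w,v)\leq K\log_2\abs{\gamma}$ immediately gives $\abs{\gamma}\geq 2^{\rho r/2K}$; without this lemma (or an equivalent re-proof of it) your outline stalls at a polynomial bound. Finally, your dichotomy omits the case of $H$ finite and not conjugate into any peripheral subgroup, where there is no loxodromic element and no axis (the paper uses commensurability, Theorem~\ref{poulrd}, to reduce to Sisto's exponential bound for $Div(G,e)$); and in your peripheral finite-index case the coset of a peripheral subgroup (e.g.\ a copy of $\mathbb{Z}^2$) is not a quasigeodesic, so the transversal-crossing picture does not apply there either --- the paper handles that case by the separate Theorem~\ref{ldorhg} for fully relatively quasiconvex subgroups combined with $div(G,H)\preceq Div(G,H)$ (Theorem~\ref{udld}).
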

We refer the readers to Section~ \ref{feopog} for the definition of the number of filtered ends.
\subsection{Lower relative divergence}

As mentioned earlier, the lower divergence of a finitely generated group is either linear or exponential. The lower relative divergence of a pair of groups, on the other hand, is more diverse. 

\begin{thm}
\label{i3}
Let $f$ be any polynomial function or exponential function. There is a pair of groups $(G,H)$, where $G$ is a $\CAT(0)$ group (i.e. the group that acts properly and cocompactly on some $\CAT(0)$ space) and $H$ is an infinite cyclic subgroup of $G$, such that 
$div(G,H)$ is $f$.
\end{thm}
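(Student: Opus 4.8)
The plan is to treat the exponential case and the polynomial cases separately, realizing each target function by a $\CAT(0)$ group $G$ together with a cyclic subgroup $H=\langle h\rangle$ generated by an infinite order element. Since $G$ acts geometrically on the $\CAT(0)$ space $X$, the element $h$ acts as a hyperbolic (semisimple) isometry with an axis $\gamma$, so $H$ is undistorted and $N_r(H)$ is honestly a metric tube of radius $r$ about $\gamma$. Computing $div(G,H)=\sigma(r)$ then amounts to estimating, uniformly in $r$, how long a path outside $N_{\rho r}(H)$ must be in order to join two points of $\partial N_r(H)$ that are at distance at least $nr$ and lie in a common component of $X\setminus N_r(H)$.

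For the exponential target I would take $G$ to be a one-ended hyperbolic $\CAT(0)$ group, for example a closed hyperbolic surface group acting on $\Hyp^2$, and let $h$ be any infinite order element. Since $X$ is $\delta$-hyperbolic, two admissible points of $\partial N_r(H)$ must track the equidistant curves of $\gamma$ once the inner tube $N_{\rho r}(H)$ is removed, and in negative curvature such equidistant curves have length exponential in $r$; the ambient exponential divergence of $X$ supplies the matching upper bound. Hence $div(G,H)$ is exponential.

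The polynomial targets carry the real difficulty. Groups $G$ of polynomial divergence $r^d$ exist for every $d$ by Macura \cite{MR3032700}, and by cocompactness their divergence is realized up to equivalence by a periodic geodesic, which we may take to be the axis $\gamma$ of some infinite order $h$; set $H=\langle h\rangle$. The subtlety is that Macura's divergence is a \emph{supremum} over geodesics, whereas $div(G,H)=\sigma(r)$ is an \emph{infimum} over admissible far apart pairs: a single cheap pair, namely two points that escape to distance $nr$ along a flat or product region running transverse to $\gamma$, would already collapse $\sigma$ to linear. I must therefore arrange, by a careful choice of the group, the space $X$, and the axis $\gamma$, that the tube about $\gamma$ has \emph{uniform} divergence of degree $d$, so that every admissible far apart pair, not merely the extremal one, needs a detour of length comparable to $r^d$. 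Concretely this means selecting $\gamma$ to meet the diverging part of $X$ while avoiding any flat that stretches transversally across it, so that no linear escape is available.

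Granting such a pair $(G,H)$, I would establish $div(G,H)\simeq r^d$ in two steps. The upper bound $\sigma(r)\preceq r^d$ follows by exhibiting, for each $r$, one admissible pair joined outside $N_{\rho r}(H)$ by the divergence-realizing detour of $X$ around $\gamma$, of length comparable to $r^d$. The lower bound $\sigma(r)\succeq r^d$ is the crux: every path outside $N_{\rho r}(H)$ joining an admissible far apart pair must be rerouted around the tube, and the polynomial divergence of $X$ together with the absence of transverse flats forces its length to be $\succeq r^d$; cocompactness reduces this to finitely many combinatorial types of pairs and yields an estimate uniform in the position along the periodic axis $\gamma$. This uniform lower bound on the infimum, which rules out every cheap far apart pair and controls the component structure of $X\setminus N_r(H)$, is the main obstacle, and is exactly what distinguishes lower relative divergence from Gersten's supremum-type divergence and thereby permits intermediate polynomial growth. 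The linear case $d=1$ sits at the base of the scheme and is realized directly by $(\Z^2,\Z)$ with $H$ a direct factor.
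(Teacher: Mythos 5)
Your exponential case is fine as far as it goes: it is the same example the paper itself uses (a closed hyperbolic surface group with a cyclic subgroup), and your direct argument via convexity of tubes and exponential growth of equidistant curves in $\Hyp^2$ can be made rigorous. The paper reaches the same conclusion by different machinery: the lower bound comes from its general theorem on fully relatively quasiconvex subgroups of relatively hyperbolic groups (Theorem~\ref{ldorhg}, via Corollary~\ref{qodc}), and the upper bound from Theorem~\ref{ldocs} together with undistortedness of $H$.

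The polynomial case, however, contains a genuine gap, and you in effect concede it: for $d\geq 2$ the entire content of the theorem is the uniform lower bound on the infimum, and your proposal leaves exactly that step to be ``arranged by a careful choice'' and then ``granted.'' You never specify which cyclic subgroup of Macura's group to take, and the appeal to ``cocompactness reduces this to finitely many combinatorial types of pairs'' is not an argument: the $G_d$--action is not transitive on admissible pairs in $\partial N_r(H)$, and nothing in your outline excludes a cheap pair sitting in some component of the complement far from the axis. The paper closes this gap with two concrete ingredients. First, it takes $H_d=\langle a_d\rangle$ with axis $\alpha$ and proves the two-sided reduction $div_\alpha\preceq div(G,H)\preceq div_\alpha\circ(3Dist^H_G)$ (Proposition~\ref{ldadog}); the left-hand inequality is precisely what kills cheap pairs, because any admissible pair $x,y\in\partial N_r(H)$ can be capped off to $\alpha$ by two geodesics of length $r$, yielding a path that circumvents a ball $B(h^i,\rho r)$ centered at a vertex of $\alpha$, and by $H$--periodicity $div(\alpha,h^i)=div(\alpha,e)$, so \emph{every} admissible pair costs at least $div_\alpha(\rho r)-2r$. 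Since finitely generated abelian subgroups of $\CAT(0)$ groups are undistorted (Theorem~\ref{csocg}), $Dist^{H_d}_{G_d}$ is linear and hence $div(G_d,H_d)\sim div_\alpha$. Second, it computes $div_\alpha$ (Proposition~\ref{dog}): the upper bound is Macura's Proposition 4.4, while the lower bound is a Van Kampen diagram argument --- any path outside $B(e,r)$ with endpoints on $\alpha$ on opposite sides of $e$ bounds, together with the subsegment of $\alpha$, a reduced diagram that must contain an $a_d$--corridor, forcing the path to contain a subsegment joining the $a_0$--ray to the $a_d$--ray based at $e$, at which point Macura's Theorem 5.3 gives length at least $p_d(r)$. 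Without these two steps, or genuine replacements for them, your outline establishes only the upper bound $\sigma(r)\preceq r^d$, not the theorem.
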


In Theorem~ \ref{ldns} and Theorem~ \ref{ldocs}, we compute the lower relative divergence of a pair of groups $(G,H)$ when $H$ is an infinite normal subgroup or an infinite cyclic subgroup. In order to measure the lower relative divergence of a finitely generated group with respect to a normal subgroup, we use the concept of lower distortion of a subgroup (which is mentioned earlier). Although the idea of lower distortion is implicit in works of Gromov \cite {MR1253544}, Ol'shanski \cite{MR1714850} and many others, the exact concept does not seem to be recorded in the literature. When investigating the lower relative divergence of a pair $(G,H)$ in the case $H$ is a cyclic subgroup, we will see the connection between the concept of relative lower divergence and both upper distortion and upper divergence.

We also examine the lower relative divergence of a relatively hyperbolic group with respect to a subgroup. While the upper relative divergence of a finitely generated relatively hyperbolic group with respect to almost all subgroups is at least exponential (See Theorem \ref{i2}), its lower relative divergence can be linear (see Theorem~ \ref{ldonsirhg} and Theorem~ \ref{ldorqcs}). Moreover, we also examine the lower relative divergence of a finitely generated relatively hyperbolic group with respect to a fully relatively quasiconvex subgroup in the following theorem. 

\begin{thm}
\label{i4}
Let $(G,\PP)$ be a relatively hyperbolic group and $H$ an infinite fully relatively quasiconvex subgroup of $G$. If the number of filtered ends of $H$ in $G$ is finite, then $div(G,H)$ is at least exponential.
\end{thm}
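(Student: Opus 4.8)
The plan is to transport the problem into the Groves--Manning cusped space $X=\mathrm{Cusp}(G,\PP)$, which is a proper $\delta$--hyperbolic geodesic space on which $G$ acts, and to exploit the fact that a fully relatively quasiconvex subgroup has a quasiconvex orbit $H\cdot o\subseteq X$ whose intersection with every horoball is uniformly bounded. The engine of the argument is the classical \emph{exponential divergence estimate} in a $\delta$--hyperbolic space: if $Q$ is a $\kappa$--quasiconvex subset and $p,q$ are points with $d(p,Q)=d(q,Q)=r$ whose nearest--point projections to $Q$ lie at distance at least $(n-2)r$, then every geodesic $[p,q]$ must penetrate a bounded neighbourhood $N_{C}(Q)$ with $C=C(\delta,\kappa)$, so that for $r$ large it enters $N_{\rho r}(Q)$; consequently any path from $p$ to $q$ avoiding $N_{\rho r}(Q)$ is forced to wrap around $Q$ and has length at least $L_{0}\,e^{c\rho r}$, where $L_{0},c>0$ depend only on $\delta,\kappa$ and $\rho$. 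I would first isolate and prove this estimate, since it already produces the exponential behaviour in the genuinely hyperbolic directions.

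Next I would reduce the theorem to this estimate. Fix $n$ large (say $n\ge 3$) and take any $x,y\in\partial N_r(H)$ with $d(x,y)\ge nr$ and $d_{r,H}(x,y)<\infty$. Because $x,y$ lie at distance $r$ from $H$, the triangle inequality forces their projections to $H$ to be at distance at least $(n-2)r$, so the hypothesis $d(x,y)\ge nr$ always places the pair in the regime of the divergence estimate. The role of the finiteness of the number of filtered ends of $H$ in $G$ is to guarantee that a pair which is connected in the complement of $N_r(H)$ cannot be joined cheaply: it controls the coends of $H$, ensuring that for all large $r$ the deep part of the complement of $N_r(H)$ has boundedly many components and that two far--apart points of $\partial N_r(H)$ lying in the same component are coarsely on one side of $H$, whence any connecting path is compelled to wrap around $H$ rather than to thread past it. Without this hypothesis the infimum is typically either vacuous or realised by short detours, and the finiteness of filtered ends is precisely what excludes the intermediate behaviour; I expect the extraction of this separation statement from the filtered--ends count to be a genuinely technical point.

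The main obstacle, however, is the interface between the word metric on $G$, in which $div(G,H)$, the neighbourhoods $N_r(H)$ and the boundary $\partial N_r(H)$ are defined, and the hyperbolic cusped metric on $X$, in which the divergence estimate lives: the two differ drastically inside the horoballs, where divergence degenerates to linear. I must verify that a detour path $\alpha$ from $x$ to $y$ cannot use the linear geometry of a horoball to slip from one side of $H$ to the other cheaply. This is exactly where \emph{full} relative quasiconvexity is indispensable: since $H$ meets each horoball in a bounded set, the portion of $[x,y]$ that fellow--travels $H$ stays in the thick part of $X$, so the piece of $N_{\rho r}(H)$ that $\alpha$ must circumvent is a genuinely hyperbolic obstacle rather than a parabolic one. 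Quantifying this with the Groves--Manning distance formula, and converting the cusped--metric lower bound $L_0 e^{c\rho r}$ into a word--metric lower bound on the length of $\alpha$ at the cost of adjusting the constant in the exponent, would give $div(G,H)\succeq e^{c'\rho r}$. Carrying out this transfer uniformly in $r$, in tandem with the separation statement coming from the finiteness of filtered ends, is the step I anticipate will demand the most care.
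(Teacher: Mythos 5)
You have not proved the theorem; there are two genuine problems, one with your key geometric premise and one with the engine of the argument.

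First, your central claim about the subgroup is false: you assert that a \emph{fully} relatively quasiconvex subgroup has an orbit in the cusped space meeting every horoball (equivalently, every peripheral left coset) in a uniformly bounded set. That is the characterization of \emph{strongly} relatively quasiconvex subgroups. Full relative quasiconvexity allows $H\cap gPg^{-1}$ to be a finite-\emph{index} subgroup of $gPg^{-1}$, in which case $H$ coarsely contains the entire coset $gP$; indeed the theorem is meant to apply to $H=P$ a peripheral subgroup itself, where your premise fails outright. In the paper's proof (Theorem~\ref{ldorhg}) fullness is used in exactly the opposite way: when the geodesic $c_1$ between the projections $x_1,y_1\in H$ travels deep in a peripheral coset $gP$ whose conjugate meets $H$ in an infinite subgroup, fullness promotes that intersection to finite index, hence $gP\subset N_C(H)$ for a uniform $C$, so the detour path, which avoids $N_{\rho r}(H)$, automatically avoids $N_{\rho r-C}(gP)$; the coset then serves as the obstacle that the path must circumvent.

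Second, even where your quasiconvexity claim does hold, the proposed engine cannot yield an exponential bound, because the word-metric-to-cusped-metric transfer loses exponentially. The detour path $\gamma$ is constrained only by $d_S(z,H)\ge\rho r$ for $z\in\gamma$. But cusped distances between thick points collapse logarithmically along any peripheral coset: if $z$ lies far out on a coset $g'P'$ that passes within bounded $d_S$--distance of $H$ (such cosets exist even when every $H\cap g'P'g'^{-1}$ is finite), then the cusped distance from $z$ to $H\cdot o$ is of order $\log d_S(z,H)$. So the best uniform avoidance radius you can feed into the hyperbolic divergence estimate is $R=O(\log \rho r)$, and that estimate returns a length bound of order $e^{cR}=(\rho r)^{O(1)}$ --- polynomial, not exponential. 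This is precisely why the paper never passes to a hyperbolic model space: its exponential mechanism is Lemma~\ref{mainlemma}, the estimate $d_S(w,v)\le K\log_2\abs{p}$ comparing a \emph{word-metric} path against a coned-off geodesic in \emph{word-metric} terms, combined with the transition-point dichotomy of Lemma~\ref{lemman1}; patching your approach would essentially force you to re-derive this lemma. A smaller misreading: the finiteness of $\tilde{e}(G,H)$ plays no role in any ``separation statement'' --- in the paper it only guarantees that admissible pairs $x,y$ exist (so that $\sigma^n_{\rho}(r)<\infty$), while the exponential lower bound is established for \emph{every} pair with $d_r(x,y)<\infty$.
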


In the above theorem, if we drop the condition ``fully relative quasiconvexness'' of the subgroup $H$, the conclusion of the theorem is no longer true (see Theorem \ref{ldorqcs}).
 
\subsection{Overview}

In Section~ \ref{prelim}, we prepare some preliminary knowledge for the main part of the paper. This knowledge will be used to define the concept of relative divergence and compute relative divergence of certain pairs of groups.

In Section~ \ref{sd}, we recall the concept of distortion of a subgroup, which we call upper distortion and introduce the related concept of lower distortion. Together with upper distortion, lower distortion helps us understand the connection between the geometry of a group and the geometry of its subgroups. We also carefully investigate this new concept although it is not the main part of this paper.

In Section~ \ref{Concepts}, we give precise definitions of upper and lower divergence of a pair $(X,A)$, where $X$ is a geodesic space and $A$ is a subspace. We use these concepts to define the upper and lower divergence of a pair $(G,H)$, where $G$ is a finitely generated group and $H$ is a subgroup. We also investigate some key properties of relative divergence.

In Sections~ \ref{rdns} and \ref{rdcs}, we investigate the divergence of a finitely generated group with respect to a normal subgroup or a cyclic subgroup. In Section~ \ref{rdns}, the proof of Theorem~ \ref{i1} is also shown.

In Section~ \ref{rdcat(0)g}, we examine relative divergence of some $CAT(0)$ groups. We also investigate a family of groups studied by Macura \cite{MR3032700} to show that relative lower divergence can be a polynomial function with arbitrary degree. In this section, readers can find the proof of Theorem~ \ref{i3} for the case the lower divergence is polynomial. 

In Section~ \ref{rdrhg}, we examine the relative divergence of a relatively hyperbolic group. We also investigate the lower relative divergence of a relatively hyperbolic group with respect to a fully relatively quasiconvex subgroup and use this fact to show that the lower divergence of a pair of groups can be at least exponential. In this section, we show the proofs of Theorem~ \ref{i2} and Theorem~ \ref{i4}. Moreover, readers can see the proof of Theorem~ \ref{i3} for the case the lower divergence is exponential in this section.
\subsection*{Acknowledgments}
I would like to thank my advisor Prof.~Christopher Hruska for very helpful comments and suggestions.

\section{Preliminaries}
\label{prelim}
In this section, we discuss some preliminary background before discussing the main part of the paper. We first construct the notions of domination and equivalence. We review some concepts in geometric group theory: geodesic spaces, quasigeodesics, quasi-isometry and quasi-isometric embedding, and the number of filtered ends of pairs of groups. We also introduce the concept of quasi-isometry between two pairs of metric spaces. 

\subsection{The notions of domination and equivalence}
In this section, we build the notions of domination and equivalence on the set of some certain families of functions. These notions are the tool to measure the relative divergence of a finitely generated group with respect to a subgroup.

\begin{defn}
Let $\mathcal{M}$ be the collection of all functions from $[0,\infty)$ to $[0,\infty]$. Let $f$ and $g$ be arbitrary elements of $\mathcal{M}$. \emph{The function $f$ is dominated by the function $g$}, denoted \emph{$f\preceq g$}, if there are positive constants $A$, $B$ and $C$ such that $f(x)\leq g(Ax)+Bx$ for all $x>C$. Two function $f$ and $g$ are \emph{equivalent}, denoted \emph{$f\sim g$}, if $f\preceq g$ and $g\preceq f$. \emph{The function $f$ is strictly dominated by the function $g$}, denoted \emph{$f\prec g$}, if $f$ is dominated by $g$ and they are not equivalent.

\end{defn}

\begin{rem}
The relations $\preceq$ and $\prec$ are transitive. The relation $\sim$ is an equivalence relation on the set $\mathcal{M}$.

Let $f$ and $g$ be two polynomial functions in the family $\mathcal{M}$. We observe that $f$ is dominated by $g$ iff the degree of $f$ is less than or equal to the degree of $g$ and they are equivalent iff they have the same degree. All exponential functions of the form $a^{bx+c}$, where $a>1, b>0$ are equivalent. Therefore, a function $f$ in $\mathcal{M}$ is \emph{linear, quadratic or exponential...} if $f$ is respectively equivalent to any polynomial with degree one, two or any function of the form $a^{bx+c}$, where $a>1, b>0$.
\end{rem}

\begin{defn}
Let $\{\delta^n_{\rho}\}$ and $\{\delta'^n_{\rho}\}$ be two families of functions of $\mathcal{M}$, indexed over $\rho \in (0,1]$ and positive integers $n\geq 2$. \emph{The family $\{\delta^n_{\rho}\}$ is dominated by the family $\{\delta'^n_{\rho}\}$}, denoted \emph{$\{\delta^n_{\rho}\}\preceq \{\delta'^n_{\rho}\}$}, if there exists constant $L\in (0,1]$ and a positive integer $M$ such that $\delta^n_{L\rho}\preceq \delta^{Mn}_{\rho}$. The notions of strict domination and equivalence can be defined as above. 
\end{defn}

\begin{rem}
The relations $\preceq$ and $\prec$ are transitive. The relation $\sim$ is an equivalence relation.

If $f$ is an element in $\mathcal{M}$, we could represent $f$ as a family $\{\delta^n_{\rho}\}$ for which $\delta^n_{\rho}=f$ for all $\rho$ and $n$. Therefore, the family $\{\delta^n_{\rho}\}$ is dominated by (or dominates) a function $f$ in $\mathcal{M}$ if $\{\delta^n_{\rho}\}$ is dominated by (or dominates) the family $\{\delta'^n_{\rho}\}$ where $\delta'^n_{\rho}=f$ for all $\rho$ and $n$. The equivalence between a family $\{\delta^n_{\rho}\}$ and a function $f$ in $\mathcal{M}$ can be defined similarly. Thus, a family $\{\delta^n_{\rho}\}$ is linear, quadratic, exponential, etc if $\{\delta^n_{\rho}\}$ is equivalent to the function $f$ where $f$ is linear, quadratic, exponential, etc.
\end{rem}

\subsection{Geodesic spaces, quasigeodesics, quasi-isometry}

In this section, we review the concepts of geodesic spaces, quasigeodesics, quasi-isometry and quasi-isometric embedding, and we introduce the concept of quasi-isometry between two pair of metric spaces. These concepts play an important role in defining the concept of upper relative divergence and lower relative divergence of a finitely generated group with respect to a subgroup. Most of information in this section is cited from \cite{MR1086648}. 

\begin{rem}
For each path with finite length $\alpha$ in a geodesic space $X$, we denote the endpoints of $\alpha$ by $\alpha_+$, $\alpha_-$ and the length of $\alpha$ by $\ell(\alpha)$. For each ray $\alpha$ in a space $X$, we denote the initial point of $\alpha$ by $\alpha_+$.
\end{rem}
 
\begin{defn}
Let $(X,d)$ be a metric space.
\begin{enumerate}
\item A path $p$ in $X$ is an \emph{$(L,C)$--quasigeodesic} for some $L\geq 1$ and $C\geq 0$, if for every subpath $q$ of $p$ the inequality $\ell(q)\leq L\,d(q_+,q_-)+C$ holds.
\item A path $p$ in $X$ is a \emph{quasigeodesic} if it is $(L,C)$--quasigeodesic for some $L\geq 1$ and $C\geq 0$. 
\item A path $p$ in $X$ is an \emph{L--quasigeodesic} if it is $(L,L)$--quasigeodesic for some $L\geq 1$. 
\item A path $p$ in $X$ is a \emph{geodesic} if it is $(1,0)$--quasigeodesic.
\item Two quasigeodesics are \emph{equivalent} if the Hausdorff distance between their images is finite.
\item The metric space $X$ is a \emph{geodesic space} if any pair of points in $X$ can be joined by a geodesic segment.
\end{enumerate}
\end{defn}

\begin{defn}
Let $(X,d_X)$ and $(Y,d_Y)$ be two metric spaces. The map $\Phi$ from $X$ to $Y$ is \emph{a quasi-isometry} if there is a constant $K\geq 1$ and a function $\Psi$ from $Y$ to $X$ such that the following holds:
\begin{align}
d_Y\bigl(\Phi(x_1),\Phi(x_2)\bigr)&\leq K\,d_X(x_1,x_2)+K \text{ for all } x_1, x_2 \text{ in } X\\
d_X\bigl(\Psi(y_1),\Psi(y_2)\bigr)&\leq K\,d_Y(y_1,y_2)+K \text{ for all } y_1, y_2 \text{ in } Y\\
d_Y\bigl(\Phi\circ\Psi(y), y\bigr)&\leq K \text{ for all } y \text{ in } Y\\
d_X\bigl(\Psi\circ\Phi(x), x\bigr)&\leq K \text{ for all } x \text{ in } X
\end{align}
\end{defn}

The proof of the following lemma is obvious, and we leave it to the reader.

\begin{lem}
\label{qip}
Let $(X,d_X)$ and $(Y,d_Y)$ be two geodesic spaces and the map $\Phi$ from $X$ to $Y$ a quasi-isometry. Then there is a constant $C\geq1$ such that the following hold:
\begin{enumerate}
\item $({1}/{C})\,d_X(x_1,x_2)-1\leq d_Y\bigl(\Phi(x_1),\Phi(x_2)\bigr)\leq C\,d_X(x_1,x_2)+C$, for all $x_1, x_2$ in $X$
\item $N_C\bigl(\Phi(X)\bigr)=Y$
\item If $\alpha$ is a path connecting two points $x_1$ and $x_2$ in $X$, then there is a path $\beta$ connecting $\Phi(x_1)$ and $\Phi(x_2)$ in $Y$ such that the Hausdorff distance between $\Phi(\alpha)$ and $\beta$ is at most $C$. Moreover, $\abs{\beta}\leq C\abs{\alpha}+C$.
\item If $\beta$ is a path connecting two points $\Phi(x_1)$ and $\Phi(x_2)$ for some $x_1, x_2 \in X$, then there is a path $\alpha$ connecting $x_1$ and $x_2$ in $X$ such that the Hausdorff distance between $\Phi(\alpha)$ and $\beta$ is at most $C$. Moreover, $\abs{\alpha}\leq C\abs{\beta}+C$.
\end{enumerate} 
\end{lem}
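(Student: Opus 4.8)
The plan is to produce a single constant $C\geq 1$ that serves all four items by proving each estimate with its own constant and then setting $C$ to be the largest of these. The only tools needed are the four defining inequalities of the quasi-isometry (with constant $K$ and quasi-inverse $\Psi$), the triangle inequality, and, for the two path statements, the hypothesis that $X$ and $Y$ are geodesic. Throughout I would keep $\Phi$ and $\Psi$ as given and repeatedly insert the compositions $\Psi\circ\Phi$ or $\Phi\circ\Psi$ so as to invoke the coarse-inverse inequalities $d_X(\Psi\Phi(x),x)\leq K$ and $d_Y(\Phi\Psi(y),y)\leq K$.

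Items (1) and (2) are immediate. The upper bound in (1) is the first defining inequality once $C\geq K$. For the lower bound I would bound $d_X(x_1,x_2)$ by inserting $\Psi\Phi(x_1)$ and $\Psi\Phi(x_2)$; combining the two coarse-inverse inequalities with the second defining inequality gives $d_X(x_1,x_2)\leq K\,d_Y\bigl(\Phi(x_1),\Phi(x_2)\bigr)+3K$, which rearranges to $d_Y\geq (1/K)d_X-3$ and then into the stated form after enlarging $C$ (taking $C\geq 3K$ and using that the left-hand side $(1/C)d_X-1$ is negative for small $d_X$). For (2), given $y\in Y$ the point $\Phi\Psi(y)$ lies in $\Phi(X)$ and is within $K$ of $y$, so $N_K(\Phi(X))=Y$.

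For (3) I would discretize. Assuming $\alpha$ is rectifiable (otherwise the length bound is vacuous), parametrize by arc length and pick $x_1=p_0,p_1,\dots,p_m=x_2$ along $\alpha$ with $d_X(p_i,p_{i+1})\leq 1$ and $m\leq \ell(\alpha)+1$. The first defining inequality gives $d_Y(\Phi(p_i),\Phi(p_{i+1}))\leq 2K$, so since $Y$ is geodesic I connect consecutive images by geodesics of length $\leq 2K$ and let $\beta$ be their concatenation, whence $\ell(\beta)\leq 2Km\leq 2K\,\ell(\alpha)+2K$. The Hausdorff estimate then splits in two: each point of $\beta$ is within $2K$ of some $\Phi(p_i)\in\Phi(\alpha)$, and each point $\Phi(\alpha(t))$ is within $1$ of some $p_i$ in $X$, hence within $2K$ of $\Phi(p_i)\in\beta$.

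Item (4) is where I expect the only real difficulty, since it is the one place where a geodesic of $X$ must be pushed forward through $\Phi$. I would sample $\beta$ by arc length as $\Phi(x_1)=q_0,\dots,q_k=\Phi(x_2)$ with $d_Y(q_j,q_{j+1})\leq 1$, apply $\Psi$, and join consecutive $\Psi(q_j)$ by geodesics in $X$ of length $\leq 2K$ (second defining inequality). Because $\Psi(q_0)=\Psi\Phi(x_1)$ and $\Psi(q_k)=\Psi\Phi(x_2)$ are only within $K$ of $x_1,x_2$, I prepend and append geodesics of length $\leq K$ to get a path $\alpha$ genuinely from $x_1$ to $x_2$, with $\ell(\alpha)\leq 2K\,\ell(\beta)+4K$. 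The delicate point is the Hausdorff bound: for $\Phi(w)\in\Phi(\alpha)$ with $w$ on the segment through $\Psi(q_j)$ one has $d_X(w,\Psi(q_j))\leq 2K$, so $d_Y(\Phi(w),\Phi\Psi(q_j))\leq 2K^2+K$, and then $d_Y(\Phi\Psi(q_j),q_j)\leq K$ places $\Phi(w)$ within $2K^2+2K$ of $q_j\in\beta$; conversely each $q_j$ is within $K$ of $\Phi\Psi(q_j)\in\Phi(\alpha)$ and intermediate points of $\beta$ are within $1$ of some $q_j$. The crux, and the reason the final constant is forced to be quadratic, $C=2K^2+2K$, is exactly this: controlling the vertices $\Phi\Psi(q_j)$ costs only $K$, but carrying the whole $X$-geodesic segments forward inflates their diameter by the multiplicative factor $K$ before the additive $K$, so one must track that inflation rather than settle for the vertex bound.
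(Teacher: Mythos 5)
The paper offers no proof of this lemma at all---it is declared ``obvious'' and left to the reader---so there is no argument of the author's to compare yours against; the only question is whether your proof is correct, and it is. Your constants check out: inserting $\Psi\circ\Phi$ and using the triangle inequality gives $d_X(x_1,x_2)\leq K\,d_Y\bigl(\Phi(x_1),\Phi(x_2)\bigr)+3K$, which yields item (1) with $C\geq 3K$ after the case split on whether $d_X(x_1,x_2)\geq 3K$; item (2) is immediate from the coarse-inverse inequality for $\Phi\circ\Psi$; and the discretize-and-interpolate constructions in (3) and (4), with the two endpoint corrections of length at most $K$ in (4), give Hausdorff bounds of $2K$ and $2K^2+2K$ respectively, so $C=2K^2+2K$ serves all four items. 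Your closing observation is also the right one to emphasize: in (4) the vertices $\Phi\Psi(q_j)$ cost only $K$, but the pushed-forward geodesic segments of $X$ have their diameter multiplied by $K$ before the additive error, which is exactly why the constant ends up quadratic in $K$ rather than linear. One small point to tighten: in (3) and (4) you set aside non-rectifiable paths on the grounds that the length bound is vacuous, but the Hausdorff-distance claim is not vacuous for such paths; the fix is to choose the partition points by uniform continuity of the path (so that each sub-arc has diameter at most $1$) instead of by arc length, after which your estimates go through verbatim. In the paper's actual applications this case never arises, since the paths there witness finite induced length metrics, but the lemma as stated quantifies over all paths.
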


\begin{defn}
Let $(X,d_X)$ and $(Y,d_Y)$ be two geodesic spaces and the map $\Phi$ from $X$ to $Y$ a quasi-isometric embedding if \[({1}/{C})\,d_X(x_1,x_2)-1\leq d_Y\bigl(\Phi(x_1),\Phi(x_2)\bigr)\leq C\,d_X(x_1,x_2)+C\] for all $x_1, x_2$ in $X$.
\end{defn}

\begin{rem}
Throughout this paper, we denote $(X,A)$ to be a pair of metric spaces, where $X$ is a geodesic space and $A$ is a subspace of $X$. 
\end{rem}
\begin{defn}
Two pairs of spaces $(X,A)$ and $(Y,B)$ are \emph{quasi-isometric} if there is a quasi-isometry $\Phi$ from $X$ to $Y$ such that the Hausdorff distance between $\Phi(A)$ and $B$ is finite.
\end{defn}

It is not hard to prove the following proposition and we leave it to the reader.
\begin{prop}
Quasi-isometry of pairs of metric spaces is an equivalence relation.
\end{prop}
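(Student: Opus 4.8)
The plan is to verify the three defining properties of an equivalence relation directly from the definition, the only substantive input being that a quasi-isometry carries subsets lying at finite Hausdorff distance to subsets lying at finite Hausdorff distance. For reflexivity I would take $\Phi$ to be the identity map $\mathrm{id}_X \colon X \to X$, which is a quasi-isometry (with $K = 1$ and associated function $\Psi = \mathrm{id}_X$); since $\mathrm{id}_X(A) = A$, the Hausdorff distance between $\mathrm{id}_X(A)$ and $A$ is zero, so $(X,A)$ is quasi-isometric to itself.

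For symmetry, suppose $\Phi \colon X \to Y$ is a quasi-isometry witnessing $(X,A)\sim(Y,B)$, with associated function $\Psi \colon Y \to X$ and constant $K$, and suppose the Hausdorff distance between $\Phi(A)$ and $B$ is at most $D$. The four inequalities in the definition of a quasi-isometry are interchanged under swapping the roles of $(\Phi,X)$ and $(\Psi,Y)$: inequality (2) plays the role of (1), (1) the role of (2), (4) the role of (3), and (3) the role of (4). Hence $\Psi$ is itself a quasi-isometry from $Y$ to $X$ with associated function $\Phi$. It remains to bound the Hausdorff distance between $\Psi(B)$ and $A$. For $b \in B$, choosing $a \in A$ with $d_Y(\Phi(a),b) \le D$ gives
\[ d_X(a,\Psi(b)) \le d_X\bigl(a,\Psi(\Phi(a))\bigr) + d_X\bigl(\Psi(\Phi(a)),\Psi(b)\bigr) \le K + \bigl(K\,d_Y(\Phi(a),b) + K\bigr) \le KD + 2K, \]
using inequalities (4) and (2). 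Running the same estimate in the reverse direction (each $a \in A$ admits $b \in B$ with $d_Y(\Phi(a),b) \le D$) shows $A$ and $\Psi(B)$ lie within Hausdorff distance $KD + 2K$, so $(Y,B)\sim(X,A)$.

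For transitivity, suppose $\Phi \colon X \to Y$ and $\Theta \colon Y \to Z$ are quasi-isometries witnessing $(X,A)\sim(Y,B)$ and $(Y,B)\sim(Z,C)$, with associated functions $\Psi$ and $\Xi$ respectively. I would first record the standard fact that $\Theta \circ \Phi$ is a quasi-isometry with associated function $\Psi \circ \Xi$; the four required inequalities follow from the coarse-Lipschitz bounds for $\Phi$ and $\Theta$ together with the near-identity bounds for $\Phi\Psi$ and $\Theta\Xi$. For the Hausdorff condition, the key observation is that a quasi-isometry distorts finite Hausdorff distance only by a bounded factor: since $\Phi(A)$ and $B$ lie within Hausdorff distance $D_1$, the coarse-Lipschitz bound for $\Theta$ gives that $\Theta(\Phi(A))$ and $\Theta(B)$ lie within Hausdorff distance $K_\Theta D_1 + K_\Theta$. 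Combining this with the hypothesis that $\Theta(B)$ and $C$ lie within Hausdorff distance $D_2$, and using the triangle inequality for Hausdorff distance, I conclude that $(\Theta\circ\Phi)(A) = \Theta(\Phi(A))$ and $C$ lie within Hausdorff distance $K_\Theta D_1 + K_\Theta + D_2$, establishing $(X,A)\sim(Z,C)$.

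The content here is almost entirely bookkeeping, and the only points requiring genuine care --- which I regard as the main (mild) obstacle --- are verifying that the quasi-inverse $\Psi$ and the composite $\Theta\circ\Phi$ remain quasi-isometries in the precise sense of the paper's definition, namely exhibiting their associated functions and constants rather than merely invoking the two-sided distance estimate of Lemma~\ref{qip}. Once this is done, all the Hausdorff bounds flow uniformly from the single principle that a quasi-isometry carries sets at finite Hausdorff distance to sets at finite Hausdorff distance.
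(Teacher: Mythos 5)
Your proof is correct. The paper offers no written proof of this proposition --- it is explicitly left to the reader --- and your direct verification of the three properties (identity map for reflexivity; the quasi-inverse $\Psi$ for symmetry, with the four defining inequalities swapped in pairs and the Hausdorff bound $KD+2K$; composition $\Theta\circ\Phi$ with associated function $\Psi\circ\Xi$ and the triangle inequality for Hausdorff distance for transitivity) is exactly the routine argument the paper intends, carried out at the right level of care with respect to the paper's specific definition of quasi-isometry.
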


\subsection{Filtered ends of pairs of groups}
\label{feopog}
In this section, we review the concepts of the number of ends of groups and the number of filtered ends of pairs of groups. We refer the readers to Chapter~ 14 in \cite{MR2365352} for the proof of all the statements on these concepts. We also prove the lemma on the existence of subgroup perpendicular ray which is defined below.

We now define the concept of the number of filtered ends of a pair of groups and we will see that this concept generalizes the concept of the number of ends of a group. 
\begin{defn}
Let $G$ be a group with a finite generating set $S$ and $H$ a subgroup of $G$. For each positive $r$ a connected component $U$ of $C_r(H)$ in the Cayley graph $\Gamma(G,S)$ is \emph{deep} if $U$ does not lie in the $s$--neighborhood of $H$ for any positive s. Let $\tilde{e}_r(G,H)$ be the number of deep components of $C_r(H)$. We note that $\tilde{e}_r(G,H)\geq \tilde{e}_s(G,H)$ if $r>s$. The number of \emph{filtered ends of the pair} $(G,H)$, denoted $\tilde{e}(G,H)$, is the supremum of the set $\bigset{\tilde{e}_r(G,H)}{r>0}$. 
\end{defn}

\begin{rem}
Let $G$ be a finitely generated group and $H$ a subgroup.
\begin{enumerate}
\item The number $\tilde{e}(G,H)$ does not depend on the choice of finite generating set $S$ of $G$ and $\tilde{e}(G,H)=0$ iff $H$ is a finite index subgroup of $G$.

\item If $\tilde{e}(G,H)=m<\infty$, then there is a positive number $r_0$ such that $C_r(H)$ has exactly $m$ deep components for each $r>r_0$.

\item When $H$ is the trivial subgroup, $\tilde{e}(G,H)$ is \emph{the number of ends of $G$}, denoted $\tilde{e}(G)$. A finitely generated group is \emph{one-ended} if $\tilde{e}(G)=1$ 
\end{enumerate}

\end{rem}

\begin{thm}[Proposition 14.5.9, \cite {MR2365352}]
\label{feons}
If $H$ is a finitely generated normal subgroup of $G$ then $\tilde{e}(G,H)$ equals the number of ends of $G/H$.
\end{thm}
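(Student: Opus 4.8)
The engine of the plan is the observation that, for a normal subgroup, distance to $H$ upstairs is exactly distance to the identity downstairs. Write $\pi\colon G\to G/H$ for the quotient homomorphism and $\bar S=\pi(S)$, so that $\pi$ extends to a surjective graph morphism $\Gamma(G,S)\to\Gamma(G/H,\bar S)$ sending each edge to an edge or to a vertex. First I would establish the identity
\[ d_G(g,H)=d_{G/H}(\bar e,\bar g)\qquad\text{for every }g\in G, \]
where $\bar g=\pi(g)$. This holds because $\pi$ is $1$--Lipschitz with length--preserving lifts of words, whence $d_{G/H}(\bar e,\bar g)=\min_{x\in gH}\abs{x}_S=d_G(e,gH)$, and normality gives $d_G(e,gH)=d_G(e,Hg)=d_G(g,H)$. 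Granting this, a vertex $g$ lies outside $N_r(H)$ if and only if $\bar g$ lies outside $\ball{\bar e}{r}$, so on the level of vertices $C_r(H)=\pi^{-1}(\Omega_r)$, where $\Omega_r$ denotes the complement of $\ball{\bar e}{r}$ in $\Gamma(G/H,\bar S)$. Thus $\pi$ restricts to a surjective graph morphism $C_r(H)\to\Omega_r$, and since $\Gamma(G/H,\bar S)$ is locally finite, the number of ends of $G/H$ is the supremum over $r$ of the number of unbounded components of $\Omega_r$.

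Next I would set up the correspondence on components. Since $\pi$ is a graph morphism, each component $U$ of $C_r(H)$ has connected image, so $\pi(U)$ lies in a single component of $\Omega_r$; and if $U$ is deep then $\pi(U)$ escapes every ball $\ball{\bar e}{s}$ by the distance identity, so that component is unbounded. This defines a map $\Theta$ from deep components of $C_r(H)$ to unbounded components of $\Omega_r$. Surjectivity is the easy direction: given an unbounded component $V$ of $\Omega_r$, choose vertices of $V$ with $d_{G/H}(\bar e,\cdot)\to\infty$ and join them inside $V$; lifting these paths from a single basepoint of $\pi^{-1}(V)$ produces, via $C_r(H)=\pi^{-1}(\Omega_r)$, a connected set of vertices in $C_r(H)$ whose distances to $H$ tend to infinity, hence a single deep component mapping onto $V$ under $\Theta$.

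The main obstacle is injectivity, and this is exactly where finite generation of $H$ enters. Fix a finite generating set $T$ of $H$ and let $R=\max_{t\in T}\abs{t}_S$. The key fact is a coarse connectedness of fibers: any two vertices $g$ and $gh$ of a coset $gH$ can be joined by a path in $\Gamma(G,S)$ that runs through the coset along $T$--letters, each realized by an $S$--word of length at most $R$; every vertex of this path projects into $\ball{\bar g}{R}$, hence by the distance identity stays in $\set{x}{d_G(x,H)\ge d_G(g,H)-R}$. To prove $\Theta$ injective, suppose deep components $U_1,U_2$ both map to $V$. Choose $g_2\in U_2$ with $d_G(g_2,H)>r+R$, which is possible because $U_2$ is deep, and pick any $g_1\in U_1$. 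Joining $\bar g_1$ to $\bar g_2$ inside $V$ and lifting the path from $g_1$ yields a path in $C_r(H)$ from $g_1$ to a vertex $g_2h\in g_2H$, so $g_2h\in U_1$. Since $g_2h$ and $g_2$ lie in the same coset, $d_G(g_2h,H)=d_G(g_2,H)>r+R$, so the fiber path above joins them while remaining in $\set{x}{d_G(x,H)>r}=C_r(H)$. Hence $g_2h$ and $g_2$ lie in the same component, forcing $U_1=U_2$.

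With $\Theta$ a bijection for every $r$, the number of deep components of $C_r(H)$ equals the number of unbounded components of $\Omega_r$; taking the supremum over $r$ gives that $\tilde e(G,H)$ equals the number of ends of $G/H$. I expect the only delicate points to be the book-keeping between the combinatorial (vertex) and metric versions of $N_r(H)$ --- harmless, since the discrepancy is bounded by a constant independent of $r$ and does not affect the end count --- and the careful use of the constant $R$ in the injectivity step, which is the one place the hypothesis that $H$ is finitely generated is indispensable.
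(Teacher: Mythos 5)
Your proof is correct, but there is no in-paper argument to compare it with: the paper never proves this statement, it simply quotes Proposition~14.5.9 of \cite{MR2365352} and refers the reader to Chapter~14 of that book for all proofs concerning filtered ends. What you have written is therefore a self-contained substitute, carried out directly in the paper's own setting (deep components of $C_r(H)$ inside $\Gamma(G,S)$), rather than in Geoghegan's covering-space formulation. The three pillars of your argument all hold up. First, the distance identity $d_S(g,H)=d_{\bar S}(\bar e,\bar g)$ is exactly where normality enters (it is essentially Lemma~\ref{nmsg} of the paper). Second, path lifting through the quotient morphism gives both well-definedness and surjectivity of $\Theta$, and needs neither normality beyond the identity above nor finite generation. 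Third, injectivity rests on coarse connectedness of the fibers $gH$ inside $\bigset{x}{d_S(x,H)\geq d_S(g,H)-R}$, which is where finite generation of $H$ is used --- and it is genuinely indispensable, not an artifact of your method: for $H=\ker(F_2\to\mathbb{Z})$, a normal but infinitely generated subgroup, $C_1(H)$ already has infinitely many deep components (the vertices $a^kb$, $k\in\mathbb{Z}$, all lie in distinct deep components of the tree) while $G/H=\mathbb{Z}$ has two ends, so $\Theta$ is surjective but far from injective there. Finally, the combinatorial-versus-metric bookkeeping you flag really is harmless: the distance to $H$ restricted to a closed edge is the minimum of two affine functions, so $C_r(H)$ meets each closed edge in a subinterval containing every endpoint that lies in $C_r(H)$; consequently any component of $C_r(H)$ containing a vertex is a full subgraph, and vertex-free components are bounded, hence never deep. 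One cosmetic remark: in the surjectivity step, "a single deep component mapping onto $V$" should read "into $V$", which is all that $\Theta(U)=V$ requires.
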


\begin{defn}
Let $G$ be a group with a finite generating set $S$ and $H$ an infinite index subgroup of $G$. A geodesic ray $\gamma$ in the Cayley graph $\Gamma(G,S)$ is \emph{$H$--perpendicular} if the initial point $h$ of $\gamma$ lies in $H$ and $d_S(\gamma(r),H)=r$ for all positive $r$.
\end{defn}

The following lemma shows the existence of many $H$--perpendicular geodesic rays.

\begin{lem}
\label{pr}
Let $G$ be a group with a finite generating set $S$ and $H$ an infinite index subgroup of $G$. Then for each element $h$ in $H$, there is an $H$--perpendicular geodesic ray with the initial point $h$.
\end{lem}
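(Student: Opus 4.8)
The plan is to build the required ray as a limit of finite $H$--perpendicular geodesic segments whose lengths tend to infinity, using a König's lemma (Arzel\`a--Ascoli) argument in the locally finite Cayley graph $\Gamma(G,S)$. Since left multiplication by an element of $H$ is an isometry of $\Gamma(G,S)$ that carries $H$ onto itself, it suffices to produce, for the fixed $h\in H$, geodesic segments of arbitrarily large length starting at $h$ along which the distance to $H$ grows linearly; after that the extraction of the ray is uniform in $h$. The one genuinely nontrivial input I need first is that points of $G$ get arbitrarily far from $H$, i.e.\ $\sup_{g\in G} d_S(g,H)=\infty$. This is exactly where the hypothesis that $H$ has \emph{infinite} index is used: if instead every $g$ satisfied $d_S(g,H)\le R$, then writing $g=h'w$ with $h'\in H$ and $\abs{w}_S\le R$ shows that the coset $Hg=Hw$ ranges over the finite set of cosets represented by words of length at most $R$, forcing $[G:H]<\infty$.

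Granting this, the second step produces the finite perpendicular segments. For each $n$ I choose $g_n$ with $d_S(g_n,H)\ge n$, let $h_n\in H$ realize the (integer, hence attained) distance $m_n=d_S(g_n,H)$, and take a geodesic $\sigma_n$ from $h_n$ to $g_n$. The key observation is that such a nearest--point geodesic is automatically perpendicular: for $t\in[0,m_n]$ the inequality $d_S(\sigma_n(t),H)\le t$ is immediate since $\sigma_n(0)=h_n\in H$ and $d_S(\cdot,H)$ is $1$--Lipschitz, while $m_n=d_S(g_n,H)\le d_S(g_n,\sigma_n(t))+d_S(\sigma_n(t),H)=(m_n-t)+d_S(\sigma_n(t),H)$ gives the reverse bound, so $d_S(\sigma_n(t),H)=t$. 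Translating $\sigma_n$ by $h h_n^{-1}\in H$ then yields a geodesic $\gamma_n$ of length $m_n\ge n$ that starts at $h$ and still satisfies $d_S(\gamma_n(t),H)=t$ for all $t\in[0,m_n]$.

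Finally I would pass to the limit. Because $\Gamma(G,S)$ is locally finite there are only finitely many geodesic segments of each length issuing from $h$, so a standard diagonal (König's lemma) argument extracts a geodesic ray $\gamma$ from $h$ whose restriction to $[0,k]$ agrees with $\gamma_n|_{[0,k]}$ for infinitely many $n$; in particular $d_S(\gamma(k),H)=k$ at every integer $k$. To upgrade this to all real $t$, I use that $d_S(\gamma(0),H)=0$ together with $1$--Lipschitzness forces $d_S(\gamma(t),H)\le t$, while for any integer $k\ge t$ the bound $k=d_S(\gamma(k),H)\le d_S(\gamma(t),H)+(k-t)$ forces $d_S(\gamma(t),H)\ge t$; squeezing gives $d_S(\gamma(t),H)=t$, so $\gamma$ is $H$--perpendicular. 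The main obstacle is not any single estimate but ensuring that perpendicularity survives the limit: the compactness extraction only controls $\gamma$ vertex by vertex, and one must verify both that the limit path is genuinely a geodesic (each initial segment coincides with a geodesic $\gamma_n$) and that the equality $d_S(\gamma(t),H)=t$ persists off the integer points, which is precisely what the Lipschitz squeeze above handles.
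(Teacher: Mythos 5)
Your proof is correct and follows essentially the same route as the paper's: build nearest-point geodesic segments from $H$ to points $g_n$ far from $H$, translate them by elements of $H$ so they start at $h$, and extract a limiting ray via Arzel\`a--Ascoli (K\"onig's lemma). The only difference is that you spell out three steps the paper leaves implicit --- that infinite index forces $\sup_g d_S(g,H)=\infty$, that nearest-point geodesics are automatically perpendicular, and that perpendicularity survives the limit --- which is a useful elaboration but not a different argument.
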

\begin{proof}
For each positive integer $n$, there is a vertex $g_n$ in $C_n(H)$. Let $k_n$ be an element in $H$ and $\alpha_n$ a geodesic segment connecting $g_n$ and $k_n$ such that the length of $\alpha_n$ is equal to the distance between $g_n$ and $H$. We define $\gamma_n=(hk^{-1}_n)\alpha_n$, then $\gamma_n$ is a geodesic segment with the initial point $h$ and $d_S\bigl(\gamma_n(r),H\bigr)=r$ for all positive $r$ less than the length of $\gamma_n$. By the Arzela-Ascoli theorem, there is a geodesic ray $\gamma$ with the initial point $h$ such that $d_S\bigl(\gamma(r),H\bigr)=r$ for all positive $r$. \qedhere
\end{proof} 

\section{Distortion of subgroups}
\label{sd}
In this section, we will review the concept of distortion of a subgroup, which we call upper distortion. This concept of distortion will later help us compute relative divergence of a large class of pairs of groups. We also introduce the concept of lower distortion of a subgroup. This new concept is also a tool to compute relative divergence. We investigate some key properties of lower distortion and the relation between lower distortion and upper distortion. 

First of all, we will review the concept of upper distortion.
\begin{defn}
Let $G$ be a group with a finite generating set $S$ and $H$ a subgroup of $G$ with a finite generating set $T$. 
The \emph{ upper subgroup distortion} of $H$ in $G$ is the function is the function $Dist^H_G\!:(0,\infty)\to(0,\infty)$ defined as follows:
\[Dist^H_G(r)=\max \bigset{\abs{h}_T}{h\in H, \abs{h}_S\leq r}.\] 
\end{defn}

\begin{rem}
It is well-known that the concept of upper distortion does not depend on the choice of finite generating sets $S$ and $T$. More precisely, the functions $Dist^H_G$ are equivalent for all pairs of finite sets $(S,T)$ generating $(G,H)$ respectively.

The function $Dist^H_G$ is non-decreasing, and dominates a linear function.

A finitely generated subgroup $H$ of $G$ is \emph{undistorted} if $Dist^H_G$ is linear.
\end{rem}

We now introduce the concept of lower distortion.
\begin{defn}
Let $G$ be a group with a finite generating set $S$ and $H$ a subgroup of $G$ with a finite generating set $T$. The \emph{lower distortion} of $H$ in $G$ is the function $dist^H_G\!:(0,\infty)\to(0,\infty)$ defined as follows:
\[dist^H_G(r)=\min \bigset{\abs{h}_T}{h\in H, \abs{h}_S\geq r}.\] 
We use the convention that the minimum of the empty set is 0.
\end{defn}

\begin{rem}
Similar to the concept of upper distortion, the concept of lower distortion also does not depend on the choice of generating sets. When $H$ is an infinite subgroup, the function $dist^H_G$ is non-decreasing and dominates a linear function. 
\end{rem}

The following proposition shows a relation between upper distortion and lower distortion.
\begin{prop}
Let $G$ be a finitely generated group and $H$ a finitely generated subgroup of $G$. Then $dist^H_G \preceq Dist^H_G$.
\end{prop}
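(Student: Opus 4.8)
The plan is to establish a clean pointwise inequality and then convert it into the domination relation. Set $D := \max\set{\abs{t}_S}{t \in T}$, a finite positive constant recording how long the generators of $H$ are when written as words in $S$. I will show that
\[ dist^H_G(r) \leq Dist^H_G(r + D) \]
for every $r > 0$, with the convention that the minimum over an empty set is $0$ taking care of the degenerate case. Granting this, the remark that $Dist^H_G$ is non-decreasing gives $Dist^H_G(r + D) \leq Dist^H_G(2r)$ whenever $r > D$, so that $dist^H_G(r) \leq Dist^H_G(2r) + r$ for all $r > D$; this is precisely $dist^H_G \preceq Dist^H_G$ with constants $A = 2$, $B = 1$, $C = D$.

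The heart of the argument, and the step I expect to require the most care, is producing for each $r$ an element $h \in H$ whose $S$--length lies in the narrow window $[r, r + D)$. Fix $r > 0$ and suppose first that some $g \in H$ has $\abs{g}_S \geq r$ (this always holds when $H$ is infinite, since balls in $G$ are finite). Write $g$ as a word $t_1 t_2 \cdots t_m$ in the generators $T^{\pm 1}$ and consider the partial products $p_0 = e, p_1, \ldots, p_m = g$, all of which lie in $H$. Their $S$--lengths start at $\abs{p_0}_S = 0 < r$, end at $\abs{p_m}_S \geq r$, and change by at most $D$ at each step because $p_{k} = p_{k-1} t_k$. Hence there is a first index $k$ with $\abs{p_k}_S \geq r$, and for this index minimality forces $\abs{p_{k-1}}_S < r$, so $\abs{p_k}_S \leq \abs{p_{k-1}}_S + D < r + D$. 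Thus $h := p_k$ is the desired element of $H$ with $r \leq \abs{h}_S < r + D$.

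With $h$ in hand the inequality falls out immediately: since $h \in H$ and $\abs{h}_S \geq r$, the definition of $dist^H_G$ gives $dist^H_G(r) \leq \abs{h}_T$; and since $\abs{h}_S < r + D$, the element $h$ is one of those competing in the maximum defining $Dist^H_G(r + D)$, so $\abs{h}_T \leq Dist^H_G(r + D)$. Combining these yields the pointwise bound. Finally, if $H$ is finite then $dist^H_G(r) = 0$ for all large $r$ and is bounded for small $r$, while $Dist^H_G$ dominates a linear function, so the domination holds trivially in that case; this disposes of the remaining possibility and completes the proof.
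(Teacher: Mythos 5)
Your proof is correct and takes essentially the same approach as the paper's: both locate an element $h \in H$ whose $S$--length lies in a controlled window just above $r$ by a discrete intermediate-value argument along a word in $H$ (the paper walks along a geodesic of $\Gamma(H,T)$ viewed inside $\Gamma(G,S)$ after normalizing $T \subset S$, so each step changes $\abs{\cdot}_S$ by at most $1$), and then sandwiches $dist^H_G(r) \leq \abs{h}_T \leq Dist^H_G(2r)$. The only difference is bookkeeping: you avoid the normalization $T \subset S$ by carrying the constant $D = \max_{t \in T}\abs{t}_S$ explicitly and invoking monotonicity of $Dist^H_G$ at the end.
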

\begin{proof}
Let $S$ be a finite generating set of $G$ and we assume that $S$ contains the finite generating set $T$ of the subgroup $H$. Thus, we could consider $\Gamma(H,T)$ as a subgraph of $\Gamma(G,S)$. If $H$ is a finite subgroup then $dist^H_G$ is a bounded function and the proof follows easily. Thus, we assume $H$ is an infinite subgroup. 

For each $r>1$, we could chose an element $k$ in $H$ such that $\abs{k}_{S}\geq 2r$. We connect the identity element $e$ and $k$ by a geodesic $\alpha$ in $\Gamma(H,T)$. Thus, we can choose $h$ be an element in $\alpha$ such that 
$r\leq\abs{h}_{S}\leq 2r$. Since $h$ is also an element of $H$, then $dist^H_G(r) \leq \abs{h}_{T} \leq Dist^H_G(2r)$. Thus, $dist^H_G \preceq Dist^H_G$. \qedhere
\end{proof}

We now investigate some key properties of lower distortion:

\begin{thm}
Suppose that $G$, $H$, $K$ are all infinite finitely generated groups and $K\leq H \leq G$. Then:
\begin{enumerate}
\item $dist^K_H \circ dist^H_G \preceq dist^K_G$
\item $dist^K_H \preceq dist^K_G$
\item $dist^H_G \preceq dist^K_G$
\item If $\abs{G:H}< \infty$, then $dist^K_G \sim dist^K_H$
\item If $\abs{H:K}< \infty$, then $dist^K_G \sim dist^H_G$
\item If $H_1$ and $H_2$ are two commensurable finitely generated subgroups, then $dist^{H_1}_G \sim dist^{H_2}_G$
\end{enumerate}
\end{thm}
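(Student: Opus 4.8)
The plan is to fix, once and for all, finite generating sets $U \subseteq T \subseteq S$ of $K$, $H$, $G$ respectively (enlarging them so that each contains the previous), so that $\abs{k}_S \le \abs{k}_T \le \abs{k}_U$ for every $k \in K$ and $\abs{h}_S \le \abs{h}_T$ for every $h \in H$; by the remarks following the definitions the resulting functions do not depend on these choices. Two elementary inequalities drive everything: the minimizer defining $dist^H_G(r)$ has $T$-length at least its $S$-length, so $dist^H_G(r) \ge r$, and likewise $dist^K_H(r) \ge r$; moreover all three functions are non-decreasing and dominate a linear function.

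First I would prove (1) by chasing a single minimizer. Choose $k \in K$ with $\abs{k}_S \ge r$ realizing $dist^K_G(r) = \abs{k}_U$. Since $k \in H$ and $\abs{k}_S \ge r$, the element $k$ is feasible for $dist^H_G(r)$, so $\abs{k}_T \ge dist^H_G(r)$; since $k \in K$ and $\abs{k}_T \ge dist^H_G(r)$, it is feasible for $dist^K_H$ at the threshold $dist^H_G(r)$, so $\abs{k}_U \ge dist^K_H\bigl(dist^H_G(r)\bigr)$. This gives the pointwise bound $dist^K_H \circ dist^H_G(r) \le dist^K_G(r)$, which is (1). Statements (2) and (3) then drop out of (1): using $dist^H_G(r) \ge r$ and monotonicity, $dist^K_H(r) \le dist^K_H\bigl(dist^H_G(r)\bigr) \le dist^K_G(r)$; and using $dist^K_H(s) \ge s$ with $s = dist^H_G(r)$, $dist^H_G(r) \le dist^K_H\bigl(dist^H_G(r)\bigr) \le dist^K_G(r)$.

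For (4) the reverse domination $dist^K_G \preceq dist^K_H$ is all that is needed, since (2) supplies the other inequality. Because $H$ has finite index in $G$ it is undistorted, so $\abs{h}_T \le \lambda \abs{h}_S + \lambda$ for all $h \in H$. Given $r$, I would pick $k \in K$ with $\abs{k}_T \ge \lambda r + \lambda$ realizing $dist^K_H(\lambda r + \lambda)$; then $\lambda r + \lambda \le \abs{k}_T \le \lambda\abs{k}_S + \lambda$ forces $\abs{k}_S \ge r$, so $k$ is feasible for $dist^K_G(r)$ and $dist^K_G(r) \le \abs{k}_U = dist^K_H(\lambda r + \lambda) \le dist^K_H(2\lambda r)$. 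Note that here only the argument of $dist^K_H$ is rescaled, so no multiplicative constant appears and the equivalence follows cleanly.

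Statement (5) is where the real work lies; (3) already gives $dist^H_G \preceq dist^K_G$, so I must prove $dist^K_G \preceq dist^H_G$. Fixing right coset representatives $c_1, \dots, c_m$ for $K$ in $H$ (finite since $[H:K] < \infty$) and writing $D = \max_i \abs{c_i}_T$, I would take $h$ realizing $dist^H_G(r)$, write $h = k c_i$, and estimate $\abs{k}_S \ge r - D$ and $\abs{k}_T \le dist^H_G(r) + D$; since $K$ is undistorted in $H$ this yields $dist^K_G(r-D) \le \abs{k}_U \le \mu\, dist^H_G(r) + \mu(D+1)$. The crux, and the step I expect to be the main obstacle, is to absorb the multiplicative constant $\mu$, that is, to show $\mu\, dist^H_G \preceq dist^H_G$, which the paper's definition of domination does not grant for free (a multiplicative constant in front of $g$ is not automatically swallowed by the additive term $Bx$). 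My plan is to extract this from a growth regularity of lower distortion: if $h$ realizes $dist^H_G(R)$, then along a $T$-geodesic $e = h_0, \dots, h_N = h$ with $N = dist^H_G(R)$ the $S$-length changes by at most $\tau = \max_{t \in T} \abs{t}_S$ per step, which gives $dist^H_G(R) \ge dist^H_G(s) + (R-s)/\tau - 1$ and in particular forbids arbitrarily long plateaus; combined with the fact that $dist^H_G$ dominates a linear function, I expect this to yield $\mu\, dist^H_G \preceq dist^H_G$ and hence (5). Finally (6) reduces to (4) and (5): if $H_1$ and $H_2$ are commensurable then $K_0 = H_1 \cap H_2$ is an infinite finitely generated subgroup of finite index in each $H_i$, so (5) gives $dist^{K_0}_G \sim dist^{H_1}_G$ and $dist^{K_0}_G \sim dist^{H_2}_G$, and transitivity of $\sim$ finishes the proof. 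Everything except the absorption of $\mu$ is pointwise bookkeeping with the three word metrics.
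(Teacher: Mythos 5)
Your handling of (1)--(4) and (6) is correct and essentially identical to the paper's proof: the same minimizer-chasing gives (1), the same monotonicity-plus-linearity observations give (2) and (3), your proof of (4) is the paper's proof (including the observation that only the argument gets rescaled there), and (6) reduces to (5) exactly as in the paper. For (5) your skeleton also matches the paper's: both pass from an element realizing $dist^H_G$ to a nearby element of $K$ using $\abs{H:K}<\infty$, and both land on a bound of the form $dist^K_G \leq \mu\, dist^H_G(\cdot) + \text{const}$ with a genuine multiplicative constant $\mu$. You are right to flag that constant as the crux --- the definition of $\preceq$ does not absorb it, and (interestingly) the paper's own proof simply asserts ``In particular, $dist^K_G\preceq dist^H_G$'' from the bound $dist^K_G(n)\leq C\,dist^H_G(2n)+C^2+C$ without justifying this step. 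The gap in your proposal is that the mechanism you offer for absorbing $\mu$ fails. The two properties you invoke --- the increment bound $dist^H_G(R)\geq dist^H_G(s)+(R-s)/\tau-1$ (no long plateaus) and domination of a linear function --- do not imply $\mu\,dist^H_G\preceq dist^H_G$: they supply only a gain linear in the argument, which cannot dominate $(\mu-1)\,dist^H_G(2n)$ once $dist^H_G$ is superlinear (as it is for the paper's own Heisenberg pair, where $dist^H_G$ is quadratic). Indeed, a function can satisfy both properties and still fail $2g\preceq g$: with $n_{k+1}=n_k^2$, let $g(x)=n_{k+1}+(x-n_k)$ on $[n_k,n_{k+1})$; then for any fixed $A,B$ and large $k$ one has $g(An_k)+Bn_k=n_k^2+(A-1+B)n_k<2n_k^2=2g(n_k)$.

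The repair is a strengthening of your own geodesic-splitting trick. With $U\subseteq T\subseteq S$ as you arranged, let $h$ realize $dist^H_G(r+s)$ and split a $T$-geodesic from $e$ to $h$ at the first vertex $h_i$ with $\abs{h_i}_S\geq r$; instead of merely counting the remaining steps, note that $h_i^{-1}h\in H$ satisfies $\abs{h_i^{-1}h}_S\geq (r+s)-(r+1)=s-1$, so the tail has $T$-length at least $dist^H_G(s-1)$. This gives superadditivity, $dist^H_G(r+s)\geq dist^H_G(r)+dist^H_G(s-1)$, which iterates to $dist^H_G\bigl((\mu+1)r\bigr)\geq \mu\, dist^H_G(r)$ for $r\geq\mu$, hence $\mu\,dist^H_G(2n)\leq dist^H_G\bigl(2(\mu+1)n\bigr)$ for all large $n$. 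Feeding this into your bound $dist^K_G(r-D)\leq\mu\,dist^H_G(r)+\mu(D+1)$ yields $dist^K_G\preceq dist^H_G$ with no multiplicative constant, completing (5) and hence (6). So your proof is complete except at precisely the step you identified as the main obstacle; the flag was warranted (the paper glosses over it), but closing it needs superadditivity of lower distortion rather than the plateau bound, and this same lemma would also be the honest way to finish the paper's own argument.
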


\begin{proof}
We call $S_1$, $S_2$ and $S_3$ finite generating sets of $G$, $H$ and $K$ respectively. We can assume that $S_3 \subset S_2 \subset S_1$. We now prove that
\[dist^K_H \circ dist^H_G(n) \leq dist^K_G(n) \text{ for all $n$}.\]

For any positive number $n$, we choose $k_0 \in K$ such that $\abs{k_0}_{S_1}\geq n$ and $\abs{k_0}_{S_3}=dist^K_G(n)$. Since $k_0 \in H$ and $\abs{k_0}_{S_1}\geq n$, then $\abs{k_0}_{S_2}\geq dist^H_G(n)$. Therefore, $\abs{k_0}_{S_3}\geq dist^K_H \bigl (dist^H_G(n)\bigr)$. Thus, \[dist^K_H \circ dist^H_G(n) \leq dist^K_G(n) \text{ for all $n$}.\] 

Statements (2) and (3) are immediate results of (1) since the lower distortion functions $dist^H_G$ and $dist^K_H$ are non-decreasing and at least linear.

We now prove Statement (4). Since $dist^K_H \preceq dist^K_G$, then we only need to prove $dist^K_G\preceq dist^K_H$. Since $\abs{G:H}< \infty$, then there is a positive integer $C$ such that \[d_{S_2}(h_1,h_2)\leq C\,d_{S_1}(h_1,h_2)+C \text{ for all $h_1$ and $h_2$ in $H$}.\] We now prove that
\[dist^K_G(n) \leq dist^K_H(2Cn) \text{ for all $n$}.\] 

For any positive number $n>1$, we choose $k_0 \in K$ such that $\abs{k_0}_{S_2}\geq 2Cn$ and $\abs{k_0}_{S_3}=dist^K_H(2Cn)$. Thus, \[\abs{k_0}_{S_1}\geq \frac{\abs{k_0}_{S_2}-C}{C}\geq 2n-1\geq n.\]

Therefore, $dist^K_G(n) \leq dist^K_H(2Cn)$. In particular, $dist^K_G\preceq dist^K_H$.

We now prove Statement (5). Since $dist^H_G \preceq dist^K_G$, then we only need to prove $dist^K_G\preceq dist^H_G$. Since $\abs{H:K}< \infty$, then there is a positive integer $C$ such that \[d_{S_3}(k_1,k_2)\leq C\,d_{S_2}(k_1,k_2)+C \text{ for all $k_1$ and $k_2$ in $K$},\] and $H\subset N_C(K)$ with respect to metric $d_{S_2}$. We now show that 
\[dist^K_G(n) \leq Cdist^H_G(2n)+C^2+C \text{ for all $n\geq C$}.\]

For any positive number $n$ greater than $C$, we choose $h_0 \in H$ such that $\abs{h_0}_{S_1}\geq 2n$ and $\abs{h_0}_{S_2}=dist^H_G(2n)$. Since $H\subset N_C(K)$ with respect to metric $d_{S_2}$, then there is $k_0\in K$ such that $d_{S_2}(k_0,h_0)\leq C$. In particular, $d_{S_1}(k_0,h_0)\leq C$. Thus, \[\abs{k_0}_{S_1}\geq \abs{h_0}_{S_1}-C\geq 2n-C\geq n.\]

Thus, $\abs{k_0}_{S_3}\geq dist^K_G(n)$

Also \[\abs{k_0}_{S_3}\leq C\abs{k_0}_{S_2}+C\leq C(\abs{h_0}_{S_2}+C)+C\]

and \[\abs{h_0}_{S_2}=dist^H_G(2n)\]

Therefore, $dist^K_G(n) \leq Cdist^H_G(2n)+C^2+C$. In particular, $dist^K_G\preceq dist^H_G$. 

We easily obtain (6) from (5) by observing that $\abs{H_1:(H_1\cap H_2)}<\infty$ and $\abs{H_2:(H_1\cap H_2)}<\infty$. \qedhere
\end{proof}

We now explain the relationship between the lower distortion and the growth of a finitely generated group. We will see that the growth function will be an upper bound of the lower distortion. Before showing this fact, we need to review the concept of growth of groups.

\begin{defn}
Let $G$ be a group with a finite set of generators $S$. \emph{The growth} of $G$, denoted by \emph{$Growth_G$}, is a function $f\!:[0,\infty)\to [0,\infty)$ to itself defined by letting $f(r)$ to be the number of elements of $G$ that lie in the ball $B(e,r)$ for each $r\geq 0$.
\end{defn}

\begin{rem}
It is well-known that the growth of a finitely generated group does not depend on the choice of finite generating set (the proof is almost identical to the case of upper distortion). More precisely, the functions $Growth_G$ are equivalent for all finite sets $S$ of generators of $G$. Moreover, the function $Growth_G$ is dominated by the exponential function. 
\end{rem}

\begin{prop}
\label{ldidbgf}
Let $G$ be a finitely generated group and $H$ a finitely generated subgroup of $G$. Then the lower distortion $dist^H_G$ is dominated by the growth function $Growth_G$ of $G$. In particular, the lower distortion $dist^H_G$ is dominated by the exponential function.
\end{prop}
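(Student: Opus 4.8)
The plan is to establish the stronger \emph{pointwise} inequality $dist^H_G(r) \le Growth_G(r)$ for all $r$, which at once gives $dist^H_G \preceq Growth_G$ (with constants $A = 1$, $B = 0$), and then to combine this with transitivity of $\preceq$ and the fact recalled above that $Growth_G$ is dominated by the exponential function to obtain the final ``in particular'' assertion. If $H$ is finite, then $dist^H_G$ is eventually $0$, hence bounded, and a bounded function is dominated by every function in $\mathcal{M}$ (absorb the bound into the $Bx$ term); so I would dispose of this case first and assume $H$ infinite throughout the main argument.

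Fix finite generating sets $S$ of $G$ and $T$ of $H$, and fix $r > 0$; write $d = dist^H_G(r)$. Since $H$ is infinite it meets the complement of each $S$--ball, so the set defining the minimum is nonempty and $d \ge 1$. The central observation is that the defining minimum forces every $h \in H$ with $\abs{h}_T < d$ to satisfy $\abs{h}_S < r$: indeed $d$ is the least $T$--length of an element of $H$ whose $S$--length is at least $r$, so an element of smaller $T$--length cannot reach $S$--length $r$. Consequently all elements of $H$ lying in the $T$--ball of radius $d-1$ are contained in the $S$--ball $B(e,r)$ of $G$, and therefore their number is at most $Growth_G(r)$.

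It then remains only to bound that number from below. Here I would use that an infinite finitely generated group contains elements of arbitrarily large word length, so there is $h^\ast \in H$ with $\abs{h^\ast}_T \ge d-1$; truncating a $T$--geodesic word for $h^\ast$ produces prefixes $p_0 = e, p_1, \dots, p_{d-1}$ with $\abs{p_i}_T = i$, which are $d$ distinct elements of $H$ all lying in the $T$--ball of radius $d-1$. Combining the two counts gives $d \le Growth_G(r)$, that is, $dist^H_G(r) \le Growth_G(r)$, as desired.

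The argument is essentially a counting/pigeonhole estimate, and I do not anticipate any serious obstacle. The only point requiring a little care is the lower bound on the size of the $T$--ball, namely producing $d$ distinct short elements of $H$ via geodesic prefixes, together with the degenerate bookkeeping (the convention that the minimum of the empty set is $0$, and the observation that for infinite $H$ the set $\set{h \in H}{\abs{h}_S \ge r}$ is nonempty, so that $d \ge 1$ and the prefix count is meaningful).
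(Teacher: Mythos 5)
Your proof is correct, and it takes a genuinely cleaner route than the paper's, even though the counting core is the same. The paper's argument first assumes $T\subseteq S$, so that $\Gamma(H,T)$ can be viewed as a subgraph of $\Gamma(G,S)$; it then picks $h\in H$ with $\abs{h}_S\geq r$, runs a $T$--geodesic from $e$ toward $h$, truncates it at the first vertex $h'$ whose $S$--length reaches $r$, and notes that this initial segment stays in the $S$--ball of radius $2r$ (each edge is labeled by an element of $T\subseteq S$, so moves $S$--length by at most $1$); since a geodesic visits distinct vertices, its length --- hence $\abs{h'}_T$, hence $dist^H_G(r)$ --- is at most $Growth_G(2r)$. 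You instead bound the minimum $d=dist^H_G(r)$ directly by pigeonhole: minimality of $d$ forces every $h\in H$ with $\abs{h}_T\leq d-1$ to satisfy $\abs{h}_S<r$, so the whole $T$--ball of radius $d-1$ in $H$ sits inside the $S$--ball of radius $r$ in $G$, while geodesic prefixes of a long element of $H$ show that this $T$--ball contains at least $d$ distinct elements, giving $d\leq Growth_G(r)$. Your organization buys two things: the nested-generating-set hypothesis and the step-by-step tracking of $S$--length along the geodesic disappear, replaced by a one-line consequence of the definition of the minimum; and you obtain the sharper pointwise inequality $dist^H_G(r)\leq Growth_G(r)$ in place of $dist^H_G(r)\leq Growth_G(2r)$. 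Both proofs dispose of finite $H$ the same way and deduce the ``in particular'' clause by transitivity from the fact that $Growth_G$ is dominated by the exponential function.
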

\begin{proof}
Let $S$ be a finite generating set of $G$. We will assume that $S$ contains the finite generating set $T$ of the subgroup $H$. Thus, we could consider $\Gamma(H,T)$ as a subgraph of $\Gamma(G,S)$.
If $H$ is finite, then $dist^H_G$ is bounded and the proof follows easily. Thus, we assume $H$ is an infinite subgroup. 

For each $r>1$, we could chose an element $h$ in $H$ such that $\abs{h}_{S}\geq r$. We connect the identity element $e$ and $h$ by a geodesic $\alpha$ in $\Gamma(H,T)$. Let $h'$ be a vertex in $\alpha$ such that $\abs{h'}_{S}\geq r$ and the subpath $\alpha'$ of $\alpha$ connecting $e$ and $h'$ must lie in the closed ball with center $e$ and radius $2r$ of $\Gamma(G,S)$. Thus, the length of $\alpha'$ is bounded by the number of vertices in this ball. Therefore, $\abs{h'}_{T}$ is bounded by the number of vertices of the closed ball with center $e$ and radius $2r$ in $\Gamma(G,S)$. Thus, $dist^H_G(r)\leq Growth_G(2r)$. Therefore, $dist^H_G\preceq Growth_G$. \qedhere
\end{proof}

We now find some examples of finitely generated groups and its finitely generated subgroups to see their lower distortion. The following theorem can be deduced from the work of Milnor (see the proof of Lemma 4 in \cite {MR0232311}). We just use the new concept of lower distortion to interpret a part of Milnor's work. 

\begin{thm}
\label{rd}
Let $G=\langle a,b,c| bab^{-1}a^{-1}=c, ac=ca, bc=cb\rangle$ be the Heisenberg group and $H$ the cyclic group generated by $c$. Then $dist^H_G$ and $Dist^H_G$ are both quadratic.
\end{thm}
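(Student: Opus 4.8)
The plan is to reduce the whole statement to the single two-sided estimate that the word length of $c^n$ in $G$ grows like $\sqrt{\abs n}$; once this is in hand, both distortion functions follow directly from their definitions. Taking $T=\{c\}$ as generating set of $H=\gen{c}\cong\Z$ we have $\abs{c^n}_T=\abs n$, so
\[
Dist^H_G(r)=\max\bigset{\abs n}{\abs{c^n}_S\le r},\qquad
dist^H_G(r)=\min\bigset{\abs n}{\abs{c^n}_S\ge r}.
\]
If I can produce constants $C_1,C_2>0$ with $C_1\sqrt{\abs n}\le\abs{c^n}_S\le C_2\sqrt{\abs n}$ for all large $\abs n$, then the lower estimate $C_1\sqrt{\abs n}\le\abs{c^n}_S$ forces $\abs n\le (r/C_1)^2$ whenever $\abs{c^n}_S\le r$ and makes every $c^n$ with $\abs n\ge(r/C_1)^2$ satisfy $\abs{c^n}_S\ge r$, so it yields both $Dist^H_G(r)\preceq r^2$ and $dist^H_G(r)\preceq r^2$; symmetrically, the upper estimate $\abs{c^n}_S\le C_2\sqrt{\abs n}$ yields the two matching lower bounds $r^2\preceq Dist^H_G(r)$ and $r^2\preceq dist^H_G(r)$. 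Hence both functions are quadratic.

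For the upper bound on $\abs{c^n}_S$ I would use the defining relations directly. Since $c=bab^{-1}a^{-1}$ is central, moving a single $a$ past a single $b$ costs exactly one factor of $c$, and transporting $a^m$ past $b^m$ one letter at a time (there are $m^2$ such elementary swaps) gives the identity $b^m a^m b^{-m} a^{-m}=c^{m^2}$. Thus $\abs{c^{m^2}}_S\le 4m$, and for general $n\ge 1$ I set $m=\lfloor\sqrt n\rfloor$ and write $c^n=c^{m^2}c^{\,n-m^2}$ with $0\le n-m^2<2m+1$, so that $\abs{c^n}_S\le 4m+(n-m^2)\le 6\sqrt n$; the case $n<0$ is symmetric, giving $C_2=6$.

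The lower bound is the substantive point, and here I would pass to the integral Heisenberg matrix model. Realizing $a,b$ as the two elementary unitriangular $3\times 3$ integer matrices, the element $c=bab^{-1}a^{-1}$ becomes the central matrix with $1$'s on the diagonal, $-1$ in the $(1,3)$ entry and $0$ elsewhere, so $c^n$ has $(1,3)$ entry $-n$. Writing each generator as $I+N$ with $N$ strictly upper triangular and with entries in $\{-1,0,1\}$, the $(1,3)$ entry of a product $g_1\cdots g_\ell$ expands (all longer products of strictly upper triangular $3\times3$ matrices vanishing) as $\sum_i (N_i)_{13}+\sum_{i<j}(N_i)_{12}(N_j)_{23}$, whose absolute value is at most $\ell+\binom{\ell}{2}\le\ell^2$. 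Applied to a geodesic word for $c^n$, whose length is $\ell=\abs{c^n}_S$, this gives $\abs n\le\abs{c^n}_S^{\,2}$, i.e. $\abs{c^n}_S\ge\sqrt{\abs n}$, so $C_1=1$ works.

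Feeding these two estimates into the max/min formulas of the first paragraph shows that $Dist^H_G$ and $dist^H_G$ are each equivalent to $r\mapsto r^2$, which is the claim. The only genuinely delicate step is the lower bound on $\abs{c^n}_S$, and the matrix computation is precisely the quantitative input that controls it (it is the usual ``area versus perimeter'' phenomenon for the Heisenberg group, made rigorous through the $(1,3)$-entry); everything else is routine bookkeeping with the definitions of the two distortion functions.
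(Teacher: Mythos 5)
Your proof is correct, but it takes a genuinely different route from the paper, which in fact offers no self-contained argument for Theorem~\ref{rd} at all: the paper simply deduces the statement from the proof of Lemma~4 in Milnor's paper \cite{MR0232311}, remarking that the new notion of lower distortion is only being used to reinterpret Milnor's estimates. Your argument reduces both distortion functions to the single two-sided estimate $\sqrt{\abs{n}}\leq\abs{c^n}_S\leq 6\sqrt{\abs{n}}$; this reduction is sound, since with $T=\{c\}$ one has exactly $Dist^H_G(r)=\max\set{\abs{n}}{\abs{c^n}_S\leq r}$ and $dist^H_G(r)=\min\set{\abs{n}}{\abs{c^n}_S\geq r}$, and the four resulting comparisons with $r^2$ are all carried out correctly. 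The upper bound via the commutator identity $b^ma^mb^{-m}a^{-m}=c^{m^2}$ (valid because $c$ is central) and the lower bound via the $(1,3)$--entry expansion of $(I+N_1)\cdots(I+N_\ell)$, using that products of three or more strictly upper triangular $3\times 3$ matrices vanish, are both correct. One point worth making explicit in a final write-up: the matrix step requires only that sending $a$, $b$ to the two elementary unitriangular integer matrices extends to a homomorphism of $G$ (the relations are immediate to verify, and then $c$ maps to the central matrix with $(1,3)$--entry $-1$); faithfulness of this representation is never needed, so there is no hidden gap. What the paper's citation buys is brevity; what your argument buys is a complete, elementary proof with explicit constants ($C_1=1$, $C_2=6$). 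It also meshes with the paper's own later treatment of this group: the lower bound you prove is essentially fact~(4) of Lemma~\ref{hs} (word length at most $N$ forces the central exponent to be at most $N^2$ in absolute value), which the paper quotes from \cite{MR1086648} when computing the relative divergence of this same pair in Theorem~\ref{rdohs}.
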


\begin{rem}
In \cite{MR647496}, Tits investigates the growth of a finitely generated virtually nilpotent group. We can use a part of his work to find a pair $(G,H)$, where $G$ is a finitely generated nilpotent group and $H$ is a finitely generated subgroup, such that $dist^H_G$ and $Dist^H_G$ can be equivalent to the same polynomial with arbitrary degree.

In \cite{MR1872804}, Osin also gives a formula to compute upper distortion of arbitrary subgroups of nilpotent groups.
\end{rem}

Before studying more examples about lower distortion, we need to review the concept of length functions and a key theorem. 

\begin{defn}
Let $G$ be a group with a finite generating set $S$ and $H$ a subgroup of $G$. The \emph{length function} $\ell$ of $H$ inside $G$ is the function from the group $H$ to the set of natural numbers as follows: \[\ell(h)=\abs{h}_S \text{ for $h\in H$ }.\]
\end{defn}

 \begin{rem}
In some sense, the concept of length function can give us more information than the concepts of upper and lower distortion when we investigate an embedding of a subgroup.
\end{rem}

\begin{thm}(\cite{MR1714850})
\label{lengthfunction}
Let $\ell$ be the length function of group $H$ inside some finitely generated group $G$. Then the following conditions hold:
\begin{enumerate}
\item $\ell(h)=\ell(h^{-1})$ for every $h\in H$; $\ell(h)=0$ iff $h=e$.
\item $\ell(h_1h_2)\leq \ell(h_1)+\ell(h_2)$ for every $h_1, h_2 \in H$.
\item There is a positive integer $C$ such that the cardinality of the set $\bigset{h\in H}{\ell(h)\leq n}$ does not exceed $C^n$ for every natural number $n$
\end{enumerate}
Conversely for every group $H$ and every function $\ell$ from $H$ to the set of natural numbers satisfying (1)--(3), there exists an embedding of $H$ into a 2--generated group $G$ with a finite generating set $S=\{g_1,g_2\}$ such that the length function $\ell_1$ of $H$ inside $G$ is equivalent to $\ell$ (i.e. there exists a positive integer $B$ such that $({1}/{B})\ell(h)\leq \ell_1(h)\leq B\ell(h)$).

\end{thm}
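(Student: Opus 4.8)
The plan is to establish the two directions separately, treating the forward implication as routine and reserving the real work for the converse.

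\emph{Necessity of (1)--(3).} Suppose $H \leq G$ with $G$ finitely generated by a set $S$ with $\abs{S}=k$, and let $\ell(h)=\abs{h}_S$ for $h\in H$. Condition (1) is immediate: a generating set is symmetric, so $\abs{h}_S=\abs{h^{-1}}_S$, and $\abs{h}_S=0$ precisely when $h=e$. Condition (2) is the triangle inequality for the word metric, $\abs{h_1h_2}_S\leq\abs{h_1}_S+\abs{h_2}_S$. For (3), the ball of radius $n$ about $e$ in $\Gamma(G,S)$ contains at most $(2k+1)^n$ vertices, since there are at most $(2k)^i$ reduced words of length $i$ and $\sum_{i=0}^n(2k)^i\leq(2k+1)^n$; as $\set{h\in H}{\ell(h)\leq n}$ lies inside this ball, its cardinality is at most $C^n$ with $C=2k+1$.

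\emph{The converse: construction.} First note that (3) forces $H$ to be countable, being the union over $n$ of the finite sets $\set{h}{\ell(h)\leq n}$. I would realize $H$ inside a small-cancellation quotient of the free product $H*F$, where $F=\langle a,b\rangle$ is free of rank two, arranging that each nontrivial element of $H$ is glued to a word in $a,b$ of comparable length. Concretely, I would choose for every $h\in H\setminus\{e\}$ a \emph{code word} $u_h\in F$ with $(1/B_0)\ell(h)\leq\abs{u_h}_{\{a,b\}}\leq B_0\ell(h)$, selected so that the symmetrized family $\set{hu_h^{-1}}{h\neq e}$ satisfies a $C'(\lambda)$ small-cancellation condition over the free product $H*F$ (with $\lambda\leq 1/6$, say). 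The key point is that condition (3) is exactly what makes such a family available: there are at most $C^n$ elements of $\ell$-length at most $n$, whereas the number of reduced words in $F$ of length about $n$ grows like $3^n$, so once $B_0$ is large enough that $3^{B_0}>C$ there is an exponential surplus from which a small-cancellation family can be extracted greedily, level by level in $n$. Setting $G=(H*F)/N$, where $N$ is the normal closure of $\set{hu_h^{-1}}{h\neq e}$, every $h$ becomes equal to a word in $a,b$, so $G$ is generated by $S=\{a,b\}$.

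\emph{The two bounds.} The upper bound is then automatic from the defining relations: $\ell_1(h)=\abs{h}_S\leq\abs{u_h}_{\{a,b\}}\leq B_0\ell(h)$. The remaining assertions --- injectivity of the natural map $H\to G$ and the matching lower bound $\ell_1(h)\geq(1/B)\ell(h)$ --- are where the difficulty lies, and I expect this to be the main obstacle. Both should follow from small-cancellation theory over free products: the Lyndon--Schupp machinery guarantees that the factor $H$ embeds, and Greendlinger's lemma together with the linear isoperimetric inequality should show that a word in $a,b$ representing $h$ cannot be shortened below a fixed fraction of $\abs{u_h}_{\{a,b\}}$, since trivializing $wu_h^{-1}\in F$ in $G$ forces it to carry large pieces of the relators at a definite cost in length. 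The technical heart is verifying the small-cancellation hypothesis for the chosen code words --- controlling how they overlap one another and interact with the single $H$-syllable in each relator --- and then converting the resulting geodesic control into the multiplicative lower bound with a constant $B$ uniform over $H$; conditions (1) and (2) are what make the coding consistent with the group law, while condition (3) supplies the code words.
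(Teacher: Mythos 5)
The first thing to note is that the paper contains no proof of this statement: it is Ol'shanskii's theorem, quoted from \cite{MR1714850} and used as a black box, so your attempt can only be judged against the cited source, not against anything in the paper. Your forward direction is correct and standard. For the converse, your overall architecture --- present $G$ as a quotient of the free product of $H$ and a rank-two free group by relations $h=u_h$, with code words $u_h$ of length comparable to $\ell(h)$ chosen greedily, the greedy step being feasible because condition (3) caps the competitors at $C^n$ while the supply of words grows like $3^{B_0n}$ --- is the right kind of construction. But two of the steps you defer contain genuine gaps.

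First, the hypothesis you intend to verify, that $\{hu_h^{-1} : h\neq e\}$ satisfies $C'(\lambda)$ \emph{over the free product $H*F$}, is not merely unchecked but unsatisfiable. In small cancellation theory over free products (the Lyndon--Schupp machinery you invoke), length means \emph{syllable} length. With $F=\langle a,b\rangle$ a single free factor, each $u_h$ is one syllable, so every relator $hu_h^{-1}$ has length $2$; moreover, for any $b_0\in F$ distinct from the code words one has semi-reduced factorizations $u_{h_1}h_1^{-1}=b_0\bigl(b_0^{-1}u_{h_1}\bigr)h_1^{-1}$ and $u_{h_2}h_2^{-1}=b_0\bigl(b_0^{-1}u_{h_2}\bigr)h_2^{-1}$, so pieces of syllable length $1$ are unavoidable, while $C'(\lambda)$ demands pieces of length $<2\lambda\leq 1$. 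The condition fails for every choice of code words. The repair is to work over $H*\langle a\rangle*\langle b\rangle$ and take the $u_h$ alternating in $a^{\pm1},b^{\pm1}$, so that syllable length equals letter length; with that change your counting argument does produce the required family, and injectivity of $H\to G$ then does follow from the factor-embedding theorem, as you say.

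Second, and more seriously, the lower bound $\ell_1(h)\geq(1/B)\ell(h)$ does not follow from applying Greendlinger to $wu_h^{-1}$ ``at a definite cost in length.'' Greendlinger produces a long subword of \emph{some} relator $h'u_{h'}^{-1}$, where $h'$ need not be $h$ and $\abs{u_{h'}}$ bears no relation to $\ell(h)$; after the Dehn replacement the word acquires $H$--letters, so the iteration leaves the class of words you started with. Worse, any accounting based on the word $wu_h^{-1}$ is circular: the best one can extract is $\ell(h)\leq B_0K\bigl(\abs{w}+\abs{u_h}\bigr)$, and the $\abs{u_h}$ term swallows the desired estimate. One must instead run the argument on $h^{-1}w$, keeping $h$ as a free-product letter, and use two further ingredients: (i) the summed form of Greendlinger's lemma, $\sum_\Pi\abs{\partial\Pi}\leq K\abs{\partial D}$ over the relator cells of a reduced diagram (this needs $\lambda$ strictly below $1/6$, via iterated Dehn reduction); and (ii) bookkeeping of the $H$--edges: each relator cell carries exactly one $H$--edge, labelled some $h_i^{\pm1}$, these are joined to the single boundary $H$--edge through a planar forest of cells whose boundary labels are words in $H$ equal to $1$ in $H$, and it is precisely conditions (1) and (2) --- symmetry and subadditivity of $\ell$ --- that convert this forest into $\ell(h)\leq\sum_i\ell(h_i)\leq B_0\sum_i\abs{\partial\Pi_i}\leq B_0K\bigl(\abs{w}+1\bigr)$. (The tempting shortcut of retracting $H*\langle a\rangle*\langle b\rangle\to H$ fails: it expresses $h$ as a product of $H$--conjugates of the $h_i$, and conjugation can inflate $\ell$ arbitrarily.) You mention conditions (1)--(2) only in a vague closing clause, but they are the crux of this step; without (i) and (ii) the proof does not close.
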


\begin{defn}
A function $f\!:[0,\infty)\to [0,\infty)$ is subadditive if $f(i+j)\leq f(i)+f(j)$ for every positive numbers $i$ and $j$. 
\end{defn}

We now apply Theorem~ \ref{lengthfunction} to show that any finitely generated group $H$ can be a subgroup of a finitely generated group $G$ such that the lower distortion and the upper distortion of $H$ in $G$ can be both equivalent to any element of some large class of functions.

\begin{thm}
Let $f\!:[0,\infty)\to [0,\infty)$ be a strictly increasing function such that $f(0)=0$ and $f^{-1}$ is subadditive. Suppose that there is a positive integer $C$ such that $f(n)\leq C^n$ for every positive $n$. Let $H$ be a finitely generated group such that its growth is bounded by some polynomial function. Then there is a finitely generated group $G$ such that $dist^H_G\sim Dist^H_G\sim f$.
\end{thm}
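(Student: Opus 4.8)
The plan is to realize the desired group $G$ as the $2$--generated group produced by Theorem~\ref{lengthfunction}, applied to a carefully chosen length function on $H$. Let $T$ be a finite generating set of $H$ and write $|h|_T$ for the word length. The key idea is to define the candidate length function by
\[\ell(h) = \big\lceil f^{-1}(|h|_T)\big\rceil,\]
so that an element which is ``long'' in $H$ becomes proportionally ``short'' in the ambient group, with the compression rate governed precisely by $f^{-1}$. I would then verify that $\ell$ satisfies conditions (1)--(3) of Theorem~\ref{lengthfunction}.

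Conditions (1) and (2) are where the hypotheses on $f$ enter directly. Symmetry and $\ell(h)=0$ iff $h=e$ follow from $|h|_T = |h^{-1}|_T$ together with $f(0)=0$ and strict monotonicity of $f$. For subadditivity I would combine $|h_1h_2|_T \le |h_1|_T + |h_2|_T$, the monotonicity of $f^{-1}$, the assumed subadditivity of $f^{-1}$, and the elementary inequality $\lceil a+b\rceil \le \lceil a\rceil + \lceil b\rceil$. Condition (3) is the place where the growth hypothesis on $H$ and the bound $f(n)\le C^n$ are used: the set $\{h : \ell(h)\le n\}$ is essentially $\{h : |h|_T \le f(n)\}$, whose cardinality is $Growth_H(f(n))$; since $Growth_H$ is dominated by a polynomial $P$ and $f(n)\le C^n$, this is at most $P(C^n)$, which is bounded by $(C')^n$ for a suitable $C'$.

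Theorem~\ref{lengthfunction} then yields a $2$--generated group $G$ with finite generating set $S$, an embedding $H\hookrightarrow G$, and a constant $B$ with $(1/B)\ell(h)\le |h|_S \le B\,\ell(h)$ for all $h\in H$; equivalently, $|h|_S$ is comparable to $f^{-1}(|h|_T)$. From here the two distortion computations are parallel bookkeeping against the definition of $\preceq$. For the upper distortion, the inequality $|h|_S \ge (1/B)f^{-1}(|h|_T)$ gives $Dist^H_G(r)\le f(Br)$, while choosing, for each $r$, an element with $|h|_T \approx f(r/B)$ (available since $H$ is infinite, so every word length is attained) gives the matching lower bound; hence $Dist^H_G \sim f$. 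For the lower distortion the roles of the two inequalities are swapped: $|h|_S \le B f^{-1}(|h|_T)$ forces $dist^H_G(r)\ge f(r/B)$, and an element with $|h|_T \approx f(Br)$ realizes $dist^H_G(r)\le f(Br)+O(1)$, so $dist^H_G \sim f$ as well.

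I expect the only real obstacle to be the analytic care surrounding the generalized inverse $f^{-1}$ and the rounding in the definition of $\ell$: since $f$ need not be continuous or surjective, I must fix a convention such as $f^{-1}(y)=\sup\{x : f(x)\le y\}$ and check that the identity $f^{-1}(f(x))=x$ and the equivalence ``$f^{-1}(m)\le n \iff m\le f(n)$'' hold well enough that the ceiling function does not corrupt the equivalence classes. Everything else is a routine application of the definitions together with Theorem~\ref{lengthfunction}. One should also note that $H$ is implicitly assumed infinite, since for finite $H$ both distortion functions are bounded and cannot be equivalent to the unbounded function $f$.
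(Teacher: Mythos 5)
Your proposal is correct and follows essentially the same route as the paper: the identical length function $\ell(h)=\lceil f^{-1}(|h|_T)\rceil$, the same verification of conditions (1)--(3) using subadditivity of $f^{-1}$, polynomial growth of $H$, and $f(n)\leq C^n$, and the same application of Theorem~\ref{lengthfunction} followed by distortion bookkeeping. The only difference is that you prove four inequalities (matching upper and lower bounds for each of $Dist^H_G$ and $dist^H_G$), whereas the paper proves only $Dist^H_G\preceq f$ and $f\preceq dist^H_G$ and then concludes via the previously established relation $dist^H_G\preceq Dist^H_G$; your extra bounds are valid but redundant.
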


\begin{proof}
We fix some finite generating set $T$ for $H$. Let $A$ and $m$ be a positive integers such that the number of group elements in a ball with radius $n$ is bounded by $An^m$ for every positive integer $n$. For each nonnegative number $x$, we define $\lceil x\rceil$ to be the smallest integer that is greater than or equal to $x$. We now define the length function $\ell\!:H\to \NN$ as follows: \[\ell(h)=\bigl\lceil f^{-1}\bigl(\abs{h}_{T}\bigr)\bigr\rceil \text{ for every $h\in H$ }.\]

We will check $\ell$ satisfies Conditions (1)--(3) in Theorem~ \ref{lengthfunction}. Obviously, $\ell(h)=\ell(h^{-1})$ for every $h\in H$ and $\ell(h)=0$ iff $h=e$. We now check $\ell$ satisfies Condition (2). Indeed, for every $h_1, h_2\in H$ 
\begin{align*}
\ell(h_1h_2)&=\bigl\lceil f^{-1}\bigl(\abs{h_1h_2}_{T}\bigr)\bigr\rceil \\&\leq \bigl\lceil f^{-1}\bigl(\abs{h_1}_{T}+\abs{h_2}_{T}\bigr)\bigr\rceil \\&\leq \bigl\lceil f^{-1}\bigl(\abs{h_1}_{T}\bigr)+f^{-1}\bigl(\abs{h_2}_{T}\bigr)\bigr\rceil\\&\leq \bigl\lceil f^{-1}\bigl(\abs{h_1}_{T}\bigr)\bigr\rceil+\bigl\lceil f^{-1}\bigl(\abs{h_2}_{T}\bigr)\bigr\rceil\\&\leq \ell(h_1)+\ell(h_2).
\end{align*}

Finally, we check $\ell$ satisfies Condition (3). Since for each nonnegative integer $n$ \begin{align*} \bigset{h\in H}{\ell(h)\leq n}&=\bigset{h\in H}{\lceil f^{-1}(\abs{h}_{T})\rceil\leq n}\\&=\bigset{h\in H}{f^{-1}(\abs{h}_{T})\leq n}\\&=\bigset{h\in H}{\abs{h}_{T}\leq f(n)}\\&\subset \bigset{h\in H}{\abs{h}_{T}\leq C^n} \end{align*}
and the cardinality of the set $\bigset{h\in H}{\abs{h}_{T}\leq C^n}$ is bounded by $A {(C^m)}^n$, then the cardinality of the set $\bigset{h\in H}{\ell(h)\leq n}$ is bounded by $A {(C^m)}^n$.

By Theorem~ \ref{lengthfunction}, the group $H$ is a subgroup of some finitely generated group $G$ with a finite generating set $S$ such that the function $\ell$ is equivalent to $\ell_1$, where $\ell_1(h)= \abs{h}_{S}$ for every $h\in H$. Therefore, there is a positive integer $B$ such that $({1}/{B})\ell(h)\leq \ell_1(h)\leq B\ell(h)$ for every $h\in H$.

We now show that the upper distortion $Dist^H_G$ is dominated by $f$. For each positive number $n$ and any $h\in H$ such that $\abs{h}_S\leq n$, we see that \[f^{-1}(\abs{h}_{T})\leq \ell(h)\leq B \ell_1(h)\leq Bn.\]
Thus, $\abs{h}_{T}\leq f(Bn)$. Therefore, $Dist^H_G(n)\leq f(Bn)$. In particular, the upper distortion $Dist^H_G$ is dominated by $f$.

We finish the proof of the theorem by showing that the lower distortion $dist^H_G$ dominates $f$. For each positive number $n$ and any $h\in H$ such that $\abs{h}_S\geq Bn+B$, we see that \[f^{-1}(\abs{h}_{T})\geq \ell(h)-1\geq \frac{1}{B} \ell_1(h)-1\geq n.\]
Thus, $\abs{h}_{T}\geq f(n)$. Therefore, $dist^H_G(Bn+B)\geq f(n)$. In particular, the lower distortion $dist^H_G$ dominates $f$. \qedhere
\end{proof}

We know show one pair of groups $(G,H)$ such that $dist^H_G$ and $Dist^H_G$ are not equivalent. The following example is defined by Gromov \cite {MR1253544} 

\begin{exmp}
Let $G=\langle a,b,c| bab^{-1}=a^2, cbc^{-1}=b^2\rangle$ and let $H$ the cyclic subgroup generated $a$. Observe that
\[a^{2^{2^n}}=b^{2^n}ab^{-2^n}=c^nbc^{-n}ac^nb^{-1}c^{-n}\]
Thus, $Dist^H_G(4n+2)\geq 2^{2^n}$ for each positive number $n$. Therefore, the upper distortion $Dist^H_G$ is super-exponential. However, the lower distortion $dist^H_G$ is at most exponential by Proposition~ \ref{ldidbgf}. Therefore, two functions $dist^H_G$ and $Dist^H_G$ are not equivalent.
\end{exmp}

\section{Relative divergence of geodesic spaces and finitely generated groups}
\label{Concepts}
\subsection{Relative upper divergence}
\label{Rud}

In this section, we introduce the concept of relative upper divergence of geodesic spaces as well as finitely generated groups. We also prove that upper relative divergence is a quasi-isometry invariant.

\begin{defn}
Let $X$ be a geodesic space and $A$ a subspace of $X$. Let $r$ be any positive number.
\begin{enumerate}
\item $N_r(A)=\bigset{x \in X}{d_X(x, A)<r}$
\item $\partial N_r(A)=\bigset{x \in X}{d_X(x, A)=r}$ 
\item $C_r(A)=X-N_r(A)$.
\item Let $d_{r,A}$ be the induced length metric on the complement of the $r$--neighborhood of $A$ in $X$. If the subspace $A$ is clear from context, we can use the notation $d_r$ instead of using $d_{r,A}$. 
\end{enumerate}
\end{defn}

\begin{defn}
Let $(X,A)$ be a pair of metric spaces. For each $\rho \in (0,1]$ and positive integer $n\geq 2$, we define a function $\delta^n_{\rho}\!:[0, \infty)\to [0, \infty]$ as follows: 

For each $r$, let $\delta^n_{\rho}(r)=\sup d_{\rho r}(x_1,x_2)$ where the supremum is taken over all $x_1, x_2 \in \partial N_r(A)$ such that $d_r(x_1, x_2)<\infty$ and $d(x_1,x_2)\leq nr$. 


The family of functions $\{\delta^n_{\rho}\}$ is \emph{the relative upper divergence} of $X$ with respect $A$, denoted $Div(X,A)$.
\end{defn}
Before defining the upper relative divergence of a finitely generated group with respect to a subgroup, we need the following proposition.

\begin{prop}
\label{qiv}
If two pairs of spaces $(X,A)$ and $(Y,B)$ are quasi-isometric, then $Div(X,A)\sim Div(Y,B)$.
\end{prop}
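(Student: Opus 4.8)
The plan is to show that a quasi-isometry of pairs $\Phi\colon X\to Y$ (with quasi-inverse $\Psi$) induces the domination $Div(X,A)\preceq Div(Y,B)$ in both directions, which by symmetry of the quasi-isometry relation yields the equivalence. Recall that by hypothesis there is a constant $C\geq 1$ as in Lemma~\ref{qip}, and the Hausdorff distance between $\Phi(A)$ and $B$ is bounded by some constant, which we may absorb into $C$. The core geometric fact I would establish first is a \emph{neighborhood comparison}: there are constants so that $\Phi$ maps $C_r(A)$ into a set close to $C_{r'}(B)$ for a suitably rescaled $r'$, and conversely points near $\partial N_r(A)$ are carried close to $\partial N_{r'}(B)$. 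Concretely, if $d_X(x,A)=r$ then the distance $d_Y(\Phi(x),B)$ lies between $(1/C)r-C'$ and $Cr+C'$ for a constant $C'$ depending on $C$ and the Hausdorff constant; this is a direct consequence of part~(1) of Lemma~\ref{qip} applied to $x$ and a nearest point of $A$, combined with the comparison of $\Phi(A)$ and $B$.

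The key estimate I need to transport is the comparison of the \emph{induced length metrics} $d_{\rho r,A}$ and $d_{\rho r,B}$. Here I would use parts~(3) and (4) of Lemma~\ref{qip}: a path $\alpha$ in $C_{\rho r}(A)$ connecting $x_1,x_2$ maps under $\Phi$ to a path $\beta$ whose length is at most $C\ell(\alpha)+C$ and whose Hausdorff distance from $\Phi(\alpha)$ is at most $C$. The delicate point is that $\beta$ must stay in the complement of a neighborhood of $B$ so that it is admissible for computing $d_{\rho' r, B}$; using the neighborhood comparison above, a path avoiding $N_{\rho r}(A)$ is carried to a path avoiding $N_{\rho'' r}(B)$ for a smaller constant $\rho''$ (the constriction of $\rho$ is exactly why relative divergence is defined as a \emph{family} indexed over $\rho$, and why domination of families allows rescaling $\rho\mapsto L\rho$). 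Thus I obtain $d_{\rho'' r, B}\bigl(\Phi(x_1),\Phi(x_2)\bigr)\leq C\,d_{\rho r,A}(x_1,x_2)+C$, and symmetrically in the other direction via $\Psi$.

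With these two comparisons in hand, the argument is bookkeeping at the level of the definitions of $\delta^n_\rho$. Given $x_1,x_2\in\partial N_r(A)$ with $d_r(x_1,x_2)<\infty$ and $d(x_1,x_2)\leq nr$, I would push them forward to points $y_1,y_2$ near $\partial N_{r'}(B)$ (first moving them exactly onto $\partial N_{r'}(B)$ by a bounded correction, using geodesics whose controlled length is absorbed into the $Bx$ term of the domination relation), check that $d(y_1,y_2)\leq (Mn)r'$ for a suitable integer multiplier $M$ coming from the quasi-isometry constant, and that finiteness of $d_r$ is preserved so the supremum is taken over a comparable set. This shows $\delta^{n}_{L\rho,(X,A)}\preceq \delta^{Mn}_{\rho,(Y,B)}$, which is precisely the domination of families $\{\delta^n_\rho\}_{(X,A)}\preceq\{\delta^n_\rho\}_{(Y,B)}$. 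The main obstacle I anticipate is the bookkeeping needed to move endpoints from $\partial N_r(A)$ exactly onto $\partial N_{r'}(B)$ while keeping both the ambient distance bound $d(x_1,x_2)\leq nr$ and the finiteness/deepness of the induced-metric component under control; handling this cleanly requires tracking how the additive constant $C'$ from the neighborhood comparison interacts with the multiplicative rescaling of $r$, and this is where I would spend the most care.
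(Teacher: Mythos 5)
Your proposal is correct and takes essentially the same route as the paper's proof: both rest on Lemma~\ref{qip}, adjust endpoints onto the boundary of a rescaled neighborhood of $B$ via geodesics, transport paths in both directions to compare the induced length metrics, and rescale the parameters $\rho\mapsto L\rho$, $n\mapsto Mn$ before invoking symmetry of quasi-isometry of pairs. The only (organizational) difference is that the paper factors the argument into Lemma~\ref{qiv1} (comparing $(X,A)$ with $(Y,\Phi(A))$) and Lemma~\ref{qiv2} (replacing $\Phi(A)$ by the Hausdorff-close subspace $B$ inside $Y$), whereas you absorb the Hausdorff correction directly into the quasi-isometry constants in a single pass.
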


Before proving the above proposition, we need the following lemmas.
\begin{lem}
\label{qiv1}
Let $X$, $Y$ be geodesic spaces and $A$ a subspace of $X$. Let $\Phi$ be a quasi-isometry from $X$ to $Y$. Then $Div(X,A) \preceq Div\bigl(Y,\Phi(A)\bigr)$.
\end{lem}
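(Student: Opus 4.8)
The plan is to show $Div(X,A) \preceq Div\bigl(Y,\Phi(A)\bigr)$ by establishing the required family-domination relation directly from the definition: find constants $L \in (0,1]$ and an integer $M$ so that for each $\rho$ and $n$, the function $\delta^n_{L\rho}$ (computed in $(X,A)$) is dominated by $\delta'^{Mn}_{\rho}$ (computed in $(Y,\Phi(A))$). Concretely, I would fix the quasi-isometry constant $C \geq 1$ furnished by Lemma~\ref{qip} and then, given a pair $x_1, x_2 \in \partial N_r(A)$ witnessing a large value of $\delta^n_{L\rho}(r)$, transport it across $\Phi$ to a pair of points in $Y$ near $\Phi(x_1), \Phi(x_2)$ that lie on $\partial N_{r'}(\Phi(A))$ for an appropriate $r' \approx r$, verify they satisfy the constraints defining $\delta'^{Mn}_{\rho}(r')$, and then pull an efficient avoidant path in $Y$ back to $X$ using part (4) of Lemma~\ref{qip}. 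Since $d_{\rho r, A}(x_1,x_2)$ is realized (up to the length metric) by paths in $X$, and these paths map forward to paths in $Y$ avoiding $N_{\rho' r}(\Phi(A))$, the core of the argument is a careful bookkeeping of how the neighborhood radii and the avoidance radii scale under $\Phi$.

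First I would record the elementary comparison between neighborhoods: because $\Phi$ is a quasi-isometry with constant $C$ and the Hausdorff distance between $\Phi(A)$ and its image is controlled, there are linear relations between $d_X(x,A)$ and $d_Y(\Phi(x), \Phi(A))$, so that $\Phi$ roughly sends $\partial N_r(A)$ into a bounded annulus around $\partial N_{r/C}(\Phi(A))$ and, conversely, avoidance of $N_{\rho r}(A)$ in $X$ corresponds to avoidance of $N_{L\rho r}(\Phi(A))$ in $Y$ for a suitable $L$. I would choose $L$ to absorb the multiplicative constant $C$ (and additive slack) coming from these neighborhood comparisons, and choose $M$ to absorb the factor by which the coarse distance $d(x_1,x_2) \leq nr$ can grow under $\Phi$, so that the image points satisfy the $\leq Mn r'$ diameter constraint required on the $Y$-side. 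The slight mismatch of points (the image $\Phi(x_i)$ need not lie exactly on $\partial N_{r'}(\Phi(A))$) is handled by sliding each point a bounded distance along a geodesic toward or away from $\Phi(A)$ to land it on the correct sphere, which only perturbs all relevant distances additively.

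The main obstacle I anticipate is the subtle interaction between the \emph{induced length metric} $d_{r,A}$ and the quasi-isometry: $d_{\rho r}$ is defined via path lengths inside the complement $C_{\rho r}(A)$, not via the ambient metric, so a forward or backward image of a path must be shown to stay in the appropriate complement region while its length is controlled linearly by part (3) or (4) of Lemma~\ref{qip}. The delicate point is that a path realizing the length metric $d_{L\rho r, A}(x_1,x_2)$ in $X$ must map to a path in $Y$ that genuinely avoids $N_{\rho r'}(\Phi(A))$, and conversely that an optimal avoidant path in $Y$ pulls back to one avoiding $N_{L\rho r}(A)$; getting the avoidance radii to line up (so that $L\rho$ on one side corresponds to $\rho$ on the other after accounting for the $C$-distortion of distances to $A$) is exactly where the constant $L$ is forced, and care is needed because a path that avoids a neighborhood may fail to do so after applying $\Phi$ near the boundary. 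Once these path-transport lemmas are set up with explicit (but routine) linear estimates, the domination $\delta^n_{L\rho} \preceq \delta'^{Mn}_{\rho}$ follows, giving the family-level inequality $Div(X,A) \preceq Div\bigl(Y,\Phi(A)\bigr)$.
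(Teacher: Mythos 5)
Your proposal is correct and follows essentially the same route as the paper's proof: fix the quasi-isometry constant $K$ from Lemma~\ref{qip}, reduce to showing $\delta^n_{L\rho}\preceq \delta'^{Mn}_{\rho}$ for explicit $L$ and $M$, map a witnessing pair $x_1,x_2$ forward, slide the images along geodesics onto a sphere $\partial N_{r'}\bigl(\Phi(A)\bigr)$ (the paper takes $r'=r/2K$), verify the three defining constraints there (using the forward image of the connecting path in $C_r(A)$ for the finiteness condition), and pull an efficient avoidant path back to $X$ with part (4) of Lemma~\ref{qip}. One caveat of wording only: the slide of $\Phi(x_i)$ onto the correct sphere is not a uniformly bounded distance but can be of order $Kr+K$, since the image of $\partial N_r(A)$ only lies in an annulus of linear width around $\Phi(A)$; this is harmless because the relation $\preceq$ absorbs additive linear terms, exactly as in the paper's final estimate $\delta^n_{L\rho}(r)\leq K\delta'^{Mn}_{\rho}\bigl(r/2K\bigr)+(2K^2+1)r$.
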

\begin{proof}
Let $B=\Phi(A)$. Let $Div(X,A) = \{\delta^n_{\rho}\}$ and $Div(Y,B)= \{\delta'^n_{\rho}\}$. Let $K$ be the number provided by Lemma~ \ref{qip}. Let $L={1}/{8K^2}$ and $M=\big[2K(2K+1)+1\big]+1$. We will prove that $\delta^n_{L\rho} \preceq \delta'^{Mn}_{\rho}$. More precisely, we define $r_0=3K(1+K)+{8K^2}/{\rho}$ and we are going to show \[\delta^n_{L\rho}(r)\leq K\delta'^{Mn}_{\rho}\Big(\frac{r}{2K}\Big)+(2K^2+1)r.\]

Indeed, let $x_1$ and $x_2$ be arbitrary points in $\partial N_r(A)$ such that $d_X(x_1,x_2)\leq nr$ and $d_{r,A}(x_1, x_2)< \infty$. Thus, there is a path $\alpha$ in $C_r(A)$ connecting $x_1$ and $x_2$. By Lemma~ \ref{qip}, there is a path $\beta$ connecting $\Phi(x_1)$, $\Phi(x_2)$ such that the Hausdorff distance between $\Phi(\alpha)$ and $\beta$ is at most $K$. Thus, \begin{align*} d_Y\bigl(\beta, B\bigr)&\geq d_Y\bigl(\Phi(\alpha), B\bigr)-K\\&\geq \frac{1}{K}\,d_X\bigl(\alpha, A\bigr)-1-K\\&\geq \frac{r}{K}-1-K\geq\frac{r}{2K} \end{align*} 

Thus, we could choose $y_1$ in $\partial N_{{r}/{2K}}(B)$ and a geodesic $\beta_1$ in $C_{{r}/{2K}}(B)$ connecting $\Phi(x_1)$ and $y_1$ such that the length of $\beta_1$ is bounded above by the distance between $\Phi(x_1)$ and $B$. Also, $d_Y\bigl(\Phi(x_1), B\bigr)\leq K\,d_X(x_1,A)+K\leq Kr+K$. Therefore, the length of $\beta_1$ is at most $Kr+K$. Similarly, we could choose $y_2$ in $\partial N_{{r}/{2K}}(B)$ and a geodesic $\beta_2$ in $C_{{r}/{2K}}(B)$ connecting $\Phi(x_2)$ and $y_2$ such that the length of $\beta_2$ is bounded above by $Kr+K$.

We define $\beta_3=\beta_1\cup \beta\cup\beta_2$, then $\beta_3$ is a path in $C_{{r}/{2K}}(B)$ connecting $y_1$ and $y_2$. Thus, $d_{{r}/{2K},B}(y_1,y_2) < \infty$

Also \begin{align*} d_Y(y_1,y_2)&\leq d_Y\bigl(y_1, \Phi(x_1)\bigr)+d_Y\bigl(\Phi(x_1),\Phi(x_2)\bigr)+d_Y\bigl(\Phi(x_2),y_2\bigr)\\&\leq (Kr+K)+ \bigl(K\,d_X(x_1,x_2)+K\bigr)+(Kr+K)\\&\leq 2Kr+3K+Knr\leq (2K+1)nr\leq Mn(\frac{r}{2K})\end{align*}

We are now going to show that \[d_{L\rho r,A}(x_1,x_2)\leq Kd_{\rho ({r}/{2K}),B}(y_1,y_2)+(2K^2+1)r.\]

Indeed, let $\beta'$ be an arbitrary path in $C_{\rho ({r}/{2K})}(B)$ connecting $y_1$ and $y_2$. We define $\gamma=\beta_1\cup \beta'\cup\beta_2$, then $\gamma$ is a path in $C_{\rho ({r}/{2K})}(B)$ connecting $\Phi(x_1)$, $\Phi(x_2)$ and the length of $\gamma$ is bounded above by $2Kr+2K+\abs{\beta'}$. 

By Lemma~ \ref{qip}, there is a path $\alpha'$ connecting $x_1$ and $x_2$ in $X$ such that the Hausdorff distance between $\Phi(\alpha')$ and $\gamma$ is at most $K$. Moreover, $\abs{\alpha'}\leq K\abs{\gamma}+K$. Since \begin{align*} d_Y\bigl(\Phi(\alpha'), B\bigr)&\geq d_Y(\gamma, B)-K\\&\geq \frac{\rho r}{2K}-K\geq\frac{\rho r}{4K} \end{align*} 

then \begin{align*} d_X(\alpha',A)&\geq \frac{1}{K}\,d_Y\bigl(\Phi(\alpha'), B\bigr)-1\\&\geq \frac{\rho r}{4K^2}-1 \geq\frac{\rho r}{8K^2}\geq L\rho r \end{align*} 
Thus, $\alpha'$ is a path in $C_{L\rho r}(A)$ connecting $x_1$ and $x_2$. Therefore, the distance in $C_{L\rho r}(A)$ between $x_1$ and $x_2$ is bounded above by the length of $\alpha'$. 

Also \begin{align*} \abs{\alpha'}&\leq K\abs{\gamma}+K\\&\leq K\Big(2Kr+2K+\abs{\beta'}\Big)+K\\&\leq K\abs{\beta'}+(2K^2+1)r, \end{align*} and $\beta'$ is an arbitrary path in $C_{\rho ({r}/{2K})}(B)$ connecting $y_1$ and $y_2$.

Thus, \[d_{L\rho r,A}(x_1,x_2)\leq Kd_{\rho ({r}/{2K}),B}(y_1,y_2)+(2K^2+1)r.\]

Therefore, \[\delta^n_{L\rho}(r)\leq K\delta'^{Mn}_{\rho}\Big(\frac{r}{2K}\Big)+(2K^2+1)r.\] 

Thus, $\delta^n_{L\rho} \preceq \delta'^{Mn}_{\rho}$. \qedhere
\end{proof}

\begin{lem} 
\label{qiv2}
Let $X$ be a geodesic space. Let $A$ and $B$ be two subspaces such that the Hausdorff distance between them is finite. Then $Div(X,A)\sim Div(X,B)$.
\end{lem}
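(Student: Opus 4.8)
The plan is to prove the two dominations $Div(X,A)\preceq Div(X,B)$ and $Div(X,B)\preceq Div(X,A)$ separately; since the hypothesis is symmetric in $A$ and $B$, it suffices to establish the first, and the second follows by interchanging the roles of $A$ and $B$. Write $Div(X,A)=\{\delta^n_{\rho}\}$ and $Div(X,B)=\{\delta'^n_{\rho}\}$, and let $\tau$ be the (finite) Hausdorff distance between $A$ and $B$. The single elementary fact driving the argument is that $d(\cdot,B)$ is $1$--Lipschitz and $\lvert d(z,A)-d(z,B)\rvert\le\tau$ for every $z\in X$; consequently $C_u(A)\subseteq C_{u-\tau}(B)$ and $\{\,z:d(z,B)\ge u\,\}\subseteq\{\,z:d(z,A)\ge u-\tau\,\}$ for all $u$. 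I will produce $L\in(0,1]$ and a positive integer $M$ (one convenient choice is $L=\tfrac12$ and $M=2$) together with explicit constants for which $\delta^n_{L\rho}\preceq\delta'^{Mn}_{\rho}$.

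Fix $\rho$ and $n$, and let $x_1,x_2\in\partial N_r(A)$ be admissible for $\delta^n_{\rho}(r)$, so that $d_{r,A}(x_1,x_2)<\infty$ and $d(x_1,x_2)\le nr$. The crucial construction is to transport $x_1,x_2$ onto a sphere around $B$ whose radius is a fixed \emph{proportion} of $r$ rather than $r-\tau$; I will use $s=\tfrac{3}{4}r$. Since $d(x_i,A)=r$ forces $d(x_i,B)\ge r-\tau\ge s$ (for $r\ge 4\tau$), I travel from $x_i$ along a geodesic to a point of $B$ nearly realizing $d(x_i,B)$ until the first point $y_i$ at which $d(y_i,B)=s$; then $y_i\in\partial N_s(B)$, the subsegment $[x_i,y_i]$ stays in $C_s(B)$, and its length is at most $d(x_i,B)-s\le \tfrac{1}{4}r+\tau$. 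Splicing these two short segments onto a path realizing $d_{r,A}(x_1,x_2)$ (which lies in $C_r(A)\subseteq C_s(B)$) shows $d_{s,B}(y_1,y_2)<\infty$, while the triangle inequality gives $d(y_1,y_2)\le d(x_1,x_2)+2(\tfrac{1}{4}r+\tau)\le (Mn)s$ for $M=2$ and $r$ large. Thus the pair $(y_1,y_2)$ is admissible for $\delta'^{Mn}_{\rho}(s)$.

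The core estimate compares the two induced length metrics. Given an arbitrary path $\beta$ in $C_{\rho s}(B)$ from $y_1$ to $y_2$, the concatenation $\gamma=[x_1,y_1]\cup\beta\cup[y_2,x_2]$ joins $x_1$ to $x_2$. On the two segments $d(\cdot,A)\ge s-\tau$, and on $\beta$ one has $d(\cdot,A)\ge \rho s-\tau$; since $s=\tfrac{3}{4}r$ and $\rho\le 1$, both quantities are at least $L\rho r=\tfrac{1}{2}\rho r$ once $r\ge 4\tau/\rho$. Hence $\gamma\subseteq C_{L\rho r}(A)$, so $d_{L\rho r,A}(x_1,x_2)\le \ell(\gamma)\le \ell(\beta)+(\tfrac{1}{2}r+2\tau)$. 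Taking the infimum over $\beta$ and then the supremum over admissible $(x_1,x_2)$ yields $\delta^n_{L\rho}(r)\le \delta'^{Mn}_{\rho}(\tfrac{3}{4}r)+(\tfrac{1}{2}+2\tau)r$ for $r>4\tau/\rho$, which is precisely the domination $\delta^n_{L\rho}\preceq\delta'^{Mn}_{\rho}$ with scaling constant $\tfrac34$, additive constant $\tfrac12+2\tau$, and ($\rho$--dependent, hence admissible) threshold $4\tau/\rho$. The symmetric argument gives the reverse domination, and therefore $Div(X,A)\sim Div(X,B)$.

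The one genuinely delicate point is the mismatch between the spheres $\partial N_r(A)$ and $\partial N_{\bullet}(B)$. The naive choice $s=r-\tau$ would only produce a bound of the form $\delta^n_{L\rho}(r)\le\delta'^{Mn}_{\rho}(r-\tau)+\text{const}$, and converting the additive shift $r\mapsto r-\tau$ in the argument into the multiplicative form $r\mapsto Ar$ demanded by $\preceq$ would require an (unavailable) monotonicity assumption on the divergence functions. Choosing $s$ to be a fixed fraction of $r$ dissolves this difficulty: the shift becomes the harmless scaling $r\mapsto\tfrac{3}{4}r$, at the cost of connecting segments of length $O(r)$, which are absorbed into the linear additive term $Br$ permitted in the definition of domination. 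The remaining work—verifying that each region containment holds for all $r$ beyond the stated threshold—is routine bookkeeping with the inequality $\lvert d(\cdot,A)-d(\cdot,B)\rvert\le\tau$.
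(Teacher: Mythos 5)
Your proof is correct and takes essentially the same approach as the paper's: both arguments transport the endpoints from $\partial N_r(A)$ to a sphere about $B$ whose radius is a fixed \emph{fraction} of $r$ (the paper uses $r/2$ with $L=1/4$, $M=6$; you use $3r/4$ with $L=1/2$, $M=2$), splice short connecting segments onto paths in the complements, and compare the induced length metrics, absorbing the $O(r)$ cost of the connectors into the additive term allowed by $\preceq$. The only differences are the choice of constants and your more explicit first-hitting-point construction of the connecting segments, which handles the fact that the distance to $B$ need not be realized.
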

\begin{proof}
We only need to prove $Div(X,A) \preceq Div(X,B)$ since the argument for the other direction is almost identical. There is a positive number $r_0$ such that $A$ lies in the $r_0$--neighborhood of $B$ and $B$ also lies in the $r_0$--neighborhood of $A$. Thus, $\bar{N_r(A)}\subset N_{r+r_0}(B)$ and $\bar{N_r(B)}\subset N_{r+r_0}(A)$ for each positive $r$. Let $Div(X,A) = \{\delta^n_{\rho}\}$ and $Div(X,B)= \{\delta'^n_{\rho}\}$. We will to show $\delta^n_{\rho/4} \preceq \delta'^{6n}_{\rho}$. More precisely, we are going to prove that for each $r>{4r_0}/{\rho}$ \[\delta^n_{\rho/4}(r)\leq \delta'^{6n}_{\rho}\Big(\frac{r}{2}\Big)+ 4r.\]

Let $x_1$, $x_2$ be arbitrary points in $\partial N_r(A)$ such that $d_X(x_1,x_2)\leq nr$ and $d_{r,A}(x_1, x_2)< \infty$. Thus, there is a path $\alpha$ in $C_r(A)$ connecting $x_1$ and $x_2$. Therefore, $\alpha$ lies in $C_{r-r_0}(B)$. Thus, $\alpha$ also lies in $C_{r/2}(B)$ because $r/2>r_0$. Moreover, $x_1$ and $x_2$ lies in $N_{r+r_0}(B)$. Therefore, we could choose $y_1$, $y_2$ in $\partial N_{r/2}(B)$ and two geodesics $\beta_1$, $\beta_2$ in $C_{r/2}(B)$ connecting $x_1$, $y_1$ and $x_2$, $y_2$ respectively such that the length of $\beta_1$ and $\beta_2$ are at most $r+r_0$. Since the distance between $x_1$ and $x_2$ is bounded above by $nr$, then the distance between $y_1$ and $y_2$ is at most $nr+2r+2r_0$. Thus, $d_X(y_1,y_2)\leq (n+4)r\leq 3nr\leq 6n(r/2)$. We define $\alpha'=\beta_1\cup \alpha\cup\beta_2$, then $\alpha'$ is a path in $C_{r/2}(B)$ connecting $y_1$ and $y_2$. Thus, $d_{r/2,B}(y_1,y_2) < \infty$.

We are now going to show that \[d_{\rho r/4,A}(x_1,x_2)\leq d_{\rho ({r}/{2}),B}(y_1,y_2)+4r.\]
 
Indeed, let $\gamma$ be an arbitrary path in $C_{\rho ({r}/{2})}(B)$ connecting $y_1$ and $y_2$. Then $\gamma$ also lies in $C_{\rho ({r}/{2})-r_0}(A)$. Therefore, $\gamma$ lies in $C_{{\rho r}/{4}}(A)$. Since $\beta_1$ and $\beta_2$ lies in $C_{r/2}(B)$, then they also lies in $C_{r/2-r_0}(A)$. Thus, $\beta_1$ and $\beta_2$ lies in $C_{{\rho r}/{4}}(A)$. We define $\gamma'=\beta_1\cup \gamma\cup\beta_2$, then $\gamma'$ is a path in $C_{\rho r/4}(A)$ connecting $x_1$ and $x_2$. Thus, $d_{\rho r/4,A}(x_1,x_2)\leq \abs{\gamma'}$

Also \begin{align*} \abs{\gamma'}&\leq \abs{\beta_1}+\abs{\gamma}+\abs{\beta_2}\\&\leq (r+r_0)+\abs{\gamma}+(r+r_0)\\&\leq \abs{\gamma}+4r \end{align*} and $\gamma$ is an arbitrary path in $C_{\rho ({r}/{2})}(B)$ connecting $y_1$, $y_2$. 

Thus, \[d_{\rho r/4,A}(x_1,x_2)\leq d_{\rho ({r}/{2}),B}(y_1,y_2)+4r.\]
Therefore, \[\delta^n_{\rho/4}(r)\leq \delta'^{6n}_{\rho}\Big(\frac{r}{2}\Big)+4r.\]
Thus, $\delta^n_{\rho/4} \preceq \delta'^{6n}_{\rho}$. \qedhere
\end{proof}

We now finish the proof of Proposition~ \ref{qiv}.

\begin{proof}
Let $\Phi$ be a map from $X$ to $Y$ such that the Hausdorff distance between $\Phi(A)$ and $B$ is finite. Then $Div(X,A) \preceq Div\bigl(Y,\Phi(A)\bigr)$ by Lemma~ \ref{qiv1} and $Div\bigl(Y,\Phi(A)\bigr) \sim Div(Y,B)$ by Lemma~ \ref{qiv2}. Thus, $Div(X,A)\preceq Div(Y,B)$.
Similarly, $Div(Y,B)\preceq Div(X,A)$.
Therefore, $Div(X,A)\sim Div(Y,B)$. \qedhere
\end{proof} 

We are now ready to define the concept of relative upper divergence of of a finitely generated group with respect to a subgroup.
\begin{defn} 
Let $G$ be a finitely generated group and $H$ its subgroup. We define \emph{the relative upper divergence} of $G$ with respect to $H$, denoted \emph{$Div(G,H)$} to be the relative upper divergence of the Cayley graph $\Gamma(G,S)$ with respect to $H$ for some finite generating set $S$.
\end{defn}

\begin{rem}
\label{sudofgg}
If $H$ is the trivial subgroup, then $\delta^n_\rho=\delta^2_\rho$ for all $n\geq 2$. Thus, we can ignore the parameter $n$ in the family $\{\delta^n_{\rho}\}$ and consider that $Div(G,e)$ is characterized by the one-parametrized family of functions $\{\delta_{\rho}\}$. By this way, the upper relative divergence $Div(G,e)$ is the same as the upper divergence $Div(G)$ of the group $G$ in terms of Gersten \cite{MR1254309} 
\end{rem}
\subsection{Relative lower divergence}
\label{rld}

In this section, we introduce the concept of relative lower divergence of geodesic spaces as well as finitely generated groups. Similar to upper divergence, this concept is also a quasi-isometry invariant.
\begin{defn}
Let $(X,A)$ be a pair of spaces. For each $\rho \in (0,1]$ and positive integer $n\geq 2$, we define a function $\sigma^n_{\rho}\!:[0, \infty) \to [0, \infty]$ as follows: 

For each positive $r$, if there is no pair of $x_1, x_2 \in \partial N_r(A)$ such that $d_X(x_1,x_2)\geq nr$ and $d_r(x_1, x_2)< \infty$, we define $\sigma^n_{\rho}(r)=\infty$.

Otherwise, we define $\sigma^n_{\rho}(r)=\inf d_{\rho r}(x_1,x_2)$ where the infimum is taken over all $x_1, x_2 \in \partial N_r(A)$ such that $d_r(x_1, x_2)<\infty$ and $d(x_1,x_2)\geq nr$.


The family of functions $\{\sigma^n_{\rho}\}$ is \emph{the relative lower divergence} of $X$ with respect $A$, denoted $div(X,A)$.
\end{defn}

By using the same argument from the previous section, we have the following proposition.

\begin{prop}
\label{qiv'}
If two pairs of spaces $(X,A)$ and $(Y,B)$ are quasi-isometric, then $div(X,A)\sim div(Y,B)$.
\end{prop}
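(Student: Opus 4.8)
The plan is to follow the two-step structure used for Proposition~\ref{qiv}, replacing suprema by infima and the admissibility constraint $d(x_1,x_2)\leq nr$ by $d(x_1,x_2)\geq nr$ throughout. First I would establish the analogue of Lemma~\ref{qiv1}: if $\Phi\colon X\to Y$ is a quasi-isometry and $B=\Phi(A)$, then $div(X,A)\preceq div(Y,B)$. Second I would establish the analogue of Lemma~\ref{qiv2}: if $A$ and $B$ are subspaces of a single geodesic space $X$ with finite Hausdorff distance, then $div(X,A)\sim div(X,B)$. Granting both, the proposition follows exactly as in the proof of Proposition~\ref{qiv}: writing $div(X,A)=\{\sigma^n_\rho\}$ and so on, I get $div(X,A)\preceq div(Y,\Phi(A))\sim div(Y,B)$, and applying the first lemma to the quasi-inverse $\Psi$ (a quasi-isometry $Y\to X$ with $\Psi(B)$ at finite Hausdorff distance from $A$) together with the second lemma gives $div(Y,B)\preceq div(X,\Psi(B))\sim div(X,A)$, the reverse domination.

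The only genuine change is the direction in which pairs are transported in the first lemma. For the supremum it sufficed to push an arbitrary admissible pair $x_1,x_2\in\partial N_r(A)$ forward and bound its complement distance above by that of the image pair, which is at most the supremum $\delta'^{Mn}_\rho$. For the infimum I must instead exhibit one cheap admissible pair in $X$, so I would take a near-optimal far-apart pair $y_1,y_2\in\partial N_s(B)$ (realizing the infimum $\sigma'^{Mn}_\rho(s)$ up to an additive constant, with $s$ a fixed multiple of $r$), pull it back by $\Psi$, and slide the two images along geodesics onto $\partial N_r(A)$, playing exactly the role $\beta_1,\beta_2$ played in Lemma~\ref{qiv1}. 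The same path-pasting argument then produces a path in $C_{L\rho r}(A)$ joining the resulting $x_1,x_2$, yielding $d_{L\rho r,A}(x_1,x_2)\leq K\,d_{\rho s,B}(y_1,y_2)+Cr$; since $x_1,x_2$ is itself admissible for $\sigma^n_{L\rho}(r)$, the left-hand side dominates $\sigma^n_{L\rho}(r)$, and I conclude $\sigma^n_{L\rho}(r)\leq K\sigma'^{Mn}_\rho(s)+Cr$, i.e.\ $\sigma^n_{L\rho}\preceq \sigma'^{Mn}_\rho$. The second lemma is even closer to its upper-divergence counterpart: the inclusions $\bar{N_r(A)}\subset N_{r+r_0}(B)$ and $\bar{N_r(B)}\subset N_{r+r_0}(A)$ and the identical prepended geodesics carry a far-apart admissible pair on one sphere to one on the other and bound the complement distances in the same way.

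The main obstacle will be bookkeeping the two admissibility constraints that the supremum argument did not have to control for the infimum. First, I must guarantee that the transported pair is genuinely far apart: from $d_Y(y_1,y_2)\geq Mn\,s$ the quasi-inverse gives $d_X(\Psi(y_1),\Psi(y_2))\geq (1/K)Mn\,s-K$, and the two geodesic slides change this by at most a bounded multiple of $r$, so I must fix $M$ large enough (depending only on $K$) that the final pair still satisfies $d_X(x_1,x_2)\geq nr$; this is precisely the role the integer $M$ plays in the definition of domination of families. Second, I must handle the convention $\sigma^n_\rho(r)=\infty$: if the relevant $Y$-infimum is $\infty$ the asserted bound holds vacuously, while otherwise the explicitly constructed path shows the pulled-back pair has finite complement distance, so the $X$-infimum is taken over a nonempty set and the estimate is meaningful. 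Once these two points are checked, everything else is the constant-chasing already carried out in Lemmas~\ref{qiv1} and~\ref{qiv2}.
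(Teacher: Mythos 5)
Your proposal is correct and coincides with the paper's own treatment: the paper proves Proposition~\ref{qiv'} with the single remark that it follows ``by using the same argument from the previous section,'' i.e., by adapting Lemmas~\ref{qiv1} and~\ref{qiv2} and composing them through the quasi-inverse exactly as you describe. The points you flag as needing care --- pulling back a near-optimal admissible pair through $\Psi$ at a scale $s$ that is a sufficiently large ($K$-dependent) multiple of $r$ so that the transported pair lands on $\partial N_r(A)$ with a connecting path still outside $N_r(A)$, enlarging $M$ to preserve the separation $d(x_1,x_2)\geq nr$, and the vacuous case where the $Y$-infimum is $\infty$ --- are precisely the details the paper leaves implicit, and your handling of them is sound.
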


We now define the concept of relative lower divergence of a finitely generated group with respect to a subgroup.
\begin{defn} 
Let $G$ be a finitely generated group and $H$ its subgroup. We define \emph{the relative lower divergence} of $G$ with respect to $H$, denoted \emph{$div(G,H)$}, to be the relative lower divergence of the Cayley graph $\Gamma(G,S)$ with respect to $H$ for some finite generating set $S$. 
\end{defn}

\subsection{Some Properties of Relative Divergence of finitely generated groups}
In this section, we examine some key properties of relative divergence and we compare upper and lower relative divergence.

\begin{thm}
Let $G$ be a finitely generated group and $H$ a subgroup of $G$. Suppose that $Div(G,H)=\{\delta^n_{\rho}\}$ and $div(G,H)=\{\sigma^n_{\rho}\}$. 
\begin{enumerate}
\item If $H$ is an infinite index subgroup of $G$, then $\delta^n_{\rho}(r)<\infty$ for every $r>0$.
\item If $H$ is infinite and $0<\tilde{e}(G,H)<\infty$, then $\sigma^n_{\rho}(r)<\infty$ for every $r>0$.
\end{enumerate}
\end{thm}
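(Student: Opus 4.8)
The common engine for both parts is the left--multiplication action of $H$ on the Cayley graph $\Gamma=\Gamma(G,S)$. Each $h\in H$ acts as an isometry of $\Gamma$ sending $H$ to $hH=H$, so it preserves $N_s(H)$, $\partial N_r(H)$ and $C_s(H)$ for every $s$; consequently the induced length metric $d_{\rho r}$ is $H$--invariant, $d_{\rho r}(hx_1,hx_2)=d_{\rho r}(x_1,x_2)$. Two further facts about $\Gamma$ will be used throughout. First, $\Gamma$ is proper and the $G$--action is proper, so every infinite orbit of a point is unbounded. Second, $\partial N_r(H)$ is a closed, discrete, $H$--invariant set: along each edge $[p,q]$ of $\Gamma$ one has $d(z,H)=\min\{s+d(p,H),(1-s)+d(q,H)\}$, where $s$ is the arclength from $p$, a concave tent function with slopes $\pm 1$, so its $r$--level set meets each edge in at most two points and each level vertex is isolated; hence $\partial N_r(H)\cap\cball{e}{R}$ is \emph{finite} for every $R$. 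Finally, $\rho r\le r$ gives $C_r(H)\subseteq C_{\rho r}(H)$, so $d_{\rho r}\le d_r$; in particular $d_r(x_1,x_2)<\infty$ implies $d_{\rho r}(x_1,x_2)<\infty$.

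For part~(1) the plan is to use $H$--invariance to push every admissible pair into a fixed compact region and then count. Given $x_1,x_2\in\partial N_r(H)$ with $d(x_1,x_2)\le nr$ and $d_r(x_1,x_2)<\infty$, choose $h\in H$ realizing $d(x_1,H)=r$; replacing $(x_1,x_2)$ by $(h^{-1}x_1,h^{-1}x_2)$ leaves $d_{\rho r}$, $d$, and the finiteness of $d_r$ unchanged, while placing $h^{-1}x_1$ in $\partial N_r(H)\cap\cball{e}{r}$ and hence $h^{-1}x_2$ in $\partial N_r(H)\cap\cball{e}{(n+1)r}$. By discreteness both of these sets are finite, so there are only finitely many translated admissible pairs, each with $d_{\rho r}\le d_r<\infty$. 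Since the collection of admissible pairs and the function $d_{\rho r}$ are both $H$--invariant, the supremum defining $\delta^n_\rho(r)$ is unchanged by this translation and so equals a maximum over finitely many finite numbers; thus $\delta^n_\rho(r)<\infty$.

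For part~(2) it suffices to exhibit a \emph{single} admissible pair for the infimum, i.e. points $x_1,x_2\in\partial N_r(H)$ lying in one connected component of $C_r(H)$ with $d(x_1,x_2)\ge nr$; any such pair has $d_{\rho r}\le d_r<\infty$, forcing $\sigma^n_\rho(r)<\infty$. First I would show $C_r(H)$ has a deep component for \emph{every} $r>0$: since $\tilde{e}(G,H)\ge 1$, for $R>\max(r_0,r)$ the set $C_R(H)$ has a deep component, and as $C_R(H)\subseteq C_r(H)$ this deep component sits inside a component of $C_r(H)$ of unbounded depth, which is therefore deep. Because $\tilde{e}_r(G,H)\le\tilde{e}(G,H)<\infty$, the group $H$ permutes a finite nonempty set of deep components, so some deep component $U$ has $\Stab_H(U)$ of finite index in $H$, hence infinite. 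A deep component meets $\partial N_r(H)$ (following a path from a far point of $U$ back to $H$, it crosses level $r$ at a point still joined to $U$ within $C_r(H)$; alternatively one may invoke Lemma~\ref{pr}), so fix $x_1\in\partial N_r(H)\cap U$. Every $h\in\Stab_H(U)$ carries $x_1$ to another point of $\partial N_r(H)\cap U$, and since the $G$--action is proper this infinite orbit is unbounded; hence two of its points lie at distance $\ge nr$, giving the desired admissible pair inside the single component $U$.

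The main obstacle is part~(2): the delicate point is arranging the two far--apart boundary points in the \emph{same} component of $C_r(H)$ for all $r$, not merely for $r>r_0$. This is exactly where both hypotheses enter: $\tilde{e}(G,H)<\infty$ makes the set of deep components finite and hence produces a finite-index (infinite) stabilizer, while $H$ infinite, together with properness of the action, makes the stabilizer's orbit unbounded; dropping either hypothesis allows $\sigma^n_\rho$ to be identically $\infty$. Part~(1), by contrast, is essentially a compactness/local-finiteness count once $H$--invariance and the discreteness of $\partial N_r(H)$ are established.
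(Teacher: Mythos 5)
Your proposal is correct. Part (1) is essentially the paper's own argument: translate an admissible pair by an element $h\in H$ realizing $d_S(x_1,H)=r$, use $H$--invariance of $d_{\rho r}$ and of the admissibility conditions, and reduce the supremum to a maximum over a finite set of pairs, each finite because $d_{\rho r}\le d_r<\infty$; you merely add the edge-by-edge discreteness argument that the paper treats as obvious. Part (2), however, takes a genuinely different route. The paper proves it via Lemma~\ref{pr}: since $H$ is infinite one picks $m+1$ points $h_0,\dots,h_m$ of $H$ pairwise at distance at least $(n+2)r$, bases an $H$--perpendicular geodesic ray at each, notes that the tails of these rays beyond radius $r$ lie in deep components of $C_r(H)$, and since there are at most $m=\tilde e(G,H)$ such components, pigeonhole puts two tails in the same component, so the points $\gamma_i(r),\gamma_j(r)$ form the required admissible pair. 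You instead let $H$ act on the nonempty finite set of deep components of $C_r(H)$, extract a deep component $U$ whose stabilizer $\Stab_H(U)$ has finite index in $H$ (hence is infinite), and use properness to conclude that the $\Stab_H(U)$--orbit of a single point $x_1\in\partial N_r(H)\cap U$ is unbounded, producing two orbit points at distance at least $nr$ inside the single component $U$. Both arguments invoke the two hypotheses ($H$ infinite, $0<\tilde e(G,H)<\infty$) at the same essential junctures, and both implicitly use that components of $C_r(H)$ are path-connected. The paper's pigeonhole is shorter given that Lemma~\ref{pr} is already established, and the same device recurs later (Theorem~\ref{udld}, Proposition~\ref{sche}), so it fits the paper's toolkit; your version avoids the Arzela--Ascoli construction of perpendicular rays altogether and yields the slightly stronger structural conclusion that the admissible pair may be taken within one $H$--orbit. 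One small simplification available to you: unboundedness of the orbit needs no appeal to properness of the action, since $d_S(hx_1,x_1)\ge \abs{h}_S-2\,d_S(x_1,e)$ and an infinite subgroup of a finitely generated group is unbounded in the word metric.
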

\begin{proof}
Fix a finite set $S$ of generators of $G$. 

First, we will prove that $\delta^n_{\rho}(r)<\infty$ for every $r>0$. 
 We define 
\[A=S(e,r)\cap \partial N_r(H).\] Obviously, $A$ is a non-empty finite set. We define 
\[B=\bigset{(x,y)}{\text{$x\in A$, $y\in \partial N_r(H)$, $d_r(x,y)<\infty$ and $d_S(x,y)\leq nr$}}.\]
Therefore, $B$ is also a non-empty finite set. Define $M= \bigset{d_{\rho r}(x,y)}{(x,y)\in B}$ and we will show $\delta^n_{\rho}(r)\leq M$.

Indeed, let $x$, $y$ be arbitrary points in $\partial N_r(H)$ such that $d_r(x,y)<\infty$ and $d_S(x,y)\leq nr$. Let $h$ be an element in $H$ such that $d_S(x,H)=d_S(x,h)=r$. Thus, $(h^{-1}x,h^{-1}y)\in B$ and $d_{\rho r}(x,y)=d_{\rho r}(h^{-1}x,h^{-1}y)$. Thus, $d_{\rho r}(x,y)\leq M$. It follows that $\delta^n_{\rho}(r)\leq M$. 

We now assume that $0<\tilde{e}(G,H)<\infty$ and we will prove $\sigma^n_{\rho}(r)<\infty$ for all $r>0$. Let $m=\tilde{e}(G,H)$. For each $i\in \{0,1,2,\cdots,m\}$ we could choose $h_i$ in $H$ such that the distance between $h_i$ and $h_j$ is at least $(n+2)r$ whenever $i\ne j$. By Lemma~ \ref{pr}, for each $i\in \{0,1,2,\cdots,m\}$ we could choose an $H$--perpendicular ray $\gamma_i$ with the initial point $h_i$. Thus, there are at least two different rays $\gamma_i$ and $\gamma_j$ such that $\gamma_i\cap C_r(H)$ and $\gamma_i\cap C_r(H)$ lie in the same component of $C_r(H)$. We define $u=\gamma_i(r)$ and $v=\gamma_j(r)$. Then $u$, $v$ lie in $\partial N_r(H)$, the distance $d_r(u,v)<\infty$ and $d_S(u,v)\geq nr$. Thus, 
\[\sigma^n_{\rho}(r)\leq d_{\rho r}(x,y)<\infty.\qedhere\] 
\end{proof}

\begin{thm}
\label{udld}
Let $G$ be an infinite finitely generated group and $H$ an infinite finitely generated subgroup of $G$. If $0<\tilde{e}(G,H)<\infty$, then $div(G,H)\preceq Div(G,H)$ 
\end{thm}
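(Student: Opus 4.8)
The plan is to verify the family domination $\{\sigma^n_\rho\}\preceq\{\delta^n_\rho\}$ directly from the definition, showing one may take $L=1$ and $M=4$, i.e.\ $\sigma^n_\rho\preceq\delta^{4n}_\rho$ for every $\rho$ and every $n\geq 2$. The whole argument rests on a single reduction. To bound the infimum $\sigma^n_\rho(r)$ from above by the supremum $\delta^{4n}_\rho(r)$, it suffices to produce, for each large $r$, one pair of points $x_1,x_2\in\partial N_r(H)$ with $d_r(x_1,x_2)<\infty$ lying in the overlap of the two domains, namely $nr\leq d(x_1,x_2)\leq 4nr$. For such a pair both defining constraints hold at once, so $\sigma^n_\rho(r)\leq d_{\rho r}(x_1,x_2)\leq \delta^{4n}_\rho(r)$, and this inequality for all large $r$ is exactly $\sigma^n_\rho\preceq\delta^{4n}_\rho$.

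For the construction, note that since $\tilde{e}(G,H)=m$ with $0<m<\infty$, the subgroup $H$ has infinite index (as $\tilde{e}(G,H)=0$ iff $H$ has finite index), so Lemma~\ref{pr} applies, and there is $r_0$ with $C_r(H)$ having exactly $m$ deep components for $r>r_0$. Fix $r>r_0$, take an $H$--perpendicular ray $\gamma$ with initial point $e$, and set $x_1=\gamma(r)\in\partial N_r(H)$; its tail escapes every bounded neighborhood of $H$, so $x_1$ lies in a deep component $U$ of $C_r(H)$. Left multiplication by any $h\in H$ is an isometry of the Cayley graph preserving $H$, hence preserving $C_r(H)$ and permuting its deep components, so if $h$ stabilizes $U$ then $x_2:=hx_1\in U\cap\partial N_r(H)$, giving $d_r(x_1,x_2)<\infty$, and $\abs{h}_S-2r\leq d(x_1,x_2)\leq \abs{h}_S+2r$ by the triangle inequality.

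The delicate point, and the reason the naive pigeonhole used in the finiteness theorem above is not enough, is to find a \emph{stabilizing} $h$ whose length $\abs{h}_S$ lies in the narrow window $[(n+2)r,(n+2)r+D]$, uniformly in $r$: packing far-apart perpendicular rays easily yields the lower bound $d(x_1,x_2)\geq nr$ but gives no control on the upper bound. I resolve this by exploiting that $H$ acts on only $m$ deep components, so $\Stab_H(U)$ has index at most $m$ in $H$. A finitely generated group has only finitely many subgroups of index at most $m$, so their intersection $K_0$ is a fixed finite-index subgroup of $H$; being finite index in the infinite group $H$ it is infinite and finitely generated, and by construction $K_0\subseteq \Stab_H(U)$ for every deep component $U$ at every level. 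Fixing a finite generating set of $K_0$ and letting $D$ bound the $S$--lengths of its generators, I choose $w\in K_0$ with $\abs{w}_S\geq (n+2)r$ (possible since $K_0$ is infinite) and walk along a geodesic from $e$ to $w$ in the Cayley graph of $K_0$: consecutive vertices differ in $S$--length by at most $D$, so the first vertex $h$ reaching length $\geq (n+2)r$ satisfies $(n+2)r\leq\abs{h}_S\leq (n+2)r+D$, with $h\in K_0\subseteq\Stab_H(U)$ as required.

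It then remains to collect the estimates. From $\abs{h}_S\geq (n+2)r$ we get $d(x_1,x_2)\geq nr$, and from $\abs{h}_S\leq (n+2)r+D$, together with $r\geq D$ and $n\geq 2$, we get $d(x_1,x_2)\leq (n+4)r+D\leq (n+5)r\leq 4nr$. Thus $(x_1,x_2)$ lies in the overlap of the two domains, and the reduction of the first paragraph yields $\sigma^n_\rho(r)\leq\delta^{4n}_\rho(r)$ for all sufficiently large $r$. Hence $\sigma^n_\rho\preceq\delta^{4n}_\rho$ for every $\rho$ and $n$, so $\{\sigma^n_\rho\}\preceq\{\delta^n_\rho\}$ with $L=1$ and $M=4$; that is, $div(G,H)\preceq Div(G,H)$. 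I expect the finite-index intersection trick of the third paragraph to be the main obstacle, since it is what upgrades the one-sided pigeonhole bound to the two-sided length control the reduction demands.
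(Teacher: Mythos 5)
Your proof is correct, but the mechanism you use to produce the pair $(x_1,x_2)$ is genuinely different from the paper's. The paper makes the same reduction (exhibit, for each large $r$, two points of $\partial N_r(H)$ in the same component of $C_r(H)$ whose distance lies in a window $[nr,Mnr]$), but it obtains them by a \emph{calibrated pigeonhole}: fixing $S$ so that $T=S\cap H$ generates $H$, it chooses $m+1$ elements $h_i\in H$ with $\abs{h_i}_S\in[4nir,4nir+1)$ (possible because along a geodesic of $\Gamma(H,T)$ the $S$--length changes by at most $1$), erects an $H$--perpendicular ray at each $h_i$, and uses $\tilde{e}(G,H)=m$ to find two rays entering the same deep component; the calibration of the basepoints then yields \emph{both} bounds, $nr\leq d_S(x,y)\leq 4(2m+1)nr$, so the paper takes $M=4(2m+1)$. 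Thus your worry that pigeonhole ``gives no control on the upper bound'' is not quite right --- pinning the basepoint lengths supplies that control, and your stabilizer machinery is not strictly necessary. What your route buys is the uniform constant $M=4$: translating a single perpendicular ray by an element of $K_0$ (the intersection of the finitely many subgroups of $H$ of index at most $m$, which is finite index in $H$ and stabilizes every deep component at every radius) replaces dependence on $m$ by dependence on the auxiliary width $D$, and your walk along a geodesic of $K_0$ to land $\abs{h}_S$ in a window of width $D$ is the exact analogue of the paper's implicit use of $T\subseteq S$. Both arguments are valid; the paper's is more elementary and self-contained, while yours invokes the finiteness of bounded-index subgroups of a finitely generated group to get a cleaner, $m$--independent constant --- a difference that is immaterial for the statement itself, since $M$ is allowed to depend on the pair $(G,H)$.
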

\begin{proof}
Fix a finite generating set $S$ of $G$ such that $T=S\cap H$ generates $H$. We could consider $\Gamma(H,T)$ as a subgraph of $\Gamma(G,S)$. We denote $Div(G,H)=\{\delta^n_{\rho}\}$ and $div(G,H)=\{\sigma^n_{\rho}\}$. Let $m=\tilde{e}(G,H)$ and $M=4(2m+1)$. We will show $\sigma^n_\rho \preceq \delta^{Mn}_{\rho}$. More precisely, we are going to prove that for each $r>2$ \[\sigma^n_\rho(r)\leq \delta^{Mn}_{\rho}(r).\] 

For each $i\in \{0, 1, 2, \cdots, m\}$ we choose $h_i$ in $H$ such that $4nir \leq {\abs{h_i}}_S< 4nir+1$ and $\gamma_i$ to be an $H$--perpendicular geodesic ray with the initial point $h_i$. Since $m=\tilde{e}(G,H)$, then there are two different geodesics $\gamma_i$ and $\gamma_j$ ($i<j$) such that $\gamma_i\cap C_r(H)$ and $\gamma_j\cap C_r(H)$ lie in the same component of $C_r(H)$. We define $x=\gamma_i(r)$ and $y=\gamma_j(r)$, then $x$ and $y$ lie in $\partial N_r(H)$ and $d_r(x, y)< \infty$.
Also, \begin{align*}d_S(x,y)&\leq d_S(x,h_i)+d_S(h_i,h_j)+d_S(h_j,y)\\&\leq r+4n(i+j)r+2+r\leq 8mnr+4r\leq (Mn)r \end{align*}

and \begin{align*}d_S(x,y)&\geq d_S(h_i,h_j)-d_S(h_i,x)-d_S(h_j,y)\\&\geq 4njr-4nir-1-r-r\geq 4nr-3r\geq nr \end{align*}

Thus, \[\sigma^n_\rho(r)\leq d_{\rho r}(x,y)\leq \sigma^{Mn}_{\rho}(r).\]

Therefore, $\sigma^n_\rho \preceq \delta^{Mn}_{\rho}$. \qedhere
\end{proof}

\begin{thm} [Commensurability]
Let $G$ be a finitely generated group. 
\begin{enumerate}
\label{poulrd}
\item If $K\leq H \leq G$ and $[H:K]<\infty$, then $Div(G,H)\sim Div(G,K)$ and $div(G,H)\sim div(G,K)$.
\item If $H_1$ and $H_2$ are two commensurable subgroups of $G$. Then, $Div(G,H_1)\sim Div(G,H_2)$ and $div(G,H_1)\sim div(G,H_2)$.
\item If $K\leq H \leq G$ and $[G:H]<\infty$, then $Div(G,K)\sim Div(H,K)$ and $div(G,K)\sim div(H,K)$.
\item For any conjugate $gHg^{-1}$ of $H$, $Div(G,gHg^{-1})\sim Div(G,H)$ and $div(G,gHg^{-1}) \sim div(G,H)$
\end{enumerate} 
\end{thm}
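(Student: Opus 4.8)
The plan is to deduce all four statements from the quasi-isometry invariance of relative divergence, namely Proposition~\ref{qiv} for $Div$ and Proposition~\ref{qiv'} for $div$. Recall that two pairs $(X,A)$ and $(Y,B)$ are declared quasi-isometric as soon as there is a quasi-isometry $\Phi\colon X\to Y$ for which the Hausdorff distance between $\Phi(A)$ and $B$ is finite. Thus for each part my only task is to exhibit such a $\Phi$ between the two relevant pairs of Cayley graphs; the equivalence of \emph{both} upper and lower divergence then follows at once, with Proposition~\ref{qiv'} handling $div$ in every case so that no separate lower-divergence argument is needed. Throughout I fix a finite generating set $S$ of $G$ and work in $\Gamma(G,S)$ with the left-invariant word metric $d_S$.

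For statement (1) I would first verify that $H$ and $K$ have finite Hausdorff distance in $\Gamma(G,S)$ when $[H:K]<\infty$. Here it is essential to decompose $H$ into finitely many \emph{right} cosets $H=\bigcup_{i=1}^m Kh_i$, since left-invariance of $d_S$ interacts cleanly with right multiplication: each point $kh_i$ satisfies $d_S(kh_i,K)\le d_S(kh_i,k)=\abs{h_i}_S$, so $H\subset N_{r_0}(K)$ for $r_0=\max_i\abs{h_i}_S+1$, while $K\subset H$ trivially. Hence the identity map of $\Gamma(G,S)$ exhibits $(\Gamma(G,S),H)$ and $(\Gamma(G,S),K)$ as quasi-isometric pairs, and Propositions~\ref{qiv} and~\ref{qiv'} give $Div(G,H)\sim Div(G,K)$ and $div(G,H)\sim div(G,K)$. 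Statement (2) is then immediate: commensurability of $H_1$ and $H_2$ means $K:=H_1\cap H_2$ has finite index in each, so applying (1) twice yields $Div(G,H_1)\sim Div(G,K)\sim Div(G,H_2)$, and likewise for $div$.

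For statement (3), since $[G:H]<\infty$ the inclusion $H\inclusion G$ extends to a quasi-isometry $\Gamma(H,T)\to\Gamma(G,S)$ (a finite-index subgroup is coarsely dense, and on $H$ the $T$-metric and the restricted $S$-metric are bi-Lipschitz equivalent), and this map carries the subspace $K\subset H$ onto $K\subset G$ with Hausdorff distance $0$; thus $(\Gamma(H,T),K)$ and $(\Gamma(G,S),K)$ are quasi-isometric pairs and Propositions~\ref{qiv} and~\ref{qiv'} apply. For statement (4) I would use conjugation $c_g\colon x\mapsto gxg^{-1}$: by left-invariance $d_S\bigl(c_g(x),c_g(y)\bigr)=\abs{gx^{-1}yg^{-1}}_S$, which differs from $d_S(x,y)=\abs{x^{-1}y}_S$ by at most $2\abs{g}_S$, so $c_g$ is a $(1,2\abs{g}_S)$--quasi-isometry of $\Gamma(G,S)$ with quasi-inverse $c_{g^{-1}}$; since $c_g(H)=gHg^{-1}$, the pairs $(\Gamma(G,S),H)$ and $(\Gamma(G,S),gHg^{-1})$ are quasi-isometric and the conclusion again follows from Propositions~\ref{qiv} and~\ref{qiv'}.

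Because each part is a direct appeal to the invariance propositions, I expect no serious obstacle; the only points requiring genuine care are the three elementary metric estimates, all consequences of left-invariance of $d_S$. The most error-prone of these is the finite-Hausdorff-distance claim in (1): a naive use of left cosets leads to conjugates $\abs{k^{-1}h_i^{-1}k}_S$ that need not be bounded, and it is precisely the switch to right cosets that makes the displacement uniformly bounded. The remaining estimates — the bi-Lipschitz comparison of word metrics for the finite-index inclusion in (3), and the bounded displacement of $c_g$ in (4) — are routine.
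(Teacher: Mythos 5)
Your proposal is correct and takes exactly the paper's approach: the paper's entire proof is the one-line observation that all four statements follow from Proposition~\ref{qiv} and Proposition~\ref{qiv'}, i.e., quasi-isometry invariance of $Div$ and $div$. Your write-up merely supplies the routine verifications the paper leaves implicit (the right-coset estimate for (1), the finite-index inclusion for (3), and the bounded displacement of conjugation for (4)), and those verifications are all accurate.
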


\begin{proof}
The theorem follows immediately from Proposition~ \ref{qiv} and Proposition~ \ref{qiv'}. \qedhere
\end{proof}

\section{Relative divergence of finitely generated groups with respect to their normal subgroups}
\label{rdns}
In this section, we investigate the upper and lower divergence of a finitely generated group relative to a normal subgroup. 

\begin{lem}
\label{nmsg}
Let $G$ be a group with a finite generating set $S$ and $H$ a normal subgroup of $G$. Suppose $g_1H$, $g_2H$ are arbitrary left cosets of $H$ and the distance between them is $n$. Then for any element $g_1h$ in $g_1H$ the distance between $g_1h$ and $g_2H$ is also $n$.
\end{lem}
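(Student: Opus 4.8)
The plan is to exploit two facts: first, that left multiplication by any group element is an isometry of the Cayley graph $\Gamma(G,S)$, and second, that $H$ is normal. Combining these, I would show that the point-to-set distance $d_S(g_1h,g_2H)$ is actually \emph{independent} of which element $g_1h$ of the coset $g_1H$ we choose. Once this independence is in hand the lemma follows at once, because the coset distance is the infimum of these point-to-set distances over the coset $g_1H$, and the infimum of a constant-valued function equals that constant; since the coset distance is $n$, every value $d_S(g_1h,g_2H)$ must equal $n$. Note there is no need to worry about whether the defining infimum is attained, since I phrase everything through this double-infimum identity.

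The heart of the argument is the independence claim. Fix two elements $g_1h$ and $g_1\tilde h$ of the coset $g_1H$, and set $w=g_1\bigl(\tilde h h^{-1}\bigr)g_1^{-1}$. A direct computation gives $L_w(g_1h)=w\,g_1h=g_1\tilde h$, where $L_w$ denotes the isometry of $\Gamma(G,S)$ given by left multiplication by $w$. The crucial point is that $L_w$ also carries the coset $g_2H$ back to itself: writing $g_2^{-1}w g_2=(g_2^{-1}g_1)(\tilde h h^{-1})(g_2^{-1}g_1)^{-1}$, normality of $H$ shows this conjugate lies in $H$, so that
\[
L_w(g_2H)=w\,g_2H=g_2\bigl(g_2^{-1}w g_2\bigr)H=g_2H .
\]
Since $L_w$ is an isometry preserving $g_2H$ setwise and sending $g_1h$ to $g_1\tilde h$, applying it inside the infimum defining the point-to-set distance yields $d_S(g_1h,g_2H)=d_S(g_1\tilde h,g_2H)$. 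As $h$ and $\tilde h$ were arbitrary, $d_S(g_1h,g_2H)$ is constant over $h\in H$.

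Finally I would assemble the conclusion: for any fixed $h$,
\[
d_S(g_1H,g_2H)=\inf_{h_1,h_2\in H} d_S(g_1h_1,g_2h_2)=\inf_{h_1\in H} d_S(g_1h_1,g_2H)=d_S(g_1h,g_2H),
\]
where the last equality uses that the point-to-set distances are all equal. Since the left-hand side is $n$, we obtain $d_S(g_1h,g_2H)=n$ for every $g_1h\in g_1H$. The main obstacle — indeed the only nontrivial step — is verifying that $L_w$ preserves the coset $g_2H$ setwise, and this is exactly the step that uses normality of $H$; without normality the conjugate $g_2^{-1}wg_2$ need not return to $H$ and the distance could genuinely depend on the base point.
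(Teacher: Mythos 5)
Your proof is correct and is essentially the paper's argument: both exploit that left multiplication by an element of the form $g_1\eta g_1^{-1}$ (with $\eta\in H$) is an isometry of $\Gamma(G,S)$ which, thanks to normality, carries the coset $g_2H$ onto itself, thereby moving any point of $g_1H$ to any other point of $g_1H$ without changing its distance to $g_2H$. The only cosmetic difference is that the paper translates a distance-realizing pair (legitimate, since word-metric distances are integer-valued and the infimum is attained), whereas you phrase the same translation as constancy of the point-to-set distance combined with an infimum identity.
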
 
\begin{proof}
Obviously, the distance between $g_1h$ and $g_2H$ is at least $n$. Thus, we only need to show this distance is bounded above by $n$. Choose $g_1h_1$ in $g_1H$ and $g_2h_2$ in $g_2H$ such that the distance between them is $n$. Define $g= g_1hh_1^{-1}g_1^{-1}$. Since $H$ is a normal subgroup, then $g$ lies in $H$ and $g'=g(g_2h_2)$ is an element in $g_2H$. Also, $d_S(g_1h,g')=d_S(gg_1h_1,gg_2h_2)=d_S(g_1h_1,g_2h_2)=n$. Therefore, the distance between $g_1h$ and $g_2H$ is at most $n$. \qedhere
\end{proof}

\begin{thm}
\label{udns}
Let $G$ be a finitely generated group and $H$ a finitely generated normal subgroup of $G$. Suppose that $Div(G,H)=\{\delta^n_{\rho}\}$ and $Div(G/H,e)=\{\delta_\rho\}$. Let 
$\bar{\delta^n_\rho}(r)= \delta_\rho(r)+nr$ for each positive $r$. Then $Div(G/H,e)\preceq Div(G,H)\preceq \{Dist^H_G\circ\bar{\delta^n_\rho}\}$. Moreover, if $G/H$ is one-ended, then $Div(G/H,e)\preceq Div(G,H)\preceq Dist^H_G \circ Div(G/H,e)$.
\end{thm}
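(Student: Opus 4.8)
The plan is to prove the theorem as a chain of three domination statements, treating the relationship between the Cayley graph of $G$ and that of the quotient $G/H$ as the central geometric idea. The key observation is that the quotient map $\pi\!: G \to G/H$ is the right tool: since $H$ is normal, left cosets of $H$ correspond exactly to vertices of $\Gamma(G/H,\bar S)$, and by Lemma~\ref{nmsg} the distance between two cosets is realized uniformly from any representative, so $\pi$ behaves like a ``distance-preserving projection'' at the coset level. The $r$--neighborhood $N_r(H)$ in $\Gamma(G,S)$ maps under $\pi$ essentially onto $N_r(e)$ in $\Gamma(G/H,\bar S)$, and a path in the complement $C_r(H)$ projects to a path in the complement of the $r$--ball around $e$ in the quotient. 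This correspondence is what lets me transfer relative divergence of the pair $(G,H)$ to ordinary (Gersten) divergence $Div(G/H,e)$, which by Remark~\ref{sudofgg} is a one-parameter family.

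First I would establish the lower bound $Div(G/H,e)\preceq Div(G,H)$. Given two points $\bar x_1,\bar x_2$ in $\partial N_r(e)\subset \Gamma(G/H,\bar S)$ realizing (nearly) the divergence $\delta_\rho(r)$ of the quotient, I lift them to vertices $x_1,x_2$ in $\Gamma(G,S)$ lying at distance about $r$ from $H$. Any path in $C_r(H)$ joining $x_1,x_2$ projects to a path avoiding the $r$--ball around $e$ in the quotient, and projection does not increase length; conversely a near-geodesic in $C_{\rho r}(e)$ downstairs gives, via a lift, a bound on the detour length upstairs. This shows the detour cost in $G$ is at least a constant multiple of the detour cost in $G/H$, up to the usual rescaling of $\rho$ and $n$, hence the domination.

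Next I would prove the upper bound $Div(G,H)\preceq \{Dist^H_G\circ\bar\delta^n_\rho\}$, where $\bar\delta^n_\rho(r)=\delta_\rho(r)+nr$. Here I start with $x_1,x_2\in\partial N_r(H)$ at distance at most $nr$ in $G$, project to $\bar x_1,\bar x_2$ in the quotient, and use the quotient's upper divergence to find an efficient avoidant path $\bar\alpha$ downstairs of length at most $\delta_\rho(r)$ (after accounting for the fact that $d(\bar x_1,\bar x_2)\le nr$, which is why the $+nr$ term appears in $\bar\delta^n_\rho$). The issue, and what I expect to be the main obstacle, is \emph{lifting} $\bar\alpha$ back to a path in $C_{\rho r}(H)$ in $\Gamma(G,S)$: each edge of $\bar\alpha$ must be replaced by an edge upstairs, but the endpoints of consecutive lifted edges generally differ by an element of $H$, and closing up these gaps forces one to travel inside cosets of $H$, which is precisely where subgroup distortion enters. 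Controlling the total length of the lift requires bounding the $S$--length of the $H$--elements needed to correct the lift by the upper distortion $Dist^H_G$ evaluated at the length of $\bar\alpha$, yielding the composition $Dist^H_G\circ\bar\delta^n_\rho$; one must also check these correcting segments can be routed to stay outside $N_{\rho r}(H)$, using normality and Lemma~\ref{nmsg} to keep them far from $H$.

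Finally, to obtain the ``Moreover'' clause when $G/H$ is one-ended, I would argue that the $n$--dependence collapses. Since $G/H$ is one-ended, $Div(G/H,e)$ is characterized by the single-parameter family $\{\delta_\rho\}$, and $\delta_\rho$ dominates a linear function (any infinite one-ended group has at-least-linear divergence), so the additive $nr$ term in $\bar\delta^n_\rho$ is absorbed: $\bar\delta^n_\rho\sim\delta_\rho$ in the family sense after adjusting the index $n$ via the domination of families. Composing with the non-decreasing, at-least-linear function $Dist^H_G$ then gives $\{Dist^H_G\circ\bar\delta^n_\rho\}\sim Dist^H_G\circ Div(G/H,e)$, and combining with the already-established lower bound completes the proof. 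I would note that the one-endedness hypothesis is exactly what guarantees $\delta_\rho$ grows at least linearly so that the $nr$ term is harmless.
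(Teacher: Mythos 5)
Your proposal follows essentially the same route as the paper's proof: you pass to the quotient graph $\Gamma(G/H,\bar{S})$, use normality (Lemma~\ref{nmsg}) to identify distance to $H$ upstairs with distance to $\bar{e}$ downstairs, get $Div(G/H,e)\preceq Div(G,H)$ by length-non-increasing projection of avoidant paths, get $Div(G,H)\preceq \{Dist^H_G\circ\bar{\delta^n_\rho}\}$ by lifting an avoidant path from the quotient and correcting the endpoint discrepancy inside a coset of $H$ at a cost bounded by $Dist^H_G\bigl(\delta_\rho(r)+nr\bigr)$ while staying in $C_{\rho r}(H)$, and absorb the $nr$ term using the at-least-linear divergence of the one-ended quotient. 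The only imprecision is your picture of gaps between \emph{consecutive} lifted edges: a path downstairs lifts edge-by-edge to a genuine path upstairs, and only its terminal endpoint must be corrected by a single $H$-element, which is exactly where the paper applies the distortion bound, so this does not affect the argument.
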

\begin{proof}
Let $S$ be a finite generating set of $G$ and assume that $T=G\cap S$ generates $H$. Moreover, the image $\bar{S}$ of $S$ under the quotient map is a finite generating set of the quotient group $G/H$. We see that the Cayley graph $\Gamma(G/H,\bar{S})$ is the quotient graph of the Cayley graph $\Gamma(G,S)$ under the action of $H$.

We will first show that $\delta^n_{\rho}\preceq Dist^H_G\circ\bar{\delta^n_\rho}$. More precisely, we will show that 
$\delta^n_{\rho}(r)\leq 2Dist^H_G\circ\bar{\delta^n_\rho}(r)$ for all positive $r$.

Indeed, let $x$, $y$ be arbitrary points in $\partial N_r(H)$ such that $d_{r,H}(x,y)<\infty$ and $d_S(x,y)\leq nr$. We assume that $r$ is an integer and $x$, $y$ are vertices. Thus, there is a path in $C_r(H)$ connecting $x$ and $y$. Let $\bar{x}$ and $\bar{y}$ be the associated points of $x$ and $y$ respectively in $\Gamma(G/H,\bar{S})$. Thus, $\bar{x}$ and $\bar{y}$ lie in the sphere $S_r(\bar{e})$ and there is a path outside the ball $B_r(\bar{e})$ connecting them.

Since $d_{\rho r,\bar{e}}(\bar{x},\bar{y})\leq \delta_\rho(r)$, then there is a path $\alpha$ in $C_{\rho r}(\bar{e})$ connecting $\bar{x}$, $\bar{y}$ such that the length of $\alpha$ is bounded above by $\delta_\rho(r)$. Thus, there is a path $\beta$ in $C_{\rho r}(H)$ connecting $x$ and some point $y'$ in $\partial N_r(H)$. Moreover, $y'=hy$ for some $h$, and $\alpha$, $\beta$ have the same length. Thus, the length of $\beta$ is also bounded above by $\delta_\rho(r)$. Thus, the distance between $x$ and $y'$ is also bounded above by $\delta_\rho(r)$. Therefore, the distance between $y$ and $y'$ is bounded above by $\delta_\rho(r)+nr$. Since $y$ and $y'$ lie in the same left coset $gH$, then there is a path $\gamma$ with vertices in $gH$ connecting $y$ and $y'$. Thus, the path $\gamma$ must lie in $C_r(H)$ by Lemma~ \ref{nmsg}. Moreover, the path $\gamma$ can be chosen with the length bounded above by $Dist^H_G\bigl(\delta_\rho(r)+nr\bigr)$. We define 
$\beta'=\beta \cup \gamma$ then $\beta'$ is a path in $C_{\rho r}(H)$ connecting $x$, $y$ and the length of $\beta'$ is bounded above by 
$Dist^H_G\bigl(\delta_\rho(r)+nr\bigr)+\delta_\rho(r)$. Thus
\[d_{\rho r,H}(x,y)\leq Dist^H_G\bigl(\delta_\rho(r)+nr\bigr)+\delta_\rho(r)\leq 2Dist^H_G\circ\bar{\delta^n_\rho}(r).\]
Therefore, \[\delta^n_{\rho}(r)\leq 2Dist^H_G\circ\bar{\delta^n_\rho}(r).\]
Thus, \[\delta^n_{\rho}\preceq Dist^H_G\circ\bar{\delta^n_\rho}.\] 

We now show $\delta_{\rho} \preceq \delta^n_{\rho}$. More precisely, we are going to show that 
$\delta_{\rho}(r) \leq \delta^n_{\rho}(r)$ for all positive $r$.

Indeed, let $u$ and $v$ be arbitrary points in $S_r(\bar{e})$ of $\Gamma(G/H,\bar{S})$ and $d_{r,\bar{e}}(u,v)<\infty$. We assume that $r$ is an integer and $u$, $v$ are vertices. Choose 
$x_1$ and $y_1$ be lifting points of $u$ and $v$ respectively such that $d_S(x_1,y_1)=d_{\bar{S}}(u,v)\leq 2r \leq nr$. Obviously, $x_1$ and $y_1$ lie in $\partial N_r(H)$. We will show $d_{r,H}(x_1,y_1)<\infty$.

Indeed, since there is a path in $C_r(\bar{e})$ connecting $u$ and $v$, then there is a path $\alpha_1$ in $C_r(H)$ connecting two points $x_1$ and some point $y'_1$, where $y'_1=h'y_1$ for some $h'$ in $H$. Since $y_1$ and $y'_1$ lie in the same left coset $g'H$, then there is a path $\alpha_2$ with vertices in $g'H$ connecting $y_1$ and $y'_1$. By Lemma~ \ref{nmsg}, the path $\alpha_2$ also lies in $C_r(H)$. By concatenating $\alpha_1$ and $\alpha_2$, we have a path in $C_r(H)$ connecting $x_1$ and $y_1$. Thus, $d_{r,H}(x_1,y_1)<\infty$.

We now prove that $d_{\rho r,\bar{e}}(u,v)\leq d_{\rho r,H}(x_1,y_1)$. Indeed, for any path $\gamma'$ in $C_{\rho r}(H)$ connecting $x_1$ and $y_1$, there is a path 
$\bar{\gamma'}$ connecting $u$, $v$ such that the length of $\bar{\gamma'}$ is less than or equal to the length of $\gamma'$. Thus, $d_{\rho r,\bar{e}}(u,v)\leq d_{\rho r,H}(x_1,y_1)$. Therefore, $\delta_{\rho}(r) \leq \delta^n_{\rho}(r)$. Thus, $\delta_{\rho} \preceq \delta^n_{\rho}$.

If a quotient group $G/H$ is one-ended, then $\delta_{\rho}(r)\geq 2r$ for each $r>0$. Thus, 
\[\bar{\delta^n_\rho}(r)= \delta_\rho(r)+nr\leq (n+1)\delta_\rho(r).\]
Therefore, \[\delta^n_{\rho}(r)\leq 2Dist^H_G\circ\bar{\delta^n_\rho}(r)\leq 2Dist^H_G\circ \delta_\rho\bigl((n+1)r\bigr).\]

Thus, \[\delta^n_{\rho}\preceq Dist^H_G\circ \delta_\rho.\]

Therefore, \[Div(G,H)\preceq Dist^H_G \circ Div(G/H,e).\] \qedhere

\end{proof}

\begin{rem}
If $G=H \times K$ and $K$ is a one-ended group, then $Div(G,H)\sim Div(K,e)$. Thus, we could have any desired relative upper divergence $Div(G,H)$ by controlling the divergence $Div(K,e)$. In particular, any finitely generated group $H$ could be embedded as a subgroup of a larger finitely generated group $G$ such that $Div(G,H)$ is any polynomial functions or exponential function. Indeed, we only need to choose $K$ to be a one-ended hyperbolic group to have the upper relative divergence $Div(G,H)$ as the exponential function. Similarly, we can choose a one-ended group $K$ such that $Div(K,e)$ is equivalent to a desired polynomial (for example, see \cite{MR3032700}) and $Div(G,H)$ is also equivalent to this desired polynomial.
\end{rem}

\begin{thm}
\label{ldns}
Let $G$ be a finitely generated group and $H$ an infinite normal subgroup of $G$. Let $K$ be any finitely generated subgroup of $H$. Then, $div(G,H)\preceq dist^K_G$. In particular, if $H$ is finitely generated, then $div(G,H)\preceq dist^H_G$.
\end{thm}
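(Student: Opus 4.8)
The plan is to bound the infimum $\sigma^n_\rho(r)$ defining $div(G,H)=\{\sigma^n_\rho\}$ from above by exhibiting, for each radius $r$, a single admissible pair of points on $\partial N_r(H)$ that are far apart in $G$ but cheap to join inside the complement $C_r(H)$, and then to recognize the cost of that join as the value $dist^K_G(nr)$. First I would fix a finite generating set $S$ of $G$ containing a finite generating set $T$ of $K$, so $\Gamma(K,T)$ embeds in $\Gamma(G,S)$. I may assume $K$ is infinite (otherwise $dist^K_G$ is eventually $0$ by the empty-set convention and nothing is claimed). For a positive integer $r$ in the regime where $\partial N_r(H)\neq\emptyset$ I would pick a vertex $x$ with $d_S(x,H)=r$ — for instance a point $\gamma(r)$ on an $H$--perpendicular ray furnished by Lemma~\ref{pr} — and then choose $k\in K$ realizing the lower distortion at scale $nr$, i.e.\ $\abs{k}_S\geq nr$ and $\abs{k}_T=dist^K_G(nr)$. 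The candidate pair is $x_1=x$ and $x_2=xk$.

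Next I would run the three verifications. By left-invariance of the word metric, $d_S(x_1,x_2)=d_S(e,k)=\abs{k}_S\geq nr$, so the separation requirement holds. Since $k\in H$ and $H$ is normal, $xk$ lies in the coset $xH$; by Lemma~\ref{nmsg} \emph{every} element of $xH$ is at distance exactly $r$ from $H$, so $x_2\in\partial N_r(H)$ too. To connect them I would left-translate by $x$ a $T$--geodesic from $e$ to $k$: this is a path in $\Gamma(G,S)$ of length $\abs{k}_T$ all of whose vertices lie in $xH$. The hard part is the claim that this connecting path never enters $N_r(H)$, so that it both witnesses $d_r(x_1,x_2)<\infty$ (making the pair admissible) and bounds $d_{\rho r}(x_1,x_2)$. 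This is exactly where normality is essential: Lemma~\ref{nmsg} pins every vertex of the path at distance $r$ from $H$, and for an interior point $t$ of an edge $(u,v)$ within the coset, at parameter $\theta\in[0,1]$ from $u$, one has $d_S(t,H)=\min\bigl(\theta+d_S(u,H),(1-\theta)+d_S(v,H)\bigr)=r+\min(\theta,1-\theta)\geq r$. Hence the whole path stays in $C_r(H)\subseteq C_{\rho r}(H)$, giving $\sigma^n_\rho(r)\leq d_{\rho r}(x_1,x_2)\leq\abs{k}_T=dist^K_G(nr)$.

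Finally I would unwind the definition of domination of families with $dist^K_G$ read as the constant family: taking $L=1$ and $M=1$, the uniform estimate $\sigma^n_\rho(r)\leq dist^K_G(nr)$ is precisely $\sigma^n_\rho\preceq dist^K_G$ (with multiplicative constant $A=n$), and therefore $div(G,H)=\{\sigma^n_\rho\}\preceq dist^K_G$. The ``in particular'' clause is then immediate by taking $K=H$ when $H$ is finitely generated. I expect essentially all the difficulty to be concentrated in the containment of the connecting path in $C_r(H)$ via Lemma~\ref{nmsg}; the left-invariance computation, the choice of $k$ from the definition of $dist^K_G$, and the final bookkeeping of the domination constants are routine.
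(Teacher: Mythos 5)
Your proposal is correct and follows essentially the same route as the paper's proof: pick $k_0\in K$ realizing $dist^K_G(nr)$, take the witness pair $x=g$, $y=gk_0$ for a point $g$ with $d_S(g,H)=r$, and left-translate a $T$--geodesic from $e$ to $k_0$, invoking normality via Lemma~\ref{nmsg} to keep the whole path in $C_r(H)$. The only differences are that you spell out details the paper glosses over (the edge-interior distance estimate, the implicit assumption that $K$ is infinite, the existence of $g$ via Lemma~\ref{pr}, and the domination constants), which only strengthens the write-up.
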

\begin{proof}
Let $S$ be a finite generating set of $G$ and assume that $T=K\cap S$ generates $K$. Thus, $\Gamma(K,T)$ is a subgraph of $\Gamma(G,S)$. Denote $div(G,H)=\{\sigma^n_{\rho}\}$. We will prove that $\sigma^n_{\rho}\preceq dist^K_G$. More precisely, $\sigma^n_{\rho}(r)\leq dist^K_G(nr)$.
 
For each $r>0$, we assume that $r$ is an integer. Since $dist^K_G(nr)=\min \bigset{\abs{k}_T}{\abs{k}_S\geq nr}$, then there is an element $k_0$ in $K$ such that $\abs{k_0}_S\geq nr$ and $\abs{k_0}_T\leq dist^K_G(nr)$. Let $\alpha$ be a geodesic in $\Gamma(K,T)$ connecting the identity element $e$ and $k_0$. Thus, all vertices of $\alpha$ lie in $H$, and the length of $\alpha$ is bounded above by $dist^K_G(nr)$. Choose any element $g$ in $G$ such that $d_S(g,H)=r$ and define $x=g$ and $y=gk_0$. By Lemma~ \ref{nmsg}, the points $x$ and $y$ lie in $\partial N_r(H)$ and $g\alpha$ is a path in $C_r(H)$ connecting $x$ and $y$. Moreover, $d_S(x,y)=\abs{k_0}_S\geq nr$. Thus, \[\sigma^n_{\rho}(r)\leq d_{\rho r}(x,y)\leq \ell(g\alpha)\leq\ell(\alpha)\leq dist^K_G(nr).\] Therefore, $\sigma^n_{\rho}\preceq dist^K_G$. \qedhere
\end{proof}

\begin{cor}
\label{csifgg}
Let $G$ be a finitely generated group and $H$ an infinite normal subgroup of $G$. If $H$ contains some infinite finitely generated subgroup, then $div(G,H)$ is dominated by the growth of $G$. In particular, $div(G,H)$ is at most exponential.
\end{cor}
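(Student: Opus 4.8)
The plan is to chain together the two results already in hand: Theorem~\ref{ldns}, which bounds $div(G,H)$ above by the lower distortion of a finitely generated subgroup of $H$, and Proposition~\ref{ldidbgf}, which in turn bounds lower distortion above by the growth function. Since both are statements about the domination relation $\preceq$, and $\preceq$ is transitive (as recorded in the remark immediately following its definition), the corollary should drop out with essentially no additional work.

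First I would use the hypothesis to fix an infinite finitely generated subgroup $K$ of $H$. As $K$ is a finitely generated subgroup of the infinite normal subgroup $H$, the hypotheses of Theorem~\ref{ldns} are met, and that theorem yields $div(G,H)\preceq dist^K_G$. Here it is worth noting that $K$ being infinite is exactly what makes the bound meaningful, since the proof of Theorem~\ref{ldns} selects elements $k_0\in K$ of arbitrarily large $S$--length.

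Next I would apply Proposition~\ref{ldidbgf} to the finitely generated subgroup $K\leq G$, obtaining $dist^K_G\preceq Growth_G$. Combining the two dominations by transitivity of $\preceq$ gives $div(G,H)\preceq Growth_G$, which is the first assertion of the corollary.

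For the final sentence, I would invoke the standard fact recorded in the remark after the definition of $Growth_G$, namely that the growth function of any finitely generated group is dominated by the exponential function; one more application of transitivity then shows $div(G,H)$ is at most exponential. The only point needing any verification is that $K$ genuinely satisfies the hypotheses of both Theorem~\ref{ldns} and Proposition~\ref{ldidbgf}, but this is immediate from the assumption that $K$ is an infinite finitely generated subgroup sitting inside $H\leq G$, so I expect no real obstacle.
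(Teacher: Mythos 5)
Your proposal is correct and is exactly the argument the paper intends: the corollary is stated without proof precisely because it follows by chaining Theorem~\ref{ldns} (applied to an infinite finitely generated subgroup $K\leq H$) with Proposition~\ref{ldidbgf} and transitivity of $\preceq$. Your observation that $K$ must be infinite for Theorem~\ref{ldns} to give a meaningful bound is also the right point to flag, and it matches the paper's remark that the case where every finitely generated subgroup of $H$ is finite remains open.
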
 

\begin{rem}
In Corollary \ref{csifgg}, it is unknown whether or not $div(G,H)$ is dominated by the exponential function when every finitely generated subgroup of $H$ is finite.

In Theorem~ \ref{ldns}, the relative lower divergence $div(G,H)$ can be strictly dominated by $dist^H_G$. Similarly, $Div(G,H)$ could be strictly dominated by $Dist^H_G\circ Div(G/H,e)$ in Theorem~ \ref{udns} (here we assume that $G/H$ is one-ended). We now compute the relative divergence of the Heisenberg group with respect to some cyclic subgroup to show these facts.
\end{rem}
Before computing the relative divergence of the Heisenberg group with respect to some cyclic subgroup, we need some results about this group.

\begin{lem}
\label{hs}
 Let $G=\langle a,b,c| bab^{-1}a^{-1}=c, ac=ca, bc=cb\rangle$ be the Heisenberg group and $H$ the cyclic subgroup generated by $c$. Then 
\begin{enumerate}
\item Each element of $G$ could be written uniquely in the form $a^kb^\ell c^p$, where $k, \ell, p$ are integers.
\item \begin{align*}(a^kb^\ell c^p)a&=a^{k+1}b^\ell c^{p+l}\\(a^kb^\ell c^p)b&=a^kb^{\ell+1}c^p\\(a^kb^\ell c^p)c&=a^kb^\ell c^{p+1}\end{align*}
\item $H$ is a normal subgroup of $G$, and $G/H=\mathbb{Z}^2$ is one-ended.
\item If $\abs{a^kb^\ell c^p}\leq N$, then $\abs{k}\leq N$, $\abs{\ell}\leq N$, $\abs{p}\leq N^2$
\item $d_S(a^kb^\ell c^p,H)=\abs{k}+\abs{\ell}$
\end{enumerate}
\end{lem}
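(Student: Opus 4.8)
The plan is to handle the five parts in dependency order, establishing the multiplication rules (2) first, then introducing a faithful matrix model that settles (1), feeds into (3) and (4), and makes (5) routine. I would begin by rewriting the defining relation $bab^{-1}a^{-1}=c$ as $ba=cab$, and, using that $c$ is central, prove by induction on $\ell\geq 0$ the identity $b^\ell a=c^\ell a b^\ell$ (and its analogue for negative $\ell$). Granting this, all three formulas of (2) are immediate: the $c$- and $b$-formulas follow at once from centrality of $c$, while $(a^kb^\ell c^p)a=a^kb^\ell a c^p=a^k(c^\ell a b^\ell)c^p=a^{k+1}b^\ell c^{p+\ell}$.

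For (1), existence of the normal form is a rewriting argument: starting from any word in $a^{\pm1},b^{\pm1},c^{\pm1}$ and repeatedly applying (2) and its inverse forms, one collects the $a$'s to the left, then the $b$'s, pushing all commutators into central powers of $c$, terminating in an expression $a^kb^\ell c^p$. Uniqueness I would obtain from a faithful representation $\Phi\colon G\to\GL_3(\mathbb{Z})$ sending $a\mapsto\begin{pmatrix}1&0&0\\0&1&1\\0&0&1\end{pmatrix}$, $b\mapsto\begin{pmatrix}1&1&0\\0&1&0\\0&0&1\end{pmatrix}$, $c\mapsto\begin{pmatrix}1&0&1\\0&1&0\\0&0&1\end{pmatrix}$; one checks these matrices satisfy the defining relations, so $\Phi$ is a well-defined homomorphism, and that $\Phi(a^kb^\ell c^p)=\begin{pmatrix}1&\ell&p\\0&1&k\\0&0&1\end{pmatrix}$, whose entries recover $(k,\ell,p)$, so distinct triples give distinct elements. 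Part (3) is then quick: the relations force $c$ central, so $H=\gen{c}$ is normal and, by (1), infinite cyclic; the quotient map kills $c$, turning the relation into $\bar b\bar a=\bar a\bar b$, so $G/H=\presentation{\bar a,\bar b}{[\bar a,\bar b]}\cong\mathbb{Z}^2$, which is one-ended.

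For (4) I would split the three bounds. The estimates $\abs{k}\leq N$ and $\abs{\ell}\leq N$ come from the quotient homomorphism $\phi\colon G\to\mathbb{Z}^2$ of (3): since $\phi$ sends generators to elements of norm at most $1$, it is $1$-Lipschitz on word metrics, and $\phi(a^kb^\ell c^p)=(k,\ell)$, whence $\abs{k}+\abs{\ell}\leq\abs{a^kb^\ell c^p}_S\leq N$. The bound $\abs{p}\leq N^2$ is the genuinely nilpotent part: representing a geodesic word of length $N$ as a product of $N$ elementary matrices $\Phi(s)$, I track the off-diagonal entries under successive right multiplication. The $(1,2)$- and $(2,3)$-entries change by at most $1$ per step, so after $j$ letters the $(1,2)$-entry has absolute value at most $j$; the $(1,3)$-entry, which equals $p$ in the final matrix, changes at step $j$ by at most the current $(1,2)$-entry, hence by at most $j$, giving $\abs{p}\leq\sum_{j=1}^{N}j=\tfrac{N(N+1)}{2}\leq N^2$.

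Finally, (5) follows from (2) together with $\phi$. For the lower bound, $d_S(a^kb^\ell c^p,H)=\min_q\abs{(a^kb^\ell c^p)^{-1}c^q}_S$, and applying the $1$-Lipschitz map $\phi$ shows each such length is at least $\abs{k}+\abs{\ell}$. For the upper bound, (2) lets me right-multiply $a^kb^\ell c^p$ by $b^{-\ell}$ and then $a^{-k}$, a total of $\abs{k}+\abs{\ell}$ generator steps after which the element is some $c^{p'}\in H$, so $d_S(a^kb^\ell c^p,H)\leq\abs{k}+\abs{\ell}$. The main obstacle is the pair of claims that genuinely require the explicit model: uniqueness in (1) and the quadratic bound $\abs{p}\leq N^2$ in (4). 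Both are where the non-abelian structure is visible, and both are cleanest through $\Phi$ rather than combinatorially; once $\Phi$ is in hand, the inductive area estimate on the top-right matrix entry is the only place any real computation is needed.
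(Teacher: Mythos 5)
Your proposal is correct, but it takes a genuinely different route from the paper. The paper does not prove parts (1)--(4) at all: it cites them to Examples 1.5 and 1.18 of Bridson--Haefliger and only proves (5), doing so combinatorially --- a shortest word $w$ representing $c^{-p'}a^kb^\ell c^p$ is written as a product of blocks $a^{k_1}b^{\ell_1}c^{p_1}\cdots a^{k_n}b^{\ell_n}c^{p_n}$, and uniqueness of the normal form from (1) forces $k=\sum k_i$, $\ell=\sum \ell_i$, giving $\abs{w}_S\geq\abs{k}+\abs{\ell}$. You instead make the whole lemma self-contained: (2) by induction from $ba=cab$, uniqueness in (1) via the unitriangular matrix model $\Phi$, the bound $\abs{p}\leq N^2$ by tracking how right multiplication by each generator perturbs the $(1,2)$-- and $(1,3)$--entries of the partial products, and the lower bound in (5) via the $1$--Lipschitz abelianization $\phi\colon G\to\mathbb{Z}^2$. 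Your $\phi$-argument for (5) is essentially the paper's exponent-sum argument in cleaner, coordinate-free form (the paper's block decomposition is exactly an abelianization count), while your matrix model buys something the paper simply outsources: honest proofs of uniqueness in (1) and of the quadratic bound in (4), the latter being genuinely nontrivial and nowhere argued in the paper itself. One small slip worth fixing: when the $j$-th letter is $c^{\pm1}$, the $(1,3)$--entry changes by $1$ even if the current $(1,2)$--entry is $0$, so the per-step change is bounded by $\max\{\abs{x_{j-1}},1\}\leq j$ rather than by the $(1,2)$--entry alone; the sum $\sum_{j=1}^{N}j\leq N^2$ and the conclusion are unaffected.
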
 
\begin{proof}
For the facts (1), (2), (3) and (4), we refer the reader to Examples 1.5 and 1.18 in \cite{MR1086648}. We now prove the fact (5).

First we observe that $c$ commutes with every element of group $G$.
Since $d_S(a^kb^\ell c^p,c^p)=d_S(c^pa^kb^\ell,c^p)=\abs{a^kb^\ell}_S\leq \abs{k}+\abs{\ell}$ and $c^p\in H$, then $d_S(a^kb^\ell c^p,H)\leq\abs{k}+\abs{\ell}$. Let $c^{p'}$ be an element in $H$ such that 
$d_S(a^kb^\ell c^p,H)=d_S(a^kb^\ell c^p,c^{p'})$. Thus, $d_S(a^kb^\ell c^p,H)=\abs{c^{-p'}a^kb^\ell c^p}_S=\abs{a^kb^\ell c^{p-p'}}_S$. Let $w$ be the shortest word such that $a^kb^\ell c^{p-p'}\equiv_G w$. We could write $w$ in the form $w=a^{k_1}b^{\ell_1}c^{p_1}a^{k_2}b^{\ell_2}c^{p_2}\cdots a^{k_n}b^{\ell_n}c^{p_n}$ and $\abs{w}_S=\sum_{i=1}^{n} (\abs{k_i}+\abs{\ell_i}+\abs{p_i})$. We note that the values of $k_i, \ell_i, p_i$ can be zero. Thus, 
\[d_S(a^kb^\ell c^p,H)=\sum_{i=1}^{n} (\abs{k_i}+\abs{\ell_i}+\abs{p_i}).\]
Also, there is $p''$ such that $w\equiv_G a^{k_1+k_2+\cdots+k_n}b^{\ell_1+\ell_2+\cdots+\ell_n}c^{p''}$

Thus, $a^kb^\ell c^{p-p'}\equiv_G a^{k_1+k_2+\cdots+k_n}b^{\ell_1+\ell_2+\cdots+\ell_n}c^{p''}$

By (1), it implies that $k=k_1+k_2+\cdots+k_n$ and $\ell=\ell_1+\ell_2+\cdots+\ell_n$

Then, \[d_S(a^kb^\ell c^p,H)=\sum_{i=1}^{n} (\abs{k_i}+\abs{\ell_i}+\abs{p_i}) \geq \abs{k}+\abs{\ell}.\]

Therefore, $d_S(a^kb^\ell c^p,H)=\abs{k}+\abs{\ell}$. \qedhere
\end{proof}
\begin{thm}
\label{rdohs}
 Let $G=\langle a,b,c| bab^{-1}a^{-1}=c, ac=ca, bc=cb\rangle$ be the Heisenberg group and $H$ the cyclic group generated by $c$. Then 
\begin{enumerate}
\item $dist^H_G$ and $Dist^H_G$ are both quadratic.
\item $div(G,H)$ and $Div(G,H)$ are both linear.
\end{enumerate}
\end{thm}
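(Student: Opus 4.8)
The plan is to establish the two claims of Theorem~\ref{rdohs} separately, using the structural facts about the Heisenberg group collected in Lemma~\ref{hs}. For part (1), since $H=\gen{c}$ is normal and $G/H\cong\Z^2$ is one-ended, I can compute both distortion functions directly from the word-length formulas. By Lemma~\ref{hs}(4), if $\abs{c^p}_S\leq N$ then $\abs{p}\leq N^2$, which shows $Dist^H_G(N)=\max\set{\abs{c^p}_T}{\abs{c^p}_S\leq N}$ is at most quadratic; to see it is exactly quadratic I would exhibit the standard commutator computation, namely that $c^{n^2}$ (or a comparable power) can be expressed by a word of length $O(n)$ using $b^nab^{-n}a^{-1}$-type relations, so $\abs{c^{n^2}}_S\leq Cn$ and hence $Dist^H_G(Cn)\geq n^2$. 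For the lower distortion $dist^H_G$, the same bounds apply in reverse: if $\abs{c^p}_S\geq r$ then again $\abs{p}$ is bounded below by a multiple of $r^2$ via Lemma~\ref{hs}(4) (read as $\abs{c^p}_S\geq\sqrt{\abs{p}}$ up to constants), giving $dist^H_G\sim Dist^H_G\sim$ quadratic. This part is essentially a repackaging of Theorem~\ref{rd}.

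For part (2), the lower relative divergence $div(G,H)$ is linear. The upper bound $div(G,H)\preceq dist^H_G$ from Theorem~\ref{ldns} only gives a quadratic bound, which is not sharp enough, so I need a direct argument. The key geometric fact is Lemma~\ref{hs}(5): $d_S(a^kb^\ell c^p,H)=\abs{k}+\abs{\ell}$, so the distance to $H$ depends only on the $\Z^2$-coordinates $(k,\ell)$ and not at all on $p$. This means that $C_r(H)$ and its complement structure is governed entirely by the flat $\Z^2$ quotient, which is one-ended with linear divergence. Concretely, to bound $\sigma^n_\rho(r)$ from above by a linear function, I would take two points $x,y\in\partial N_r(H)$ with $d_r(x,y)<\infty$ and $d_S(x,y)\geq nr$, and connect them by an efficient path in $C_{\rho r}(H)$ whose length is linear in $r$; such a path exists because once we are at distance $r$ from $H$, we can move freely in the $c$-direction (which costs nothing toward $H$ by part (5)) and navigate in the $(k,\ell)$-plane along the sphere of radius $r$ in $\Z^2$, whose induced-path metric on the complement of the disk is linear.

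For the matching lower bound $div(G,H)\succeq$ linear, I would observe that $\sigma^n_\rho$ trivially dominates a linear function: for any infinite $H$ with $0<\tilde e(G,H)<\infty$ the points realizing the infimum satisfy $d_S(x,y)\geq nr$, so any connecting path has length at least $nr$, forcing $\sigma^n_\rho(r)\geq nr\geq r$. Combined with Theorem~\ref{feons} (so that $\tilde e(G,H)=\tilde e(G/H)=1<\infty$, since $\Z^2$ is one-ended), this gives $div(G,H)\sim$ linear. The analogous upper-divergence claim $Div(G,H)\sim$ linear follows from Theorem~\ref{udns}: we have $Div(G/H,e)\preceq Div(G,H)\preceq Dist^H_G\circ Div(G/H,e)$, and since $G/H=\Z^2$ has linear divergence $Div(G/H,e)$, the lower bound already forces $Div(G,H)\succeq$ linear; for the upper bound the naive estimate gives $Dist^H_G\circ(\text{linear})=$ quadratic, so again I must refine it using Lemma~\ref{hs}(5), exactly as in the lower-divergence computation, to produce a linear connecting path in $C_{\rho r}(H)$ directly in $\Gamma(G,S)$.

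The main obstacle is the refined upper bound in both divergence computations: the black-box estimates from Theorems~\ref{ldns} and~\ref{udns} only yield quadratic bounds because $H$ is quadratically distorted, whereas the true answer is linear. The reason the divergence is insensitive to this distortion is that, by Lemma~\ref{hs}(5), traveling in the central $c$-direction does not change the distance to $H$, so the distortion of $H$ never enters the cost of an efficient detour path staying outside $N_{\rho r}(H)$. Making this precise---constructing an explicit linear-length path in $C_{\rho r}(H)$ between two prescribed boundary points, while verifying it stays outside the $\rho r$-neighborhood using only the $(k,\ell)$-coordinate formula---is the technical heart of the argument, and I expect it to mirror the standard proof that $\Z^2$ has linear divergence, lifted through the central extension.
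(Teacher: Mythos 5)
Part (1) of your proposal is fine and coincides with the paper's treatment: the paper simply quotes Theorem~\ref{rd}. Your linear \emph{lower} bounds in part (2) are also correct (indeed every divergence family dominates a linear function by the definition of $\preceq$), and the organizational difference is minor: the paper proves that $Div(G,H)$ is at most linear directly and then deduces $div(G,H)\preceq Div(G,H)$ from Theorem~\ref{udld} (using $\tilde{e}(G,H)=\tilde{e}(G/H)=1$ via Theorem~\ref{feons}), whereas you propose to do $div(G,H)$ first and then repeat the construction for $Div(G,H)$. The genuine gap is in the one step that carries all the content: the construction of a linear-length connecting path in $C_{\rho r}(H)$, which you defer with the claim that it will ``mirror the standard proof that $\Z^2$ has linear divergence, lifted through the central extension,'' justified by the assertion that ``the distortion of $H$ never enters the cost of an efficient detour path.'' This is exactly backwards. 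A path in $\Z^2=G/H$ avoiding the disk of radius $r$ lifts to a path in $C_r(H)$ that ends not at $y$ but at some point $y'=c^{p''}y$ of the coset $yH$, and the central discrepancy $p''$ is in general quadratic in $r$: for the upper divergence the paper's computation via Lemma~\ref{hs}(2),(4) only gives $\abs{p'-p}\leq 2n^2r^2$, and for the lower divergence a quadratic discrepancy is \emph{forced}, since by Lemma~\ref{hs}(5) any two points of $\partial N_r(H)$ have $\Z^2$--projections within $\ell^1$--distance $2r$ of each other, so a separation $d_S(x,y)\geq nr$ can only come from the center, and $\abs{c^m}_S\asymp\sqrt{\abs{m}}$ then forces $\abs{p''}$ to be of order $(n-2)^2r^2$. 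Closing such a discrepancy by moving ``in the $c$--direction'' edge by edge costs $\abs{p''}\sim n^2r^2$, i.e.\ quadratic length, so the lifted-$\Z^2$ path you describe is not linear and your argument fails at precisely the point you identify as the technical heart.

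What actually makes the theorem true --- and what the paper's proof does --- is the opposite of your heuristic: the \emph{quadratic distortion of the center is essential}. The identity $c^{t^2}=b^ta^tb^{-t}a^{-t}$ lets one traverse a central jump of size at most $4n^2r^2$ by a word of length at most $13nr$; the paper first pushes both endpoints out along $a$-- and $b$--edges to points of the form $x_3=a^rb^{13nr}c^{\ast}$, whose distance to $H$ is $(13n+1)r$ by Lemma~\ref{hs}(5), so that this commutator path of length at most $13nr$ cannot re-enter $N_r(H)$, and then concatenates to get a path of length at most $50nr$ in $C_r(H)$. So the distortion of $H$ is not irrelevant to the detour cost; it is the only reason a linear detour exists, and it is also why the black-box bounds from Theorems~\ref{ldns} and~\ref{udns} (which you correctly note give only quadratic) are not sharp. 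Any correct proof must use the relation $bab^{-1}a^{-1}=c$ in this way, and your proposal as written contains no substitute for this step.
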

\begin{proof}
The fact that $dist^H_G$ and $Dist^H_G$ are both quadratic could be seen in Theorem~ \ref{rd}. We see that $\tilde{e}(G,H)=e(G/H)=1$ by Theorem~ \ref{feons}. Thus, $div(G,H)\preceq Div(G,H)$ by Theorem~ \ref{udld}. Therefore, it is sufficient to show $Div(G,H)$ is linear.

Denote $Div(G,H)=\{\delta^n_{\rho}\}$. We will show that $\delta^n_{\rho}\preceq r$. More precisely, we are going to show that $\delta^n_{\rho}(r)\leq 50nr$ for all positive $r$. 

Indeed, let $x$ and $y$ be arbitrary points in $\partial N_r(H)$ such that $d_r(x,y)<\infty$ and $d_S(x,y)\leq nr$. Assume that $r$ is an integer and $x$, $y$ are vertices. Write $x=a^kb^\ell c^p$ and $y=a^{k'}b^{\ell'}c^{p'}$. Thus, $\abs{k}+\abs{\ell}=r$ and $\abs{k'}+\abs{\ell'}=r$ by Lemma~ \ref{hs}(5).

By Lemma~ \ref{hs}(2) and the fact that $c$ commutes with any element of group $G$, we compute 
\[x^{-1}y=a^{k'-k}b^{\ell'-\ell}c^{(p'-p)-\ell(k'-k)}.\]
Also, \[\abs{x^{-1}y}_S=d_S(x,y)\leq nr.\]
Thus, $\abs{k'-k}\leq nr$, $\abs{\ell'-\ell}\leq nr$ and $\abs{(p'-p)-\ell(k'-k)}\leq n^2r^2$.

Therefore, \[\abs{p'-p}\leq \abs{(p'-p)-\ell(k'-k)}+\abs{\ell(k'-k)}\leq n^2r^2+nr^2\leq 2n^2r^2.\]

Let $\ell_1$ be a number such that $\ell \ell_1\geq 0$ and $\abs{\ell_1}=r$. Let $x_1=xb^{\ell_1-\ell}$; $x_2=x_1a^{r-k}$ and $x_3=x_2b^{13nr-\ell_1}$. By Lemma~ \ref{hs}(2), we see that $x_3=a^rb^{13nr}c^{p+\ell_1(r-k)}$. 

Since $x_1=xb^{\ell_1-\ell}$ and $\abs{\ell_1-\ell}\leq r$; then there is a path $\alpha_1$ with edges labeled by $b$ connecting $x$ and $x_1$ such that the length of $\alpha_1$ is less than or equal to $r$. Similarly, there is a path $\alpha_2$ with edges labeled by $a$ connecting $x_1$, $x_2$ such that the length of $\alpha_2$ is less than $2r$ and a path $\alpha_3$ with edges labeled by $b$ connecting $x_2$, $x_3$ such that the length of $\alpha_3$ is less than $14nr$. Let $\alpha=\alpha_1\cup\alpha_2\cup\alpha_3$. We see that each vertex of $\alpha$ is of the form $x=a^{k_1}b^{\ell_1} c^{p_1}$ where $\abs{k_1}+\abs{\ell_1}\geq r$. Therefore, $\alpha$ is a path in $C_r(H)$ by Lemma~ \ref{hs}(5) and $\alpha$ connects $x$ and $x_3$, where $x_3=a^rb^{13nr}c^{p+\ell_1(r-k)}$ and $\abs{\ell_1}=r$. Moreover, the length of $\alpha$ is bounded above by $17nr$. 

By a similar argument, there is a path $\beta$ in $C_r(H)$ connecting $y$ and $y_3$, where $y_3=a^rb^{13nr}c^{p'+\ell'_1(r-k')}$ and $\abs{\ell'_1}=r$. Moreover, the length of $\beta$ is bounded above by $17nr$.

We now try to connect $x_3$ and $y_3$ by a path $\gamma$ in $C_r(H)$ with length bounded above by $14nr$. Indeed, let $p_1= p+\ell_1(r-k)$ and $p'_1= p'+\ell'_1(r-k')$ and assume that $p_1\leq p'_1$. Thus, \[\abs{p_1'-p_1}\leq\abs{p'-p}+ \abs{\ell_1(r-k)}+\abs{\ell'_1(r-k')}\leq 2n^2r^2+2r^2+2r^2\leq 4n^2r^2.\] 
Thus, $0\leq p'_1-p_1\leq 4n^2r^2$.

Let $t$ be a positive number such that $t^2\leq (p'_1-p_1)<(t+1)^2$ and let $t_1=(p'_1-p_1)-t^2$. Then $t\leq 2nr$ and $t_1\leq (t+1)^2-t^2\leq 2t+1\leq 5nr$. Also, $c^{p'_1-p_1}=c^{t^2}c^{t_1}=b^ta^tb^{-t}a^{-t}c^{t_1}$ and $y_3=x_3c^{p'_1-p_1}$. Thus, we could connect $x_3$, $y_3$ by a path $\gamma$ such that the length of $\gamma$ is bounded above by $4t+t_1$. Therefore, this length is bounded above by $13nr$. Also, the distance between $x_3$ and $H$ is $(13n+1)r$. Thus, $\gamma$ must lie in $C_r(H)$. Let $\bar{\gamma}=\alpha\cup\gamma\cup\beta$ then $\bar{\gamma}$ is a path in $C_r(H)$ connecting $x$, $y$ and the length of $\bar{\gamma}$ is bounded above by $50nr$. Thus, $d_{\rho r}(x,y)<50nr$. Therefore, $\delta^n_{\rho}(r)\leq 50nr$. Thus, $\delta^n_{\rho}\preceq r$. \qedhere 
\end{proof}

\section{Relative divergence of finitely generated groups with respect to their cyclic subgroups}
\label{rdcs}
In this section, we investigate the upper and lower divergence of a finitely generated group relative to an infinite cyclic subgroup. 

\begin{defn}
\label{axisfcg}
Let $G$ be a group with finite generating set $S$ and $H$ an infinite cyclic subgroup of $G$ generated by some element $h$ in $S$. Let $e_h$ be the edge with the identity vertex as the initial point and labeled by $h$ in $\Gamma(G,S)$. A bi-infinite arc $\alpha=\cup_{n\in \mathbb{Z}}h^ne_h$ is \emph{the axis} of $H$.
\end{defn}

Suppose $G$ is a finitely generated one-ended group and $H$ is an infinite cyclic subgroup of $G$ in this section. Let $h$ be a generator of $H$ and assume that the finite generating set $S$ of $G$ contains $h$. Let $\alpha$ be the axis of $H$. Thus, $\alpha$ is a bi-infinite arc with all vertices in $H$.

We now define the concept of divergence of a bi-infinite arc in a one-ended geodesic space. This concept will play an important role for investigating the lower divergence of a one-ended group $G$ with respect to an infinite cyclic subgroup. 
 
\begin{defn}
Let $X$ be a one-ended geodesic space and $\beta$ a proper bi-infinite arc. Let $c$ be one point on $\beta$. \emph{The divergence of $(\beta,c)$}, denoted $div(\beta,c)$, is the function $f\!:(0,\infty)\to (0,\infty)$ as follows:

For each positive $r$, we define 
\[f(r)=\inf\bigset{\abs{\gamma}}{\text{$\gamma$ is a path in $X-B(c,r)$ with endpoints on $\beta$ and on different sides of $c$}}.\]
\end{defn}
\begin{rem}
\label{rm1}
Observe that $div(\beta,c)$ is a non-decreasing function.

Let $\alpha$ be the axis of the infinite cyclic subgroup $H$, which is defined in Definition \ref{axisfcg}. Then $div(\alpha,h^i)=div(\alpha,e)$ in the Cayley graph $\Gamma(G,S)$ for any element $h^i$ in $H$ and let $div_\alpha=div(\alpha,e)$. 

For each $x$ in $\Gamma(G,S)-\alpha$ and $u$ a point in $\alpha$ such that $d_S(x,\alpha)=d_S(x,u)$, the point $u$ must be a vertex of $\Gamma(G,S)$. Thus, $N_r(\alpha)=N_r(H)$ for each $r>1$. Therefore, $\partial N_r(\alpha)=\partial N_r(H)$ and $C_r(\alpha)=C_r(H)$ for each $r>1$.
\end{rem}
\begin{defn}
Let $c$ be an arc in $\Gamma(G,S)$. If $c_0$ is any subset of $c$, the \emph{Hull} of $c_0$ in $c$, denoted $\Hull_c(c_0)$, is the smallest connected subspace of $c$ containing $c_0$.
\end{defn}

\begin{lem}
\label{doa}
Choose $r>1$ and let $n$ be a positive integer. Choose $s\geq 3 Dist^H_G\bigl((n+2)r\bigr)$. Let $a$, $b$, $c$ be three different points in $\alpha$ such that $c$ lies between $a$, $b$. Assume that $a$, $b$ lie outside the ball $B(c,s)$. Let $\gamma$ be an arc outside $B(c,s)$ connecting $a$ and $b$. Then there are two points $x$, $y$ in $\gamma\cap\partial N_r(\alpha)$ such that $d_S(x,y)\geq nr$ and the segment of $\gamma$ connecting $x$ and $y$ lies in $C_r(\alpha)$.
\end{lem}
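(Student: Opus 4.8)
The plan is to isolate the single mechanism forcing $\gamma$ to stay far from $\alpha$. After applying the left translation carrying $c$ to the identity (an isometry of $\Gamma(G,S)$ preserving $\alpha$ and $H$) we may assume $c=e=h^0$, so the vertices of $\alpha$ are the powers $h^j$ and $d_S(h^j,c)=\abs{h^j}_S$; since $c$ lies between $a=h^{j_a}$ and $b=h^{j_b}$ we may also assume $j_a<0<j_b$. Write $D=Dist^H_G\bigl((n+2)r\bigr)$, so that the hypothesis becomes $s\ge 3D$, and take $\gamma$ to be an edge--path (so it has only finitely many excursions below). For $z\notin\alpha$ let $\pi(z)$ be a nearest point of $\alpha$; by Remark~\ref{rm1} every such $\pi(z)$ is a \emph{vertex} $h^{j(z)}$, so the integer $j(z)$ is a genuine coordinate recording which side of $c$ the projection lies on. I would first record the two elementary consequences of the definition of upper distortion: $\abs{h^j}_S\le\abs{j}$ (because $h\in S$) and $\abs{j}>Dist^H_G(t)\Rightarrow\abs{h^j}_S>t$. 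The key step is \textbf{Claim A}: no point $z$ of $\gamma$ with $d_S(z,\alpha)\le r$ can project into the central segment $\{h^j:\abs{j}\le D\}$. Indeed such a $z$ has $d_S(z,c)\ge s$ and $d_S(z,\pi(z))\le r$, so $\abs{h^{j(z)}}_S=d_S(\pi(z),c)\ge s-r\ge 3D-r$; if $\abs{j(z)}\le D$ then $\abs{h^{j(z)}}_S\le\abs{j(z)}\le D$, forcing $3D-r\le D$, i.e. $r\ge 2D$, which is impossible since $D\ge\lfloor(n+2)r\rfloor> r/2$. Thus every point of $\gamma\cap\bar{N_r(\alpha)}$ projects either to the negative side ($j\le -D-1$) or to the positive side ($j\ge D+1$), never between them.

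Next I would show the chosen side is locally constant. For consecutive vertices $z,z'$ of $\gamma$ inside $\bar{N_r(\alpha)}$ one has $d_S(\pi(z),\pi(z'))<2r+1\le (n+2)r$, hence $\abs{j(z)-j(z')}\le Dist^H_G(2r+1)\le D$. Since the two admissible sides are separated by the forbidden block $\{\abs{j}\le D\}$, crossing from $j\le -D-1$ to $j\ge D+1$ would require a jump of at least $2D+2>D$ in a single step; therefore the side is constant along any connected subarc of $\gamma$ contained in $\bar{N_r(\alpha)}$. Applying Claim A at the endpoints themselves (where $d_S(\cdot,\alpha)=0$) shows $a$ lies on the negative side and $b$ on the positive side. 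Now decompose $\gamma$ into its maximal subarcs in $C_r(\alpha)$ --- the excursions $E_1,\dots,E_m$, whose endpoints lie on $\partial N_r(\alpha)$ --- together with the complementary subarcs, which lie in $\bar{N_r(\alpha)}$. The side is constant on each complementary subarc, so it can only change across an excursion; as it equals $-$ at $a$ and $+$ at $b$, some excursion $E_k$ has its two endpoints $x,y\in\partial N_r(\alpha)$ on opposite sides of $c$.

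Finally, for this $E_k$ the subarc of $\gamma$ between $x$ and $y$ is precisely $E_k\subseteq C_r(\alpha)$, while $\pi(x)=h^{j_x}$ and $\pi(y)=h^{j_y}$ have, say, $j_x\le -D-1$ and $j_y\ge D+1$, so $j_y-j_x\ge 2D+2>D=Dist^H_G\bigl((n+2)r\bigr)$. The distortion implication then gives $d_S(\pi(x),\pi(y))=\abs{h^{j_y-j_x}}_S>(n+2)r$, and since $d_S(x,\pi(x))=d_S(y,\pi(y))=r$ the triangle inequality yields
\[d_S(x,y)\ \ge\ d_S(\pi(x),\pi(y))-2r\ >\ (n+2)r-2r\ =\ nr,\]
which is exactly the desired pair $(x,y)$.

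The one genuine obstacle is that $H$ may be badly distorted in $G$, so $\alpha$ need not be quasi--isometrically embedded and one cannot reason with distances measured along $\alpha$. The device that circumvents this is to label sides of $c$ by the \emph{exponent} $j$ and to use $Dist^H_G$ in both directions: the trivial bound $\abs{h^j}_S\le\abs{j}$ to prove that projections avoid the central block (Claim A), and the defining property of $Dist^H_G$ to convert the resulting large exponent gap back into the large Cayley distance $d_S(x,y)\ge nr$. The constant $3$ in $s\ge 3Dist^H_G\bigl((n+2)r\bigr)$ is precisely what reconciles Claim A with the single--step estimate $\abs{j(z)-j(z')}\le D$.
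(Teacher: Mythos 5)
Your proof is correct, and it runs on the same fuel as the paper's: decompose $\gamma$ at its crossings of $\partial N_r(\alpha)$, project the crossing points to vertices of $\alpha$ (Remark~\ref{rm1}), and use distortion in both directions --- the trivial bound $\abs{h^j}_S\leq\abs{j}$ to push projections of points of $\gamma$ far from $c$, and the defining property of $Dist^H_G$ to convert a large exponent gap at the distinguished excursion into $d_S(x,y)\geq(n+2)r-2r=nr$. Where you genuinely differ is in how that excursion is located. The paper argues by covering, twice: it first shows $\gamma\not\subset N_r(\alpha)$ (otherwise the projection hulls of the edges of $\gamma$ would cover $[a,b]\ni c$, placing $c$ within $Dist^H_G(2r+1)+r<s$ of $\gamma$); then, after projecting the crossing points $x_i$ to $x_i'\in\alpha$, it notes that the segments $[x_i',x_{i+1}']$ chain across $[a,b]$, so some $[x_{i_0}',x_{i_0+1}']$ contains $c$, re-runs the hull argument to force $(x_{i_0},x_{i_0+1})\subset C_r(\alpha)$, and estimates the gap through the two distances to $c$, each at least $s-r$. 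You compress both covering steps into a single mechanism: the forbidden block $\{h^j:\abs{j}\leq D\}$ with $D=Dist^H_G\bigl((n+2)r\bigr)$, a locally constant side function on $\gamma\cap\bar{N_r(\alpha)}$, and a discrete intermediate-value argument from $\mathrm{side}(a)=-$ to $\mathrm{side}(b)=+$. This buys uniformity: the case $\gamma\subset N_r(\alpha)$ needs no separate contradiction (a globally constant side is already impossible), the hull construction disappears, and your block-width bound $2D+2>D$ plays exactly the role of the paper's $2(s-r)>D$. Two pieces of bookkeeping should be tightened, using only inequalities you already have: your normalization $c=e$ tacitly assumes $c$ is a vertex of $\alpha$ (the lemma allows any point of $\alpha$; translate a nearest vertex of $\alpha$ to $e$ instead and absorb the extra additive constant into the slack of $s\geq 3D$), and local constancy of the side must be invoked for arbitrary points $z,z'\in\gamma\cap\bar{N_r(\alpha)}$ with $d_S(z,z')\leq 1$, not only for consecutive vertices, because excursion endpoints are in general interior points of edges and a complementary subarc may contain no vertex at all --- your estimate $d_S\bigl(\pi(z),\pi(z')\bigr)\leq 2r+1$ applies verbatim in that generality, so each complementary subarc has constant side by connectedness.
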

\begin{proof}

First, we will show that $\gamma$ does not lie in the $r$--neighborhood of $\alpha$. Assume by way of contradiction that $\gamma$ lies in the $r$--neighborhood of $\alpha$. For each $G$--vertex $v$ of $\gamma$, let \[c_v=\Hull_{\alpha}\bigl(\alpha\cap \bar{B(v,r)}\bigr).\] For each edge $e$ of $\gamma$ with $G$--endpoints $v$ and $w$, let \[c_e= \Hull_{\alpha}(c_v\cup c_w).\] We see that the subsegment $[a,b]$ of $\alpha$ is covered by the sets $c_e$ for all edges $e$ of $\gamma$. In particular, $c$ lies in some $c_e$, where $e$ is an edge of $\gamma$. Therefore, $c$ lies between two vertices $u_1$ and $v_1$ of $\alpha$ whose distance from vertices of $e$ is at most $r$. Thus, the distance between $u_1$ and $v_1$ is less than $2r+1$. Therefore, the length of the subsegment $[u_1,v_1]$ of $\alpha$ is less than $Dist^H_G(2r+1)$. Thus,\[d_S(c,\gamma)\leq Dist^H_G(2r+1)+r<2 Dist^H_G\bigl((n+2)r\bigr)<s,\]
which is a contradiction. Thus, $\gamma$ does not lie in the $r$--neighborhood of $\alpha$.

Let $M=\bigset{x_i}{i\in\{0,1,2,\cdots,n\}}$ be the set of points of $\gamma$ that satisfies the following conditions:
\begin{enumerate}
\item We have $x_0=a$ and $x_n=b$.
\item For each $i\in\{1,2,\cdots,n-1\}$, the distance between $x_i$ and $\alpha$ is $r$. 
\item For each $i\in\{0,1,2,\cdots,n-1\}$, the open segment $(x_i,x_{i+1})$ does not contain any point in $\partial N_r(\alpha)$
\end{enumerate}
For each $i\in\{1,2,\cdots,n-1\}$, let $x'_i$ be a vertex of $\alpha$ such that $d_S(x_i,x'_i)=r$. We again assign $x'_0=a$ and $x'_n=b$. For each $i\in\{0,1,2,\cdots,n-1\}$, we define $d_i$ to be the subsegment of $\alpha$ that connect $x'_i$ and $x'_{i+1}$. Thus, $c$ must lie in some $d_{i_0}$. Since $(x_{i_0},x_{{i_0}+1})\cap \partial N_r(\alpha)=\emptyset$, then either $(x_{i_0},x_{{i_0}+1})\subset N_r(\alpha)$ or $(x_{i_0},x_{{i_0}+1})\cap N_r(\alpha)=\emptyset$

If $(x_{i_0},x_{{i_0}+1})\subset N_r(\alpha)$, we can use the same argument as above to show that $d_S(c,\gamma)<s$, which is a contradiction. Thus, $(x_{i_0},x_{{i_0}+1})\cap N_r(\alpha)=\emptyset$ or $(x_{i_0},x_{{i_0}+1})\subset C_r(\alpha)$.

Since the distance between $x_{i_0}$, $c$ is at least $s$ and the distance between $x'_{i_0}$, $x_{i_0}$ is $r$, then the distance between $x'_{i_0}$ and $c$ is at least $s-r$. Thus, the length of the segment of $\alpha$ connecting $x'_{i_0}$ and $c$ is at least $s-r$. Similarly, the length of the segment of $\alpha$ connecting $x'_{{i_0}+1}$ and $c$ is also at least $s-r$. Thus, the length of the segment of $\alpha$ connecting $x'_{i_0}$ and $x'_{{i_0}+1}$ is also at least $2s-2r$. Therefore, this length is strictly bounded below by $Dist^H_G\bigl((n+2)r\bigr)$. Thus, the distance in $H$ between $x'_{i_0}$ and $x'_{{i_0}+1}$ is strictly greater than $Dist^H_G\bigl((n+2)r\bigr)$. Therefore, the distance in $G$ between $x'_{i_0}$ and $x'_{{i_0}+1}$ is at least $(n+2)r$. Also, the distances $d_S(x'_{i_0}, x_{i_0})$ and $d_S(x'_{{i_0}+1},x_{{i_0}+1})$ are both $r$. Thus, the distance between $x_{i_0}$ and $x_{{i_0}+1}$ is at least $nr$. We let $x=x_{i_0}$ and $y=x_{{i_0}+1}$. \qedhere
\end{proof}

\begin{prop}
\label{ldadog}
Let $G$ be a one-ended group with a finite generating set $S$. Let $H$ be an infinite cyclic subgroup generated by some element in $S$ and $\alpha$ the axis of $H$. Then, \[div_\alpha\preceq div(G,H)\preceq div_{\alpha}\circ(3Dist^H_G)\].
\end{prop}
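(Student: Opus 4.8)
The plan is to prove the two domination inequalities separately, using throughout the identifications $N_r(\alpha)=N_r(H)$, $\partial N_r(\alpha)=\partial N_r(H)$ and $C_r(\alpha)=C_r(H)$ valid for $r>1$ (Remark~\ref{rm1}), together with the $H$--homogeneity $div(\alpha,h^i)=div_\alpha$, so that the base point on the axis may be taken to be \emph{any} vertex of $\alpha$. Because $div_\alpha$ and $div_\alpha\circ(3Dist^H_G)$ are single functions, regarded as constant families $\{\delta^n_\rho\}$ independent of $n$ and $\rho$, each family inequality reduces to exhibiting one pair $(L,M)$ for which the ordinary domination $\preceq$ of functions holds for every $\rho\in(0,1]$ and every $n\ge2$.

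For the upper bound $div(G,H)\preceq div_\alpha\circ(3Dist^H_G)$ the key tool is Lemma~\ref{doa}. Writing $div(G,H)=\{\sigma^n_\rho\}$, I would fix $r>1$, set $s=3Dist^H_G\bigl((n+2)r\bigr)$, and choose a crossing path $\gamma$ for $div_\alpha(s)=div(\alpha,e)(s)$ of length within $\epsilon$ of the infimum: $\gamma$ avoids $B(e,s)$ and joins two points $a,b\in\alpha$ lying outside $B(e,s)$ on opposite sides of $e$. Lemma~\ref{doa} (with $c=e$) then produces $x,y\in\gamma\cap\partial N_r(H)$ with $d_S(x,y)\ge nr$ whose connecting subarc of $\gamma$ lies in $C_r(H)$. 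That subarc is a legitimate competitor in the definition of $\sigma^n_\rho(r)$, since $C_r(H)\subseteq C_{\rho r}(H)$, so $\sigma^n_\rho(r)\le \ell(\gamma)\le div_\alpha\bigl(3Dist^H_G((n+2)r)\bigr)$. Reading the right-hand side as $(div_\alpha\circ 3Dist^H_G)\bigl((n+2)r\bigr)$ yields $\sigma^n_\rho\preceq div_\alpha\circ(3Dist^H_G)$ with domination constant $A=n+2$; taking $L=1$ this holds for all $\rho,n$, which is the required family domination.

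For the lower bound $div_\alpha\preceq div(G,H)$ I would run the construction in reverse with a fixed enlargement, say $M=4$. Given a near-optimal pair $x_1,x_2\in\partial N_r(H)$ for $\sigma^{Mn}_\rho(r)$, with $d_S(x_1,x_2)\ge Mnr$ and a path of length $\approx d_{\rho r}(x_1,x_2)$ in $C_{\rho r}(H)$ joining them, let $u_1,u_2$ be nearest vertices of $\alpha$ to $x_1,x_2$, so $d_S(u_1,u_2)\ge Mnr-2r$. Since $d_S(u_1,\cdot)$ changes by at most $1$ along consecutive vertices of $\alpha$, a discrete intermediate value argument produces a vertex $c$ of $\alpha$ strictly between $u_1$ and $u_2$ with $d_S(u_1,c)\ge 2r$ and $d_S(u_2,c)\ge 2r$ (this is where $M$ must be taken large enough). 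Prepending and appending the two radial segments $[u_1,x_1]$ and $[x_2,u_2]$ of length $r$ to the connecting path gives a crossing path for $c$; its middle part avoids $N_{\rho r}(\alpha)\supseteq B(c,\rho r)$, while each radial piece stays within distance $r$ of $u_i$ and hence at distance $\ge 2r-r\ge\rho r$ from $c$, so the whole path avoids $B(c,\rho r)$. Using $div(\alpha,c)=div_\alpha$ and taking the infimum over configurations gives $div_\alpha(\rho r)\le \sigma^{Mn}_\rho(r)+2r$, i.e.\ $div_\alpha\preceq\sigma^{Mn}_\rho$ after the substitution $r\mapsto r/\rho$.

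The substantive difficulty is already packaged in Lemma~\ref{doa}, so the main thing to get right is the bookkeeping of scales and radii. In the upper bound I must track that the ball radius $3Dist^H_G((n+2)r)$ is exactly what Lemma~\ref{doa} demands and that the extracted subarc stays in $C_{\rho r}(H)$; in the lower bound I must verify that a single fixed $M$ lets one place $c$ far from both projections $u_1,u_2$ for every $n\ge2$ and $\rho\in(0,1]$, and that the appended radial segments avoid $B(c,\rho r)$ even though they meet the axis away from $c$. Degenerate cases, where $\sigma^{Mn}_\rho(r)=\infty$ or $div_\alpha$ is infinite, make the corresponding inequalities trivial, so one-endedness is used only to guarantee that the crossing paths exist when they are needed.
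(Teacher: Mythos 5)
Your proposal is correct and is essentially the paper's own proof: the upper bound is obtained exactly as you describe, by applying Lemma~\ref{doa} with $s=3Dist^H_G\bigl((n+2)r\bigr)$ to a near-optimal crossing arc and noting the extracted subarc is a competitor for $\sigma^n_\rho(r)$, and the lower bound by projecting a near-optimal pair $x_1,x_2\in\partial N_r(H)$ to vertices of the axis, choosing a vertex $c$ of $\alpha$ between the projections far from both, and splicing in the two radial geodesics to get a crossing path avoiding $B(c,\rho r)$, yielding $div_\alpha(\rho r)\leq \sigma^{Mn}_\rho(r)+2r$. The only differences are in constants (the paper uses the safety margin $5r$ and $n\geq 20$ where you use $2r$ and $M=4$), and note that in the middle-segment step the containment you actually need is $B(c,\rho r)\subseteq N_{\rho r}(H)$, which holds directly since $c$ is a vertex of $\alpha$ and hence lies in $H$.
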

\begin{proof}
Denote $div(G,H)=\{\sigma^n_{\rho}\}$. 

We will first show that $\sigma^n_{\rho}\preceq div_{\alpha}\circ(3Dist^H_G)$. More precisely, we are going to show that 
$\sigma^n_{\rho}(r)\leq div_{\alpha}\circ(3Dist^H_G)\bigl((n+2)r\bigr)$ for all numbers $r>1$.

Indeed, let $s=3 Dist^H_G\bigl((n+2)r\bigr)$. Let $\gamma$ be any arc outside the ball $B(e,s)$ connecting two points $u$ and $v$ on $\alpha$ such that
 $e$ lies between $u$ and $v$. By Lemma~ \ref{doa}, there are two points $x$ and $y$ in $\gamma\cap\partial N_r(\alpha)$ such that $d_S(x,y)\geq nr$ and the segment of $\gamma$ connecting $x$ and $y$ lies in $C_r(\alpha)$. By Remark \ref{rm1}, two points $x$ and $y$ also lie in $\partial N_r(H)$. Then $d_{\rho r}(x,y)$ is bounded above by the length of $\gamma$. Therefore, $\sigma^n_{\rho}(r)$ is bounded above by the length of $\gamma$. Thus, 
\[\sigma^n_{\rho}(r)\leq div_{\alpha}(s).\]
Therefore, \[\sigma^n_{\rho}(r)\leq div_{\alpha}\circ(3Dist^H_G)\bigl((n+2)r\bigr).\] 

We now will show that $div_\alpha\preceq \sigma^n_{\rho}$ for each $n\geq20$. More precisely, we are going to show that for each $r>3$ 
\[div_\alpha(\rho r)\leq \sigma^n_{\rho}(r)+2r.\] 

Indeed, let $x_1$ and $y_1$ be arbitrary points in $\partial N_r(H)$ such that $d_X(x_1,y_1)\geq nr$ and $d_r(x_1, y_1)<\infty$. Let $\beta$ be any arc in $C_{\rho r}(H)$ connecting $x_1$ and $y_1$. Let $x_2$ and $y_2$ be vertices in $\alpha$ such that $d_S(x_1,\alpha)=d_S(x_1,x_2)=r$ and $d_S(y_1,\alpha)=d_S(y_1,y_2)=r$. Let $\beta_1$ be a geodesic connecting $x_1$ and $x_2$ and $\beta_2$ a geodesic connecting $y_1$ and $y_2$. Since the distance between $x_1$ and $y_1$ is bounded below by $nr$, then the distance between $x_2$ and $y_2$ is bounded below by $(n-2)r$. Let $h^i$ be a vertex of $\alpha$ such that $h^i$ lies between $x_2$, $y_2$ such that $x_2$, $y_2$ do not lie in the ball of center $h^i$ with radius $5r$. Let 
$\bar{\beta}=\beta_1\cup\beta\cup\beta_2$. Thus, $\bar{\beta}$ is a path outside the ball $B(h^i,\rho r)$ connecting the two points $x_2$, $y_2$ in $\alpha$ and $h^i$ lies between $x_2$, 
$y_2$. Therefore, we could have an arc $\beta'$ from $\bar{\beta}$ connecting two points $x_2$ and $y_2$. Thus, $div_\alpha(\rho r)$ is bounded above by the length of $\bar{\beta}$. Therefore, $div_\alpha(\rho r)$ is bounded above by $\abs{\beta}+2r$. Therefore, $div_\alpha(\rho r)$ is bounded above by $d_{\rho r}(x_1, y_1)+2r$. Thus, \[div_\alpha(\rho r)\leq \sigma^n_{\rho}(r)+2r.\] Therefore, \[div_\alpha\preceq \sigma^n_{\rho}.\qedhere\] 
\end{proof}

\begin{thm}
\label{ldocs}
Let $G$ be a one-ended finitely generated group and $H$ an infinite cyclic subgroup of $G$. Suppose that $div(G,H)=\{\sigma^n_{\rho}\}$ and $Div(G,e)=\{\delta_{\rho}\}$. Then 
$\sigma^n_{\rho}\preceq \delta_{\rho}\circ \bigl(({3}/{\rho})Dist^H_G\bigr)$.
\end{thm}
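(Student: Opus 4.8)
The plan is to combine Proposition~\ref{ldadog} with a direct comparison between the divergence $div_\alpha$ of the axis and the one-parameter group divergence $\{\delta_\rho\}=Div(G,e)$. Since both $div(G,H)$ and $Div(G,e)$ depend only on quasi-isometry type, I would first enlarge a finite generating set of $G$ so that it contains a generator $h$ of the infinite cyclic group $H$; this places us in the setting of Proposition~\ref{ldadog}, with $\alpha$ the axis of $H$. The upper half of that proposition supplies the explicit estimate $\sigma^n_\rho(r)\le div_\alpha\bigl(3Dist^H_G((n+2)r)\bigr)$ for all $r>1$. Hence it suffices to establish the single inequality
\[div_\alpha(s)\le \delta_\rho\Bigl(\tfrac{1}{\rho}s\Bigr)\]
for all sufficiently large $s$, and then substitute $s=3Dist^H_G((n+2)r)$.

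To prove this inequality I would set $t=s/\rho$, so that $\rho t=s$ and, since $\rho\le 1$, $t\ge s$. The goal is to produce two points $u_0,v_0\in\alpha\cap\partial N_t(e)$ on opposite sides of $e$ to which the $n=2$ case of the group divergence can be applied (recall $\delta_\rho=\delta^2_\rho$ by Remark~\ref{sudofgg}). Because $\alpha$ is a proper bi-infinite arc, each of its two rays $\alpha^+,\alpha^-$ escapes every ball, so along each ray the continuous function $\tau\mapsto d_S(\alpha(\tau),e)$ exceeds $t$ eventually; let $u_0$ be the \emph{last} point at which $\alpha^+$ crosses $\partial N_t(e)$ and $v_0$ the last such point on $\alpha^-$. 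Then $d_S(u_0,e)=d_S(v_0,e)=t$, the tails of $\alpha^+$ and $\alpha^-$ beyond $u_0$ and $v_0$ lie entirely in $C_t(e)$, and $d_S(u_0,v_0)\le 2t$ by the triangle inequality.

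The main obstacle is verifying that $d_t(u_0,v_0)<\infty$, i.e.\ that $u_0$ and $v_0$ lie in the same component of $C_t(e)$, which is the admissibility condition required to invoke $\delta_\rho(t)$. Here I would use one-endedness of $G$: for $t$ larger than the threshold $r_0$ associated to $\tilde e(G)=1$, the set $C_t(e)$ has a unique deep component $D_t$. The tails of $\alpha^+$ and $\alpha^-$ beyond $u_0$ and $v_0$ are connected, lie in $C_t(e)$, and are not contained in any neighbourhood of $e$, so each is contained in $D_t$; as $u_0$ and $v_0$ are endpoints of these tails inside $C_t(e)$, both belong to $D_t$, whence $d_t(u_0,v_0)<\infty$. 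Now $u_0,v_0\in\partial N_t(e)$ satisfy $d_S(u_0,v_0)\le 2t$ and $d_t(u_0,v_0)<\infty$, so $d_{\rho t}(u_0,v_0)\le \delta_\rho(t)$; that is, there is a path in $C_{\rho t}(e)=C_s(e)$ joining $u_0$ and $v_0$ of length at most $\delta_\rho(t)$. Such a path has its endpoints on $\alpha$ on opposite sides of $e$ and avoids $B(e,s)$, so it is a competitor in the infimum defining $div_\alpha(s)$, giving $div_\alpha(s)\le \delta_\rho(t)=\delta_\rho(s/\rho)$.

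Combining the two estimates yields $\sigma^n_\rho(r)\le \delta_\rho\bigl(\tfrac{3}{\rho}Dist^H_G((n+2)r)\bigr)$ for all large $r$. Writing the right-hand side as the family $\{\delta'^n_\rho\}$ with $\delta'^n_\rho=\delta_\rho\circ\bigl(\tfrac{3}{\rho}Dist^H_G\bigr)$, the factor $(n+2)$ is absorbed into the multiplicative constant $A$ in the definition of $\preceq$, so $\sigma^n_\rho\preceq\delta'^n_\rho$ for each fixed $\rho$ and $n$ (taking $L=1$ and $M=1$ in the family domination). This is exactly the desired assertion $\sigma^n_\rho\preceq \delta_\rho\circ\bigl(\tfrac{3}{\rho}Dist^H_G\bigr)$.
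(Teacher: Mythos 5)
Your proof is correct and is essentially the paper's own argument in modular form: the paper proves the theorem by taking two points of $\alpha$ at distance $s=({3}/{\rho})\,Dist^H_G\bigl((n+2)r\bigr)$ from $e$, considering arcs outside $B(e,\rho s)$, and applying Lemma~\ref{doa}, which is exactly your first step (the explicit estimate from Proposition~\ref{ldadog}, itself proved via Lemma~\ref{doa}) merged with your second step (the comparison $div_\alpha(s)\leq\delta_\rho(s/\rho)$). If anything, your version is slightly more careful: the paper tacitly assumes its chosen endpoints $x,y\in\alpha\cap S(e,s)$ satisfy $d_s(x,y)<\infty$, which is the admissibility condition needed to invoke $\delta_\rho(s)$, and this is precisely what your choice of last crossing points together with the one-endedness argument supplies.
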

\begin{proof}
We will show that
$\sigma^n_{\rho}(r)\leq \delta_{\rho}\circ \bigl(({3}/{\rho})Dist^H_G\bigr)\bigl((n+2)r\bigr)$ for all number $r>1$.

Indeed, let $s=({3}/{\rho}) Dist^H_G\bigl((n+2)r\bigr)$. Choose $x$ and $y$ in $\alpha\cap S(e,s)$ such that $e$ lies between $x$ and $y$. Let $\gamma$ be an arbitrary arc outside 
$B_{\rho s}(e)$ connecting $x$ and $y$. Since $\rho s=3 Dist^H_G\bigl((n+2)r\bigr)$, then there are two points $x_1$ and $y_1$ in $\gamma\cap\partial N_r(\alpha)$ such that $d_S(x_1,y_1)\geq nr$ and the segment of $\gamma$ connecting $x_1$ and $y_1$ lies in $C_r(\alpha)$ by Lemma~ \ref{doa}. Thus, the two points $x_1$ and $y_1$ also lie in $\partial N_r(H)$ and the segment of $\gamma$ connecting $x_1$ and $y_1$ also lies in $C_r(H)$ by Remark \ref{rm1}. Thus, the distance $d_{\rho r}(x_1, y_1)$ is bounded above by the length of $\gamma$. Therefore, $\sigma^n_{\rho}(r)$ is also bounded above by the length of $\gamma$. Thus, 
\[\sigma^n_{\rho}(r)\leq \delta_{\rho}(s).\]
Therefore,
\[\sigma^n_{\rho}(r)\leq \delta_{\rho}\circ (\frac{3}{\rho}Dist^H_G)\bigl((n+2)r\bigr).\] 
Thus, $\sigma^n_{\rho}\preceq \delta_{\rho}\circ \bigl(({3}/{\rho})Dist^H_G\bigr)$. \qedhere
\end{proof}
\begin{rem}
In Theorem~ \ref{ldocs}, we could not replace $div(G,H)$ by $Div(G,H)$. For example, let $H=\mathbb{Z}$ and $K$ be any one-ended finitely generated group such that $Div(K,e)$ is super-linear. We define $G=H\times K$. Thus, $G$ is a one-ended finitely generated group and $H$ is an infinite cyclic subgroup of $G$. Then, $Dist^H_G$ is linear, $Div(G,e)$ is also linear and $Div(G,H)=Div(K,e)$ is super-linear.

Moreover, the two functions $\sigma^n_{\rho}$ and $\delta_{\rho}\circ \bigl(({3}/{\rho})Dist^H_G\bigr)$ in Theorem~ \ref{ldocs} can be equivalent in some cases (for example: $G=\mathbb{Z}^2$ and $H$ any cyclic subgroup of $G$), and $\sigma^n_{\rho}$ can be strictly dominated by $\delta_{\rho}\circ \bigl(({3}/{\rho})Dist^H_G\bigr)$ in some other cases (see Theorem~ \ref{rdohs}). 
\end{rem}

\section{Relative divergence of $\CAT(0)$ groups}
\label{rdcat(0)g}
In this section, we investigate the relative divergence of $(G,H)$ where $G$ is a $\CAT(0)$ group. We use Theorem~ \ref{udns} to build $\CAT(0)$ groups with arbitrary polynomial upper relative divergences with respect to some subgroup (see Theorem~ \ref{udocg}). We also examine the class of groups defined by Macura \cite {MR3032700} to obtain arbitrary polynomial lower relative divergence (see Corollary~ \ref{ldocg}). 

We now review some concepts and some basic properties of a $\CAT(0)$ group. We refer the reader to \cite{MR1744486} for studying more on $\CAT(0)$ groups. 

\begin{defn}
Let $X$ be a geodesic space. A \emph{geodesic triangle} $\Delta$ in $X$ consists of three points $p, q, r$ in $X$ and three geodesic segments $[p, q], [q, r], [r, p]$. A \emph{comparison triangle} for $\Delta$ in $\EE^2$ is a geodesic triangle $\bar{\Delta}$ in $\EE^2$ with vertices $\bar{p}, \bar{q}, \bar{r}$ such that $d(p, q) = d(\bar{p}, \bar{q}), d(q, r) = d(\bar{q}, \bar{r})$ and $d(r, p) = d(\bar{r}, \bar{p})$. A point $\bar{x}$ in $[\bar{q}, \bar{r}]$ is called a \emph{comparison point} for $x$ in $[q, r]$ if $d(q, x) = d(\bar{q}, \bar{x})$. Comparison points on $[p, q]$ and $[p, r]$ are defined in the same way.
\end{defn}

\begin{defn}
A geodesic triangle $\Delta$ in a geodesic space X satisfies \emph{the $\CAT(0)$ inequality} if $d(x,y)\leq d(\bar{x},\bar{y})$ for all points $x$ and $y$ on $\Delta$ and corresponding points $\bar{x},\bar{y}$ on the comparison triangle $\bar{\Delta}$ in Euclidean space $\EE^2$.
\end{defn}

\begin{defn}
\label{CAT(0)}
A geodesic space X is \emph{$\CAT(0)$ space} if every triangle in X satisfies the $\CAT(0)$ inequality.

A group is \emph{$\CAT(0)$} if it acts properly and cocompactly on some proper $\CAT(0)$ space.
\end{defn}

The proof of the following proposition can be found in \cite{MR1744486}.
\begin{prop}
Let $(X_1,d_1)$ and $(X_2,d_2)$ be $\CAT(0)$ spaces. Then the Cartesian product $X_1 \times X_2$ endowed with the metric $d$ defined by $d^2=d^2_1+d^2_2$ is a $\CAT(0)$ space.
\end{prop}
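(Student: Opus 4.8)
The plan is to sidestep a direct manipulation of Euclidean comparison triangles, which mesh awkwardly with the $\ell^2$ product metric $d=\sqrt{d_1^2+d_2^2}$, and instead to route the argument through the Bruhat--Tits $\mathrm{CN}$ inequality, whose squared-distance form adds cleanly across the two factors. Recall that this inequality states that for all $p,q,r$ in a geodesic space and $m$ the midpoint of $[q,r]$ one has $d(p,m)^2\leq \tfrac12 d(p,q)^2+\tfrac12 d(p,r)^2-\tfrac14 d(q,r)^2$, and that a geodesic space is $\CAT(0)$ if and only if it satisfies this inequality.

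First I would establish the geodesic structure of the product: a curve $t\mapsto\bigl(c_1(t),c_2(t)\bigr)$ is a constant-speed geodesic of $(X_1\times X_2,d)$ exactly when each $c_i$ is a constant-speed geodesic of $X_i$ and the two are parametrized proportionally, a fact proved by the computation $d\bigl(c(s),c(t)\bigr)^2=\abs{t-s}^2\bigl(d_1(p_1,q_1)^2+d_2(p_2,q_2)^2\bigr)$ for the product of two geodesics joining $P=(p_1,p_2)$ to $Q=(q_1,q_2)$ (see \cite{MR1744486}). Since each $X_i$ is $\CAT(0)$ and hence uniquely geodesic, the product is uniquely geodesic, so the midpoint $m$ of a segment $[Q,R]$ with $R=(r_1,r_2)$ is exactly $m=(m_1,m_2)$, where $m_i$ is the midpoint of $[q_i,r_i]$ in $X_i$. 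This factorwise splitting of midpoints is the linchpin of the whole argument.

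The main step is then a one-line combination. For any $P=(p_1,p_2)$, applying the $\mathrm{CN}$ inequality in each $\CAT(0)$ factor gives $d_i(p_i,m_i)^2\leq \tfrac12 d_i(p_i,q_i)^2+\tfrac12 d_i(p_i,r_i)^2-\tfrac14 d_i(q_i,r_i)^2$ for $i=1,2$; summing these two inequalities and using $d^2=d_1^2+d_2^2$ term by term yields the $\mathrm{CN}$ inequality for $P$, $Q$, $R$ and their midpoint $m$ in the product. By the Bruhat--Tits characterization this shows that $(X_1\times X_2,d)$ is $\CAT(0)$. I expect the only genuine obstacle to be the invocation of the hard direction of that characterization (that $\mathrm{CN}$ implies $\CAT(0)$), which I would cite rather than reprove; the remaining content is purely the additive bookkeeping of squared distances. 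A fully self-contained route through comparison triangles is available but less transparent, precisely because the Euclidean model of a product triangle does not decompose along the factors the way the $\mathrm{CN}$ inequality does.
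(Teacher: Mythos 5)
Your proof is correct, but there is nothing in the paper to compare it against: the paper does not prove this proposition at all, it simply defers to \cite{MR1744486}, where the statement is part of the standard development of $\CAT(0)$ spaces. What your argument buys is self-containedness (modulo the two standard facts you cite: the description of geodesics in an $\ell^2$--product and the hard direction of the Bruhat--Tits CN characterization), and the CN route is indeed the one that meshes with $d^2=d_1^2+d_2^2$, since squared distances add across the factors exactly as your one-line summation requires. Two refinements are worth recording. First, your displayed computation proves only one half of the ``exactly when'': it shows that a pair of proportionally parametrized geodesics is a geodesic of the product, but not that every geodesic of the product arises this way; that converse (which is what you invoke, via unique geodesicity, to know that the midpoint of $[Q,R]$ splits as $(m_1,m_2)$) needs the equality case of the Minkowski inequality, or the citation to \cite{MR1744486}. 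Second, you can dispense with the converse altogether: once the product is known to be geodesic and the specific point $m=(m_1,m_2)$ is known to satisfy the CN inequality against every $P$, take $P=m'$ for any other midpoint $m'$ of $Q$ and $R$; then CN gives $d(m',m)^2\leq \frac{1}{2}d(m',Q)^2+\frac{1}{2}d(m',R)^2-\frac{1}{4}d(Q,R)^2=0$, so midpoints in the product are unique, $m$ is the midpoint of every geodesic from $Q$ to $R$, and the characterization applies. With that observation your proof relies only on the easy half of the geodesic description --- the half your computation actually establishes --- plus the cited CN criterion.
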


The following corollary is an immediate result of the above proposition.

\begin{cor}
\label{cpocg}
The direct product of two $\CAT(0)$ groups is a $\CAT(0)$ group.
\end{cor}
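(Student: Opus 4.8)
The plan is to equip the product space $X_1 \times X_2$ with the product action of $G_1 \times G_2$ and to verify the three ingredients of Definition~\ref{CAT(0)}: that this product space is a proper $\CAT(0)$ space, and that the product action is both proper and cocompact. Since $G_1$ and $G_2$ are $\CAT(0)$ groups, I may choose proper $\CAT(0)$ spaces $X_1$ and $X_2$ on which $G_1$ and $G_2$ act properly and cocompactly by isometries, and I endow $X_1 \times X_2$ with the metric $d$ satisfying $d^2 = d_1^2 + d_2^2$.

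First I would record that $X_1 \times X_2$ is a $\CAT(0)$ space: this is precisely the preceding proposition. To see that it is proper, I would observe that a closed ball $\bar{B}((x_1,x_2), r)$ is contained in the product $\bar{B}(x_1, r) \times \bar{B}(x_2, r)$, which is compact as a product of two compact sets; being a closed subset of a compact set, the closed ball is itself compact, so $X_1 \times X_2$ is proper.

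Next I would check that the product action $(g_1, g_2)\cdot(x_1,x_2) = (g_1 x_1, g_2 x_2)$ is proper. Given a compact set $K \subseteq X_1 \times X_2$, its coordinate projections $K_1$ and $K_2$ are compact and $K \subseteq K_1 \times K_2$. If $(g_1,g_2)K \cap K \neq \emptyset$, then examining coordinates gives $g_i K_i \cap K_i \neq \emptyset$ for $i = 1, 2$. Since each factor acts properly, only finitely many $g_1$ and only finitely many $g_2$ satisfy these conditions, so only finitely many pairs $(g_1, g_2)$ move $K$ so as to meet itself; hence the product action is proper.

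Finally, for cocompactness I would choose compact sets $D_1 \subseteq X_1$ and $D_2 \subseteq X_2$ with $G_1 D_1 = X_1$ and $G_2 D_2 = X_2$. Then $D_1 \times D_2$ is compact and $(G_1 \times G_2)(D_1 \times D_2) = (G_1 D_1) \times (G_2 D_2) = X_1 \times X_2$, so the product action is cocompact. Combining these facts, $G_1 \times G_2$ acts properly and cocompactly on the proper $\CAT(0)$ space $X_1 \times X_2$, and is therefore a $\CAT(0)$ group. None of the steps presents a genuine obstacle; the only points that require care are the bookkeeping arguments showing that properness of the product space and properness of the product action each reduce cleanly to the two coordinate factors.
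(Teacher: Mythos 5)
Your proof is correct and follows exactly the route the paper intends: the paper simply declares the corollary an immediate consequence of the preceding proposition on products of $\CAT(0)$ spaces, and your argument is that same deduction with the routine verifications (properness of the product space, properness and cocompactness of the product action) written out. No discrepancies with the paper's approach.
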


The following theorem is a direct result from Corollary III.$\Gamma$.4.8 and Theorem III.$\Gamma$.4.10 in \cite{MR1744486}.

\begin{thm}
\label{csocg}
Every finitely generated abelian subgroup of a $\CAT(0)$ group is undistorted.
\end{thm}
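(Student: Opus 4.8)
The plan is to realize the abelian subgroup as a Euclidean flat inside the $\CAT(0)$ space on which $G$ acts, and then to read off undistortion from the Milnor--\v{S}varc lemma. Fix a proper $\CAT(0)$ space $X$ on which $G$ acts properly and cocompactly by isometries, and fix a basepoint $x_0 \in X$. By the Milnor--\v{S}varc lemma the orbit map $g \mapsto g\cdot x_0$ is a quasi-isometry $G \to X$; in particular $|g|_S$ is comparable to the displacement $d_X(g\cdot x_0, x_0)$ for all $g \in G$. Since a subgroup is quasi-isometrically embedded in $G$ exactly when any of its finite-index subgroups is, and every finitely generated abelian group contains a free abelian subgroup $\Z^n$ of finite index, it suffices to prove that $\Z^n$ is undistorted, i.e.\ that $Dist^{\Z^n}_G$ is linear.

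First I would observe that, because $G$ acts properly and cocompactly, every element of $G$ acts as a semisimple isometry of $X$ (the standard consequence of cocompactness recorded in Corollary III.$\Gamma$.4.8). Hence the free abelian subgroup $\Z^n \le G$ acts properly by semisimple isometries, and the Flat Torus Theorem (Theorem~II.7.1, whose group-theoretic form is Theorem III.$\Gamma$.4.10) applies: it produces a nonempty $\Z^n$-invariant convex subspace of $X$ splitting isometrically as $Y \times \EE^n$, on which $\Z^n$ acts trivially in the first factor and as a cocompact lattice of translations on the Euclidean factor $\EE^n$. I take the basepoint $x_0$ to lie in this flat.

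Finally I would assemble the quasi-isometry estimates. The restricted orbit map $\Z^n \to \EE^n$ is the orbit map of a cocompact translation lattice, hence a quasi-isometry, so $d_X(a\cdot x_0, x_0)$ is comparable to the word length of $a$ measured in $\Z^n$. Chaining this with the Milnor--\v{S}varc comparison $|a|_S \asymp d_X(a\cdot x_0, x_0)$ from the first paragraph gives $|a|_S \asymp |a|_{\Z^n}$ for every $a \in \Z^n$, which is exactly the assertion that $Dist^{\Z^n}_G$ is linear; undistortion of the original subgroup then follows from the finite-index reduction. The only genuinely delicate point is verifying the hypotheses of the Flat Torus Theorem---semisimplicity of the relevant isometries and cocompactness of the $\Z^n$-action on the flat---after which the argument is a routine concatenation of quasi-isometry inequalities whose constants I would suppress.
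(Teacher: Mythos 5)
Your argument is correct, but it is genuinely different from what the paper does. The paper supplies no proof of its own: it derives the theorem in one line from Corollary III.$\Gamma$.4.8 and Theorem III.$\Gamma$.4.10 of Bridson--Haefliger, which belong to the \emph{semihyperbolicity} machinery --- a group acting properly and cocompactly on a $\CAT(0)$ space is semihyperbolic, and finitely generated abelian subgroups of semihyperbolic groups are quasi-isometrically embedded. Your route is instead the classical geometric one: Milnor--\v{S}varc for the action of $G$ on $X$, semisimplicity of the isometries coming from cocompactness (this is Proposition II.6.10(2) in Bridson--Haefliger, not Corollary III.$\Gamma$.4.8 as you guessed), the Flat Torus Theorem II.7.1 producing an invariant flat $\EE^n$ on which $\Z^n$ acts as a cocompact lattice of translations, Milnor--\v{S}varc again on that flat, and the standard reduction to a finite-index free abelian subgroup. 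Each step checks out: the restricted action on the invariant flat is proper because the $G$-action is, convexity of the flat makes the ambient and intrinsic metrics agree, and passing to finite-index subgroups does not change distortion (the paper proves the lower-distortion analogue of this commensurability invariance in Section~\ref{sd}). As for what each approach buys: yours is self-contained modulo the Flat Torus Theorem and exhibits the geometric reason for undistortion (the subgroup acts cocompactly on a convex flat), whereas the semihyperbolicity theorem the paper cites is more general, applying to groups (biautomatic groups, for instance) that need not act on any proper $\CAT(0)$ space. The only blemishes in your write-up are the misattributed reference numbers, which are cosmetic rather than mathematical.
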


\begin{thm}
\label{udocg}
Let $f$ be any polynomial function or exponential function. There is a pair of groups $(G,H)$ where $G$ is a $\CAT(0)$ group and $H$ is a normal infinite cyclic subgroup of $G$ such that $Div(G,H)\sim f$.
\end{thm}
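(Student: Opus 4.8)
The plan is to realize the desired function $f$ as the ordinary upper divergence $Div(K,e)$ of a suitable one-ended $\CAT(0)$ group $K$, and then to manufacture the pair $(G,H)$ as a direct product. Concretely, I would set $G = \Z \times K$ and take $H = \Z \times \{e\}$ to be the first direct factor. Then $H$ is an infinite cyclic normal subgroup of $G$, the quotient $G/H$ is isomorphic to $K$ (hence one-ended), and $G$ is a $\CAT(0)$ group by Corollary~\ref{cpocg}, since $\Z$ acts properly and cocompactly on the $\CAT(0)$ space $\RR$ and $K$ is $\CAT(0)$ by hypothesis.

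The equivalence $Div(G,H) \sim f$ would then follow from Theorem~\ref{udns}. Since $G/H \cong K$ is one-ended, that theorem gives
\[Div(K,e) \preceq Div(G,H) \preceq Dist^H_G \circ Div(K,e).\]
Because $H$ is a finitely generated abelian subgroup of the $\CAT(0)$ group $G$, Theorem~\ref{csocg} shows $H$ is undistorted, i.e. $Dist^H_G$ is linear. (Alternatively, $H$ is a direct factor, so its undistortedness is immediate.) As $Div(K,e) \sim f$ is a polynomial or exponential function, and hence at least linear, composing on the outside with the linear function $Dist^H_G$ leaves the equivalence class unchanged, so $Dist^H_G \circ Div(K,e) \sim Div(K,e)$. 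Squeezing then yields $Div(G,H) \sim Div(K,e) \sim f$.

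It remains to exhibit, for each target $f$, a one-ended $\CAT(0)$ group $K$ with $Div(K,e) \sim f$. When $f$ is exponential I would take $K$ to be a one-ended hyperbolic $\CAT(0)$ group, for instance the fundamental group of a closed hyperbolic surface acting on $\HH^2$; the upper divergence of a one-ended hyperbolic group is exponential. When $f$ is a polynomial of degree $d \ge 2$, I would take $K$ to be a member of the family of $\CAT(0)$ groups constructed by Macura \cite{MR3032700}, whose upper divergence is polynomial of the prescribed degree; for $d = 1$ one may simply use $K = \Z^2$, whose divergence is linear. I expect the only genuine content of the argument to lie in this last step---producing a $\CAT(0)$ group with prescribed polynomial divergence---which is exactly what Macura's construction supplies; the reduction via the product $\Z \times K$ and Theorem~\ref{udns} is then routine.
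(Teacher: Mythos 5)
Your proposal is correct and follows essentially the same route as the paper: build $G = K \times \Z$ with $H$ the $\Z$ factor, apply Theorem~\ref{udns}, and realize the prescribed divergence via Macura's $\CAT(0)$ groups (polynomial case) or a hyperbolic surface group (exponential case). In fact you are slightly more careful than the paper, which compresses the squeeze argument into the single assertion $Div(G,H)=Div(G/H,e)$; your explicit appeal to the undistortedness of $H$ (and your handling of the degree-one case via $\Z^2$) fills in details the paper leaves implicit.
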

\begin{proof}
We will build the group $G$ of the form $G=K\times \mathbb{Z}$ and we choose a suitable one-ended $\CAT(0)$ groups $K$. We choose $H$ to be the $\mathbb{Z}$ factor of $G$. Thus, we observe that $Div(G,H)=Div(G/H,e)=Div(K,e)$ by Theorem~ \ref{udns}.

If $f$ is a polynomial of degree $d$, then we choose a subgroup $K$ such that $Div(K,e)$ is equivalent to $f$ (see \cite {MR3032700} for example). If $f$ is the exponential function, we choose $K$ to be a surface group of genus $g\geq 2$. Since a surface group of genus $g\geq 2$ is a $\CAT(0)$ group, then the group $G$ is also a $\CAT(0)$ group by Corollary~ \ref{cpocg}. Moreover, $K$ is a one-ended hyperbolic group, then the upper divergence of $K$ is exponential. Thus, the relative upper divergence $Div(G,H)$ is also exponential. \qedhere
\end{proof}

\begin{thm}
Let $G$ be a $\CAT(0)$ group and $H$ a normal subgroup of $G$ that contains at least one infinite order element. Then $div(G,H)$ is linear.
\end{thm}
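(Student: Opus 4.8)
The plan is to reduce the lower relative divergence to the distortion of a single infinite cyclic subgroup and then exploit that infinite cyclic subgroups of $\CAT(0)$ groups are undistorted. Since $G$ acts properly and cocompactly on a proper $\CAT(0)$ space it is finitely generated, so the earlier machinery applies. First I would pick an infinite order element $h\in H$ and set $K=\gen{h}$; this is an infinite cyclic, hence finitely generated abelian, subgroup of $H$. As $K$ is infinite the normal subgroup $H$ is infinite as well, so Theorem~\ref{ldns} applies to the pair $(G,H)$ with this choice of $K$ and yields $div(G,H)\preceq dist^K_G$. The point of choosing $K$ rather than $H$ itself is that Theorem~\ref{ldns} never asks $H$ to be finitely generated, which matters since our hypothesis only gives a single infinite order element.

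The next step is to bound $dist^K_G$ from above by a linear function. By the proposition comparing lower and upper distortion, $dist^K_G\preceq Dist^K_G$. Since $K\cong\Z$ is a finitely generated abelian subgroup of the $\CAT(0)$ group $G$, Theorem~\ref{csocg} shows $K$ is undistorted, that is, $Dist^K_G$ is linear. Concatenating the dominations gives
\[ div(G,H)\preceq dist^K_G\preceq Dist^K_G\sim(\text{linear}), \]
so $div(G,H)$ is dominated by the linear function.

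For the reverse domination I would note that the linear function $r\mapsto r$ is dominated by the family $div(G,H)=\{\sigma^n_\rho\}$ automatically: unwinding the definition of $\preceq$ for a function against a family, it suffices to observe $r\le \sigma^n_{\rho}(r)+r$ for all $r$, which holds because each $\sigma^n_\rho$ is nonnegative. Hence the linear function $\preceq div(G,H)$, and combined with the previous paragraph this gives $div(G,H)\sim(\text{linear})$, i.e.\ $div(G,H)$ is linear. The only point requiring care---rather than a genuine obstacle---is matching the \emph{family} notion of domination used in Theorem~\ref{ldns} with the \emph{function} notion used in the distortion comparison, so that the final chain is a legitimate statement about the family $\{\sigma^n_\rho\}$ and not merely about its individual member functions.
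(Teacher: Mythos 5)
Your proposal is correct and follows essentially the same route as the paper: the paper's proof likewise takes the cyclic subgroup $K$ generated by an infinite order element of $H$, invokes Theorem~\ref{csocg} to see $K$ is undistorted, and applies Theorem~\ref{ldns} to get $div(G,H)\preceq dist^K_G$, which is linear. Your additional care about the trivial linear lower bound and the family-versus-function matching is a correct fleshing-out of details the paper leaves implicit.
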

\begin{proof}
By Theorem~ \ref{csocg}, there is an undistorted cyclic subgroup $K$ in $H$. By Theorem~ \ref{ldns}, we observe that $div(G,H)$ is linear. \qedhere
\end{proof}

We now investigate relative lower divergence of a class of $\CAT(0)$ groups introduced by Macura in \cite {MR3032700}. First, we will review this class of groups. 

For each integer $d\geq2$, we define \[G_d=\langle a_0, a_1,\cdots, a_d |a_0a_1=a_1a_0, a^{-1}_ia_0a_i=a_{i-1}, \text{for $2\leq i\leq d$}\rangle\]
and $H_d$ to be the cyclic subgroup generated by $a_d$.

Let $X_d$ be the presentation complex of $G_d$ and $\tilde{X_d}$ is the universal cover of $X_d$. The space $\tilde{X_d}$ is a $\CAT(0)$ square complex (see Macura \cite {MR3032700}). Moreover, $G_d$ is one-ended and we could consider the 1--skeleton $\tilde{X_d}^{(1)}$ of $\tilde{X_d}$ as the Cayley graph of $G_d$. Let $\alpha$ be the axis of the infinite cyclic subgroup of $H_d$ as in Definition \ref{axisfcg}. By Proposition~ \ref{ldadog} and Theorem~ \ref{csocg}, we can investigate the divergence $div_\alpha$ of $\alpha$ in $\tilde{X_d}$ to understand the lower divergence $div(G_d,H_d)$. Before computing $div_\alpha$, we need to review some results from \cite {MR3032700}.

\begin{prop}[Proposition 4.4, \cite {MR3032700}]
\label{m1}
There is a polynomial $q_d$, of degree $d$, such that for any point $O$ in $\tilde{X_d}$ and any two points $P, Q$ on the sphere $S(O,r)\subset \tilde{X_d}$, there is a path $\gamma$ in $\tilde{X_d}-B(O,r)$ connecting $P$ and $Q$ such that the length of $\gamma$ is at most $q_d(r)$
\end{prop}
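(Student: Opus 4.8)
The plan is to prove this by induction on $d$, exploiting both the $\CAT(0)$ geometry of the square complex $\tilde{X_d}$ and the nested structure of the defining presentation. The crucial algebraic observation is that the subgroup $\langle a_0,a_1,\dots,a_{d-1}\rangle$ satisfies exactly the relations defining $G_{d-1}$, with $\langle a_0,a_1\rangle\cong\Z^2$ serving as the base case. I would first establish (or import from the structure of these square complexes) that $\tilde{X_d}$ contains a $G_d$--family of convex subcomplexes, each isometrically a copy of $\tilde{X_{d-1}}$, and then reduce to the case where $O$ is the identity vertex: the group acts by isometries and vertex-transitively on $\tilde{X_d}^{(1)}$, and replacing an arbitrary $O$ by a nearest vertex alters $r$ by at most a bounded constant, which can be absorbed into $q_d$.

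The inductive step is where the polynomial degree is produced. Given $P,Q$ on $S(O,r)$, the plan is to build the avoiding path $\gamma$ as a concatenation of roughly $r$ elementary detours, each supplied by the inductive hypothesis applied inside a copy of $\tilde{X_{d-1}}$. Concretely, I would use the relation $a_i^{-1}a_0a_i=a_{i-1}$ in the form $a_0a_i=a_ia_{i-1}$ to trade a translation in the lowest-level direction for a translation one level up, and iterate: a path that would otherwise cross the ball is pushed outward by reindexing its moves through successive conjugations. Because each reindexing costs a detour that the inductive hypothesis bounds by $q_{d-1}(r)$, and the number of such detours needed to clear a ball of radius $r$ is $O(r)$, the total length satisfies a recursion of the shape $q_d(r)\le C\,r\,q_{d-1}(r)+Cr$, which yields a polynomial of degree $d$ once the base case $\langle a_0,a_1\rangle\cong\Z^2$ (linear divergence, degree one) is in hand.

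To guarantee that the concatenated path actually lies in $\tilde{X_d}-B(O,r)$, rather than merely having bounded length, I would lean on $\CAT(0)$ convexity throughout. Balls in a $\CAT(0)$ space are convex and nearest-point projection onto a convex subcomplex is $1$--Lipschitz, so each elementary detour, produced inside a convex copy of $\tilde{X_{d-1}}$ whose distance from $O$ is controlled, can be certified to avoid $B(O,r)$ by a projection estimate. I expect the main obstacle to be the bookkeeping of the interaction between levels: one must place the copies of $\tilde{X_{d-1}}$ far enough from $O$ that the inductive detours remain outside the ball, yet close enough that joining consecutive detours incurs no extra length, and one must check that the recursion constants can be chosen uniformly so that the induction genuinely closes with a single polynomial $q_d$ of degree exactly $d$. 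Pinning down this uniform control of the gluing geometry, together with the precise quantitative form of the ``trade one level for the next'' step, is the technical heart of the argument.
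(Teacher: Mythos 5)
First, an important caveat: the paper you are being checked against contains no proof of this statement. Proposition \ref{m1} is imported verbatim from Macura \cite{MR3032700} (her Proposition 4.4) and is used as a black box in the proof of Proposition \ref{dog}, so the only meaningful comparison is with Macura's own argument. Your outline is consistent with the natural (and essentially Macura's) strategy: $G_d$ is an HNN extension of $G_{d-1}$ over $\langle a_0\rangle\cong\langle a_{d-1}\rangle$, so $\tilde{X_d}$ decomposes as a tree of spaces whose vertex spaces are copies of $\tilde{X_{d-1}}$ and whose edge spaces are flat strips, and an induction on $d$ with a recursion of the shape $q_d(r)\leq Cr\,q_{d-1}(r)+Cr$ produces the degree-$d$ bound from the linear base case $\langle a_0,a_1\rangle\cong\Z^2$.

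That said, your proposal has a genuine gap, and you flag it yourself: the ``technical heart'' is named but never executed, and that is exactly where the proposition lives. Three concrete steps are missing. (i) The claim that $O(r)$ detours suffice does not follow from ``clearing a ball of radius $r$''; the correct bookkeeping is to take a geodesic $c$ from $P$ to $Q$ (length at most $2r$), list the vertex spaces and strips it crosses, and note that each strip crossing contributes at least $1$ to $\ell(c)$, so at most $2r$ pieces occur; the detour must then be built piecewise inside exactly those pieces. (ii) Your inductive hypothesis joins points lying \emph{on} a sphere of a copy $Y$ of $\tilde{X_{d-1}}$, whereas the points you need to join in $Y$ are merely outside $B(O,r)$; the $1$--Lipschitz projection remark gives the right inclusion $B(O,r)\cap Y\subset B_Y\bigl(\pi_Y(O),r\bigr)$, but you must still move your endpoints onto $S_Y\bigl(\pi_Y(O),r\bigr)$ (radially toward $\pi_Y(O)$, so as to stay outside the ball) and bound the cost of doing so. (iii) The ball can separate an edge strip into two components, so the rerouted path cannot always cross a strip where $c$ did; you must slide each crossing point along the gluing line until it clears $B_Y\bigl(\pi_Y(O),r\bigr)$ --- a slide of length at most $2r$, since a geodesic line meets that convex ball in a segment of length at most $2r$ --- and check that the slid point is usable as the start of the next detour. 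Finally, the convexity of the copies of $\tilde{X_{d-1}}$, on which (ii) rests, is asserted rather than proved; it requires showing that the gluing lines (axes of $a_0$ and $a_{d-1}$) are geodesics of $\tilde{X_d}$ and then a gluing or tree-of-spaces argument. None of these steps is false, and the recursion you write down is the right one, but until they are carried out the proposal is a plan for a proof rather than a proof.
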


\begin{prop}[Theorem 5.3, \cite {MR3032700}]
\label{m2}
There is a polynomial $p_d$, of degree $d$, such that the following holds. Let $T$ be any vertex on $\tilde{X_d}$. Let $\gamma_0$ and $\gamma_d$ be two geodesic rays issuing from $T$ such that they are the infinite concatenations of edges $a_0$ and $a_d$ respectively. For each path $\beta$ outside the ball $B(T,r)$ connecting $P\in \gamma_d$ and $Q\in \gamma_0$, the length of $\beta$ is bounded below by $p_d(r)$. 
\end{prop}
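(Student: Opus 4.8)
The plan is to work entirely inside the $\CAT(0)$ square complex $\tilde{X_d}$ and to argue by induction on $d$, using the theory of hyperplanes (dual curves) as the main tool. Recall that in a $\CAT(0)$ square complex each hyperplane is a convex, two-sided subspace that separates the complex into two half-spaces, and a combinatorial path crosses a given hyperplane at most once. The first step is to record how the defining squares act on edge-labels. In a square coming from $a_0a_1=a_1a_0$ opposite edges carry the same label, whereas in a square coming from $a_i^{-1}a_0a_i=a_{i-1}$ the two $a_i$-edges are opposite (so an $a_i$-hyperplane keeps its label) while the $a_0$-edge is opposite the $a_{i-1}$-edge. Consequently a single hyperplane may be dual to $a_0$-edges and to $a_j$-edges for $1\le j\le d-1$, changing its index only by passing through an $a_0$-edge, while the hyperplanes dual to the $a_d$-edges of $\gamma_d$ are ``pure'' and never change label. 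This label bookkeeping is the combinatorial engine of the whole argument.

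Next I would set up the loop and the separation count. Given a path $\beta$ outside $B(T,r)$ from $P\in\gamma_d$ to $Q\in\gamma_0$, I close it to a loop using the subsegments of $\gamma_d$ and $\gamma_0$ from $T$ out to $P$ and $Q$. Since $\langle a_d\rangle$ is undistorted by Theorem~\ref{csocg}, the ray $\gamma_d$ is a geodesic, so within distance $r$ of $T$ it carries $\sim r$ distinct pure-$a_d$ hyperplanes $\eta_1,\dots,\eta_{\sim r}$, each separating $T$ (and all of $\gamma_0$) from the far end of $\gamma_d$. Hence $\beta$ must cross every $\eta_k$, and because $\beta$ avoids $B(T,r)$ each crossing occurs at distance $\ge r$ from $T$. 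This already forces $\ell(\beta)\ge\sim r$; the real content is to show that where $\beta$ meets each $\eta_k$ far out, that hyperplane is accompanied by a whole ``curtain'' of lower-index hyperplanes (the $a_{d-1}$-, $a_{d-2}$-, \dots, $a_0$-hyperplanes produced by repeatedly applying the index-dropping squares), which is itself a copy of the divergence configuration for $G_{d-1}$.

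This is where the induction enters. The subgroup $\langle a_0,\dots,a_{d-1}\rangle$ is isomorphic to $G_{d-1}$, and $G_d$ is the HNN extension of $G_{d-1}$ with stable letter $a_d$ conjugating $\langle a_0\rangle$ to $\langle a_{d-1}\rangle$. Geometrically, each pure-$a_d$ hyperplane $\eta_k$ has a carrier isometric to $\tilde{X}_{d-1}\times[0,1]$, whose two sides are copies of the $G_{d-1}$-complex; the relation $a_d^{-1}a_0a_d=a_{d-1}$ forces the gluing to send the $a_0$-line on one side to the $a_{d-1}$-line on the other, so crossing $\eta_k$ realizes the index shift $a_0\mapsto a_{d-1}$. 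Thus within each of the $\sim r$ copies of $\tilde{X}_{d-1}$ threaded by $\gamma_d$ the path $\beta$ is forced to travel between an $a_{d-1}$-direction and an $a_0$-direction while staying far from $T$, a configuration governed by the inductive bound $p_{d-1}$ of degree $d-1$ once one checks that the relevant ball in that copy has radius $\gtrsim r$. The base case $d=2$ is handled directly from the same hyperplane picture in the complex of $G_2=\langle a_0,a_1,a_2\rangle$, yielding the quadratic bound, and the extra factor of $r$ raising the degree from $d-1$ to $d$ comes from summing the degree-$(d-1)$ contributions over the $\sim r$ distinct $a_d$-levels.

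The step I expect to be the main obstacle is precisely this multiplier: proving that passing from $G_{d-1}$ to $G_d$ raises the exponent by \emph{exactly} one. One must show both that the portion of $\beta$ lying in a single $\tilde{X}_{d-1}$-copy still sees the full degree-$(d-1)$ lower bound (so that crossing the carrier does not let $\beta$ cheaply slip toward $T$ or shortcut the $a_{d-1}$-to-$a_0$ journey), and that the $\sim r$ copies contribute genuinely independent lengths rather than being counted once. Controlling this overlap — establishing that the curtains at different $a_d$-levels are sufficiently disjoint that their lengths add up to $\sim r\cdot r^{d-1}=r^d$ — is the quantitative core, and it is here that the convexity and cross-at-most-once properties of hyperplanes are used in an essential way, together with the explicit index-dropping combinatorics of the squares. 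The polynomial $p_d$ is then assembled from the constants accumulated at each level, its degree is $d$ by construction, and comparison with the upper bound $q_d$ of Proposition~\ref{m1} confirms that this degree is sharp.
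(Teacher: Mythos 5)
First, a point of reference: the paper contains no proof of this statement — it is imported verbatim as Theorem 5.3 of Macura \cite{MR3032700} — so your argument cannot be measured against an internal proof and has to stand on its own. It does not, for two reasons. The smaller one is a structural error: you claim each pure $a_d$-hyperplane has carrier isometric to $\tilde{X}_{d-1}\times[0,1]$ ``whose two sides are copies of the $G_{d-1}$-complex.'' In fact the only relator involving $a_d$ is $a_d^{-1}a_0a_da_{d-1}^{-1}$, and each $a_d$-edge lies in exactly two squares of that type, so an $a_d$-hyperplane is a \emph{line} and its carrier is a flat strip $\mathbb{R}\times[0,1]$ whose two boundary lines are a single coset $g\langle a_0\rangle$ and the coset $ga_d\langle a_{d-1}\rangle$. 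The copies of $\tilde{X}_{d-1}$ are the vertex spaces of the tree-of-spaces decomposition induced by the HNN splitting of $G_d$ over $\langle a_0\rangle\sim\langle a_{d-1}\rangle$; they sit \emph{between} consecutive strips and are not sides of any carrier. This part is repairable, and the corrected picture still supports your strategy.

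The fatal problem is that the inductive step — exactly the step you flag as ``the main obstacle'' — is never carried out, and the two difficulties buried in it are not routine. (a) \emph{Excursions:} the subpath of $\beta$ between its last crossing of $\eta_{k+1}$ and its first subsequent crossing of $\eta_k$ stays in the slab between those hyperplanes, but that slab is not the vertex space $V_k\cong\tilde{X}_{d-1}$: it also contains every branch of the tree of spaces attached to $V_k$ along its other $a_0$- and $a_{d-1}$-lines. To invoke the inductive hypothesis you must convert this subpath into a path inside $V_k$ avoiding $B(a_d^k,\,r-k)$, and the natural conversions fail: closest-point projection onto the convex subcomplex $V_k$ is $1$--Lipschitz but moves points \emph{toward} $a_d^k$, so the projected path may enter the forbidden ball; replacing each excursion by the segment of its attaching line likewise cuts straight through the ball whenever the excursion re-enters on the opposite side. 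Showing that such an excursion must itself be long is a divergence lower bound in the branch — structurally the very statement being proved — so the outline as written is circular. (b) \emph{Rays versus lines:} even inside $V_k$, the tree structure hands you entry and exit points on the full $a_0$- and $a_{d-1}$-\emph{lines} through $a_d^k$, possibly on their negative halves, while the statement for $d-1$ that you propose to invoke speaks only of the positive rays issuing from the basepoint; the symmetry $a_j\mapsto a_j^{-1}$ handles the doubly-negative configuration but not the mixed ones, so the inductive hypothesis as stated does not apply. Both defects point the same way: the induction needs a strengthened hypothesis, closed under passing to branches and half-lines, and formulating and proving that strengthening is the real content of Macura's theorem. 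What your outline actually establishes is only the linear bound $\ell(\beta)\gtrsim r$ coming from the hyperplane count; everything above degree one is asserted rather than proven.
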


\begin{prop}
\label{dog}
The divergence $div_\alpha$ is polynomial of degree $d$.
\end{prop}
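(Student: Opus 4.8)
The plan is to squeeze $div_\alpha$ between two polynomials of degree $d$, using Proposition~\ref{m1} for an upper bound and Proposition~\ref{m2} for a lower bound. Since any two polynomials of equal degree are equivalent in $\mathcal{M}$, once both bounds are in place they force $div_\alpha$ to be polynomial of degree $d$.

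For the upper bound I would first exhibit two points of $\alpha$ on a common sphere about $e$ of radius close to $r$. Let $m$ be the least positive integer with $d_S(e,a_d^m)\ge r$, and set $P=a_d^{m}$, $Q=a_d^{-m}$. Because $d_S(e,g)=d_S(e,g^{-1})$ we have $d_S(e,P)=d_S(e,Q)=:s$, and since consecutive vertices of $\alpha$ differ by a single edge, minimality of $m$ gives $r\le s<r+1$. As $P$ and $Q$ lie on opposite sides of $e$ along $\alpha$ and both lie on $S(e,s)$, Proposition~\ref{m1} produces a path $\gamma$ in $\tilde X_d-B(e,s)\subseteq \tilde X_d-B(e,r)$ joining them with $\abs{\gamma}\le q_d(s)\le q_d(r+1)$. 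Hence $div_\alpha(r)\le q_d(r+1)$, a polynomial of degree $d$, so $div_\alpha\preceq r^d$.

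The lower bound is the heart of the matter. I want to bound below the length of an \emph{arbitrary} path $\gamma$ in $\tilde X_d-B(e,r)$ whose endpoints lie on $\alpha$ on opposite sides of $e$; write $P$ for the endpoint on the positive $a_d$--ray $\gamma_d$ and $Q$ for the endpoint on the negative $a_d$--ray. The strategy is to extract from $\gamma$ a sub-arc to which Proposition~\ref{m2} applies, using the $a_0$--ray $\gamma_0$ and the $a_d$--ray $\gamma_d$ issuing from $T=e$. The crucial claim is that $\gamma_0$ must meet $\gamma$: the three geodesic rays $\gamma_d$, the negative $a_d$--ray, and $\gamma_0$ emanate from $e$ and, being distinct CAT(0) geodesics, pairwise intersect only in $e$, so the closed curve formed by $\gamma$ together with the segment of $\alpha$ from $Q$ to $P$ should separate $e$ from infinity. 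Since $\gamma_0$ is a proper ray based at $e$ that cannot re-meet $\alpha$, it must cross $\gamma$ at some point $z$ with $d_S(e,z)\ge r$. The sub-arc of $\gamma$ from $P\in\gamma_d$ to $z\in\gamma_0$ then lies in $\tilde X_d-B(e,r)$, and Proposition~\ref{m2} gives $\abs{\gamma}\ge p_d(r)$, a polynomial of degree $d$, whence $r^d\preceq div_\alpha$.

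The main obstacle is justifying this separation/crossing claim. Unlike the Euclidean plane, the two--dimensional CAT(0) square complex $\tilde X_d$ need not be planar, so a Jordan-curve argument does not transfer verbatim, and one must rule out $\gamma_0$ escaping to infinity without meeting $\gamma$. I expect this to require the combinatorial structure of $\tilde X_d$ rather than soft topology — most naturally the hyperplanes of the square complex, which is also the machinery underlying Proposition~\ref{m2}: the point is to show that the hyperplanes dual to the edges of $\gamma_0$ beyond radius $r$ are forced to be crossed by $\gamma$ as well, producing the required $\gamma_d$--to--$\gamma_0$ sub-arc. Once the separation is secured, the degree bookkeeping is automatic, since $p_d\preceq div_\alpha\preceq q_d$ with both $p_d$ and $q_d$ of degree $d$ yields that $div_\alpha$ is polynomial of degree $d$.
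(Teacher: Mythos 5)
Your upper bound is fine and is essentially the paper's (you are in fact a bit more careful, placing the two points of $\alpha$ on a common sphere before invoking Proposition~\ref{m1}). The problem is the lower bound: the crossing claim that you yourself flag as ``the main obstacle'' is not a loose end but the entire content of the proposition, and neither of your sketches fills it. The Jordan-curve idea fails for exactly the reason you give ($\tilde{X_d}$ is not planar). The hyperplane idea, as you state it, targets the wrong hyperplanes: in $\tilde{X_d}$ the only squares containing $a_d$--edges come from the relator $a_d^{-1}a_0a_da_{d-1}^{-1}$, in which the two $a_d$--edges are opposite each other, so a hyperplane dual to an $a_d$--edge crosses only $a_d$--edges; dually, a hyperplane through an $a_0$--edge exits a commutation square through an $a_0$--edge and a conjugation square $a_i^{-1}a_0a_ia_{i-1}^{-1}$ through an $a_{i-1}$--edge, hence crosses only edges labelled $a_0,\dots,a_{d-1}$ and never an $a_d$--edge. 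Since the segment of $\alpha$ from $Q$ to $P$ consists of $a_d$--edges, no hyperplane dual to an edge of $\gamma_0$ separates $Q$ from $P$, so $\gamma$ is not forced to cross any of them; your claim that ``the hyperplanes dual to the edges of $\gamma_0$ \dots are forced to be crossed by $\gamma$'' is false. The hyperplane $\gamma$ \emph{must} cross is the one dual to the edge of $\alpha$ from $e$ to $a_d$, whose dual edges are exactly the edges $\bigl(a_0^k, a_0^ka_d\bigr)$, $k\in\mathbb{Z}$. Even with that correction you only learn that $\gamma$ meets the full $a_0$--line, possibly on the ray opposite to $\gamma_0$, where Proposition~\ref{m2} as stated does not apply; and there is no automorphism of $G_d$ fixing $a_d$ and inverting $a_0$ once $d\geq 3$, so this cannot be waved away by symmetry --- you would need a mirrored version of Proposition~\ref{m2} as an extra ingredient.

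The paper closes exactly this gap with Gersten's Van Kampen diagram technique rather than cubical geometry: fill the loop formed by $\gamma$ and the subsegment of $\alpha$ between its endpoints with a \emph{reduced} diagram $D$; the edge $(e,a_d)$ lies in a $2$--cell labelled $a_d^{-1}a_0a_da_{d-1}^{-1}$, and one propagates a corridor of such cells across their $a_d$--edges. Reducedness rules out cancellable pairs, which keeps the corridor consistently oriented (so one long side of the corridor is an initial segment of an $a_0$--ray issuing from $e$), and planarity of $D$ forces the terminal $a_d$--edge of the corridor to lie on $\gamma$ rather than on $\alpha$. That simultaneously produces the sub-arc of $\gamma$ joining $\gamma_d$ to the $a_0$--ray and the orientation control, after which Proposition~\ref{m2} finishes the degree-$d$ lower bound; the two possible orientations of the first cell are the paper's two cases. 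So your skeleton (squeeze between $q_d$ and $p_d$) agrees with the paper's, but the decisive step is missing in your write-up, and the specific mechanism you propose for it does not work as stated.
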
 

\begin{proof}
By Proposition~ \ref{m1}, there is a polynomial $q_d$, of degree $d$ such that the following holds: Let $r$ be any positive number and $u$, $v$ two points in $S(e,r)\cap \alpha$ such that $e$ lies between $u$, $v$. There is a path outside $B(e,r)$ of length at most $q_d(r)$ connecting $u$ and $v$. Therefore, $div_\alpha$ is bounded above by $q_d$.

We now prove that $div_\alpha$ has some polynomial of degree $d$ as a lower bound. Let $p_d$ be the polynomial of degree $d$ in Proposition~ \ref{m2}. We will show 
$div_\alpha$ is bounded below by this polynomial. Indeed, for each positive $r$, let $\gamma$ be any path outside $B(e,r)$ with endpoints on $\alpha$ and on different sides of $e$ (see Figure~ \ref{ma1}). 
\begin{figure}
\begin{center}
\scalebox{.6}{\includegraphics{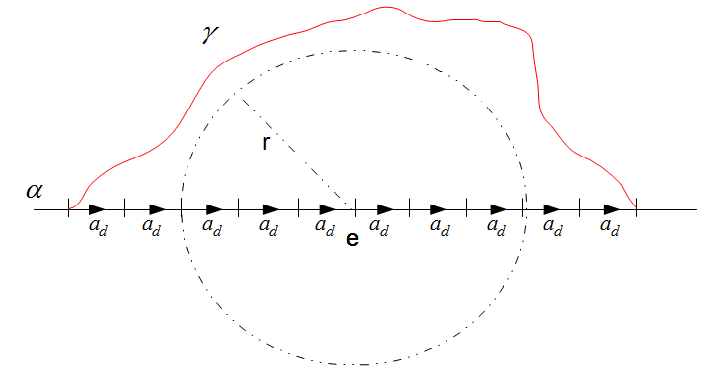}}
\end{center}
\caption{The path $\gamma$ lies outside $B(e,r)$ with endpoints on $\alpha$ and on different sides of $e$}
\label{ma1}
\end{figure}

We are going to show that there exists a subsegment $\gamma_1$ of $\gamma$ connecting two points of $\gamma_0$ and $\gamma_d$, where $\gamma_0$ and $\gamma_d$ are two geodesic rays issuing from $e$ such that they are infinite concatenations of edges $a_0$ and $a_d$ respectively (see Figure~ \ref{ma2}).

\begin{figure}
\begin{center}
\scalebox{.6}{\includegraphics{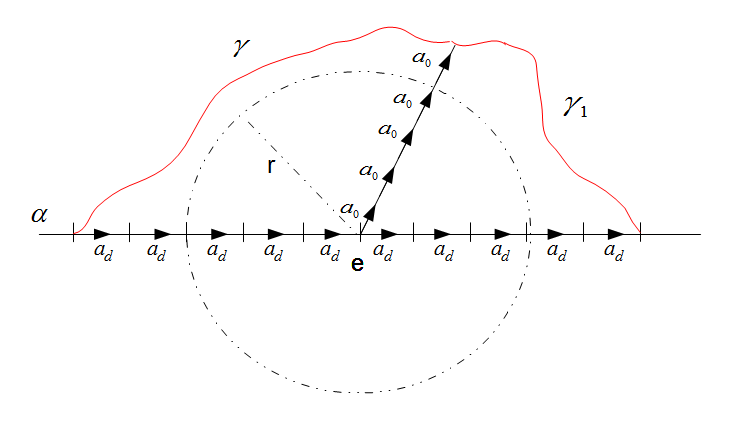}}
\caption{The subsegment $\gamma_1$ of $\gamma$ connecting two points of $\gamma_0$ and $\gamma_d$, where $\gamma_0$ and $\gamma_d$ are two geodesic rays issuing from $e$ such that they are infinite concatenations of edges $a_0$ and $a_d$ respectively}
\label{ma2}
\end{center}
\end{figure}

We will use the same technique as in \cite {MR1254309} for this argument. We observe that the path $\gamma$ and the subsegment of $\alpha$ between two endpoints of $\gamma$ form a loop in $\tilde{X_d}$ which may fill in with a reduced Van Kampen diagram $D$ (see \cite{MR0577064}). Since the path $\gamma$ lies outside the ball $B(e,r)$, the edge $a_d^{(1)}$ of $\alpha$ with the initial point $e$ must lie in some 2--cell of $D$. By the presentation of $G_d$, the edge $a_d^{(1)}$ must lie in a 2--cell $c_1$ labeled by $a_d^{-1}a_0a_da_{d-1}^{-1}$. There are two cases for $c_1$ depending on its orientation in $D$ (see Figure~ \ref{ma3}).
\begin{figure}
\centering
\begin{subfigure}{.5\textwidth}
\centering
\scalebox{0.38}{\includegraphics{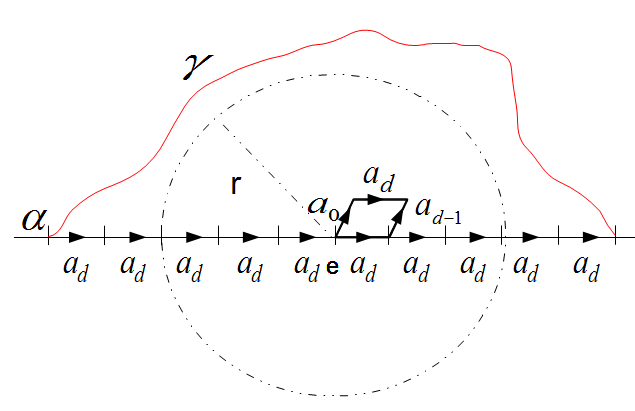}}
\caption{}
\label{ma3first}
\end{subfigure}%
\begin{subfigure}{.5\textwidth}
\centering
 \scalebox{.38}{\includegraphics{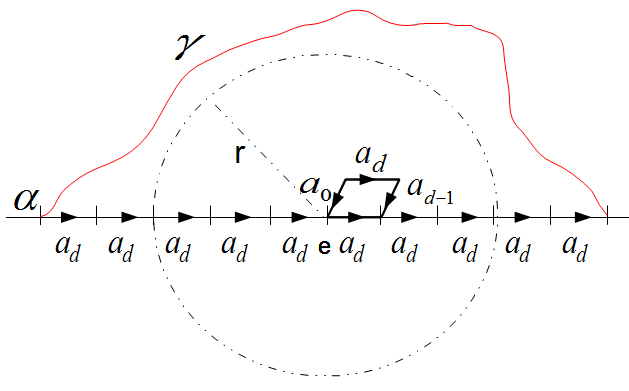}}
\caption{}
\label{ma3second}
\end{subfigure}
\caption{The position of 2--cell $c_1$ in the diagram $D$}
\label{ma3}

\end{figure}

We now only argue on the first case (see Figure~ \ref{ma3first}) and the argument of the second case (see Figure~ \ref{ma3second}) is almost identical. If the edge $a_d^{(2)}$ that is opposite to $a_d^{(1)}$ in $c_1$ lies in the path $\gamma$, it is obvious that there exist a subsegment $\gamma_1$ of $\gamma$ connecting two points of $\gamma_0$ and $\gamma_d$. Otherwise, $a_d^{(2)}$ must lie in some 2--cell $c_2$ labeled by $a_d^{-1}a_0a_da_{d-1}^{-1}$ of $D$. Again, there are two possibilities for $c_2$ depending on the orientation of $c_2$ in $D$ (see Figure~ \ref{ma4}).

\begin{figure}

\begin{subfigure}{.5\textwidth}
\scalebox{.38}{\includegraphics{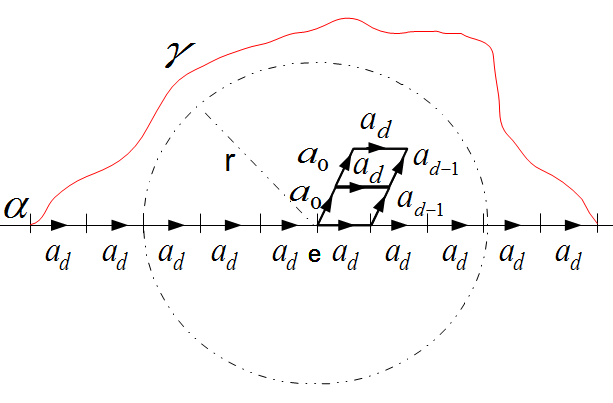}}
\caption{}
\label{ma5first}
\end{subfigure}%
\begin{subfigure}{.5\textwidth}
 \scalebox{.38}{\includegraphics{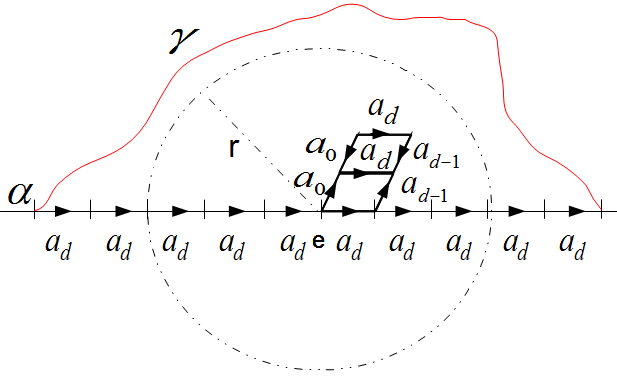}}
\caption{}
\label{ma5second}
\end{subfigure}
\caption{The position of 2--cell $c_2$ in the diagram $D$}
\label{ma4}

\end{figure}

In the second case (see Figure~ \ref{ma5second}), we see that the two 2--cells $c_1$ and $c_2$ form a cancellable pair in $D$. This is impossible since the diagram $D$ is reduced. Thus, the second possibility is ruled out. By arguing inductively, we obtain a corridor that is a concatenation of 2--cells labeled by $a_d^{-1}a_0a_da_{d-1}^{-1}$ such that one edge $a_d^{(n)}$ labeled by $a_d$ of the last 2--cell in the corridor must lie in the boundary of $D$. If $a_d^{(n)}$ is an edge of $\alpha$, the diagram $D$ would not be planar topologically. Thus, $a_d^{(n)}$ must be an edge of $\gamma$ (see Figure~ \ref{ma5}). 

\begin{figure}
\begin{center}
\scalebox{.6}{\includegraphics{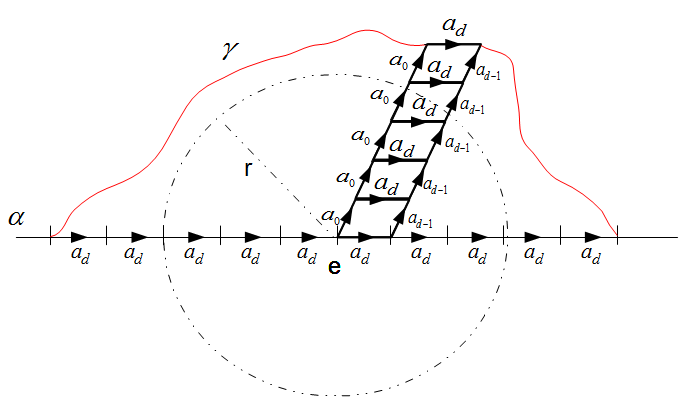}}
\end{center}
\caption{The corridor that is a concatenation of 2--cells labeled by $a_d^{-1}a_0a_da_{d-1}^{-1}$ in the diagram $D$}
\label{ma5} 
\end{figure}

Therefore, there exists a subsegment $\gamma_1$ of $\gamma$ connecting two points of $\gamma_0$ and $\gamma_d$. Since the length of $\gamma_1$ is bounded below by $p_d(r)$ by Proposition~ \ref{m2}, then the length of $\gamma$ is also bounded below by $p_d(r)$. Therefore, the divergence $div_{\alpha}$ must be dominated the polynomial $p_d(r)$. \qedhere

\end{proof}

\begin{cor}
\label{ldocg}
Let $H_d$ be a cyclic subgroup of $G_d$ generated by $a_d$. Then the relative lower divergence $div(G_d,H_d)$ is polynomial function of degree $d$.
\end{cor}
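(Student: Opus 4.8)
The plan is to derive the corollary immediately by feeding the two computations of the previous propositions into the sandwich estimate of Proposition~\ref{ldadog}, using the fact that $H_d$ is undistorted in $G_d$ to make the upper and lower bounds coincide up to equivalence. First I would check that the hypotheses of Proposition~\ref{ldadog} are in force: the group $G_d$ is one-ended (as recorded above), $H_d$ is the infinite cyclic subgroup generated by $a_d$, and $a_d$ lies in the generating set $S$, so that $\alpha$ is genuinely the axis of $H_d$ in the sense of Definition~\ref{axisfcg}. With these in hand, Proposition~\ref{ldadog} yields
\[
div_\alpha \preceq div(G_d,H_d) \preceq div_\alpha \circ (3\, Dist^{H_d}_{G_d}).
\]

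Next I would collapse the outer bound. Since $\tilde{X_d}$ is a $\CAT(0)$ square complex on which $G_d$ acts properly and cocompactly, $G_d$ is a $\CAT(0)$ group, and $H_d$ is a finitely generated abelian (in fact cyclic) subgroup. By Theorem~\ref{csocg}, $H_d$ is undistorted, so $Dist^{H_d}_{G_d}$ is linear. Because $div_\alpha$ is non-decreasing (Remark~\ref{rm1}), the composite $div_\alpha \circ (3\, Dist^{H_d}_{G_d})$ is bounded above by $div_\alpha(3Kr)$ for a suitable constant $K$ and all large $r$. By Proposition~\ref{dog}, $div_\alpha$ is a polynomial of degree $d$, and a degree-$d$ polynomial evaluated at a linear reparametrization of its argument is again a polynomial of degree $d$; hence $div_\alpha \circ (3\, Dist^{H_d}_{G_d})$ is equivalent to a polynomial of degree $d$.

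Finally, I would conclude by the squeeze: both the lower bound $div_\alpha$ and the upper bound $div_\alpha \circ (3\, Dist^{H_d}_{G_d})$ are polynomials of degree $d$, hence equivalent, and they bracket $div(G_d,H_d)$. Therefore $div(G_d,H_d)$ is a polynomial function of degree $d$. There is no serious obstacle here, since the corollary is essentially a bookkeeping combination of Propositions~\ref{ldadog} and~\ref{dog}; the only point demanding any care is the observation that undistortedness of $H_d$ (via Theorem~\ref{csocg}) makes $3\,Dist^{H_d}_{G_d}$ linear and that precomposing a degree-$d$ polynomial with a linear map preserves its degree, so that the upper and lower estimates really do agree up to equivalence.
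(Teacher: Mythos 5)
Your proposal is correct and follows essentially the same route as the paper: the paper's proof cites Proposition~\ref{ldadog} and Proposition~\ref{dog} directly, with Theorem~\ref{csocg} (undistortedness of $H_d$, making $Dist^{H_d}_{G_d}$ linear) invoked in the surrounding discussion exactly as you use it to collapse the upper bound. Your write-up merely makes explicit the bookkeeping (hypothesis check for Proposition~\ref{ldadog}, monotonicity of $div_\alpha$, and degree preservation under linear reparametrization) that the paper leaves as ``an immediate consequence.''
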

\begin{proof}
This is an immediate consequence of Proposition~ \ref{ldadog} and Proposition~ \ref{dog}. \qedhere
\end{proof}
\section{Relative divergence of relatively hyperbolic groups}
\label{rdrhg}
We now investigate the relative divergence of a relatively hyperbolic group with respect to a subgroup.

\begin{defn}
\label{defn:HyperbolicSpace}
A geodesic metric space $(X,d)$ is \emph{$\delta$--hyperbolic} if every geodesic triangle with vertices in $X$ is \emph{$\delta$--thin} in the sense that each side lies in the $\delta$--neighborhood of the union of other sides.

A finitely generated group $G$ is \emph{hyperbolic} if the Cayley graph $\Gamma(G,S)$ is a hyperbolic space for some finite set of generators $S$.
\end{defn}

\begin{defn}
\label{qc}
A subspace $Y$ of a geodesic metric space $X$ is \emph{quasi–convex} when there exists some $k > 0$ such that every geodesic in $X$ that connects a pair of points in $Y$ lies inside the 
$k$–-neighborhood of $Y$.

Suppose $G$ is a hyperbolic group with a finite generating set $S$. A subgroup $H$ of a group $G$ is quasiconvex if it is quasi-convex in the Cayley graph $\Gamma(G,S)$. 
\end{defn}

\begin{rem}
The concepts of hyperbolic groups and quasiconvex subgroups do not depend on the choice of finite set of generators (see \cite{MR1086648} and \cite{MR1170363}).
\end{rem}

We now discuss a generalization of the concepts of hyperbolic groups and quasiconvex subgroups. They are relatively hyperbolic groups and relatively quasiconvex subgroups.

\begin{defn}
Given a finitely generated group $G$ with Cayley graph $\Gamma(G,S)$ equipped with the path metric and a collection $\PP$ of subgroups of G, one can construct the \emph{coned off Cayley graph} $\hat{\Gamma}(G,S,\PP)$ as follows: For each left coset $gP$ where $P\in \PP$, add a vertex $v_{gP}$, called a \emph{peripheral vertex}, to the Cayley graph $\Gamma(G,S)$ and for each element $x$ of $gP$, add an edge $e(x,gP)$ of length 1/2 from $x$ to the vertex $v_{gP}$. This results in a metric space that may not be proper (i.e. closed balls need not be compact).
\end{defn}

\begin{defn} [Relatively hyperbolic group]
\label{rel}
A finitely generated group $G$ is \emph{hyperbolic relative to a collection $\PP$ of subgroups of $G$} if the coned off Cayley graph is $\delta$--hyperbolic and \emph{fine} (i.e. for each positive number $n$, each edge of the coned off Cayley graph is contained in only finitely many circuits of length $n$).

Each group $P\in \PP$ is a \emph{peripheral subgroup} and its left cosets are \emph{peripheral left cosets} and we denote the collection of all peripheral left cosets by $\Pi$.

An element $g$ of $G$ is \emph{hyperbolic} if $g$ is not conjugate to any element of any peripheral subgroups. 
\end{defn}

\begin{lem} [\cite{Osin06}]
\label{nops}
If $G$ is a finitely generated group which is hyperbolic relative to a collection $\PP$ of subgroups of $G$, then $\PP$ is finite. 
\end{lem}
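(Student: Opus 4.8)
The plan is to exploit the fineness of the coned off Cayley graph $\hat{\Gamma}=\hat{\Gamma}(G,S,\PP)$ by manufacturing, for each peripheral subgroup, a short circuit through a controlled edge, and then to count. Fix a finite generating set $S$ of $G$. Since the identity $e$ lies in every $P\in\PP$, the coset $eP=P$ produces a peripheral vertex $v_P$ together with a cone edge $\epsilon_P$ of length $1/2$ from $e$ to $v_P$; distinct subgroups give distinct cosets and hence distinct peripheral vertices and distinct edges. The trivial subgroup can occur at most once, so I may restrict attention to the nontrivial members of $\PP$. The whole strategy is to bound the number of such members using the fact that, for each length $n$, every edge of $\hat{\Gamma}$ lies in only finitely many circuits of length $n$.

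For the core step, given a nontrivial $P\in\PP$ I would choose a shortest nontrivial element $p_P\in P$, set $\ell_P=\abs{p_P}_S$, and fix a geodesic $[e,p_P]$ in $\Gamma(G,S)$, say with first letter $s_P\in S\cup S^{-1}$. Concatenating the cone edge $e\to v_P$, the cone edge $v_P\to p_P$, and the reverse geodesic $p_P\to e$ yields a circuit $C_P$ in $\hat{\Gamma}$ of length $\ell_P+1$ which passes through the Cayley edge $(e,s_P)$ incident to $e$. Distinct subgroups yield distinct circuits, since they traverse distinct cone edges. As there are only finitely many edges of the form $(e,s)$ with $s\in S\cup S^{-1}$, fineness shows that for each fixed $n$ only finitely many of the $C_P$ have length $n$, whence for each fixed value $\ell$ there are only finitely many nontrivial $P\in\PP$ with $\ell_P=\ell$. (As a clean special case, taking $n=2$: if $P$ contains a generator $s$, then $e\to v_P\to s\to e$ is a circuit of length $2$ through $(e,s)$, so each generator lies in only finitely many peripheral subgroups.) This already proves that $\PP$ is at most countable, and it becomes finite as soon as the quantities $\ell_P$ are uniformly bounded.

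The hard part is exactly this uniform bound $\ell_P\le L$: it cannot be extracted from $\delta$--hyperbolicity and fineness of $\hat{\Gamma}$ in isolation, and is where the cocompactness of the $G$--action must enter. Here $G$ acts on $\hat{\Gamma}$ by isometries with $g\cdot e(e,P)=e(g,gP)$, and two cone edges $e(e,P)$ and $e(e,P')$ share a $G$--orbit exactly when $P$ and $P'$ are conjugate; thus the cone edges fall into orbits indexed precisely by the conjugacy classes of the members of $\PP$. In Bowditch's formulation of a fine hyperbolic structure one requires finitely many $G$--orbits of edges, which bounds $\ell_P$ orbitwise (by transitivity one may take $p_P$ to be a translate of a fixed representative) and forces finitely many conjugacy classes; under the standing convention that the members of $\PP$ are pairwise non-conjugate this gives $\PP$ finite. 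I therefore expect the decisive obstacle to be supplying this uniform bound from the bare definition, and the cleanest route is to invoke Osin's equivalence between the present fine hyperbolic definition and the definition via a finite relative presentation satisfying a linear relative isoperimetric inequality, in which $\PP$ is finite by construction.
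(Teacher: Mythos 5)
Your fineness-plus-pigeonhole count is correct as far as it goes, and it is worth noting that the paper itself offers no proof here (the lemma is simply quoted from \cite{Osin06}), so your attempt stands or falls on its own. For each nontrivial $P\in\PP$ your circuit $C_P$ (cone edge $e\to v_P$, cone edge $v_P\to p_P$, Cayley geodesic back to $e$) is indeed embedded, has length $\ell_P+1$, passes through one of the finitely many Cayley edges incident to $e$, and distinct $P$ give distinct circuits; fineness then yields that for every $L$ only finitely many $P\in\PP$ satisfy $\ell_P\le L$. But, as you concede, this proves only that $\PP$ is countable, with $\ell_P$ unbounded if $\PP$ is infinite. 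The lemma is the finiteness statement, and that is exactly the step you do not supply. A structural red flag: your argument never uses hyperbolicity of $\hat{\Gamma}(G,S,\PP)$, yet the hypothesis is ``hyperbolic \emph{and} fine''; it is not at all clear that fineness alone could exclude infinitely many peripherals whose girths tend to infinity, so any correct proof must make hyperbolicity do real work somewhere.

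Both of your proposed ways to close the gap fail. First, the appeal to ``finitely many $G$--orbits of edges'' is circular. In the coned off Cayley graph one has $h\cdot e(x,gP)=e(hx,hgP)$, and since $x^{-1}gP=P$ the orbit of any cone edge over any coset of $P$ is exactly $\bigset{e(y,yP)}{y\in G}$; moreover a left coset of $P$ is never a left coset of $P'\neq P$ (if $xP=yP'$ then $P=x^{-1}yP'$, and $e\in P$ forces $x^{-1}y\in P'$, hence $P=P'$), even when $P'$ is conjugate to $P$. So cone-edge orbits are in bijection with $\PP$ itself --- not with conjugacy classes, as you assert --- and ``finitely many edge orbits'' is word for word the conclusion that $\PP$ is finite. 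The paper's definition of relative hyperbolicity imposes only hyperbolicity and fineness, not this cofiniteness axiom, so assuming it is assuming the conclusion. Second, the claim that in Osin's relative-presentation definition ``$\PP$ is finite by construction'' is false: Osin requires only the auxiliary generating set $X$ and the relator set $R$ to be finite, while the collection $\{H_\lambda\}_{\lambda\in\Lambda}$ may a priori be infinite. The statement that a finitely generated group can be hyperbolic relative only to a finite collection (up to trivial peripherals) is itself a theorem in \cite{Osin06}, proved there by a genuine argument over the finite relative presentation --- indeed it is the very result this lemma is citing --- and the equivalence theorems between the fine-hyperbolic definition and Osin's definition are proved under hypotheses (finite collections, or cofinite actions) that already encode the conclusion. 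In short, your proposal establishes countability and then reduces the lemma to itself.
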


\begin{lem}[Proposition 9.4, \cite{Hruska10}]
\label{CloseCosets}
Let $G$ be a group with a finite generating set $S$. Suppose $xH$ and $yK$
are arbitrary left cosets of subgroups of $G$.
For each constant $L$ there is a constant $L'=L'(G,S,xH,yK)$
so that in the metric space $(G,d_S)$ we have
\[
 N_L(x H)\cap N_L(y K) \subset
 N_{L'}(x H x^{-1} \cap y K y^{-1}).
\]
\end{lem}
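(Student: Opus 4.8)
The plan is to reduce the statement to a pigeonhole argument over the finite ball of radius $2L$ in $(G,d_S)$, exploiting the fact that left multiplication by any group element is an isometry of $(G,d_S)$. Fix $L$ and abbreviate the conjugate subgroups $H'=xHx^{-1}$ and $K'=yKy^{-1}$, so that $H'\cap K'$ is a subgroup containing $e$. Let $g$ be an arbitrary point of $N_L(xH)\cap N_L(yK)$, and choose $h\in H$, $k\in K$ with $d_S(g,xh)\le L$ and $d_S(g,yk)\le L$; then the \emph{difference element} $\delta(g):=(xh)^{-1}(yk)=h^{-1}x^{-1}yk$ satisfies $\abs{\delta(g)}_S=d_S(xh,yk)\le 2L$. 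Since $S$ is finite, the set of elements of $S$-length at most $2L$ is finite, so $\delta(g)$ takes only finitely many values as $g$ ranges over $N_L(xH)\cap N_L(yK)$.

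The key algebraic step is to show that two points realizing the same difference element differ by an element of $H'\cap K'$. Concretely, I would fix, for each value $d$ in the (finite) set of difference elements that actually occur, one representative point $g_d$ together with its data $h_d\in H$, $k_d\in K$. Given any $g$ with $\delta(g)=d=\delta(g_d)$ and data $h,k$, the equation $h^{-1}x^{-1}yk=h_d^{-1}x^{-1}yk_d$ rearranges, upon multiplying on the appropriate sides by $h_d$, $k^{-1}$, $x$, and $y^{-1}$, to
\[
 z := x h_d h^{-1} x^{-1} = y k_d k^{-1} y^{-1}.
\]
Since $h_dh^{-1}\in H$ and $k_dk^{-1}\in K$, this places $z$ in $H'\cap K'$. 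Moreover a direct computation gives $z\cdot(xh)=x h_d h^{-1}x^{-1}xh=xh_d$, so left multiplication by $z$ carries the point $xh$ (which is $L$-close to $g$) to the point $xh_d$ (which is $L$-close to $g_d$).

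The conclusion then follows from invariance under left translation. Since $z$ acts as an isometry of $(G,d_S)$ and $z\cdot xh=xh_d$,
\[
 d_S(zg,g_d)\le d_S(zg,\,z\cdot xh)+d_S(z\cdot xh,\,g_d)=d_S(g,xh)+d_S(xh_d,g_d)\le 2L,
\]
and hence $d_S(g,z^{-1}g_d)=d_S(zg,g_d)\le 2L$ with $z^{-1}\in H'\cap K'$. Applying the triangle inequality once more,
\[
 d_S(g,z^{-1})\le d_S(g,z^{-1}g_d)+d_S(z^{-1}g_d,z^{-1})\le 2L+\abs{g_d}_S.
\]
As $z^{-1}\in H'\cap K'=xHx^{-1}\cap yKy^{-1}$, this shows $g$ lies within $2L+\abs{g_d}_S$ of the intersection subgroup. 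Taking $L':=2L+\max_d\abs{g_d}_S$, where the maximum runs over the finitely many occurring difference elements, yields a single constant valid for all $g$, depending only on $G$, $S$, $xH$, $yK$ (and $L$).

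The step I expect to be the crux is the algebraic identity isolating $z\in H'\cap K'$: one must arrange the conjugation so that the coset ambiguity coming from $H$ and the ambiguity coming from $K$ cancel \emph{simultaneously} and land in the intersection of the two conjugate subgroups, while preserving the relation $z\cdot(xh)=xh_d$. Everything else — finiteness of the ball of radius $2L$, the isometry property of left translation, and assembling the triangle inequalities — is routine once that identity is in hand.
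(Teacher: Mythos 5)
Your proof is correct. Note, however, that the paper you are being compared against does not actually prove this lemma: it imports it verbatim as Proposition 9.4 of Hruska's paper (the citation \cite{Hruska10} in the lemma's header), so there is no internal proof to measure your argument against. On its own merits, your argument is sound and is essentially the standard pigeonhole argument for coset-neighborhood intersections (going back to Short's treatment of quasiconvex subgroups, which is also what Hruska's proof follows): the algebraic identity $x h_d h^{-1} x^{-1} = y k_d k^{-1} y^{-1}$ is verified by the stated rearrangement, this element does lie in $xHx^{-1}\cap yKy^{-1}$, and the left-invariance of $d_S$ makes every triangle-inequality step legitimate. The only point worth flagging is one you already handle implicitly: the ``difference element'' $\delta(g)$ is not a function of $g$ alone but of the chosen data $(g,h,k)$, so the pigeonhole must be over occurring values of $(xh)^{-1}(yk)$ with a fixed representative \emph{triple} $(g_d,h_d,k_d)$ for each value --- which is exactly what you do by fixing $g_d$ ``together with its data.'' With that reading, the constant $L'=2L+\max_d\abs{g_d}_S$ is finite (finitely many values occur, since they lie in the ball of radius $2L$ of the finitely generated group) and depends only on $G$, $S$, $xH$, $yK$, and $L$, as required.
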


\begin{defn}
\label{rqc}
Let $(G,\PP)$ be a relatively hyperbolic group. A subgroup $H$ of $G$ is \emph{relatively quasiconvex} if the following holds.
Let $S$ be some (any) finite generating set for $G$. Then there is a constant $\kappa=\kappa(S)$ such that
for each geodesic $\bar{c}$ in $\hat{\Gamma}(G,S,\PP)$ connecting two points of $H$,
every $G$--vertex of $\bar{c}$ lies within a $d_S$--distance $\kappa$ of $H$.
\end{defn}

\begin{rem}
We note that the concepts of relative hyperbolicity and relative quasiconvexness subgroups do not depend on the choice of finite set of generators (see \cite{Osin06}).

Throughout this section, we denote the metric in $\Gamma(G,S)$ by $d_S$ and the metric in $\hat{\Gamma}(G,S,\PP)$ by $d$.
\end{rem}
\begin{defn}
Let $(G,\PP)$ be a relatively hyperbolic group. 
\begin{enumerate}
\item A relatively quasiconvex subgroup $H$ of $G$ is \emph{strongly relatively quasiconvex} if for each conjugate $g^{-1}Pg$ of any peripheral subgroup $P$ and $H\cap g^{-1}Pg$ is a finite subgroup of $g^{-1}Pg$.
\item A relatively quasiconvex subgroup $H$ of $G$ is \emph{fully relatively quasiconvex} if for each conjugate $g^{-1}Pg$ of any peripheral subgroup $P$, $H\cap g^{-1}Pg$ is a finite subgroup or finite index subgroup of $g^{-1}Pg$.
\end{enumerate}
\end{defn}

\begin{lem}[Theorem 4.13, \cite{Osin06}]
Let $(G,\PP)$ be a relatively hyperbolic group. Let $H$ be a subgroup of $G$. Then the following conditions are equivalent:
\begin{enumerate}
\item $H$ is strongly relatively quasiconvex.
\item $H$ is generated by a finite set $T$ such that the natural map $(H,d_T) \to \hat{\Gamma}(G,S,\PP)$ is a quasi-isometric embedding.
\end{enumerate}
\end{lem}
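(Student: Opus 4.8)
The plan is to prove the two implications separately, throughout exploiting the contrast between the intrinsic word metric $d_T$ on $H$ and the coned-off metric $d$ on $\hat{\Gamma}(G,S,\PP)$, and keeping careful track of when a $d$-estimate can be upgraded to a $d_S$-estimate. Write $\PP=\{P_1,\dots,P_k\}$, which is finite by Lemma~\ref{nops}, and recall that each peripheral left coset $kP$ has $d$-diameter at most $1$, since all of its elements are joined to the peripheral vertex $v_{kP}$ by edges of length $1/2$.

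First I would prove (2)$\Rightarrow$(1). The finite-intersection half is the cleanest. For a fixed $g$, left-invariance of $d$ under $G$ gives $d(pg,p)=d(g,e)=:L_g$ for every $p\in P$, so the conjugate $g^{-1}Pg$, being the image of $Pg$ under the isometry $x\mapsto g^{-1}x$, has $d$-diameter at most $2L_g+1<\infty$. Hence $H\cap g^{-1}Pg$ has bounded image in $\hat{\Gamma}(G,S,\PP)$; were it infinite it would have infinite $d_T$-diameter, since balls in $(H,d_T)$ are finite, contradicting that $(H,d_T)\to\hat{\Gamma}(G,S,\PP)$ is a quasi-isometric embedding. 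This also shows the finite-intersection hypothesis is \emph{necessary} for (2). For relative quasiconvexity, the quasi-isometric embedding of $H$ into the $\delta$-hyperbolic graph $\hat{\Gamma}(G,S,\PP)$ has quasiconvex image by stability of quasigeodesics \cite{MR1086648}, so any $\hat{\Gamma}$-geodesic $\bar c$ joining two points of $H$ lies in a bounded $d$-neighborhood of $H$.

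The point where real work is needed is upgrading this $d$-closeness to the $d_S$-closeness of $G$-vertices demanded by Definition~\ref{rqc}: a $G$-vertex of $\bar c$ may be $d$-close to $H$ only because a short path passes through a peripheral vertex, which says nothing about $d_S$. To control this I would invoke the Bounded Coset Penetration property of the pair $(G,\PP)$ \cite{Osin06} together with Lemma~\ref{CloseCosets}. By fellow-travelling $\bar c$ with the image of a $d_T$-geodesic in $H$, whose vertices are all $G$-vertices lying \emph{in} $H$ with consecutive ones at $d_S$-distance at most $\max_{t\in T}\abs{t}_S$, the penetration property forces the two paths to enter and leave each peripheral coset within bounded $\hat{\Gamma}$-distance of one another, and the finite-intersection condition prevents $\bar c$ from travelling far inside any coset that $H$ meets. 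This yields the desired constant $\kappa$, and I expect this coset-penetration bookkeeping to be the main obstacle of the whole proof.

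For (1)$\Rightarrow$(2) I would first note that $H$ is finitely generated: relative quasiconvexity expresses $H$ as generated by a finite set of $d_S$-short elements together with the subgroups $H\cap g^{-1}P_ig$ for the finitely many cosets met by $H$, and strong relative quasiconvexity makes each such intersection finite. For the quasi-isometric embedding, the inclusion $(H,d_T)\to\hat{\Gamma}(G,S,\PP)$ is Lipschitz because $T$ is finite, so only the lower bound $d(e,h)\gtrsim d_T(e,h)$ is at issue. Given $h\in H$, take a $\hat{\Gamma}$-geodesic from $e$ to $h$; relative quasiconvexity places each of its $G$-vertices within $d_S$-distance $\kappa$ of $H$, so I can shadow it by a path in $H$, replacing each peripheral excursion of the geodesic by the two nearby elements $h',h''\in H$ at its ends. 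By Lemma~\ref{CloseCosets} these lie uniformly close to a common conjugate intersection, which is finite by hypothesis, so $d_T(h',h'')$ is uniformly bounded; summing the contributions along the geodesic gives $d_T(e,h)\le C\,d(e,h)+C$. As in the other direction, the crux is the passage from $d$ to $d_S$ across peripheral cosets, and it is precisely the finiteness of the intersections $H\cap g^{-1}P_ig$ that keeps these local corrections bounded.
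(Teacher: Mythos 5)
This lemma is nowhere proved in the paper: it is quoted verbatim as Theorem 4.13 of \cite{Osin06}, so your attempt has to be judged against Osin's argument rather than anything in the text. Two of your three pieces are essentially right. The deduction of finite peripheral intersections from (2) is complete and correct: each conjugate $g^{-1}Pg$ has finite $d$-diameter, so an infinite intersection with $H$ would contradict the quasi-isometric embedding of the locally finite graph $(H,d_T)$. Your (1)$\Rightarrow$(2) is also the standard argument and works: shadow a $\hat{\Gamma}$-geodesic from $e$ to $h$ by elements of $H$ within $d_S$-distance $\kappa$ of its $G$-vertices, and bound each peripheral jump using Lemma~\ref{CloseCosets} together with finiteness of the intersections. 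You do gloss one point there: the constant $L'$ of Lemma~\ref{CloseCosets} depends on the pair of cosets, so one must first translate by an element of $H$ to reduce to the finitely many cosets $gP$ with $\abs{g}_S\leq\kappa$, $P\in\PP$ --- exactly the finiteness maneuver the paper itself performs at the start of its proof of Theorem~\ref{ldorhg}. With that, $T=H\cap B(e,R)$ is a finite generating set and counting the $G$-vertices of the geodesic gives $d_T(e,h)\leq C\,d(e,h)+C$.

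The genuine gap is the step you yourself defer: relative quasiconvexity in (2)$\Rightarrow$(1), and the one mechanism you assert for it is wrong as stated. You say BCP forces the two paths to ``enter and leave each peripheral coset within bounded $\hat{\Gamma}$-distance of one another''. That statement is vacuous: any two points of a peripheral left coset are at $\hat{\Gamma}$-distance at most $1$ through the cone vertex, so it gives no control at all; the entire content of BCP is $d_S$-control, and the finite-intersection hypothesis plays no role in this step either. Here is how the bookkeeping actually closes with the tools you name. Let $\bar{c}$ be a $\hat{\Gamma}$-geodesic between $h_1,h_2\in H$, and let $p$ be the path obtained from a $d_T$-geodesic by inserting fixed $S$-words for the generators in $T$; then $p$ lies in $\Gamma(G,S)$, so it penetrates no coset, and hypothesis (2) makes it a quasigeodesic without backtracking in $\hat{\Gamma}$. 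A first application of BCP to $\bar{c}$ versus $p$ shows that for every coset penetrated by $\bar{c}$ the entry and exit vertices are $d_S$-close \emph{to each other}; hence consecutive $G$-vertices of $\bar{c}$ are uniformly $d_S$-close, and replacing each penetration by a short $S$-word yields a path $c^*$ through all $G$-vertices of $\bar{c}$ that lies in $\Gamma(G,S)$ and is still a quasigeodesic without backtracking in $\hat{\Gamma}$. Now fix a $G$-vertex $v$ of $\bar{c}$. Hyperbolicity of $\hat{\Gamma}$ gives a vertex $w$ of $p$ with $d(v,w)\leq\epsilon_0$, but the connecting geodesic $\sigma$ may still make up to $\epsilon_0$ peripheral jumps, each enormous in $d_S$; this is precisely the point your sketch never confronts. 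Apply BCP a second time, to $c^*|_{[h_1,v]}\cdot\sigma$ versus $p|_{[h_1,w]}$: the latter penetrates nothing, so every peripheral jump of $\sigma$ has $d_S$-bounded entry-to-exit, whence $d_S(v,w)$ is uniformly bounded and $v$ lies at bounded $d_S$-distance from $H$, which is Definition~\ref{rqc}. (Equivalently, one may quote the strengthened fellow-traveling property in \cite{Osin06} --- phase vertices of same-endpoint quasigeodesics without backtracking lie at uniformly bounded $d_S$-distance from one another --- which is the form of BCP Osin's own proof of Theorem 4.13 runs on; the bare entry/exit statement you invoke is not sufficient by itself.)
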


\begin{lem} [Theorem 1.14, \cite{Osin06}]
\label{pohe}
Let $(G,\PP)$ be relatively hyperbolic groups with a finite generating set $S$. Then for any hyperbolic element $h\in G$ of infinite order, there exist $\lambda > 0$ and $c\geq 0$ such that $d(e,h^n)> \lambda \abs{n}-c$. In particular, the cyclic subgroup $H$ generated by $h$ is undistorted with respect to $(G,d_S)$ and strongly relatively quasiconvex.
\end{lem}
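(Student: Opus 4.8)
The plan is to first establish the metric estimate $d(e,h^{n}) > \lambda\abs{n}-c$ by showing that $h$ acts as a \emph{loxodromic} isometry of the $\delta$--hyperbolic space $\hat{\Gamma}(G,S,\PP)$, and then to deduce the two stated consequences formally from this estimate together with the quasi-isometric embedding criterion of Theorem~4.13 in \cite{Osin06} (quoted just above). Since the statement is exactly Theorem~1.14 of \cite{Osin06}, one may simply cite it; what follows is the geometric idea behind it.

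For the main estimate, recall from Definition~\ref{rel} that $\hat{\Gamma}(G,S,\PP)$ is $\delta$--hyperbolic and fine, that $G$ acts on it by isometries, and that $h$ has infinite order. The classification of isometries of a $\delta$--hyperbolic space says $h$ is elliptic, parabolic, or loxodromic, and it is loxodromic precisely when its stable translation length $\tau(h)=\lim_{n\to\infty} d(e,h^{n})/n$ is positive; moreover, once $\tau(h)>0$, a standard argument in $\delta$--hyperbolic geometry upgrades the orbit $n\mapsto h^{n}$ into a quasigeodesic, which yields $d(e,h^{n})>\lambda\abs{n}-c$ for suitable $\lambda>0$ and $c\geq 0$. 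So it suffices to rule out the elliptic and parabolic cases.

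I would rule out the parabolic case using the hypothesis that $h$ is a hyperbolic element. A parabolic isometry fixes a unique point of the Gromov boundary, and in the geometrically finite action on the fine hyperbolic graph $\hat{\Gamma}$ the relevant fixed data are the peripheral vertices $v_{gP}$, whose stabilizer is the conjugate $gPg^{-1}$. If $h$ were parabolic it would stabilize such a vertex, forcing $h\in gPg^{-1}$ and contradicting the assumption that $h$ is not conjugate into any peripheral subgroup. The elliptic case is excluded because $h$ has infinite order: in a fine hyperbolic graph a bounded $\langle h\rangle$--orbit would force $h$ to fix a vertex, and since that vertex must then have an infinite stabilizer containing $\langle h\rangle$, it must be one of the peripheral vertices $v_{gP}$ (the only vertices with infinite stabilizer), again contradicting hyperbolicity. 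Hence $h$ is loxodromic and the estimate follows. Given the estimate, the consequences are routine. Coning off only decreases distances, so $d_S\geq d$ and therefore $\abs{h^{n}}_S=d_S(e,h^{n})\geq d(e,h^{n})>\lambda\abs{n}-c$; combined with the trivial bound $\abs{h^{n}}_S\leq\abs{n}\,\abs{h}_S$ this shows $\abs{h^{n}}_S$ is comparable to $\abs{n}$, so $\langle h\rangle$ is undistorted in $(G,d_S)$. For the last claim, take $T=\{h\}$, so $d_T(e,h^{n})=\abs{n}$; then the chain $\lambda\abs{n}-c< d(e,h^{n})\leq d_S(e,h^{n})\leq\abs{h}_S\,\abs{n}$ shows that the natural map $(H,d_T)\to\hat{\Gamma}(G,S,\PP)$ is a quasi-isometric embedding, whence $H$ is strongly relatively quasiconvex by Theorem~4.13 of \cite{Osin06}.

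The hard part is the loxodromicity of $h$. Because $\hat{\Gamma}(G,S,\PP)$ is not proper, one cannot argue by compactness, so both the elliptic and the parabolic cases must be excluded by invoking the fineness of $\hat{\Gamma}$ and the geometrically finite structure of the $G$--action (as in Bowditch's and Osin's treatment) rather than by elementary means; this is precisely the content packaged into the cited Theorem~1.14.
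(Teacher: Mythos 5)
The paper offers no argument for Lemma~\ref{pohe}: the statement is imported verbatim from Osin, and the bracketed citation [Theorem 1.14, \cite{Osin06}] \emph{is} the proof. Your primary move --- simply cite Osin --- therefore coincides with the paper, and your deductions of the two consequences from the displayed estimate are correct and routine: undistortedness follows from $d\leq d_S$ together with the trivial upper bound $\abs{h^n}_S\leq \abs{n}\abs{h}_S$, and strong relative quasiconvexity follows by feeding the resulting quasi-isometric embedding $(H,d_T)\to\hat{\Gamma}(G,S,\PP)$ into Theorem 4.13 of \cite{Osin06}, quoted just above in the paper.

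However, the sketch you offer for the estimate itself has a genuine gap, concentrated in the elliptic case. The coned-off Cayley graph $\hat{\Gamma}(G,S,\PP)$ is not locally finite: when a peripheral subgroup $P$ is infinite, the cone vertex $v_{gP}$ has infinite valence, so a bounded $\gen{h}$--orbit can consist of infinitely many distinct $G$--vertices (this actually happens for every infinite-order $h\in P$, whose entire orbit lies in the ball of radius $1$ about $v_P$). Hence the standard argument from the hyperbolic-group case --- bounded orbit plus local finiteness forces a finite orbit, hence finite order --- is unavailable, and your substitute claim, that in a fine hyperbolic graph a bounded $\gen{h}$--orbit forces $h$ to fix a vertex, is not a quotable fact. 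It is, in essence, the very content of the theorem: by your own stabilizer analysis, its truth for $\hat{\Gamma}$ is equivalent to the assertion that infinite-order elliptic elements are conjugate into peripherals. Fineness bounds the number of arcs of given length between two fixed vertices; it does not by itself produce fixed vertices for bounded actions. The parabolic case has a related defect: a parabolic isometry of $\hat{\Gamma}$ fixes a point of the Gromov boundary of $\hat{\Gamma}$, not a cone vertex, and the parabolic limit points whose stabilizers are peripheral conjugates live in the Bowditch boundary, a different object; making this rigorous requires the equivalence between Definition~\ref{rel} and the geometrically finite convergence-action formulation (Bowditch, Yaman), which is substantial machinery the paper never sets up. (A smaller, patchable point: if your fixed-vertex argument only applies to some power $h^k$, you obtain $h^k\in gPg^{-1}$, and you then need almost-malnormality of peripherals, Lemma~\ref{Osips}, to conclude that $h$ itself is conjugate into $P$.) Note finally that Osin's actual proof does not proceed through the elliptic/parabolic/loxodromic trichotomy at all, but through his combinatorial machinery of relative isoperimetric functions and components of geodesic polygons; so your sketch, even if completed along convergence-group lines, would be a genuinely different proof from the one the paper cites.
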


The following lemma is an immediate result of Proposition 2.36 in \cite{Osin06}.
\begin{lem}
\label{Osips} 
Let $(G,\PP)$ be a relatively hyperbolic groups. Then the following conditions hold:
\begin{enumerate}
\item $g_1P_1g_1^{-1}\cap g_2P_2g_2^{-1}$ is finite, where $P_1$ and $P_2$ are two different peripheral subgroups.
\item $gPg^{-1}\cap P$ is finite, where $P$ is a peripheral subgroup and $g\notin P$.
\end{enumerate}
\end{lem}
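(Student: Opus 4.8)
The plan is to recast both algebraic statements as the single geometric fact that two distinct peripheral left cosets have bounded coarse intersection, which is exactly the content of Proposition 2.36 in \cite{Osin06}. The key elementary observation is that the Cayley graph $\Gamma(G,S)$ is locally finite, so any subset of finite diameter is finite; consequently it is enough to show that each of the two intersection subgroups is contained in $N_L(c_1)\cap N_L(c_2)$ for some constant $L$ and two \emph{distinct} peripheral cosets $c_1,c_2$, since Proposition 2.36 guarantees that such an intersection is bounded. Throughout I use that $gPg^{-1}$ is precisely the stabilizer of the coset $gP$ under left multiplication, so that membership in a conjugate of a peripheral subgroup forces proximity to the corresponding coset.

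For statement (2), fix $g\notin P$ and put $L=\abs{g}_S+1$. Let $x\in gPg^{-1}\cap P$. Since $x\in P$ we have $d_S(x,P)=0$, and writing $x=gpg^{-1}$ with $p\in P$ gives $xg=gp\in gP$, whence $d_S(x,gP)\le\abs{g}_S$. Therefore the whole subgroup $gPg^{-1}\cap P$ lies in $N_L(P)\cap N_L(gP)$. Because $g\notin P$, the cosets $P$ and $gP$ are distinct, so this neighborhood intersection is bounded by Proposition 2.36 in \cite{Osin06}, hence finite. Thus $gPg^{-1}\cap P$ is finite.

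For statement (1), fix $g_1,g_2$ and set $L=\max\{\abs{g_1}_S,\abs{g_2}_S\}+1$. First I would check that the cosets $g_1P_1$ and $g_2P_2$ are distinct: if $g_1P_1=g_2P_2$, then $g_1^{-1}g_2\in P_2$ and $P_1=g_1^{-1}g_2P_2=P_2$, contradicting the hypothesis that $P_1$ and $P_2$ are different peripheral subgroups. Now for $x\in g_1P_1g_1^{-1}\cap g_2P_2g_2^{-1}$, the computation of the previous paragraph (applied to each factor) gives $d_S(x,g_iP_i)\le\abs{g_i}_S$ for $i=1,2$, so the subgroup is contained in $N_L(g_1P_1)\cap N_L(g_2P_2)$. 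Since $g_1P_1\ne g_2P_2$, Proposition 2.36 again forces this set to be bounded, and hence finite.

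The only genuine work is the two coset computations above together with the citation of Proposition 2.36. If one instead wishes to argue directly from Definition \ref{rel}, the boundedness of $N_L(c_1)\cap N_L(c_2)$ for distinct peripheral cosets can be extracted from \emph{fineness}: an infinite intersection subgroup would produce, through a fixed edge joining a peripheral vertex to a $G$--vertex, infinitely many circuits of uniformly bounded length in the coned off Cayley graph, contradicting that each edge lies in only finitely many circuits of a given length. The delicate point in that alternative is to extract genuinely distinct \emph{embedded} circuits from the resulting closed walks, so I expect the route through Proposition 2.36 to be the cleaner one.
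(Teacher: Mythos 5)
Your coset computations are sound, but the proof rests on a misattribution that matters. Proposition 2.36 in \cite{Osin06} is \emph{not} the geometric statement that distinct peripheral cosets have bounded coarse intersection; it is precisely the algebraic statement you were asked to prove (almost malnormality of the peripheral structure: $g_1P_1g_1^{-1}\cap g_2P_2g_2^{-1}$ finite for $P_1\ne P_2$, and $gPg^{-1}\cap P$ finite for $g\notin P$). This is why the paper offers no argument at all: Lemma \ref{Osips} is an immediate restatement of Osin's result, and the paper's ``proof'' is the citation itself. As written, your key step --- ``Proposition 2.36 guarantees that such an intersection is bounded'' --- appeals to a proposition that simply does not speak about neighborhoods of cosets, so the stated justification for the only nontrivial input of your argument fails.

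The good news is that the geometric fact you actually need is true and is quoted in this very paper: it is Lemma \ref{lemma4} (Theorem 4.1 of Dru\c{t}u--Sapir \cite{MR2153979}), which gives $\diam\bigl(N_M(gP)\cap N_{M'}(g'P')\bigr)<\iota(M,M')$ with respect to $d_S$ for any two distinct peripheral cosets. If you replace your citation by that isolation result, your argument goes through verbatim: $x\in gPg^{-1}$ implies $d_S(x,gP)\le\abs{g}_S$, the distinctness checks are correct ($g\notin P$ gives $P\ne gP$, and $g_1P_1=g_2P_2$ would force $P_1=P_2$), and local finiteness of $\Gamma(G,S)$ converts bounded diameter into finiteness. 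The repaired proof is then a genuinely different route from the paper's: the paper imports the algebraic statement wholesale from \cite{Osin06}, whereas you derive it from the isolation of peripheral cosets, trading dependence on Osin's algebraic result for dependence on Dru\c{t}u--Sapir. Your fallback sketch via fineness of the coned-off Cayley graph is the right idea in spirit, but, as you yourself note, extracting genuinely distinct embedded circuits is the delicate point, and it is not carried out, so it cannot substitute for the missing citation.
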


\begin{thm}[Section 8.2, \cite{MR919829}]
\label{TitsA}
Let $(G,\PP)$ be a relatively hyperbolic group and $H$ an infinite subgroup of $G$. If $H$ is not conjugate to a subgroup of any peripheral subgroup, $H$ contains a hyperbolic element. 
\end{thm}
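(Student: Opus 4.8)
The plan is to analyze the action of $G$ on the coned off Cayley graph $\hat{\Gamma}=\hat{\Gamma}(G,S,\PP)$, which by Definition~\ref{rel} is $\delta$--hyperbolic and fine, and to read the dichotomy off the way the isometries in $H$ move points of $\hat{\Gamma}$. I would prove the contrapositive: if $H$ contains no hyperbolic element, then $H$ is conjugate into a peripheral subgroup.

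First I would classify the elements of $G$ by their action on $\hat{\Gamma}$. If $g$ is a hyperbolic element of infinite order, then by Lemma~\ref{pohe} it satisfies $d(e,g^n)>\lambda\abs{n}-c$, so $g$ acts as a loxodromic isometry of $\hat{\Gamma}$ with positive translation length. If instead $g$ is conjugate into a peripheral subgroup, say $g\in fPf^{-1}$ for some $f\in G$ and $P\in\PP$, then $g$ fixes the peripheral vertex $v_{fP}$ and hence acts elliptically; and by Lemma~\ref{Osips} an infinite order element lies in at most one peripheral conjugate, so such a $g$ fixes a \emph{unique} peripheral vertex. Finally every finite order element has a finite, hence bounded, orbit and is therefore elliptic.

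Next, assuming $H$ has no hyperbolic element, the classification above shows that every element of $H$ acts elliptically on $\hat{\Gamma}$; in particular $H$ contains no loxodromic isometry. I would then invoke the classification of subgroups of the isometry group of a hyperbolic space together with the fineness of $\hat{\Gamma}$ to conclude that the infinite group $H$ fixes a vertex of $\hat{\Gamma}$. This fixed vertex cannot be a $G$--vertex, since $G$ acts freely on its $G$--vertices by left multiplication while $H$ is infinite; hence it is a peripheral vertex $v_{fP}$. Because the $G$--stabilizer of $v_{fP}$ is exactly $fPf^{-1}$, this gives $H\subseteq fPf^{-1}$, so $H$ is conjugate into the peripheral subgroup $P$, which is the desired contrapositive.

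The main obstacle is the step asserting that an infinite subgroup all of whose elements act elliptically must fix a (peripheral) vertex. This is false for isometry groups of general hyperbolic spaces, where a group can consist entirely of elliptic isometries yet have unbounded orbits and no global fixed point, so the argument genuinely uses the fineness of the coned off Cayley graph together with the finite intersection properties of peripheral cosets recorded in Lemma~\ref{Osips} and Lemma~\ref{CloseCosets}: fineness produces a bounded orbit with a well defined circumcenter, and the finiteness of $G$--vertex stabilizers, combined with the finite coarse intersection of distinct peripheral cosets, forces that center to be a single peripheral vertex. This is precisely the geometrically finite convergence group input supplied by the cited work of Gromov, and making it self contained would require developing that dynamical framework.
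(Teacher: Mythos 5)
The theorem you are proving is not actually proved in the paper: it is stated with the attribution ``Section 8.2, \cite{MR919829}'' and used as a black box, so there is no internal argument to compare against. Judged on its own terms, your proposal has a genuine gap, and you have located it yourself. The contrapositive reduction, the loxodromic/elliptic classification of elements (via Lemma~\ref{pohe} and Lemma~\ref{Osips}), and the identification of vertex stabilizers of $\hat{\Gamma}(G,S,\PP)$ are all routine; the entire content of the theorem sits in your middle step, the claim that an infinite subgroup of $G$, all of whose elements act elliptically on the coned-off Cayley graph, must fix a peripheral vertex. That claim is essentially equivalent to the theorem itself --- given the theorem, such a subgroup is conjugate into a peripheral subgroup and therefore fixes the corresponding cone vertex --- so invoking it without proof is circular.

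Moreover, the mechanism you gesture at does not work as stated. Fineness of $\hat{\Gamma}$ does not ``produce a bounded orbit'': the difficulty, which you yourself flag, is precisely that a group consisting entirely of elliptic isometries of a hyperbolic space can have unbounded orbits, and neither fineness nor the finite-intersection properties of Lemmas~\ref{Osips} and~\ref{CloseCosets} rules this out for $H$ in any direct way. (Bounded orbit implies a coarse fixed point via a circumcenter argument --- that is the easy implication; boundedness is the hard one.) The proofs in the literature establish the dichotomy by different means: Tukia-style convergence-group dynamics on the Bowditch boundary, where an infinite subgroup with no loxodromic element fixes a unique limit point, which must be a bounded parabolic point, whose stabilizer is a conjugate of a peripheral subgroup; or Osin's combinatorial argument via relative presentations. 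Your write-up ultimately defers to exactly this framework (``the geometrically finite convergence group input supplied by the cited work of Gromov''), which means the proposal is not an independent proof but a reduction of the statement to an equivalent deep fact, followed by the same citation the paper makes. That is a legitimate thing to do in a paper, but as a proof attempt it leaves the decisive step unproven.
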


\begin{lem}
\label{he2} 
Let $(G,\PP)$ be a relatively hyperbolic group and $H$ an infinite index, infinite normal subgroup of $G$. Then $H$ contains at least one infinite order hyperbolic element.
\end{lem}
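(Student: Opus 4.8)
The plan is to reduce the statement to the Tits-alternative recorded in Theorem~\ref{TitsA}: I would first show that $H$ is \emph{not} conjugate to a subgroup of any peripheral subgroup, then invoke Theorem~\ref{TitsA} to produce a hyperbolic element of $H$, and finally check that this element has infinite order. The engine for the first and main step is the interaction between the normality of $H$ and the almost-malnormality of peripheral subgroups recorded in Lemma~\ref{Osips}.

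For the main step I would argue by contradiction. Suppose $H$ is conjugate to a subgroup of some peripheral subgroup $P\in\PP$, say $H\le gPg^{-1}$ for some $g\in G$. Since $H$ is normal, for every $x\in G$ we have $H=xHx^{-1}\le (xg)P(xg)^{-1}$, and as $x$ ranges over $G$ the element $xg$ ranges over all of $G$; hence $H\le yPy^{-1}$ for every $y\in G$. In particular $H\le P$ (taking $y=e$) and, since peripheral subgroups are proper, we may choose some $y\notin P$, giving $H\le P\cap yPy^{-1}$. By Lemma~\ref{Osips}(2) this intersection is finite, contradicting the hypothesis that $H$ is infinite. Thus $H$ is not conjugate to a subgroup of any peripheral subgroup. (Here the essential hypothesis is that $H$ is infinite; the infinite-index assumption is inherited from the context in which the lemma will be applied.)

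With this in hand, Theorem~\ref{TitsA} applies to the infinite, non-parabolic subgroup $H$ and yields a hyperbolic element $h\in H$. The remaining point is to upgrade $h$ to an \emph{infinite-order} hyperbolic element. I expect this to be the only delicate part of the argument, since Definition~\ref{rel} of a hyperbolic element does not by itself preclude torsion. It is resolved by observing that the element furnished by the relatively hyperbolic Tits alternative is loxodromic in $\hat{\Gamma}(G,S,\PP)$ — equivalently, by Lemma~\ref{pohe}, it generates an undistorted, strongly relatively quasiconvex infinite cyclic subgroup — and therefore has infinite order. Put differently, an infinite subgroup that contains no loxodromic element must be parabolic, and we have just ruled that out; so the hyperbolic element produced is automatically of infinite order, completing the proof.
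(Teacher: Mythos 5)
Your proposal is correct and follows essentially the same route as the paper: normality of $H$ together with Lemma~\ref{Osips}(2) rules out conjugacy of $H$ into any peripheral subgroup (the paper conjugates twice, $H=g^{-1}Hg\le P$ and $H=g_1^{-1}Hg_1\le g_1^{-1}Pg_1$ for $g_1\notin P$, which is exactly your argument), and Theorem~\ref{TitsA} then supplies the hyperbolic element. One caution on your final step: the appeal to Lemma~\ref{pohe} to deduce infinite order is circular, since that lemma takes infinite order as a hypothesis; the correct resolution is the one you state last --- the Tits alternative for relatively hyperbolic groups actually produces a loxodromic, hence infinite-order, element --- which is also how the paper implicitly reads Theorem~\ref{TitsA}, its own proof never addressing the order issue at all.
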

\begin{proof}
If $H$ is not conjugate to a subgroup of any peripheral subgroup, $H$ contains a hyperbolic element by Theorem~ \ref{TitsA}. Suppose that $H$ is a subgroup of some conjugate $gPg^{-1}$ of some peripheral subgroup $P$, then $H=g^{-1}Hg$ is a subgroup of $P$. Let $g_1$ be an element in $G-P$, then $H=g_1^{-1}Hg_1$ is also a subgroup of $g_1^{-1}Pg_1$. Then, $\abs{P\cap g_1^{-1}Pg_1}=\infty$, which is contradicts Lemma~ \ref{Osips}. Therefore, $H$ is not a subgroup of any conjugate of any peripheral subgroup. \qedhere
\end{proof}

\begin{lem} [Theorem 3.26, \cite{Osin06}]
\label{t} There is a positive constant $\sigma$ such that the following holds. Let $\Delta=pqr$ be a triangle whose sides $p,q,r$ are geodesic in $\hat{\Gamma}(G,S,\PP)$. Then for each $G$--vertex $v$ on $p$, there is a $G$--vertex $u$ in the union $q\cup r$ such that $d_S(u,v)\leq \sigma$.
\end{lem}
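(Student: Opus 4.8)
The plan is to extract the statement from the two defining properties of relative hyperbolicity in Definition~\ref{rel}, namely that the coned-off Cayley graph $\hat{\Gamma}(G,S,\PP)$ is $\delta$--hyperbolic and fine. Hyperbolicity alone gives a weak, $d$--metric version of the conclusion: since $\Delta=pqr$ is a geodesic triangle in $\hat{\Gamma}$, thinness of triangles places every $G$--vertex $v$ on $p$ within $d$--distance $\delta$ of $q\cup r$. The entire content of the lemma is the upgrade from the coned-off metric $d$ to the word metric $d_S$, and this upgrade is genuinely nontrivial: two $G$--vertices of a common peripheral coset $gP$ are joined through the peripheral vertex $v_{gP}$ by a path of $d$--length $1$ while being arbitrarily far apart in $d_S$.

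To organize the upgrade I would first reduce to a two-geodesic problem using the tripod structure of thin triangles. By $\delta$--hyperbolicity there are internal points $m_A\in q$, $m_B\in r$, $m_C\in p$ with pairwise $d$--distances at most a multiple of $\delta$ (the Gromov-product decomposition), and after replacing each by a nearby $G$--vertex I may assume these, and the triangle vertices, are $G$--vertices at the cost of a bounded $d$--error. A given $G$--vertex $v$ on $p$ lies, say, on the subsegment $[A,m_C]$; I would then compare $v$ against the geodesic $[A,m_B]\subseteq r$, which shares the endpoint $A$ with $[A,m_C]\subseteq p$ and whose far endpoint $m_B$ satisfies $d(m_B,m_C)\le\delta$. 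Thus everything reduces to the following assertion: two $\hat{\Gamma}$--geodesics issuing from a common $G$--vertex, whose terminal $G$--vertices are at $d$--distance at most $\delta$, $d_S$--fellow-travel at the level of $G$--vertices, each $G$--vertex of one lying within a uniform $d_S$--distance of a $G$--vertex of the other.

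The proof of that fellow-traveling assertion is the step I expect to be the main obstacle, and it is exactly where fineness must enter. The plan is to run the bounded coset penetration argument: along each of the two geodesics, record the maximal peripheral transits (the subpaths entering and leaving a single coset through a peripheral vertex), and show that the two geodesics penetrate the same cosets with $d_S$--close entry and exit $G$--vertices. Here Lemma~\ref{CloseCosets} is the key tool, since a long simultaneous stay of the two geodesics near two cosets $gP$ and $g'P'$ forces a long segment into a bounded $d_S$--neighborhood of $gPg^{-1}\cap g'P'g'^{-1}$, which by Lemma~\ref{Osips} is finite unless the cosets coincide; coincidence, in turn, is incompatible with the no-backtracking of geodesics in a fine hyperbolic graph. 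Fineness is what guarantees that two geodesics with $d$--close endpoints penetrate each common coset at $d_S$--close entry and exit $G$--vertices, rather than at $G$--vertices that are $d$--close but $d_S$--far apart; this is the mechanism that converts $d$--fellow-traveling into $d_S$--fellow-traveling.

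Finally I would assemble the constant. The fellow-traveling assertion is uniform along the two (possibly long) geodesics: the bounded coset penetration constant controls the $d_S$--gap at every common coset, and on the stretches where neither geodesic penetrates a coset $d$--closeness already coincides with $d_S$--closeness, so each $G$--vertex of one geodesic is within a fixed $d_S$--distance of a $G$--vertex of the other. Tracing back through the tripod reduction, the $G$--vertex $v$ on $p$ acquires a $G$--vertex $u$ on $q\cup r$ with $d_S(u,v)\le\sigma$, where $\sigma$ depends only on $\delta$, $S$ and $\PP$, all fixed once $(G,\PP)$ and the generating set are chosen. As this bounded-penetration machinery is precisely what is established in \cite{Osin06}, in the body of the paper we simply invoke Osin's Theorem~3.26 rather than reproducing the argument.
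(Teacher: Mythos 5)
This lemma is quoted directly from Osin \cite{Osin06} (Theorem 3.26): the paper supplies no proof of its own, and your concluding move --- simply invoking that theorem --- is exactly what the paper does, so the proposal matches. Your accompanying sketch is a reasonable picture of the bounded-coset-penetration machinery behind Osin's argument, with the one caveat that to obtain a \emph{uniform} $\sigma$ you would need the uniform isolation statement of Lemma~\ref{lemma4} rather than Lemma~\ref{CloseCosets}, whose constant $L'=L'(G,S,xH,yK)$ depends on the particular pair of cosets and so cannot by itself give a bound independent of the triangle.
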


The following lemma is an immediate result of Lemma~ \ref{t}.

\begin{lem} 
\label{q} 
There is a positive constant $\sigma$ such that the following holds. Let $pqrs$ be a quadrilateral whose sides $p,q,r,s$ are geodesic in $\hat{\Gamma}(G,S,\PP)$. Then for each $G$--vertex $v$ on $p$, there is a $G$--vertex $u$ in the union $q\cup r\cup s$ such that $d_S(u,v)\leq 2\sigma$.
\end{lem}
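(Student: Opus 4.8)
The plan is to cut the quadrilateral into two geodesic triangles along a diagonal and then apply Lemma~\ref{t} at most twice, taking for $\sigma$ the very constant produced by Lemma~\ref{t}. First I would label the four corners of $pqrs$ so that $p$ runs from a vertex $A$ to $B$, $q$ from $B$ to $C$, $r$ from $C$ to $D$, and $s$ from $D$ to $A$. Since $\hat{\Gamma}(G,S,\PP)$ is a geodesic space, I may choose a geodesic $t$ joining $B$ and $D$. This diagonal splits the quadrilateral into two geodesic triangles, namely $\Delta_1$ with sides $p$, $t$, $s$ and $\Delta_2$ with sides $q$, $r$, $t$.

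Next, fix a $G$--vertex $v$ on $p$. Applying Lemma~\ref{t} to the triangle $\Delta_1$ (with $p$ as the distinguished side and $t\cup s$ as the union of the other two sides), I obtain a $G$--vertex $u_1$ on $t\cup s$ with $d_S(u_1,v)\leq\sigma$. If $u_1$ lies on $s$, then $u_1\in q\cup r\cup s$ and $d_S(u_1,v)\leq\sigma\leq 2\sigma$, so the vertex $u=u_1$ already works. Otherwise $u_1$ lies on the diagonal $t$, and I would apply Lemma~\ref{t} a second time, now to the triangle $\Delta_2$, treating $t$ as the distinguished side and $q\cup r$ as the union of the other two sides. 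This yields a $G$--vertex $u_2$ on $q\cup r$ with $d_S(u_2,u_1)\leq\sigma$, and the triangle inequality gives $d_S(u_2,v)\leq d_S(u_2,u_1)+d_S(u_1,v)\leq 2\sigma$. Thus $u=u_2\in q\cup r\subset q\cup r\cup s$ is the desired vertex.

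As the statement advertises, this is an essentially immediate consequence of Lemma~\ref{t}, so there is no real obstacle; the only point requiring a moment's care is that Lemma~\ref{t} outputs a \emph{$G$--vertex} rather than an arbitrary point of the coned-off graph. This is precisely what makes the second application legitimate (it is being applied to the $G$--vertex $u_1$) and what guarantees that the final point $u$ is itself a $G$--vertex, as required in the conclusion.
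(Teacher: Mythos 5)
Your proof is correct and is exactly the argument the paper intends: the paper gives no written proof, stating only that the lemma is an immediate consequence of Lemma~\ref{t}, and the constant $2\sigma$ in the statement corresponds precisely to your two applications of that lemma across a diagonal of the quadrilateral. Your observation that the intermediate point $u_1$ is itself a $G$--vertex, which is what legitimizes the second application of Lemma~\ref{t}, is the one point of care, and you handled it correctly.
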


\begin{lem}[Lemma A.3, \cite {MR2153979}]
\label{mainlemma}
Let $(G,\PP)$ be a relatively hyperbolic group with a finite generating set $S$. Then there is a constant $K>1$ such that the following holds. Let $p$ and $q$ be paths in 
$\hat{\Gamma}(G,S,\PP)$ such that $p- =q-$, $p+ =q+$, and $q$ is geodesic in $\hat{\Gamma}(G,S,\PP)$. Then for any vertex $v \in q$, there exists a vertex $w \in p$ such that $d_S(w,v)\leq K \log_2 \abs{p}$.
\end{lem}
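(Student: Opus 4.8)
The plan is to prove this by a \emph{dyadic bisection} argument, driven by the thinness of geodesic triangles in the coned-off Cayley graph furnished by Lemma~\ref{t}. The heart of the matter is that a single application of Lemma~\ref{t} pushes a $G$--vertex off one side of a geodesic triangle onto the union of the other two sides at a cost of only the additive constant $\sigma$ in the $d_S$--metric. If I repeatedly halve $p$ and connect the subdivision points by geodesics of $\hat{\Gamma}(G,S,\PP)$ (which exist, since the coned-off graph is a connected metric graph), I can transport the given vertex $v$ of $q$ down through roughly $\log_2\abs{p}$ nested triangles, paying $\sigma$ at each level; the additive (rather than multiplicative) cost per level is exactly what converts the $\log_2\abs{p}$ depth into the logarithmic bound. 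Let $\sigma$ be the constant of Lemma~\ref{t}.

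Concretely, I would induct on the number $n$ of edges of $p$, proving that every $G$--vertex $v$ on any coned-off geodesic $q$ joining the endpoints of $p$ admits a $G$--vertex $w$ of $p$ with $d_S(v,w)\le\sigma\lceil\log_2 n\rceil$. For the base case $n=1$ the path $p$ is a single edge and $q$ has length at most $1$, so the only $G$--vertices of $q$ are its endpoints, each a $G$--vertex of $p$; the distance is $0=\sigma\lceil\log_2 1\rceil$. For the step with $n\ge 2$, let $x_0,\dots,x_n$ be the consecutive vertices of $p$, set $m=\lfloor n/2\rfloor$, and join $x_0$ to $x_m$ and $x_m$ to $x_n$ by coned-off geodesics $q_L$ and $q_R$. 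Then $q,q_L,q_R$ form a geodesic triangle with corners $x_0,x_m,x_n$, so Lemma~\ref{t} places $v$ within $d_S$--distance $\sigma$ of a $G$--vertex $v'$ of $q_L\cup q_R$, say on $q_L$. As $q_L$ joins the endpoints of the subpath $p_L=x_0\cdots x_m$, which has $m\le\lceil n/2\rceil$ edges, the inductive hypothesis yields a $G$--vertex $w$ of $p_L\subseteq p$ with $d_S(v',w)\le\sigma\lceil\log_2\lceil n/2\rceil\rceil=\sigma(\lceil\log_2 n\rceil-1)$, and the triangle inequality closes the induction. Note the subdivision points $x_m$ may well be peripheral vertices, but this is harmless: Lemma~\ref{t} always delivers a $G$--vertex, so every vertex I track is automatically a $G$--vertex, and at the bottom level the tracked vertex sits on a length-one geodesic between adjacent vertices of $p$, hence is an endpoint that is a genuine $G$--vertex of $p$.

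To finish I would convert the edge count into the stated length bound: since each edge of $\hat{\Gamma}(G,S,\PP)$ has length at least $1/2$, we have $n\le 2\abs{p}$, so $\sigma\lceil\log_2 n\rceil\le\sigma(2+\log_2\abs{p})$, and for $\abs{p}\ge 2$ this is at most $K\log_2\abs{p}$ once $K$ is taken large (e.g.\ $K=\max(3\sigma,2)$, which also guarantees $K>1$). I expect the only genuinely delicate point to be this constant bookkeeping at the low end: the bound is an estimate for paths whose length is bounded below, and the boundedly many degenerate short cases must be absorbed into $K$ by the direct observation above that the tracked vertex is then forced onto an endpoint of $p$. The substantive content—the appearance of the logarithm—comes entirely from the additive, per-level cost of the thin-triangle estimate in the bisection; everything else is routine, and Lemma~\ref{q} could be substituted for Lemma~\ref{t} if one prefers to bisect via quadrilaterals at the cost of replacing $\sigma$ by $2\sigma$.
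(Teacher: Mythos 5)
There is no proof in the paper to compare yours against: the author imports Lemma~\ref{mainlemma} verbatim from \cite{MR2153979} (Lemma~A.3) and never proves it. So your proposal stands or falls on its own, and its core is correct. The strong induction on the edge count $n$ of $p$, bisecting at $x_{\lfloor n/2\rfloor}$, joining the subdivision points by coned-off geodesics, applying Lemma~\ref{t} to the triangle $q\cup q_L\cup q_R$ at additive cost $\sigma$, and recursing into the half that received the tracked vertex, is the classical hyperbolic-space bisection argument transplanted to the relative setting; the identity $\lceil\log_2\lceil n/2\rceil\rceil=\lceil\log_2 n\rceil-1$ (valid for all integers $n\geq 2$) correctly converts the per-level additive cost into the bound $\sigma\lceil\log_2 n\rceil$, your base case is right (a geodesic of length at most $1$ joining the endpoints of a single edge carries no $G$--vertex other than those endpoints, since distinct $G$--vertices of $\hat{\Gamma}(G,S,\PP)$ are at coned-off distance at least $1$), and restricting attention to $G$--vertices of $q$ is not only harmless but forced, since $d_S(w,v)$ has no meaning when $v$ is a cone vertex. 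What your route buys is self-containedness: it exhibits Lemma~\ref{mainlemma} as a formal consequence of Lemma~\ref{t}, which the paper already quotes, so the author's second black-box citation is logically redundant given the first.

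The one place where your write-up is too optimistic is the low end, and it is worth being precise because it is not mere bookkeeping. In this paper's coned-off graph, cone edges have length $1/2$, so a single-cone-edge path has $\abs{p}=1/2$ and the stated bound $K\log_2\abs{p}=-K$ is negative; since $d_S(w,v)\geq 0$, the inequality is then false for every $K$, and no ``absorption into $K$'' can repair it --- your degenerate-case observation proves the true statement $d_S(w,v)=0$, but not the stated one. This defect is inherited from the transcription: in \cite{MR2153979} the ambient graph is the relative Cayley graph, all of whose edges have length $1$ and all of whose vertices are group elements, so $\abs{p}\geq 1$ always and the statement is consistent (the case $\abs{p}=1$ being exactly your base-case argument). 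Read with $\abs{p}$ interpreted as the number of edges of $p$, or under the standing assumption $\abs{p}\geq 1$, your proof is complete, except that the constant needs slightly more room than $\max(3\sigma,2)$: for instance when $n=3$ and $\abs{p}=3/2$ your bound $2\sigma$ must be at most $K\log_2(3/2)$, which requires $K\geq 2\sigma/\log_2(3/2)>3\sigma$; taking $K=\max(4\sigma,2)$ closes every case with $\abs{p}\geq 1$.
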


\begin{lem}[Lemma 4.15, \cite{MR2153979}]
\label{lemma2}
\label{thm:PeripheralQuasiconvex}
Let $(G,\PP)$ be a relatively hyperbolic group with a finite generating set $S$.
For each $A_0$ there is a constant $A_1=A_1(A_0)$ such that the following
holds in $\Cayley(G,\Set{S})$.
Let $c$ be a geodesic segment whose endpoints lie in the $A_0$--neighborhood
of a peripheral left coset $gP$.
Then $c$ lies in the $A_1$--neighborhood of $gP$.
\end{lem}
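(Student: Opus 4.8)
The plan is to carry out the argument in the coned-off Cayley graph $\hat{\Gamma}(G,S,\PP)$, where the entire peripheral coset $gP$ is collapsed to within distance $1$ of the single cone vertex $v_{gP}$, and then to transfer the conclusion back to the word metric $d_S$ by comparing the word geodesic $c$ with a coned-off geodesic joining its endpoints. Writing $x,y$ for the endpoints of $c$ and choosing $p_x,p_y\in gP$ with $d_S(x,p_x),d_S(y,p_y)\le A_0$, the two segments to $p_x,p_y$ together with the length-one detour $p_x\to v_{gP}\to p_y$ show that a coned-off geodesic $\hat c$ from $x$ to $y$ has length at most $2A_0+1$. In particular $\hat c$ is short, and this shortness will be the key leverage when comparing it with $c$.

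The core of the argument is a bounded coset penetration statement comparing $c$ with $\hat c$. First I would decompose the word geodesic $c$ into maximal subpaths $\beta_i$ whose vertices lie in $N_{A_0}(g_iP_i)$ for peripheral cosets $g_iP_i\in\Pi$, separated by complementary transition subpaths $\eta_i$. Using the hyperbolicity and fineness of $\hat{\Gamma}(G,S,\PP)$ (Lemma~\ref{nops}), the thin-triangle and thin-quadrilateral estimates (Lemmas~\ref{t} and~\ref{q}), and the logarithmic tracking of Lemma~\ref{mainlemma}, I expect the transition pieces $\eta_i$ to have uniformly bounded $d_S$--length. The image of $c$ in $\hat{\Gamma}(G,S,\PP)$, after the long pieces $\beta_i$ are collapsed, becomes a coned-off quasigeodesic with controlled additive defect, so by Lemma~\ref{mainlemma} it fellow-travels $\hat c$; consequently every coset $g_iP_i$ that $c$ penetrates deeply is also nearly met by the short geodesic $\hat c$, hence lies within bounded $d_S$--distance of $gP$.

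To close the loop I would rule out deep penetration of any coset genuinely far from $gP$. If a long piece $\beta_i$ tracked a coset $g_iP_i$ with many vertices simultaneously within bounded distance of both $g_iP_i$ and, via the comparison with $\hat c$, of $gP$, then Lemma~\ref{CloseCosets} would force those vertices into a bounded neighborhood of $gPg^{-1}\cap g_iP_ig_i^{-1}$, which by Lemma~\ref{Osips} is finite, and hence $d_S$--bounded, unless $g_iP_i=gP$. Thus the only coset that $c$ can follow over a long range is $gP$ itself, and combining the uniformly bounded transition pieces with the uniformly bounded deviation of each $\beta_i$ from $gP$ yields a single constant $A_1=A_1(A_0)$, assembled from $A_0$, the thinness constant $\sigma$, the tracking constant $K$, and the intersection constant of Lemma~\ref{CloseCosets}, for which $c\subset N_{A_1}(gP)$.

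I expect the main obstacle to be the bounded coset penetration comparison of the middle step: a word geodesic is in general a very poor coned-off quasigeodesic, since its word length can be arbitrarily large while its endpoints stay at bounded coned-off distance, so one cannot simply fellow-travel $c$ with $\hat c$ inside $\hat{\Gamma}(G,S,\PP)$. The real work is to show that all of this excess word length is spent following the single coset $gP$, which is precisely where the finiteness of distinct-coset intersections (Lemmas~\ref{CloseCosets} and~\ref{Osips}) must be combined with the logarithmic divergence estimate (Lemma~\ref{mainlemma}).
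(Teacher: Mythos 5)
This lemma is nowhere proved in the paper: it is imported verbatim from Dru\c{t}u--Sapir (Lemma 4.15 of \cite{MR2153979}), where it is established with asymptotic-cone techniques, so your attempt has to be judged on its own merits, and it has a genuine gap --- one you partly flag yourself in your last paragraph. The opening move is fine: $d(x,y)\leq 2A_0+1$ in the coned-off graph. But the entire middle step is asserted rather than proved, and the tool you invoke cannot deliver it. Lemma~\ref{mainlemma} is one-directional: it places every vertex of the coned-off \emph{geodesic} within $d_S$--distance $K\log_2\abs{p}$ of the competitor path $p$, a bound that moreover grows with the word length of $c$; it says nothing about points of $c$ (or the cosets $c$ penetrates deeply) being close to $\hat c$, which is the direction you need, and in any case coned-off proximity never bounds word-metric proximity --- two points of a single peripheral coset are at coned-off distance $1$ and arbitrary $d_S$--distance. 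Your claim that the transition pieces $\eta_i$ have uniformly bounded $d_S$--length is also unproven and is not what is true: a geodesic can run parallel to $gP$ for arbitrarily long while staying outside every coset's deep neighborhood; the statement the proof actually needs is that transition \emph{points} are uniformly $d_S$--close to $gP$. Finally, your uniformity mechanism fails: the constant $L'$ in Lemma~\ref{CloseCosets} depends on the particular pair of cosets, and finiteness of $gPg^{-1}\cap g'P'g'^{-1}$ (Lemma~\ref{Osips}) carries no uniform diameter bound over the infinitely many pairs of peripheral cosets that can arise as $c$ varies, so no single $A_1=A_1(A_0)$ comes out of this.

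Both gaps can be closed with lemmas the paper already quotes. By Lemma~\ref{lemma5b}, the $(\epsilon,R)$--transition points of $c$ lie at Hausdorff $d_S$--distance at most $L$ from the $G$--vertices of $\hat c$. Form the coned-off quadrilateral with sides $\hat c$, coned-off geodesics from $x$ to $p_x$ and from $y$ to $p_y$, and the length-one geodesic $p_x\to v_{gP}\to p_y$; Lemma~\ref{q} plus Lemma~\ref{mainlemma} applied to the word paths of length at most $A_0$ from $x$ to $p_x$ and $y$ to $p_y$ puts every $G$--vertex of $\hat c$ within uniform $d_S$--distance of $gP$, hence every transition point of $c$ as well. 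Each maximal deep component of $c$ lies in $N_\epsilon(g'P')$ for a single peripheral coset (Lemma~\ref{lemma5a}) and has both endpoints uniformly close to $gP$ (they are transition points or endpoints of $c$); if $g'P'=gP$ the component is $\epsilon$--close to $gP$, and if $g'P'\neq gP$ its endpoints lie in the intersection of uniform neighborhoods of two \emph{distinct} peripheral cosets, whose diameter is uniformly bounded by Lemma~\ref{lemma4} --- the uniform isolation statement that must replace your Lemma~\ref{CloseCosets}/Lemma~\ref{Osips} combination --- so the component, being a subsegment of a geodesic, is uniformly short. Taking the maximum of these constants gives $A_1$. (Alternatively, Lemma~\ref{lemma6} with a subdivision argument on the subsegments of $c$ outside $N_M(gP)$ yields the same conclusion without the transition-point machinery.)
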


\begin{lem}[Theorem 4.1, \cite{MR2153979}]
\label{lemma4}
\label{thm:Isolated}
Suppose $(G,\PP)$ is relatively hyperbolic with a finite generating set $\Set{S}$.
For each $M, M'<\infty$ there is a constant $\iota=\iota(M,M')<\infty$
so that for any two peripheral cosets $gP \ne g'P'$ we have
\[
\diam \bigl( \nbd{gP}{M} \cap \nbd{g'P'}{M'} \bigr) < \iota
.\]
with respect to the metric $d_S$.
\end{lem}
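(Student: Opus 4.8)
The plan is to first prove the bound for a single fixed pair of distinct peripheral cosets, and then upgrade it to a bound that is uniform over all pairs by a compactness argument that exploits the finiteness of $\PP$ (Lemma~\ref{nops}) together with the local finiteness of the Cayley graph. Throughout I would set $L=\max\{M,M'\}$, so that $N_M(gP)\cap N_{M'}(g'P')\subset N_L(gP)\cap N_L(g'P')$; it then suffices to bound the diameter of the latter set, all diameters and neighborhoods being measured with respect to $d_S$.

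For the single-pair estimate I would fix distinct cosets $gP$ and $g'P'$ and invoke Lemma~\ref{CloseCosets} with the constant $L$ to obtain a constant $L'$ with
\[ N_L(gP)\cap N_L(g'P')\subset N_{L'}\bigl(gPg^{-1}\cap g'P'g'^{-1}\bigr). \]
I would then show the subgroup $gPg^{-1}\cap g'P'g'^{-1}$ is finite. Conjugating by $g^{-1}$, a $d_S$--isometry, reduces this to the finiteness of $P\cap hP'h^{-1}$ with $h=g^{-1}g'$. If $P'\neq P$, this is an intersection of conjugates of two distinct peripheral subgroups, hence finite by Lemma~\ref{Osips}(1); if $P'=P$, then $gP\neq g'P'$ forces $h\notin P$, and $hPh^{-1}\cap P$ is finite by Lemma~\ref{Osips}(2). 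In either case the subgroup is a finite set $F$, so $N_{L'}(F)$ has finite diameter, giving $\diam\bigl(N_L(gP)\cap N_L(g'P')\bigr)<\infty$ for each fixed pair.

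To make this uniform — the heart of the argument — I would argue by contradiction. If no constant $\iota=\iota(M,M')$ worked, there would be pairs of distinct cosets $(g_nP_n,g_n'P_n')$ and points $x_n,y_n\in N_L(g_nP_n)\cap N_L(g_n'P_n')$ with $d_S(x_n,y_n)\to\infty$. Since $\PP$ is finite by Lemma~\ref{nops}, after passing to a subsequence I may take $P_n=P$ and $P_n'=P'$ constant. Choosing $p_n\in g_nP$ with $d_S(x_n,p_n)\le L$ and translating on the left by $p_n^{-1}$ — which preserves $d_S$, permutes peripheral cosets, and sends $g_nP$ to $P$ — I may further assume that $x_n$ lies in the fixed ball $\overline{B(e,L)}$. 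Because the Cayley graph of a finitely generated group is locally finite, this ball is a finite set, so after a further subsequence $x_n=x$ is constant.

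Now each translated coset $Q_n$ of $P'$ satisfies $x\in N_L(Q_n)$, hence $Q_n$ meets the finite ball $\overline{B(x,L)}$. The key observation is that only finitely many peripheral cosets meet a fixed finite ball: any point $z$ lies in at most $\abs{\PP}$ peripheral cosets (for each peripheral subgroup $P''$ the only candidate is $zP''$), and the ball has finitely many points. Passing to one more subsequence makes $Q_n=Q$ a single coset, still distinct from $P$, whence $\diam\bigl(N_L(P)\cap N_L(Q)\bigr)\ge d_S(x,y_n)\to\infty$, contradicting the single-pair estimate. This yields the desired $\iota$. The main difficulty is exactly this uniformization step, and the finiteness of $\PP$ together with the finiteness of balls in the Cayley graph are precisely what let the subsequence argument collapse infinitely many configurations down to one fixed pair.
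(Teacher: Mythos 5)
Your argument is correct, but note that the paper itself contains no proof of this statement: it is imported as a black box (Theorem 4.1 of Dru\c{t}u--Sapir \cite{MR2153979}, where it arises from the asymptotic-cone/tree-graded characterization of relative hyperbolicity). What you have done is genuinely different and self-contained: you derive the quoted result from the paper's \emph{other} imported lemmas. Lemma~\ref{CloseCosets} handles each fixed pair of distinct peripheral cosets by placing $N_L(gP)\cap N_L(g'P')$ in a bounded neighborhood of the subgroup $gPg^{-1}\cap g'P'g'^{-1}$; Lemma~\ref{Osips} shows that subgroup is finite, with your case split ($P'\neq P$ versus $P'=P$, the latter using $gP\neq g'P$ to get $g^{-1}g'\notin P$) exactly matching the two clauses of that lemma; and the uniformization over all pairs is done by your subsequence argument, which uses finiteness of $\PP$ (Lemma~\ref{nops}), left-invariance of $d_S$, and local finiteness of the Cayley graph to collapse a putative sequence of counterexample pairs onto a single fixed pair, contradicting the single-pair bound. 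The argument is non-circular, since Hruska's Proposition 9.4 and Osin's Proposition 2.36 are established independently of the isolated-cosets property. What your route buys is an elementary proof from ingredients already quoted in the paper; what it gives up is effectivity, since both the contradiction step and the constant $L'(G,S,xH,yK)$ of Lemma~\ref{CloseCosets} produce $\iota$ non-constructively, whereas the cited source yields a statement one can in principle make quantitative.

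Two small blemishes, neither a gap. First, conjugation by $g^{-1}$ is \emph{not} a $d_S$--isometry; but you only need that it is a group automorphism, hence a bijection, so finiteness of $gPg^{-1}\cap g'P'g'^{-1}$ is equivalent to finiteness of $P\cap hP'h^{-1}$. Second, if points of $N_M(gP)\cap N_{M'}(g'P')$ are allowed to lie in edge interiors of the Cayley graph, you should first replace $x_n, y_n$ by vertices within distance $1$ (enlarging $M,M'$ by $1$), so that the balls appearing in your subsequence argument are honestly finite sets.
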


The following concepts are introduced by Hruska (see Definition 8.9 \cite{Hruska10}) and he used it to describe the connection between geodesics in $\Gamma(G,S)$ and geodesics in $\hat{\Gamma}(G,S,\PP)$.
\begin{defn}
Let $c$ be a geodesic of $\Gamma(G,S)$,
and let $\epsilon,R$ be positive constants.
A point $x \in c$ is \emph{$(\epsilon,R)$--deep}
in a peripheral left coset $gP$ (with respect to $c$)
if $x$ is not within a distance $R$ of an endpoint of $c$ and
$\ball{x}{R} \cap c$ lies in $\nbd{gP}{\epsilon}$. A point $x \in c$ is \emph{$(\epsilon,R)$--deep} if $x$ is $(\epsilon,R)$--deep in some peripheral left coset $gP$. If $x$ is not $(\epsilon,R)$--deep in any peripheral left coset $gP$
then $x$ is an \emph{$(\epsilon,R)$--transition point} of $c$
\end{defn}

\begin{lem} [Lemma 8.10, \cite{Hruska10}] 
\label{lemma5a}
Let $(G,\PP)$
be relatively hyperbolic with a finite generating set $S$.
For each $\epsilon$ there is a constant $R=R(\epsilon)$
such that the following holds.
Let $c$ be any geodesic of $\Gamma(G,S)$,
and let $\bar{c}$ be a connected component of the set of all
$(\epsilon,R)$--deep points of $c$.
Then there is a peripheral left coset $gP$ such that each $x \in \bar{c}$
is $(\epsilon,R)$--deep in $gP$ and is not $(\epsilon,R)$--deep in any other
peripheral left coset.
\end{lem}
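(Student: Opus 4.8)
The plan is to reduce the whole statement to the isolation of peripheral cosets recorded in Lemma~\ref{lemma4}. The mechanism is that an $(\epsilon,R)$--deep point $x$ witnesses, through the subsegment $\ball{x}{R}\cap c$, a piece of the geodesic of length $2R$ that is trapped inside $\nbd{gP}{\epsilon}$; once $R$ is large compared with the isolation constant, no such long piece can simultaneously lie near a second coset. Accordingly I would first fix $R=R(\epsilon)$ to be any constant with $2R>\iota$, where $\iota=\iota(\epsilon,\epsilon)$ is the constant supplied by Lemma~\ref{lemma4} with $M=M'=\epsilon$.

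First I would establish uniqueness of the coset at a single deep point. Suppose $x\in c$ is $(\epsilon,R)$--deep in two distinct peripheral left cosets $gP\ne g'P'$. Because $x$ lies at $d_S$--distance greater than $R$ from both endpoints of $c$ and $c$ is a geodesic of $\Gamma(G,S)$, the set $\ball{x}{R}\cap c$ is precisely the radius--$R$ subsegment of $c$ about $x$, so it is a subsegment of a geodesic and has diameter $2R$. By the definition of deepness this subsegment lies in $\nbd{gP}{\epsilon}\cap\nbd{g'P'}{\epsilon}$, whence $\diam\bigl(\nbd{gP}{\epsilon}\cap\nbd{g'P'}{\epsilon}\bigr)\ge 2R>\iota$, contradicting Lemma~\ref{lemma4}. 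Thus every deep point of $c$ is deep in exactly one peripheral left coset; this already proves the second assertion and lets me define a map $\phi$ from the set of deep points of $c$ to the collection $\Pi$ of peripheral left cosets by sending each deep point to the unique coset in which it is deep.

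It remains to see that $\phi$ is constant on the connected component $\bar{c}$, and for this I would prove $\phi$ locally constant, which is enough since $\bar{c}$ is a connected subset of the interval $c$ and $\Pi$ is discrete. Fix $x\in\bar{c}$ with $\phi(x)=gP$ and choose $\eta>0$ small enough that $2R-\eta>\iota$, which is possible since $2R>\iota$. For any deep point $y$ with $d_S(x,y)<\eta$, both $x$ and $y$ are far from the endpoints of $c$, so $\ball{x}{R}\cap c$ and $\ball{y}{R}\cap c$ are full $2R$--subsegments whose centres are within $\eta$; hence their intersection is a subsegment of diameter at least $2R-\eta>\iota$. This intersection lies in $\nbd{gP}{\epsilon}$ and in $\nbd{\phi(y)}{\epsilon}$, so Lemma~\ref{lemma4} forces $\phi(y)=gP$. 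Therefore $\phi$ is constant on a neighborhood of $x$ in $\bar{c}$, hence locally constant, hence constant; its common value $gP$ satisfies both conclusions of the lemma.

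I expect essentially no serious obstacle: the entire content is the isolation estimate, and the only care needed is bookkeeping, namely tracking that deep points sit at distance more than $R$ from the endpoints so that the balls cut out full $2R$--segments rather than truncated ones, and remembering that ``connected component'' is taken in the subspace topology on $c$ so that local constancy genuinely forces global constancy. It is worth noting that the quasiconvexity of peripheral cosets (Lemma~\ref{lemma2}) plays no role here; isolation alone drives the argument.
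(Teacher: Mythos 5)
Your argument is correct. For comparison: the paper offers no proof of this statement at all---it is imported verbatim from Hruska \cite{Hruska10} (Lemma 8.10)---so the only meaningful benchmark is Hruska's original argument, and yours is essentially that argument: both rest entirely on the isolation of peripheral cosets (Lemma~\ref{lemma4}), choosing $R$ so large that a $2R$--long geodesic subsegment cannot lie $\epsilon$--close to two distinct peripheral left cosets, which gives uniqueness of the coset at each deep point, and then an overlapping-subsegments argument gives local constancy, hence constancy on the connected component $\bar{c}$. Three small bookkeeping points, none of which is a gap: (i) since a geodesic in $\Gamma(G,S)$ is an isometric embedding and a deep point $x$ lies at distance more than $R$ from the endpoints of $c$, the set $\ball{x}{R}\cap c$ is exactly the arc-length subsegment of radius $R$ about $x$, as you assert; (ii) if $\ball{x}{R}$ is read as an open ball, the diameter of that subsegment is still $2R$ as a supremum, so $\diam\bigl(\nbd{gP}{\epsilon}\cap\nbd{g'P'}{\epsilon}\bigr)\geq 2R>\iota$ still contradicts Lemma~\ref{lemma4} (or simply choose $2R>\iota+1$); (iii) a locally constant map from a connected space into any set is constant, so the aside about discreteness of the collection of cosets is unnecessary. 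Your closing observation is also accurate: quasiconvexity of peripheral cosets (Lemma~\ref{lemma2}) plays no role here; isolation is the whole content.
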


\begin{lem} [Proposition 8.13, \cite{Hruska10}]
\label{lemma5b}
Let $(G,\PP)$ be relatively hyperbolic with a finite generating set $S$.
There exist constants $\epsilon$, $R$ and $L$ such that the following holds.
Let $c$ be any geodesic of $\Gamma(G,S)$ with endpoints in $G$,
and let $\hat{c}$ be a geodesic of $\hat{\Gamma}(G,S,\PP)$
with the same endpoints as $c$.
Then in the metric $d_{S}$, the set of $G$--vertices of $\hat{c}$
is at a Hausdorff distance at most $L$ from the set of
$(\epsilon,R)$--transition points of $c$.
Furthermore, the constants $\epsilon$ and $R$ satisfy the conclusion
of Lemma~ \ref{lemma5a}.
\end{lem}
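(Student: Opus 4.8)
The plan is to fix $\epsilon$ and $R$ to be the constants furnished by Lemma~\ref{lemma5a}, so that the final assertion of the statement holds by construction, and then to establish the two Hausdorff inclusions separately by matching the coarse ``phase structure'' of $c$ and $\hat{c}$. On the $c$ side, Lemma~\ref{lemma5a} lets me decompose $c$ into its maximal connected sets of $(\epsilon,R)$--deep points---each of which is deep in a \emph{unique} peripheral left coset---together with the complementary $(\epsilon,R)$--transition points. On the $\hat{c}$ side, $\hat{c}$ is an alternating concatenation of maximal runs of $G$--edges and single passes through peripheral vertices $v_{hQ}$, each pass entering and leaving $hQ$ at two $G$--vertices. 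The goal is to produce a bounded, order--preserving correspondence under which the deep components of $c$ are matched to the peripheral passes of $\hat{c}$ through the same coset, while the transition points are matched to the $G$--vertices of $\hat{c}$.

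The key geometric input is a coarse fellow--travelling of $c$ and $\hat{c}$ inside the $\delta$--hyperbolic space $\hat{\Gamma}(G,S,\PP)$. First I would show that the image of $c$ in $\hat{\Gamma}$ is a uniform quasigeodesic: it cannot backtrack into a coset it has already left, because by Lemma~\ref{lemma2} any $\Gamma$--geodesic subsegment whose endpoints lie near a coset $hQ$ stays in $\nbd{hQ}{A_1}$, while by Lemma~\ref{lemma4} two distinct peripheral cosets have neighbourhoods whose intersection has bounded diameter, so no two deep excursions of $c$ can land in the same coset. Stability of quasigeodesics in the hyperbolic space $\hat{\Gamma}$ then places $c$ and $\hat{c}$ within bounded $d$--distance of one another, and the fine--triangle estimates of Lemmas~\ref{t} and~\ref{q}, together with the logarithmic tracking of Lemma~\ref{mainlemma}, upgrade this $d$--closeness to the $d_S$--control needed near the relevant $G$--vertices.

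With fellow--travelling in hand, the two inclusions follow from the matching. Given a $G$--vertex $v$ of $\hat{c}$, it lies either on a $G$--edge run or at an endpoint of a peripheral pass; in both cases $v$ sits at a ``switch'' of cosets, so the corresponding point of $c$ cannot be $(\epsilon,R)$--deep (a deep component lies in one coset by Lemma~\ref{lemma5a}, and isolation via Lemma~\ref{lemma4} prevents that coset from being coned over on both sides of $v$), whence $v$ lies within $L$ of a transition point. Conversely, if a transition point $x$ of $c$ were far from every $G$--vertex of $\hat{c}$, the nearby portion of $\hat{c}$ would be a single peripheral pass over some $hQ$ whose $G$--endpoints lie near $x$, so Lemma~\ref{lemma2} would trap $c$ inside $\nbd{hQ}{A_1}$ about $x$, making $x$ an $(\epsilon,R)$--deep point and contradicting its being a transition point. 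I expect the main obstacle to be exactly the bookkeeping that makes this correspondence uniform---ensuring a single deep component of $c$ does not split across several peripheral passes (or two components merge), and converting every $d$--estimate into a $d_S$--estimate with a single constant $L$. Both are controlled by the isolation of peripheral cosets (Lemma~\ref{lemma4}) and by the finiteness of $\PP$ (Lemma~\ref{nops}), which bounds the finitely many additive constants arising from the peripheral subgroups and lets me take $L$ to be their maximum.
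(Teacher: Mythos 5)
This lemma is never proved in the paper at all --- it is quoted, with attribution, from Proposition 8.13 of \cite{Hruska10} --- so the only proof to compare against is Hruska's, whose general matching scheme (deep components of $c$ corresponding to peripheral passes of $\hat{c}$, transition points corresponding to $G$--vertices) your proposal does anticipate. However, two of your key steps fail as stated. The first is the claim that the image of $c$ in $\hat{\Gamma}(G,S,\PP)$ is a uniform quasigeodesic. It is not: along an $(\epsilon,R)$--deep component, $c$ travels $d_S$--length $n$ inside $\nbd{gP}{\epsilon}$ for a single peripheral left coset $gP$, so that subpath has length $n$ in $\hat{\Gamma}$ while its endpoints are at $\hat{d}$--distance at most $2\epsilon+1$ through the cone vertex $v_{gP}$; no constants $(L,C)$ in the paper's definition of quasigeodesic can absorb this for all $n$. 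Non-backtracking, which is all that Lemma~\ref{lemma2} together with Lemma~\ref{lemma4} gives you, is necessary but far from sufficient. What is actually true --- and is the technical core of the result, following Dru\c{t}u--Sapir \cite{MR2153979} and Osin \cite{Osin06} --- is that the path obtained from $c$ by replacing each maximal deep excursion with a pass through the corresponding cone vertex is a relative quasigeodesic \emph{without backtracking} with uniform constants; none of the lemmas quoted in this paper supplies that statement, so your appeal to Morse stability has no valid input.

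The second gap is the upgrade from $\hat{d}$--closeness to $d_S$--closeness. Stability of quasigeodesics in the hyperbolic space $\hat{\Gamma}$ controls distances only in the coned-off metric, and two points at $\hat{d}$--distance $1$ (for instance, two points of the same peripheral coset) can be arbitrarily $d_S$--far apart. Lemma~\ref{mainlemma} cannot close this gap: its bound is $K\log_2\abs{p}$ with $\abs{p}$ the length of the comparison path, and taking $p$ to be $c$, or a long subpath of it, makes the bound grow with $\ell(c)$, so it can never produce the single constant $L$ that the statement requires. Lemmas~\ref{t} and~\ref{q} are also unavailable here, since they require every side of the triangle or quadrilateral to be geodesic in $\hat{\Gamma}$, which $c$ is not. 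The missing ingredient is precisely the bounded coset penetration property (Farb, Osin): two relative quasigeodesics without backtracking sharing endpoints enter and exit each penetrated coset at points that are uniformly $d_S$--close. Your converse inclusion uses this silently as well --- you assume the $G$--endpoints of a peripheral pass of $\hat{c}$ lie $d_S$--near the transition point $x$ before applying Lemma~\ref{lemma2}, which is exactly the unproved uniform control. Note finally that the uniformity of $L$ does not come from finiteness of $\PP$ (Lemma~\ref{nops}); in the BCP machinery the constants are uniform over all peripheral cosets from the outset.
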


\begin{lem}[Lemma 4.12, \cite{MR2153979}]
\label{lemma6}
Let $(G,\PP)$ be relatively hyperbolic with a finite generating set $S$. Then for each $\theta \in [0,{1}/{2})$ there exist a number $M=M(\theta)>0$ such that for every geodesic $q$ of length $\ell$ and every peripheral left coset $gP$ with $q(0), q(\ell) \in N_{\theta \ell}(gP)$ we have $q\cap N_M(gP) \ne \emptyset$. 
\end{lem}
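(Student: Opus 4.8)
The plan is to reduce the statement to a strong--contraction (bounded--projection) property of the coset $gP$ and then to establish that property using the hyperbolicity of the coned--off graph together with the isolation of peripheral cosets. The case $\theta=0$ is vacuous, since $N_0(gP)=\emptyset$, so fix $\theta\in(0,{1}/{2})$. Write $x=q(0)$, $y=q(\ell)$, and choose nearest points $a,b\in gP$ with $d_S(x,a)=d_S(x,gP)<\theta\ell$ and $d_S(y,b)=d_S(y,gP)<\theta\ell$. The triangle inequality then supplies the quantitative engine of the proof: $d_S(a,b)\geq d_S(x,y)-d_S(x,a)-d_S(y,b)>(1-2\theta)\ell$. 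Note also that if $\theta\ell\leq M$ then $q(0)\in N_{\theta\ell}(gP)\subseteq N_M(gP)$, so the conclusion holds trivially; hence it suffices to treat $\ell>M/\theta$, and there I would argue by contradiction, assuming $q$ avoids $N_M(gP)$.

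Two soft facts would be recorded first. By Lemma~\ref{lemma2} applied with $A_0=1$, the $\Gamma$--geodesic $[a,b]$ lies in a fixed neighborhood $N_{A_1}(gP)$, where $A_1=A_1(1)$. Second, because $a$ is a nearest point of $gP$ to $x$, any $m\in[x,a]$ with $d_S(m,gP)\leq A_1$ already satisfies $d_S(m,a)\leq A_1$: if $d_S(m,c)\leq A_1$ for some $c\in gP$, then $d_S(x,a)\leq d_S(x,c)\leq d_S(x,m)+A_1=d_S(x,a)-d_S(m,a)+A_1$. Thus the transverse geodesics $[x,a]$ and $[y,b]$ meet $N_{A_1}(gP)$ only within $A_1$ of $a$ and of $b$, so the concatenation $\bar{q}=[a,x]\cup q\cup[y,b]$ is a path of length $<(1+2\theta)\ell$ joining two points of $gP$ that are more than $(1-2\theta)\ell$ apart, and which avoids $N_M(gP)$ except near its two endpoints.

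The core is to prove the following strong--contraction claim: there are constants $M_0,C$ so that every $\Gamma$--geodesic avoiding $N_{M_0}(gP)$ has nearest--point projection to $gP$ of $d_S$--diameter at most $C$. Granting it, set $M:=\max\{M_0,\ \theta C/(1-2\theta)\}$. Then for $\ell>M/\theta$ we have $d_S(a,b)>(1-2\theta)\ell>C$; but if $q$ avoided $N_M(gP)\supseteq N_{M_0}(gP)$, the claim (applied to $q$, whose projection contains $a=\pi(x)$ and $b=\pi(y)$) would force $d_S(a,b)\leq C$, a contradiction. To prove the claim I would pass to the $\delta$--hyperbolic graph $\hat{\Gamma}(G,S,\PP)$, form the $\hat{\Gamma}$--geodesic $\hat{q}$ from $x$ to $y$ and the $\hat{\Gamma}$--geodesic quadrilateral on $x,a,b,y$ whose $[a,b]$--side is the length--$\le 1$ arc through the cone vertex $v_{gP}$, and combine the thin--quadrilateral estimate (Lemma~\ref{q}) with the transition--point correspondence (Lemma~\ref{lemma5b}) to transfer a vertex of $\hat q$ lying near the $[a,b]$--side back to a transition point of $q$ at bounded $d_S$--distance from $gP$.

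The main obstacle is precisely that $\hat{\Gamma}$--proximity to $v_{gP}$ does not automatically yield $d_S$--proximity to $gP$: the coned--off geodesic could approach $v_{gP}$ by cutting through a \emph{different} peripheral coset $g'P'$ that shadows $gP$. This is exactly what the isolation of cosets (Lemma~\ref{lemma4}) controls, since such a $g'P'$ can fellow--travel $gP$ only across a region of diameter below the isolation constant $\iota$; hence any sufficiently long near--$gP$ stretch of $\hat q$ must genuinely enter $gP$, whose adjacent $G$--vertices lie in $gP$ in the $d_S$--metric. Quantifying ``sufficiently long'' against $\iota$, and using facts (i) and (ii) above to exclude a short $d_S$--detour of $\bar q$ around $gP$, is where the bulk of the work sits; it is also where the hypothesis $\theta<{1}/{2}$ enters decisively, through the linear growth of $d_S(a,b)$ in $\ell$.
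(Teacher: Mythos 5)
First, a point of comparison: the paper does not prove this lemma at all --- it is quoted verbatim from Dru\c{t}u--Sapir \cite{MR2153979} (Lemma 4.12 there), where it belongs to the machinery of asymptotically tree-graded spaces and is obtained by asymptotic-cone arguments. So your proposal has to stand on its own. Its first half does: the $\theta=0$ case is vacuous, the case $\theta\ell\leq M$ is trivial as you say, the bound $d_S(a,b)>(1-2\theta)\ell$ is correct, and the deduction of the lemma from your strong-contraction claim, with $M=\max\{M_0,\ \theta C/(1-2\theta)\}$, is clean, since $a$ and $b$ do lie in the nearest-point projection of $q$ to $gP$.

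The genuine gap is that the strong-contraction claim is never proved, and it is not a lesser statement: bounded projection diameter for geodesics avoiding $N_{M_0}(gP)$ is essentially equivalent to the lemma itself (your two-line deduction gives one implication, and the converse is just as short), so the reduction moves all of the difficulty into the claim. What you offer for the claim is a plan, and you say yourself that its key step --- converting $\hat{\Gamma}$-proximity to the cone point $v_{gP}$ into $d_S$-proximity to $gP$, quantified against the isolation constant --- is ``where the bulk of the work sits.'' That step \emph{is} the lemma. Concretely, the route you sketch has a directional problem: in Lemma~\ref{q} the only $G$--vertices on the side $a\to v_{gP}\to b$ are $a$ and $b$ themselves, and applying that lemma to them may return companions on $[x,a]$ or $[y,b]$, which says nothing; applying it instead to $G$--vertices of $\hat{q}$ gives companions somewhere on the union of the other three sides, with no mechanism forcing any companion to land near $gP$. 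What actually forces $\hat{q}$ to penetrate $gP$ when $d_S(a,b)$ is large is a bounded-coset-penetration argument: the path $[x,a]\cup(a\, v_{gP}\, b)\cup[b,y]$ penetrates $gP$ deeply, and one must show that a geodesic of $\hat{\Gamma}$ with the same endpoints does too (for instance by first showing that this concatenation is a uniform quasigeodesic of $\hat{\Gamma}$, using that $a$ and $b$ are nearest-point projections). None of Lemmas~\ref{q}, \ref{lemma4}, \ref{lemma5b} or \ref{mainlemma} delivers this without that additional work. As written, then, your text is a correct reduction to an unproven statement of the same depth, not a proof; the honest options are to cite \cite{MR2153979} as the paper does, or to carry out the penetration/contraction argument in full.
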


\begin{thm}
\label{ldonsirhg}
Let $(G,\PP)$ be a relatively hyperbolic group and $H$ an infinite index, infinite normal subgroup of $G$. Then $div(G,H)$ is linear.
\end{thm}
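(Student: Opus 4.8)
The plan is to sandwich $div(G,H)$ between two linear functions. The upper (domination) half reduces immediately to Theorem~\ref{ldns}: since $H$ is an infinite normal subgroup, that theorem gives $div(G,H)\preceq dist^K_G$ for \emph{any} finitely generated subgroup $K\le H$. So it suffices to exhibit one finitely generated $K\le H$ whose lower distortion is linear, and then to observe that the reverse domination by a linear function is automatic.

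First I would produce a well-behaved cyclic subgroup inside $H$. Because $(G,\PP)$ is relatively hyperbolic and $H$ is an infinite index, infinite normal subgroup, Lemma~\ref{he2} supplies an infinite order hyperbolic element $h\in H$. Set $K=\gen{h}$, an infinite cyclic (hence finitely generated) subgroup of $H$. By Lemma~\ref{pohe}, the fact that $h$ is hyperbolic of infinite order forces $K$ to be undistorted in $G$, i.e.\ $Dist^K_G$ is linear. Combining the proposition $dist^K_G\preceq Dist^K_G$ with the fact that the lower distortion of an infinite subgroup always dominates a linear function, I conclude that $dist^K_G$ is itself linear. Feeding this into Theorem~\ref{ldns} gives $div(G,H)\preceq dist^K_G\sim r$; note that the explicit path constructed in the proof of Theorem~\ref{ldns} also guarantees finiteness of each $\sigma^n_\rho(r)$, so $\infty$ never intrudes.

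For the reverse direction, write $div(G,H)=\{\sigma^n_\rho\}$ and note that any path realizing the induced length metric $d_{\rho r}$ between two points of $\partial N_r(H)$ is in particular a path in $G$, so its length is at least their ambient distance. Since the infimum defining $\sigma^n_\rho(r)$ ranges over pairs at ambient distance at least $nr$, we get $\sigma^n_\rho(r)\ge nr$ whenever finite, whence the linear function is dominated by $div(G,H)$. Together with the upper bound this yields $div(G,H)\sim r$, as claimed.

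I do not expect a serious obstacle: the statement is essentially an assembly of the preceding machinery, and the only genuine input is the passage from ``relatively hyperbolic $+$ infinite $+$ infinite index $+$ normal'' to ``contains an undistorted infinite cyclic subgroup,'' which is exactly the combination of Lemmas~\ref{he2} and~\ref{pohe}. The one point to state carefully is that Theorem~\ref{ldns} requires only a finitely generated subgroup of $H$, not that $H$ itself be finitely generated, so $K=\gen{h}$ is admissible even when $H$ is infinitely generated.
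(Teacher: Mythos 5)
Your proof is correct and takes essentially the same approach as the paper: the paper's own proof is a one-line citation of Theorem~\ref{ldns}, Lemma~\ref{pohe}, and Lemma~\ref{he2}, which is exactly the chain you assemble. The details you add --- passing from $Dist^K_G$ linear to $dist^K_G$ linear via $dist^K_G\preceq Dist^K_G$, and the automatic lower bound $\sigma^n_\rho(r)\geq nr$ since any path in the complement of $N_{\rho r}(H)$ has length at least the ambient distance between its endpoints --- are correct and simply make explicit what the paper leaves implicit.
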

\begin{proof}
The proof of this theorem follows from Theorem~ \ref{ldns}, Lemma~ \ref{pohe} and Lemma~ \ref{he2}. \qedhere
\end{proof}

\begin{prop}
\label{sche}
Let $(G,\PP)$ be a relatively hyperbolic group and $H$ a subgroup of $G$ for which $H$ contains at least one infinite order hyperbolic element. If $0<\tilde{e}(G,H)<\infty$, then $Div(G,H)$ is at least exponential. 
\end{prop}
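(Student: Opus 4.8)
The plan is to produce, for one sufficiently large fixed $n$ and each $\rho\in(0,1]$, an admissible pair $x_1,x_2\in\partial N_r(H)$ for which $d_{\rho r}(x_1,x_2)$ grows exponentially in $r$; the exponential bound itself will come from the logarithmic estimate of Lemma~\ref{mainlemma} in the hyperbolic coned-off graph $\hat\Gamma(G,S,\PP)$.

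I first fix an infinite order hyperbolic element $h\in H$ with axis $\alpha$. By Lemma~\ref{pohe}, $\langle h\rangle$ is undistorted in $(G,d_S)$ and strongly relatively quasiconvex, so $\alpha\subset H$ is a quasigeodesic in $\hat\Gamma$ satisfying $d(e,h^k)\ge\lambda\abs{k}-c$, while $\abs{h^k}_S\le\abs{k}\,\abs{h}_S$. Since $\tilde e(G,H)>0$, the subgroup $H$ has infinite index and $H$--perpendicular rays exist by Lemma~\ref{pr}. Put $m=\tilde e(G,H)$ and choose $p_k=h^{kg}$ for $k=0,\dots,m$, where the gap $g$ is taken large enough that $\lambda g-c\ge 4\sigma+4r+6$; then every pair satisfies $d(p_i,p_j)\ge 4\sigma+4r+6$ in $\hat\Gamma$ while $d_S(p_i,p_j)\le mg\abs{h}_S$ stays bounded by a fixed linear function of $r$. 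From each $p_k$ I take an $H$--perpendicular ray $\gamma_k$ and set $y_k=\gamma_k(r)$, so $y_k\in\partial N_r(H)$ and $d_S(y_k,p_k)=r$. Each tail $\gamma_k\bigl([r,\infty)\bigr)$ is an unbounded connected subset of $C_r(H)$, so every $y_k$ lies in a deep component; as there are only $m$ of these for $r>r_0$, two points $y_i,y_j$ share a deep component. I set $x_1=y_i$, $x_2=y_j$: then $d_r(x_1,x_2)<\infty$ and $d_S(x_1,x_2)\le d_S(p_i,p_j)+2r\le nr$ once $n$ is a large enough constant depending only on the group data, so $(x_1,x_2)$ is admissible for $\delta^n_\rho(r)$.

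Next I would show that a $\hat\Gamma$--geodesic $\hat q=[x_1,x_2]$ runs $d_S$--close to $H$. Form the $\hat\Gamma$--geodesic quadrilateral with corners $x_1,p_i,p_j,x_2$ and pick a $G$--vertex $v$ within $\hat\Gamma$--distance $1$ of the midpoint of $\hat q$. Since $d(x_1,x_2)\ge d(p_i,p_j)-2r\ge 4\sigma+2r+6$, the vertex $v$ is too far in $\hat\Gamma$ from $x_1$ and from $x_2$ to be $2\sigma$--close to either short side $[x_1,p_i]$ or $[p_j,x_2]$, each of which has $\hat\Gamma$--length at most $r$. Hence by Lemma~\ref{q} the vertex $v$ lies within $d_S$--distance $2\sigma$ of a $G$--vertex $u$ on $[p_i,p_j]$. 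As $p_i,p_j\in\langle h\rangle$ and $\langle h\rangle$ is relatively quasiconvex, $d_S(u,\langle h\rangle)\le\kappa$, and therefore $d_S(v,H)\le 2\sigma+\kappa=:C_0$.

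Finally, let $\tau$ be any path in $C_{\rho r}(H)$ from $x_1$ to $x_2$; every point of $\tau$ satisfies $d_S(\cdot,H)\ge\rho r$, so $d_S(v,\tau)\ge\rho r-C_0$. Viewing $\tau$ and the geodesic $\hat q$ as paths in $\hat\Gamma$ with common endpoints, Lemma~\ref{mainlemma} yields a vertex $w\in\tau$ with $d_S(w,v)\le K\log_2\abs{\tau}$, and $\abs{\tau}=\ell(\tau)$ because $\tau$ uses only $S$--edges. Thus $\rho r-C_0\le K\log_2\ell(\tau)$, giving $\ell(\tau)\ge 2^{(\rho r-C_0)/K}$; taking the infimum over $\tau$ shows $\delta^n_\rho(r)\ge d_{\rho r}(x_1,x_2)\ge 2^{(\rho r-C_0)/K}$, so $Div(G,H)$ is at least exponential. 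I expect the principal difficulty to lie in the second and third steps together: balancing the three competing demands on $x_1,x_2$—small $d_S$--distance, large $\hat\Gamma$--distance, and membership in a common deep component—and then upgrading $\hat\Gamma$--proximity of $\hat q$ to the axis into genuine $d_S$--proximity to $H$, where the relative quasiconvexity of $\langle h\rangle$ is what makes the logarithmic estimate bite.
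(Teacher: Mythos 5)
Your proposal is correct and follows essentially the same route as the paper's proof: powers of the hyperbolic element $h$ as basepoints, $H$--perpendicular rays with a pigeonhole on deep components to produce an admissible pair, thinness of a $\hat{\Gamma}$--quadrilateral (Lemma~\ref{q}) combined with relative quasiconvexity of $\langle h\rangle$ to place a $G$--vertex of the $\hat{\Gamma}$--geodesic from $x_1$ to $x_2$ at bounded $d_S$--distance from $H$, and then Lemma~\ref{mainlemma} for the exponential lower bound on avoidant paths. The only differences are cosmetic: you probe from the midpoint of $[x_1,x_2]$ rather than from the axis side $[p_i,p_j]$, and you fix a single large $n$ (which suffices because $\delta^n_{\rho}$ is nondecreasing in $n$) where the paper treats all $n$ at once via the multiplier $M$.
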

\begin{proof}
Suppose that $H$ contains an infinite order hyperbolic element $h$ and assume that $h$ is an element of the finite generating set $S$ of $G$. By Lemma~ \ref{pohe}, there is a positive integer $L$ such that $d(1,h^n)\geq ({n}/{L})-L$. Moreover, the subgroup $H_1$ generated by $h$ is strongly relatively quasiconvex. Thus, there is a constant $A>1$ such that the set of $G$--vertices of any geodesic $\beta$ in $\hat{\Gamma}(G,S,\PP)$ connecting two element of $H_1$ must lie in the $A$--neighborhood of $H_1$ with respect to the metric $d_S$. 

We define $m=\tilde{e}(G,H)$ and $M=L(12m+2L+2)$. Let $K>1$ be the constant in Lemma~ \ref{mainlemma} and let $\sigma$ the constant in Lemma~ \ref{q}. Denote $Div(G,H)=\{\delta^n_{\rho}\}$. We will prove that 
$e^r\preceq \delta^{Mn}_{\rho}$. More precisely, we define $r_0=2\sigma+({2}/{\rho})(A+2\sigma)+L+1$ and we will prove $2^{{\rho r}/{2K}}\leq \delta^{Mn}_{\rho}(r)$ for each $r>r_0$. We assume $r$ is an integer.

Indeed, for each $i\in \{0, 1, 2, \cdots, m\}$ we define $\gamma_i$ to be an $H$--perpendicular geodesic ray with the initial point $k_i=h^{L(6inr+L)}$. Since $m=\tilde{e}(G,H)$, then there are two different geodesics $\gamma_i$ and $\gamma_j$ ($i<j$) such that $\gamma_i\cap C_r(H)$ and $\gamma_j\cap C_r(H)$ lie in the same component of $C_r(H)$. We define $x=\gamma_i(r)$ and $y=\gamma_j(r)$, then $x$, $y$ lie in $\partial N_r(H)$ and $d_r(x, y)< \infty$.
Also, \begin{align*}d_S(x,y)&\leq d_S(x,k_i)+d_S(k_i,e)+d_s(e,k_j)+d_S(h_j,y)\\&\leq r+L(6inr+L)+L(6jnr+L)+r\\&\leq L(12mnr+2L)+2r\\&\leq L(12m+2L+2)nr\leq (Mn)r \end{align*}

and \begin{align*}d(k_i,k_j)&= d(e,h^{6L(j-i)nr})\\&\geq 6(j-i)nr-L\geq 12r-L\geq 6r \end{align*}
Let $\alpha_1$ be a geodesic in $\hat{\Gamma}(G,S,\PP)$ connecting $k_i$, $k_j$ and let $\alpha_2$ a geodesic in $\hat{\Gamma}(G,S,\PP)$ connecting $x$, $y$. Let $\beta_1$ be a geodesic in $\hat{\Gamma}(G,S,\PP)$ connecting $x$, $k_i$ and $\beta_2$ a geodesic in $\hat{\Gamma}(G,S,\PP)$ connecting $y$ and $k_j$. Let $u$ be a point in $\alpha_1$ such that $d(u,k_i)>2r$ and $d(u,k_j)>2r$. Thus, there is a $G$--vertex $v$ in $\beta_1\cup\alpha_2\cup\beta_2$ such that $d_S(u,v)\leq 2\sigma$.

If $v$ lies in $\beta_1$, then the distance in $\hat{\Gamma}(G,S,\PP)$ between $u$ and $k_i$ is bounded above by $r+2\sigma$. Thus, this distance is at most $2r$ which contradicts the choice of $u$. Thus, $v$ does not lie in $\beta_1$. Similarly, $v$ does not lie in $\beta_2$. Thus, $v$ must lie in $\alpha_2$. Also, $u$ lies in the $A$--neighborhood of the subgroup $H_1$ with respect to the metric $d_S$. Thus, $v$ lies in the $(A+2\sigma)$--neighborhood of $H_1$ with respect to the metric $d_S$. Therefore, the distance in $\Gamma(G,S)$ between $v$ and $H$ is bounded above by $(A+2\sigma)$.

We now prove that $d_{\rho r}(x,y)\geq 2^{{\rho r}/{2K}}$. Indeed, let $\gamma$ be any path in $C_{\rho r}(H)$ connecting $x$ and $y$. By Lemma~ \ref{mainlemma}, there exists a vertex $w \in \gamma$ such that $d_S(w,v)\leq K \log_2 \abs{\gamma}$. Since \[d_S(w,v)\geq d_S(w,H)-d_S(v,H)\geq\rho r-A-2\sigma\geq \frac{\rho r}{2},\] then 
\[K \log_2 \abs{\gamma}\geq \frac{\rho r}{2}.\]
Thus, $\abs{\gamma}\geq 2^{{\rho r}/{2K}}$. Therefore, $d_{\rho r}(x,y)\geq 2^{{\rho r}/{2K}}$. Therefore, $2^{{\rho r}/{2K}}\leq \delta^{Mn}_{\rho}(r)$. Thus, 
$e^r\preceq\delta^{Mn}_{\rho}$. \qedhere
\end{proof} 

The following is a key lemma we are going to use to investigate the lower divergence of a relatively hyperbolic group with respect to a fully relatively quasiconvex subgroup.
\begin{lem} 
\label{lemman1}
Let $(G,\PP)$ be relatively hyperbolic with a finite generating set $S$.
There exist constants $\epsilon$, $R$, $\sigma$, $K$ and $A$ such that the following hold: 
\begin{enumerate}
\item A subgroup $H$ is relatively quasiconvex if and only if there is a constant $\kappa$ such that for each geodesic $c$ in $\Gamma(G,S)$ joining points in $H$, the set of $(\epsilon, R)$--transition points of $c$ lies in the $\kappa$--neighborhood of $H$.
\item Let $\Delta=pqr$ be a triangle whose sides $p,q,r$ are geodesic in $\Gamma(G,S)$. Then for each $(\epsilon, R)$--transition point $v$ on $p$, there is an $(\epsilon, R)$--transition point $u$ in the union $q\cup r$ such that $d_S(u,v)\leq \sigma$.
\item Let $p$ and $q$ be paths in $\Gamma(G,S)$ such that $p- =q-$, $p+ =q+$ and $q$ is geodesic in $\Gamma(G,S)$. For any $(\epsilon, R)$--transition point $v \in q$, there exists a vertex $w \in p$ such that $d_S(w,v)\leq K \log_2 \abs{p}+K$.
\item For each peripheral left coset $gP$ and any geodesic $c$ with endpoints outside $N_A(gP)$. If $\ell(c)>9\max\bigl\{d_S(c^+,gP);d_S(c^-,gP)\bigr\}$, then the path $c$ contains an $(\epsilon, R)$--transition point $w$ which lies in the $A$--neighborhood of $gP$.
\end{enumerate}
 Furthermore, the constants $\epsilon$ and $R$ satisfy the conclusion of Lemma~ \ref{lemma5a}.
\end{lem}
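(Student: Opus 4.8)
The plan is to prove all four parts by transferring the corresponding statements from the coned-off Cayley graph $\hat{\Gamma}(G,S,\PP)$ to $\Gamma(G,S)$ through the transition-point dictionary supplied by Lemma~\ref{lemma5a} and Lemma~\ref{lemma5b}. Accordingly, I would first fix $\epsilon$ and $R$ so that the conclusions of Lemma~\ref{lemma5a} and Lemma~\ref{lemma5b} hold, and let $L$ be the Hausdorff constant of Lemma~\ref{lemma5b}: for any geodesic $c$ of $\Gamma(G,S)$ with endpoints in $G$ and any geodesic $\hat{c}$ of $\hat{\Gamma}(G,S,\PP)$ with the same endpoints, the set of $G$--vertices of $\hat{c}$ and the set of $(\epsilon,R)$--transition points of $c$ lie within $d_S$--Hausdorff distance $L$. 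Each statement below is then obtained by pushing a known coned-off estimate across this $L$--correspondence, and the required constants $\sigma$, $K$, $A$ will be assembled at the end by taking a common maximum.

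For part (1) I would simply rewrite Definition~\ref{rqc} through this dictionary. Relative quasiconvexity asserts that the $G$--vertices of every coned-off geodesic joining two points of $H$ lie in a uniform $d_S$--neighborhood of $H$. Given a $\Gamma(G,S)$--geodesic $c$ joining two points of $H$, choose a coned-off geodesic $\hat{c}$ with the same endpoints; its $G$--vertices and the transition points of $c$ differ by at most $L$ in either direction. Hence ``$G$--vertices of coned-off geodesics near $H$'' and ``transition points of Cayley geodesics near $H$'' are equivalent up to adjusting $\kappa$ by $L$, which gives the stated characterization.

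Parts (2) and (3) use the same mechanism, now feeding the correspondence into Lemma~\ref{t} and Lemma~\ref{mainlemma}. For (2): given a transition point $v$ on a side $p$ of a $\Gamma(G,S)$--triangle, replace $v$ by a $G$--vertex $v'$ within $L$ on a coned-off geodesic with the endpoints of $p$, apply Lemma~\ref{t} to the coned-off triangle to find a $G$--vertex $u'$ on the remaining two coned-off sides with $d_S(u',v')\le\sigma$, then pull $u'$ back to a transition point $u$ on $q\cup r$ within $L$; the constant becomes $2L+\sigma$. For (3): since the coned-off length of a path never exceeds its $\Gamma(G,S)$--length, I would take a coned-off geodesic $\hat{q}$ with the endpoints of $q$, replace the transition point $v\in q$ by a $G$--vertex $v'\in\hat{q}$ with $d_S(v,v')\le L$, apply Lemma~\ref{mainlemma} to the pair $(p,\hat{q})$ to get $w\in p$ with $d_S(w,v')\le K\log_2\abs{p}$, and absorb the extra $L$ into the additive constant.

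The genuinely delicate part is (4), and this is where I expect the main obstacle. The strategy is to apply the penetration Lemma~\ref{lemma6} with $\theta=1/9$: the hypothesis $\ell(c)>9\max\{d_S(c^+,gP),d_S(c^-,gP)\}$ places both endpoints of $c$ in $N_{\theta\ell}(gP)$, so $c\cap N_M(gP)\ne\emptyset$ for $M=M(1/9)$, producing a point $x\in c$ with $d_S(x,gP)\le M$. If $x$ is already an $(\epsilon,R)$--transition point, then $w=x$ works once $A\ge M$. Otherwise $x$ is deep, lying in a maximal deep component $D$ which, by Lemma~\ref{lemma5a}, is deep in a single peripheral coset $g'P'$. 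If $g'P'=gP$, then $D\subset N_\epsilon(gP)$ and the transition points flanking $D$ sit within $\epsilon+1$ of $gP$, so $A\ge\epsilon+1$ suffices. The hard case is $g'P'\ne gP$: here $x\in N_\epsilon(g'P')\cap N_M(gP)$, and the isolation Lemma~\ref{lemma4} bounds $\diam\bigl(N_\epsilon(g'P')\cap N_M(gP)\bigr)$ by some $\iota=\iota(\epsilon,M)$, so a foreign excursion can only skim $N_M(gP)$ along a segment of length at most $\iota$. The crux is to combine this bound with the constraint that both endpoints lie within $\ell/9$ of $gP$, so that $c$ cannot wander arbitrarily far from $gP$, and thereby rule out a foreign deep component accounting for the penetration without forcing a transition point into a controlled neighborhood of $gP$; choosing $A$ to dominate $M$, $\epsilon+1$, $\iota$ and $L$ then yields $w\in N_A(gP)$. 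Controlling this foreign-coset bookkeeping — showing the deep excursions near $gP$ are genuinely excluded rather than merely bounded in diameter — is the step I expect to demand the most care.
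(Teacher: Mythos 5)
Your handling of parts (1)--(3) is correct and is essentially what the paper does: fix $\epsilon$ and $R$ as in Lemma~\ref{lemma5b} and transfer Definition~\ref{rqc}, Lemma~\ref{t} and Lemma~\ref{mainlemma} across the Hausdorff-distance-$L$ correspondence between $G$--vertices of coned-off geodesics and $(\epsilon,R)$--transition points, enlarging the constants by $L$.

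Part (4), however, contains a genuine gap, located exactly where you flag it. Your setup produces only a single penetration point $x\in c\cap N_M(gP)$, and if $x$ is deep in a foreign coset $g'P'\ne gP$, nothing in your toolkit yields a transition point near $gP$: Lemma~\ref{lemma4} bounds the diameter of $D'\cap N_M(gP)$, where $D'$ is the deep component through $x$, but the transition points flanking $D'$ sit at the ends of $D'$, which can be far from $gP$, and the hypothesis $d_S(c^{\pm},gP)<\ell/9$ does not by itself prevent this. The missing ingredient is Lemma~\ref{lemma2}: it converts the single penetration point into a \emph{long} subsegment trapped near $gP$, and only then does isolation have something to bite on. Concretely, the paper sets $A_0=A_0(1/3)$ (Lemma~\ref{lemma6}), $A_1=A_1(A_0)$ (Lemma~\ref{lemma2}), $A_2=\iota(A_1,\epsilon)$ (Lemma~\ref{lemma4}), $A=A_0+A_1+A_2+\epsilon+1$, and lets $a_1,a_2$ be the first and last points of $c$ in $N_{A_0}(gP)$. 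By Lemma~\ref{lemma2} the entire subsegment $[a_1,a_2]$ lies in $N_{A_1}(gP)$; applying Lemma~\ref{lemma6} again to the subsegments of $c$ before $a_1$ and after $a_2$ gives $d_S(c^+,a_1)\le 4r$ and $d_S(c^-,a_2)\le 4r$ with $r=\max\bigl\{d_S(c^+,gP),d_S(c^-,gP)\bigr\}$, so $\ell([a_1,a_2])\ge 9r-8r=r>A_2$. Now either $[a_1,a_2]$ contains a transition point, which then lies in $N_{A_1}(gP)\subset N_A(gP)$ and we are done; or $[a_1,a_2]$ consists entirely of deep points, hence by Lemma~\ref{lemma5a} lies in $N_\epsilon(g'P')$ for a single coset $g'P'$, and then $[a_1,a_2]\subset N_{A_1}(gP)\cap N_\epsilon(g'P')$ is a geodesic of length $>A_2$, which by Lemma~\ref{lemma4} forces $g'P'=gP$; the transition point flanking this deep component then lies within $\epsilon+1$ of $gP$. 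In short, the foreign coset is excluded not by bookkeeping on one deep excursion but by trapping a segment of length comparable to $r$ inside $N_{A_1}(gP)$, which no other peripheral coset's $\epsilon$--neighborhood can accommodate.
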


We now give the proof for the above lemma. The reader can also find the proof of Statement (1) in \cite{Hruska10}.
\begin{proof}
Let $\epsilon$ and $R$ be constants in Lemma~ \ref{lemma5b}. Statements (1), (2), and (3) are immediate results of Definition~ \ref{rqc}, Lemma~ \ref{t}, Lemma~ \ref{mainlemma} and Lemma~ \ref{lemma5b}. We now focus on proving Statement (4).

Let 
\begin{align*}
                   &A_0=A_0\Big(\frac{1}{3}\Big) \text{ be the constant in Lemma~ \ref{lemma6}}\\
                   &A_1=A_1(A_0) \text{ be the constant in Lemma~ \ref{lemma2}}\\ 
                   &A_2=A_2(A_1,\epsilon) \text{ be the constant in Lemma~ \ref{lemma4}}\\
									 &A=A_0+A_1+A_2+\epsilon+1\\
\end{align*}	
Let $gP$ be any peripheral left coset. Let $c$ be any geodesic with endpoints outside $N_A(gP)$ such that $\ell(c)>9\max\bigl\{d_S(c^+,gP),d_S(c^-,gP)\bigr\}$. Let $r=\max\bigl\{d_S(c^+,gP),d_S(c^-,gP)\bigr\}$. Thus, the length of $c$ is greater than $9r$ and $r>A$. Since $\ell(c)>9\max\bigl\{d_S(c^+,gP),d_S(c^-,gP)\bigr\}$, then $c\cap N_{A_0}(u_1P)\ne \emptyset$ by Lemma~ \ref{lemma6}. Let $a_1$ and $a_2$ be the first points and the last points in $c\cap N_{A_0}(gP)$. Thus, the subsegment $[a_1,a_2]$ of $c$ connecting $a_1$ and $a_2$ must lie in the $A_1$--neighborhood of $gP$. Let $a'_1$ and $a'_2$ the vertices in $c-[a_1,a_2]$ such that $d_S(a_1,a'_1) \leq 1$ and $d_S(a_2,a'_2) \leq 1$. We assume that $a'_1$ lies between $c^+$, $a_1$ and that $a'_2$ lies between $c^-$, $a_2$. Obviously, $a'_1$ and $a'_2$ must lie in the $(A_0+1)$--neighborhood of $H$. In particular, they lie in the $r$--neighborhood of $H$. If the distance between $c^+$ and $a_1$ is greater than $4r$, then the distance in between $c^+$ and $a'_1$ is greater than $3r$. Thus, the subsegment of $c$ connecting $x^+$ and $a'_1$ must intersect the $A_0$--neighborhood of $gP$ which contradicts to the choice of $a_1$. Thus, $d_S(c^+,a_1)\leq 4r$. Similarly, $d_S(c^-,a_2)\leq 4r$. Also, the length of $c$ is at least $9r$. Thus, the length of $[a_1,a_2]$ is at least $r$. In particular, this length is bounded below by $A_2$.

We now show that $c$ contains an $(\epsilon,R)$--transition point $w$ in the $A$--neighborhood of $gP$. Indeed, if $[a_1,a_2]$ contains an $(\epsilon,R)$--transition point $w$, then $w$ must lie in the $A_1$--neighborhood of $gP$. In particular, $w$ lies in the $A$--neighborhood of $gP$ and we are done.

We now consider the case that $[a_1,a_2]$ contains only $(\epsilon,R)$--deep points. Therefore, $[a_1,a_2]$ lies in some $\epsilon$--neighborhood of some peripheral left coset $g'P'$. Thus, 
\[[a_1,a_2]\subset N_{A_1}(gP)\cap N_{\epsilon}(g'P').\] 
Also, the length of $[a_1,a_2]$ is at least $r$. Thus, the length of $[a_1,a_2]$ is bounded below $A_2$. Therefore, $\diam(N_{A_1}(gP)\cap N_{\epsilon}(g'P'))$ is strictly greater than $A_2$. Thus, $gP=g'P'$. It follows that $[a_1,a_2]$ lies in the $\epsilon$--neighborhood of $gP$. Also, the endpoints of $c$ both lie outside the $\epsilon$--neighborhood of $gP$. Thus, we could find an $(\epsilon,R)$--transition point $w$ in $c$ such that $d_S(w,gP)\leq \epsilon+1$. In particular, $w$ lies in the $A$--neighborhood of $gP$. \qedhere
\end{proof}

\begin{thm}
\label{ldorhg}
Let $(G,\PP)$ be a relatively hyperbolic group and $H$ an infinite fully relatively quasiconvex subgroup of $G$. If $0<\tilde{e}(G,H)<\infty$, then $div(G,H)$ is at least exponential.
\end{thm}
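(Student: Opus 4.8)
The plan is to upgrade Proposition~\ref{sche} into a bound that is \emph{uniform} over all competing pairs of points, since the lower divergence is an infimum rather than a supremum. Write $div(G,H)=\{\sigma^n_\rho\}$, let $\epsilon,R$ and $K$ be the constants of Lemma~\ref{lemman1}, write $\sigma_0$ for the constant $\sigma$ of its Statement~(2), and let $\kappa$ be the constant of its Statement~(1). Fixing an integer $n\geq n_0$ (with $n_0$ to be determined by the length estimates below), a large $r$, an \emph{arbitrary} pair $x_1,x_2\in\partial N_r(H)$ with $d_r(x_1,x_2)<\infty$ and $d_S(x_1,x_2)\geq nr$, and an \emph{arbitrary} path $\gamma$ in $C_{\rho r}(H)$ joining them, I aim to prove $\abs{\gamma}\geq 2^{\rho r/2K}$. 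Granting this for every such configuration gives $2^{\rho r/2K}\leq\sigma^n_\rho(r)$ for all large $r$ and all $n\geq n_0$, whence (by monotonicity of $\sigma^n_\rho$ in $n$ and the definition of domination of families, replacing $n$ by $Mn$ with $M=n_0$) $e^r\preceq\sigma^n_\rho$. The whole problem thus reduces to producing a single $(\epsilon,R)$--transition point $v$ on a geodesic $c=[x_1,x_2]$ of $\Gamma(G,S)$ with $d_S(v,H)$ bounded independently of $r$: once $v$ is found, Statement~(3) of Lemma~\ref{lemman1} furnishes a vertex $w\in\gamma$ with $d_S(w,v)\leq K\log_2\abs{\gamma}+K$, and since $w\in C_{\rho r}(H)$ forces $d_S(w,H)\geq\rho r$ we obtain $d_S(w,v)\geq\rho r-d_S(v,H)\geq\rho r/2$, hence $\abs{\gamma}\geq 2^{\rho r/2K}$.

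To locate $v$ I would choose $h_1,h_2\in H$ realizing $d_S(x_i,H)=d_S(x_i,h_i)=r$ and compare $c$ with a geodesic $[h_1,h_2]$, whose length is at least $(n-2)r$. By Statement~(1) of Lemma~\ref{lemman1} every $(\epsilon,R)$--transition point of $[h_1,h_2]$ lies within $\kappa$ of $H$. In the principal case there is such a transition point $p$ at $d_S$--distance greater than $2r+\sigma_0$ from both $h_1$ and $h_2$. Splitting the quadrilateral $x_1,h_1,h_2,x_2$ by a diagonal and applying the thin-triangle comparison for transition points (Statement~(2)) to each triangle, the two short sides $[x_1,h_1]$ and $[x_2,h_2]$ (each of length $r$) cannot absorb $p$, so $p$ propagates to a transition point $v$ on $c$ with $d_S(v,p)\leq 2\sigma_0$; then $d_S(v,H)\leq 2\sigma_0+\kappa$, as desired.

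The remaining case is the heart of the matter, and it is exactly where \emph{full} relative quasiconvexity is needed (dropping it makes the statement false, cf.\ Theorem~\ref{ldorqcs}). If no transition point of $[h_1,h_2]$ is far from both endpoints, then its middle third consists entirely of $(\epsilon,R)$--deep points and so, by Lemma~\ref{lemma5a}, lies in a single peripheral left coset $gP$. The two transition points $t_1,t_2$ bounding this deep stretch are simultaneously within $\kappa$ of $H$ and within $\epsilon$ of $gP$, and satisfy $d_S(t_1,t_2)\geq(n-6)r-2\sigma_0$. Applying Lemma~\ref{CloseCosets} with $L=\max\{\kappa,\epsilon\}$ places $t_1,t_2$ in $N_{L'}(H\cap gPg^{-1})$; were $H\cap gPg^{-1}$ finite this would bound $d_S(t_1,t_2)$, contradicting $d_S(t_1,t_2)\geq(n-6)r-2\sigma_0$ once $n_0$ is large. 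Thus full relative quasiconvexity forces $H\cap gPg^{-1}$ to have finite index in $gPg^{-1}$, so that $gPg^{-1}\subset N_D(H)$ for some constant $D$. The outstanding step — and the main obstacle — is to convert this finite-index control into a transition point $v$ of $c$ lying within a constant of $H$: one must show that, because $H$ coarsely contains the peripheral group $gPg^{-1}$ and because $c$ tracks $[h_1,h_2]$ along the stretch near $gP$ (via Statement~(4) of Lemma~\ref{lemman1} and Lemma~\ref{lemma6}), the geodesic $c$ still carries a transition point uniformly close to $H$.

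Once $v$ is produced in both cases the conclusion is immediate from the first paragraph. I expect essentially all of the genuine difficulty to reside in the degenerate peripheral case: controlling how the geodesic $c$ enters and exits the coset $gP$ relative to $H$, with constants independent of $r$, is the delicate point that the hypothesis of full relative quasiconvexity is precisely designed to make possible, and it is the step that fails for merely relatively quasiconvex subgroups.
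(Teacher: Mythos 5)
Your architecture coincides with the paper's: the same reduction via Statement~(3) of Lemma~\ref{lemman1} (a vertex of $\gamma$ within $K\log_2\abs{\gamma}+K$ of a transition point of $c$ that is uniformly close to $H$), the same dichotomy between a transition point that propagates to $c$ and a long $(\epsilon,R)$--deep stretch in a single peripheral coset $gP$, and the same use of full relative quasiconvexity to pass from ``$H\cap gPg^{-1}$ infinite'' to ``$gP$ coarsely contained in $H$''. Your principal case is complete and is essentially the paper's Case 1. But there is a genuine gap exactly where you flag it: you never produce the transition point of $c$ in the peripheral case, and that is where the paper does its real work. The paper's completion is a concrete chain of estimates: since $gP\subset N_C(H)$ and $d_S(x,H)=d_S(y,H)=r$, the endpoints of $c$ lie outside $N_{r-C}(gP)$, hence outside $N_A(gP)$; since $d_S(x,u_1)\leq 2r$ and $u_1\in N_\epsilon(gP)$, they lie within $3r$ of $gP$; and since $d_S(x,y)\geq 27nr$, the hypothesis $\ell(c)>9\max\bigl\{d_S(c^+,gP),d_S(c^-,gP)\bigr\}$ of Statement~(4) holds, yielding a transition point $w$ of $c$ with $d_S(w,gP)\leq A$. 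One then bounds $d_S(v,w)\geq d_S(v,gP)-A\geq \rho r-C-A$ for every vertex $v$ of $\gamma$ (the paper measures distance to $gP$ rather than to $H$, but since $d_S(w,H)\leq A+C$ either finishes). Note that the threshold $27n$ on $d_S(x,y)$ exists precisely to make the hypothesis of Statement~(4) verifiable, which is why the paper proves $e^r\preceq\sigma^{27n}_{\rho}$; your proposal has no mechanism to certify that hypothesis, so the appeal to Statement~(4) remains a hope rather than a step.

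A second, fixable, defect concerns uniformity of constants. Your applications of Lemma~\ref{CloseCosets} (the constant $L'$) and of ``$gPg^{-1}\subset N_D(H)$'' (the constant $D$) use constants that, as those statements are formulated, depend on the particular coset $gP$ — which varies with $r$ and with the pair $(x,y)$. Since the lower divergence is an infimum, you need these constants uniform over all configurations, otherwise the intended contradiction with $d_S(t_1,t_2)\geq (n-6)r-2\sigma_0$ evaporates. The paper arranges uniformity by translating: $d_S(H,gP)\leq\kappa+\epsilon$ lets one write $gP=h_1tP$ with $h_1\in H$ and $\abs{t}_S\leq\kappa+\epsilon$, so only finitely many cosets $tP$ occur (using Lemma~\ref{nops}), and the constants $B$ (diameter bound when $\abs{tPt^{-1}\cap H}<\infty$) and $C$ (with $tP\subset N_C(H)$ in the finite-index case) are maxima over this finite family. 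You would need the same device both for your finite-intersection contradiction and for your constant $D$.
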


\begin{rem}
Before giving the proof of the theorem, we would like to discuss a large class of groups and their subgroups to which the theorem applies. More precisely, we are going to discuss different pairs of groups $(G,H)$, where $G$ is a relatively hyperbolic group and $H$ is an infinite fully relatively quasiconvex subgroup of $G$ with $0<\tilde{e}(G,H)<\infty$.

Let $G$ be the fundamental group of some hyperbolic surface and $H$ an infinite cyclic subgroup of $G$. Thus, $G$ is a hyperbolic group and $H$ is an infinite malnormal quasiconvex subgroup of $G$. In particular, $G$ is a relatively hyperbolic group and $H$ is an infinite fully relatively quasiconvex subgroup. Obviously, the number of filtered ends $\tilde{e}(G,H)=2$.

We now come up with other example. Let $G$ be the fundamental group of some hyperbolic finite volume three manifold with cusps. Therefore, $G$ is relatively hyperbolic with respect to the collection of its cusp subgroups. Let $H$ be any cusp subgroup of $G$. We can see that $H$ is an infinite fully relatively quasiconvex subgroup of $G$ and $\tilde{e}(G,H)=1$. 

We now discuss the case $H$ is a strongly relatively quasiconvex subgroup with finite number of filtered ends $\tilde{e}(G,H)$. We can choose $G$ be the fundamental group of some hyperbolic finite volume three manifold with cusps as above. Again, $G$ is relatively hyperbolic with respect to the collection of its cusp subgroups. Let $H$ be a cyclic subgroup generated by a hyperbolic element. It is obvious that $H$ is a strongly relatively quasiconvex subgroup and the number of filtered ends $\tilde{e}(G,H)=1$ 

Now, we come up with a pair of groups $(G,H)$ that satisfy all conditions in Theorem~ \ref{ldorhg} and $H$ is neither strongly relative quasiconvex nor a subgroup of some peripheral subgroup. Let $G$ be the fundamental group of some hyperbolic finite volume three manifold with more than one cusp. We can pick up any cusp subgroup $P$ and any cyclic subgroup $K$ of $G$ generated by some hyperbolic element. By Theorem 2 in \cite{MR2994828}, it is obvious that we can choose some finite index subgroup $P_1$ of $P$ and some finite index subgroup $K_1$ of $K$ such that the subgroup $H$ generated by $P_1$ and $K_1$ is isomorphic to their free product and $H$ is also a fully relatively quasiconvex subgroup. It is not hard to see that the number of filtered ends $\tilde{e}(G,H)=1$.
\end{rem}

\begin{proof}
Let $\epsilon$, $R$, $\sigma$, $K$ and $A$ be the constants in Lemma~ \ref{lemman1}

Let $\kappa$ be the constant such that for each geodesic $c$ in $\Gamma(G,S)$ joining points in $H$, the set of $(\epsilon, R)$--transition points of $c$ lies in the $\kappa$--neighborhood of $H$.

By Lemmas~ \ref{nops} and \ref{CloseCosets} we could choose $B=\max\bigset{\diam(N_\kappa(H)\cap N_\epsilon(tP)}{\text{$\abs{t}_S\leq \kappa+\epsilon$, $P\in\PP$ and $\abs{tPt^{-1}\cap H}<\infty$}}$, and we could choose $C$ such that the $C$--neighborhood of $H$ contains all peripheral left cosets $tP$ where 
$\abs{t}_S\leq\kappa+\epsilon$ and $\abs{tPt^{-1}:(tPt^{-1}\cap H)}<\infty$.

Denote $div(G,H)=\{\sigma^n_{\rho}\}$. We will prove that 
$e^r\preceq \sigma^{27n}_{\rho}$. More precisely, we define \[r_0=\frac{4C}{\rho}(\kappa+K+A+B+C+2\sigma)\] and we will prove $2^{{\rho r}/{4K}}\leq \sigma^{27n}_{\rho}(r)$ for each $r>r_0$. We assume $r$ is an integer.

Let $x$ and $y$ be arbitrary points in $\partial N_r(H)$ such that $d_S(x,y)\geq (27n)r$ and $d_r(x, y)< \infty$. (The existence of $x$ and $y$ is guaranteed by the condition $0<\tilde{e}(G,H)<\infty$.) Let $x_1$ and $y_1$ be points in $H$ such that $d_S(x,x_1)=d_S(x,H)=r$ and $d_S(y,y_1)=d_S(y,H)=r$

Let $\gamma$ be any path in $C_{\rho r}(H)$ connecting $x$ and $y$. Let $c$ be a geodesic in $\Gamma(G,S)$ connecting $x$ and $y$ and $c_1$ a geodesic in $\Gamma(G,S)$ connecting $x_1$ and $y_1$. Let $\beta_1$ be a geodesic in $\Gamma(G,S)$ connecting $x$ and $x_1$ and $\beta_2$ a geodesic in $\Gamma(G,S)$ connecting $y$ and $y_1$.

By Lemma~ \ref{lemman1}, for each $(\epsilon, R)$--transition point $u$ in $c_1$ there is an $(\epsilon, R)$--transition point $v_u$ in $\beta_1\cup c \cup \beta_2 $ such that $d_S(u,v_u)\leq 2\sigma$. We have two main cases:

Case 1: Suppose that $v_u$ lies in $c$ for some $u$ in $c_1$.

Since $u$ lies in the $\kappa$--neighborhood of $H$, then $v_u$ lies in the $(\kappa+2\sigma)$--neighborhood of $H$. By Lemma~ \ref{lemman1}, there exists a vertex $w \in \gamma$ such that $d_S(w,v_u)\leq K \log_2 \abs{\gamma}+K$. Since $w$ lies outside $N_{\rho r}(H)$, then the distance $d_S(w,v_u)$ is bounded below by $\rho r-\kappa-2\sigma$. Thus, $K\log_2 \abs{\gamma}\geq \rho r-\kappa-2\sigma-K\geq {\rho r}/{4}$ by the choice of $r$. Thus, the length of $\gamma$ is bounded below by $2^{{\rho r}/{4K}}$.	

Case 2: Suppose that $v_u$ lies in $\beta_1\cup\beta_2$ for all $(\epsilon, R)$--transition point $u$ in $c_1$.

We could choose $u_1$ and $u_2$ in $c_1$ such that $v_{u_1}\in \beta_1$, $v_{u_2}\in \beta_2$ and all points in the geodesic $c_1$ lies between $u_1$ and $u_2$ are $(\epsilon,R)$--deep points with respect to some peripheral left coset $gP$. In particular, the two points $u_1$, $u_2$ lie in the $\epsilon$--neighborhood $gP$. Since $v_{u_1}$ lies in $\beta_1$ and the length of $\beta_1$ is $r$, then the distance between $u_1$ and $x_1$ is bounded above by $r+2\sigma$. Thus, the distance between $u_1$ and $x_1$ is bounded above by $2r$ by the choice of $r$. Similarly, the distance between $u_2$ and $y_1$ is bounded above by $2r$ with respect to the metric $d_S$. By the same argument, the distances $d_S(u_1,x)$ and $d_S(u_2,y)$ are also bounded above by $2r$. Also, the distance between $x$ and $y$ is at least $(27n)r$. Thus, the distance between $u_1$ and $u_2$ is bounded below by $(27n-4)r$. Therefore, this distance is bounded below by $(23)r$ by the choice of $n$. 

Since the distance $d_S(H, gP)\leq d_S(H, u_1)+d_S(u_1,gP)\leq \kappa+\epsilon$, then there are some $h_1$ in $H$ and $t$ in $G$ such that $\abs{t}_S\leq\kappa+\epsilon$ and $gP=h_1tP$. Thus,
\begin{align*}
\diam\bigl(N_\epsilon(tP)\cap N_\kappa(H)\bigr)&=\diam\bigl(N_\epsilon(h_1tP)\cap N_\kappa(h_1H)\bigr)\\&=\diam\bigl(N_\epsilon(gP)\cap N_\kappa(H)\bigr).
\end{align*}
Since $u_1$ and $u_2$ lie in $N_\epsilon(gP)\cap N_\kappa(H)$, then \[\diam\bigl(N_\epsilon(gP)\cap N_\kappa(H)\bigr)\geq d_S(u_1,u_2)\geq (23)r>23r>r_0>B\] 
Thus, \[\diam\bigl(N_\epsilon(tP)\cap N_\kappa(H)\bigr)>B.\] Therefore, $\abs{tPt^{-1}\cap H}=\infty$ by the choice of $B$. It follows that \[\abs{tPt^{-1}:(tPt^{-1}\cap H)}<\infty\] since $H$ is a fully relatively quasiconvex subgroup. Therefore, $tP\subset N_C(H)$. Thus, \[gP=h_1tP\subset h_1N_C(H)=N_C(H).\]
Therefore, $\gamma$ lies outside the $(\rho r-C)$--neighborhood of $gP$. Thus, $\gamma$ lies outside the $({\rho r}/{2})$--neighborhood of $gP$ by the choice of $r$. 

We now show that there is an $(\epsilon,R)$--transition point $w$ in $c$ such that $d_S(w,gP)\leq A$. Since $gP$ lies in the $C$--neighborhood of $H$ and the distance between $x$ and $H$ is $r$, then $x$ lies outside the $(r-C)$--neighborhood of $gP$. In particular, $x$ lies outside the $A$--neighborhood of $gP$. Similarly, $y$ also lies outside the $A$--neighborhood of $gP$. Since the distance between $x$ and $u_1$ is bounded above by $2r$ and $u_1$ lies in the $\epsilon$--neighborhood of $gP$, then $x$ lies in the $(2r+\epsilon)$--neighborhood of $gP$. In particular, $x$ lies in the $3r$--neighborhood of $gP$. Similarly, $y$ also lies in the $3r$--neighborhood of $gP$. Since $x$ and $y$ lies in the $3r$--neighborhood of $gP$ and the distance between $x$ and $y$ is greater than $27r$, then $\ell(c)>9\max\bigl\{d_S(c^+,gP),d_S(c^-,gP)\bigr\}$, then $c$ contains an $(\epsilon,R)$--transition point $w$ in the $A$--neighborhood of $gP$ by Lemma~ \ref{lemman1}.

We now prove that the length of $\gamma$ is bounded below by $2^{{\rho r}/{4K}}$. Indeed, by Lemma~ \ref{lemman1}, there exists a vertex $v \in \gamma$ such that $d_S(v,w)\leq K \log_2 \abs{\gamma}+K$
 Also
\[d_S(v,w)\geq d_S(v,gP)-d_S(gP,w)\geq \frac{\rho r}{2}-A.\]
Thus, 
\[K\log_2 \abs{\gamma}\geq \frac{\rho r}{2}-A-K\geq \frac{\rho r}{4}.\]

Therefore, the length of $\gamma$ is bounded below by $2^{{\rho r}/{4K}}$. Thus, $d_{\rho r}(x,y)\geq 2^{{\rho r}/{4K}}$. Thus, $2^{{\rho r}/{4K}}\leq \sigma^{27n}_{\rho}$. Therefore, $e^r\preceq \sigma^{27n}_{\rho}$. \qedhere
\end{proof}

\begin{ques}
For the pair $(G,H)$ as in Theorem~ \ref{ldorhg}, is the relative lower divergence $div(G,H)$ exactly exponential? What conditions do we need to put on the pair $(G,H)$ to force the lower relative divergence $div(G,H)$ to be exactly exponential? 
\end{ques}

\begin{cor}
\label{qodc}
Let $G$ be a hyperbolic group and $H$ an infinite quasiconvex subgroup of $G$. If $0<\tilde{e}(G,H)<\infty$, then $div(G,H)$ is at least exponential.
\end{cor}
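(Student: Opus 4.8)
The plan is to obtain this corollary as a special case of Theorem~\ref{ldorhg} by regarding the hyperbolic group $G$ as relatively hyperbolic with an empty peripheral structure. First I would set $\PP = \emptyset$. Then the coned off Cayley graph $\hat{\Gamma}(G,S,\PP)$ is literally the Cayley graph $\Gamma(G,S)$, which is $\delta$--hyperbolic because $G$ is hyperbolic and is fine because it is locally finite. Hence $(G,\PP)$ is a relatively hyperbolic group in the sense of Definition~\ref{rel}.

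With this peripheral structure the two quasiconvexity notions coincide: a geodesic $\bar{c}$ of $\hat{\Gamma}(G,S,\PP)$ is exactly a geodesic of $\Gamma(G,S)$, so the condition in Definition~\ref{rqc} that every $G$--vertex of $\bar{c}$ lie within $d_S$--distance $\kappa$ of $H$ is precisely the statement that $H$ is quasiconvex in the sense of Definition~\ref{qc}. Thus the hypothesis that $H$ is quasiconvex gives that $H$ is relatively quasiconvex.

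Next I would note that $H$ is automatically fully relatively quasiconvex. Since $\PP = \emptyset$, there are no peripheral subgroups, so the defining condition---that $H \cap g^{-1}Pg$ be finite or of finite index in $g^{-1}Pg$ for every conjugate $g^{-1}Pg$ of a peripheral subgroup $P$---quantifies over the empty set and holds vacuously. As $H$ is infinite and $0 < \tilde{e}(G,H) < \infty$ by hypothesis, all hypotheses of Theorem~\ref{ldorhg} are met, and that theorem gives that $div(G,H)$ is at least exponential.

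There is essentially no obstacle here beyond the bookkeeping of matching definitions; the only point worth stating explicitly is the identification in the second paragraph, namely that relative quasiconvexity with respect to the empty collection is literally ordinary quasiconvexity, which is immediate once one observes that the coned off graph and the Cayley graph agree.
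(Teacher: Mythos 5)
Your proposal is correct and is essentially the paper's (implicit) argument: the paper states this corollary without proof as an immediate consequence of Theorem~\ref{ldorhg}, the intended reduction being exactly yours. Viewing $G$ as hyperbolic relative to $\PP=\emptyset$ (the coned off Cayley graph is then the locally finite, hence fine, hyperbolic Cayley graph), quasiconvexity coincides with relative quasiconvexity, the ``fully'' condition holds vacuously, and Theorem~\ref{ldorhg} applies.
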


\begin{cor}
Let $(G,\PP)$ be a relatively hyperbolic group and $P$ an infinite peripheral subgroup. If $0<\tilde{e}(G,P)<\infty$, then $div(G,P)$ is at least exponential.
\end{cor}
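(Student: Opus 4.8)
The plan is to recognize this statement as a direct instance of Theorem~\ref{ldorhg}. Since $P$ is infinite and $0<\tilde{e}(G,P)<\infty$ are already among the hypotheses, it suffices to verify that a peripheral subgroup is \emph{fully relatively quasiconvex}. Once this is established, the conclusion that $div(G,P)$ is at least exponential follows immediately from Theorem~\ref{ldorhg}.

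First I would check that $P$ is relatively quasiconvex in the sense of Definition~\ref{rqc}. This is immediate from the construction of the coned-off Cayley graph: any two points $x,y\in P$ are each joined to the peripheral vertex $v_{P}$ by an edge of length $1/2$, so every geodesic $\bar{c}$ in $\hat{\Gamma}(G,S,\PP)$ between them has its only $G$--vertices at the endpoints $x,y\in P$. Hence the constant $\kappa=0$ works.

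The main work is to verify the full relative quasiconvexity condition, namely that for every conjugate $g^{-1}P'g$ of every peripheral subgroup $P'$, the intersection $P\cap g^{-1}P'g$ is either finite or of finite index in $g^{-1}P'g$. I would split into cases using Lemma~\ref{Osips}. If $P'\neq P$, then writing $P=ePe^{-1}$, Lemma~\ref{Osips}(1) gives that $P\cap g^{-1}P'g$ is finite. If $P'=P$ and $g\notin P$, then conjugating by $g$ identifies $P\cap g^{-1}Pg$ with $gPg^{-1}\cap P$, which is finite by Lemma~\ref{Osips}(2). Finally, if $P'=P$ and $g\in P$, then $g^{-1}Pg=P$ and the intersection is all of $P$, which has index one in $g^{-1}Pg$. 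In every case the intersection is finite or of finite index, so $P$ is fully relatively quasiconvex.

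There is no serious obstacle here; the one point requiring care is the bookkeeping in the conjugation step of the middle case, where one must correctly observe that $g(P\cap g^{-1}Pg)g^{-1}=gPg^{-1}\cap P$ before invoking Lemma~\ref{Osips}(2). With full relative quasiconvexity in hand, Theorem~\ref{ldorhg} applies directly and finishes the proof.
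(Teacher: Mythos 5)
Your proposal is correct and takes essentially the same route as the paper: the paper states this corollary as an immediate consequence of Theorem~\ref{ldorhg}, leaving unstated exactly the verification you supply, namely that an infinite peripheral subgroup is fully relatively quasiconvex. Your check of relative quasiconvexity (geodesics in $\hat{\Gamma}(G,S,\PP)$ between points of $P$ have no interior $G$--vertices, so $\kappa=0$ works) and your three-case analysis of $P\cap g^{-1}P'g$ via Lemma~\ref{Osips} are both sound, so Theorem~\ref{ldorhg} applies as claimed.
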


\begin{cor}
Let $(G,\PP)$ be a relatively hyperbolic group and $H$ an infinite strongly relatively quasiconvex subgroup. If $0<\tilde{e}(G,H)<\infty$, then $div(G,H)$ is at least exponential.
\end{cor}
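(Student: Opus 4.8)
The plan is to deduce this corollary directly from Theorem~\ref{ldorhg} by observing that the hypothesis imposed on $H$ here is strictly stronger than the one required there. The only point to verify is that every strongly relatively quasiconvex subgroup is automatically fully relatively quasiconvex; once this is established, the conclusion that $div(G,H)$ is at least exponential follows immediately, since Theorem~\ref{ldorhg} applies to any infinite fully relatively quasiconvex subgroup with $0<\tilde{e}(G,H)<\infty$, and these remaining hypotheses are unchanged.

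To verify the implication, I would recall the two definitions side by side. A relatively quasiconvex subgroup $H$ is \emph{strongly relatively quasiconvex} when $H\cap g^{-1}Pg$ is a finite subgroup of $g^{-1}Pg$ for every conjugate $g^{-1}Pg$ of every peripheral subgroup $P$, whereas $H$ is \emph{fully relatively quasiconvex} when $H\cap g^{-1}Pg$ is either a finite subgroup or a finite index subgroup of $g^{-1}Pg$ for every such conjugate. Since being finite trivially entails being ``finite or finite index,'' the strongly relatively quasiconvex condition is a special case of the fully relatively quasiconvex condition. Hence $H$ is fully relatively quasiconvex, and applying Theorem~\ref{ldorhg} to the pair $(G,H)$ yields that $div(G,H)$ is at least exponential.

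I do not anticipate any genuine obstacle: the entire analytic content lies in Theorem~\ref{ldorhg}, and this corollary merely records that its hypotheses are met under the stronger quasiconvexity assumption. The one routine check is the definitional containment above, which is purely a matter of comparing the two quasiconvexity conditions.
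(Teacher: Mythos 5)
Your proposal is correct and matches the paper's (implicit) argument exactly: the paper states this as an unproved corollary of Theorem~\ref{ldorhg}, precisely because its definition of strongly relatively quasiconvex ($H\cap g^{-1}Pg$ finite for every peripheral conjugate) is a special case of fully relatively quasiconvex ($H\cap g^{-1}Pg$ finite or of finite index). The definitional containment you check is the whole content, and the remaining hypotheses ($H$ infinite, $0<\tilde{e}(G,H)<\infty$) carry over unchanged.
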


\begin{rem}
From the results of Corollary~ \ref{qodc} and Theorem~ \ref{ldocs}, we could extend the result of Corollary~ \ref{ldocg}. More precisely, there is a pair of groups $(G,H)$, where $G$ is a one-ended $\CAT(0)$ group and $H$ is an infinite cyclic subgroup of $G$ such that 
$div(G,H)$ is exponential. For example, let $G$ be the fundamental group of a hyperbolic surface $M$ and $H$ the fundamental group of a closed essential curve $C$ of $M$. Then $G$ is a one-ended $\CAT(0)$ group and it is also hyperbolic. Since the infinite cyclic subgroup $H$ is also quasiconvex, then $div(G,H)$ is at least exponential. Also, $div(G,H)$ is dominated by the upper divergence of $G$. Thus, $div(G,H)$ is at most exponential. Therefore, $div(G,H)$ is exactly exponential.

In Theorem~ \ref{ldorhg}, we could not replace the condition ``fully relative quasiconvexness'' of the subgroup $H$ by the condition ``relative quasiconvexness''. Readers could look at the following theorem as a counter example.
\end{rem}

\begin{thm}
\label{ldorqcs}
Let $G=\langle a_1, a_2, a_3, b,c|[a_1,a_2][a_3,b]=e, [b,c]=e\rangle$ and $H$ be the cyclic subgroup of $G$ generated by $c$. Let $P$ be the subgroup generated by $b$ and $c$. Then, 
$G$ is a relatively hyperbolic group with respect to the subgroup $P$, $0<\tilde{e}(G,H)<\infty$, $H$ is a relatively quasiconvex subgroup and $div(G,H)$ is linear.
\end{thm}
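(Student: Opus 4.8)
The plan is to read the structure of $G$ off its presentation and then treat the four assertions in turn. Put $S=\presentation{a_1,a_2,a_3,b}{[a_1,a_2][a_3,b]=e}$, the genus--two surface group, and $P=\gen{b,c}$, which is free abelian of rank two since $[b,c]=e$; the relations of $G$ exhibit it as the amalgam $G=S*_{\gen{b}}P$, where $\gen{b}$ embeds in $P=\gen{b}\times\gen{c}$ as the first factor. As $S$ is hyperbolic and the generator $b$ is primitive, $\gen{b}$ is a quasiconvex, malnormal cyclic subgroup of $S$. Throughout I would use the retraction $\rho\colon G\to S$ induced by $a_i\mapsto a_i$, $b\mapsto b$, $c\mapsto e$ (a homomorphism, since both defining relators map to $e$), which is $1$--Lipschitz and fixes $S$ pointwise; it is the main tool for controlling $d_S(\,\cdot\,,H)$ with $H=\gen{c}$.

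\emph{Claims (1) and (3).} For relative hyperbolicity I would feed the splitting $G=S*_{\gen{b}}P$ into a combination theorem for relatively hyperbolic groups: $S$ is hyperbolic relative to the malnormal quasiconvex subgroup $\gen{b}$, and amalgamating it with the abelian group $P$ over $\gen{b}$ (a direct factor of $P$) produces a group hyperbolic relative to $P$. For relative quasiconvexity of $H=\gen{c}$, note that $H$ lies in the single peripheral subgroup $P$, which becomes one vertex $v_P$ of $\hat{\Gamma}(G,S,\PP)$; hence any geodesic of $\hat{\Gamma}(G,S,\PP)$ joining two powers of $c$ has length at most $1$ and has no $G$--vertices other than its endpoints, which lie in $H$. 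I would also record that $H\cap P=H$ is an infinite, infinite--index subgroup of $P$, so $H$ is \emph{not} fully relatively quasiconvex; this is exactly the hypothesis of Theorem~\ref{ldorhg} that the example is built to violate.

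\emph{Claim (2).} Here I would show $0<\tilde{e}(G,H)=1<\infty$. For $s\in S$ and any $m$, the retraction gives $d_S(s,c^m)\ge d_S(\rho(s),\rho(c^m))=d_S(s,e)$, whence $d_S(s,H)=d_S(s,e)$; thus $S\cap N_r(H)=\bigset{s\in S}{d_S(s,e)<r}$, and the complement $\bigset{s\in S}{d_S(s,e)\ge r}$ lies in $C_r(H)$ and is connected because the genus--two surface group is one--ended. This set contains $b^{3r}$ and $b^{-3r}$, which sit on the two opposite half--planes of $P-N_r(H)$, so those half--planes are reconnected through the surface factor; a short argument via $\rho$ (slide along $c$ back to the $b$--axis, then along $b$ out to the surface mass) then shows every point of $C_r(H)$ meets this one piece, so $C_r(H)$ has a single deep component.

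\emph{Claim (4).} This is the point of the example, and it uses the infinite index of $H$ in $P$. Since $d_{\rho r}\ge d_S$, the lower bound $\sigma^n_\rho(r)\ge nr$ is automatic, so it suffices to bound $div(G,H)=\{\sigma^n_\rho\}$ above by a linear function. Because $[b,c]=e$, cancellation gives $d_S(b^rc^j,H)=d_S(b^r,H)=r$ for every $j$ (the last equality from $\rho$, as $b^r\in S$). Put $x=b^r$ and $y=b^rc^N$ with $N\asymp nr$ chosen so that $d_S(x,y)=\abs{c^N}_S\ge nr$ (legitimate, as $\gen{c}$ is undistorted, being a direct factor of the peripheral $P$). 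Then $x,y\in\partial N_r(H)$, and the horizontal path $b^r,b^rc,\dots,b^rc^N$ stays on $\partial N_r(H)\subset C_{\rho r}(H)$, so $d_r(x,y)<\infty$ and $d_{\rho r}(x,y)\le N$; as $N$ is linear in $r$, this yields $\sigma^n_\rho(r)\le N$ and hence $div(G,H)$ is linear. The main obstacle is Claim~(1): invoking the combination theorem cleanly requires verifying malnormality and quasiconvexity of $\gen{b}$ in $S$ and matching the peripheral structures along $\gen{b}$, whereas Claims~(2)--(4) all reduce, via the retraction $\rho$, to the elementary geometry of the $\Z^2$--coset together with one--endedness of the surface factor.
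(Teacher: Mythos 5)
Your proposal is correct, and on three of the four claims it follows essentially the same path as the paper: relative quasiconvexity of $H$ is immediate because $H$ lies in the peripheral subgroup $P$, and linearity of $div(G,H)$ is witnessed by exactly the same pair $x=b^r$, $y=b^rc^{nr}$ joined by the horizontal path $b^r, b^rc,\dots,b^rc^{nr}$ along $\partial N_r(H)$. The genuine difference is claim (1). The paper builds the universal cover of the presentation complex explicitly --- copies of $\HH^2$ (for the genus-two surface group) and of $\EE^2$ (for $P\cong\Z^2$) glued along lines labelled $b$ --- observes that this is a $\CAT(0)$ 2--complex with the Isolated Flats Property, and invokes Theorem~\ref{cat(0)wisp} to get hyperbolicity relative to $P$. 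You instead stay group-theoretic: $\gen{b}$ is quasiconvex and malnormal in the hyperbolic group $S$ (malnormal because $b$ is primitive, hence $\gen{b}$ is maximal cyclic in a torsion-free hyperbolic group), so $S$ is hyperbolic relative to $\{\gen{b}\}$ by Bowditch, and a Dahmani-type combination theorem for amalgamation over a parabolic gives that $G=S*_{\gen{b}}P$ is hyperbolic relative to $\{P\}$. Both routes are sound; what the paper's geometric model buys is that the same picture of $\tilde{X}$ as a tree of planes and flats is what it gestures at for $\tilde{e}(G,H)=1$, whereas your route must argue the filtered-ends count separately --- which you do via the retraction $\rho$, in more detail than the paper's one-sentence ``by examining the construction,'' though points in pieces far out in the Bass--Serre tree still need an inductive argument that neither text supplies.

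Two small repairs to record. First, the retraction $\rho$ (which kills $c$ but fixes $b$) gives $d_S(b^rc^j,H)=\abs{b^r}_S$, but it does not by itself show $\abs{b^r}_S=r$; for that you need the paper's homomorphism $\Psi\colon G\to\Z$ sending $b\mapsto 1$ and all other generators to $0$, which is legitimate since all relators are commutators. Second, your appeal to undistortion of $\gen{c}$ as ``a direct factor of the peripheral $P$'' presupposes claim (1) together with undistortion of peripheral subgroups; the elementary fix, again, is the homomorphism $c\mapsto 1$, all other generators $\mapsto 0$, which yields $\abs{c^n}_S=\abs{n}$ directly, as in the paper.
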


Before giving the proof of Theorem~ \ref{ldorqcs}, we need to review some result in Hruska \cite{MR2033482}

\begin{defn}[Definition 5.1, \cite{MR2033482}]
A $\CAT(0)$ 2--complex $X$ has the \emph{Isolated Flats Property} if there is a function $\Phi\!: \RR_+\to \RR_+$ such that for every pair of distinct flat planes $F_1\neq F_2$ in $X$ and for every $k \geq 0$, the intersection $N_k(F_1)\cap N_k(F_1)$ of $k$--neighborhoods of $F_1$ and $F_2$ has diameter at most $\Phi(k)$.
\end{defn}

\begin{thm}[Theorem 1.6, \cite{MR2033482}]
\label{cat(0)wisp}
Suppose a group $G$ acts properly and cocompactly by isometry on a $\CAT(0)$ 2--complex with the Isolated Flats Property. Then $G$ is hyperbolic relative to the collection of
maximal virtually abelian subgroups of rank two.
\end{thm}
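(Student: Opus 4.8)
The plan is to realize $G$ as an amalgamated product and to feed a suitable $\CAT(0)$ model into Theorem~\ref{cat(0)wisp}, then read off the remaining three assertions from the geometry of that model. Writing $S=\langle a_1,a_2,a_3,b\mid [a_1,a_2][a_3,b]=e\rangle$, the relator is the genus-two surface relator in the generators $a_1,a_2,a_3,b$, so $S\cong\pi_1(\Sigma_2)$ and $b$ is a standard (hence non-separating simple) generator; also $P=\langle b,c\mid[b,c]=e\rangle\cong\Z^2$. Since the only relations of $G$ are those of $S$ together with $[b,c]=e$, van Kampen gives $G=S*_{\langle b\rangle}P$, the amalgam over the infinite cyclic $\langle b\rangle$. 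First I would build a compact locally $\CAT(0)$ space $Y=\Sigma_2\cup_{\beta}T$ by gluing a hyperbolic genus-two surface $\Sigma_2$ to a flat torus $T$ along a simple closed geodesic $\beta$ representing $b$, after rescaling $T$ so the two curves have equal length. Gluing two locally $\CAT(0)$ spaces along an isometric, locally convex circle is locally $\CAT(0)$ (Bridson--Haefliger), so $X=\tilde Y$ is $\CAT(0)$ and $G=\pi_1(Y)$ acts on it properly and cocompactly; in particular $G$ is a $\CAT(0)$ group, a fact I will reuse.

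Next I would analyze the flats of $X$. The surface lifts are copies of $\Hyp^2$ (no flat triangles) while the torus lifts are copies of $\EE^2$; hence every flat plane is contained in the union of torus lifts, and since the Bass--Serre tree of $S*_{\langle b\rangle}P$ is bipartite and never makes two torus vertices adjacent, a connected flat plane lies in a single torus lift and therefore equals it. Thus the flats are exactly the torus lifts, which correspond to the cosets of $P$, i.e.\ to the conjugates of $P$, and $P\cong\Z^2$ is the maximal virtually abelian subgroup of rank two up to conjugacy. The Isolated Flats Property is the one point needing real geometric care: two distinct torus lifts lie in distinct tree vertices, and the only way two of them come close is inside a common surface lift $\Hyp^2$, where they abut along two distinct lifts of $\beta$; since distinct lifts of a simple closed geodesic are disjoint, non-asymptotic geodesics of $\Hyp^2$, their neighbourhoods meet in bounded diameter, which bounds $N_k(F_1)\cap N_k(F_2)$. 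With isolated flats in hand, Theorem~\ref{cat(0)wisp} shows $G$ is hyperbolic relative to the conjugates of $P$, establishing relative hyperbolicity with $\PP=\{P\}$.

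For relative quasiconvexity I would argue directly from Definition~\ref{rqc}. In $\hat\Gamma(G,S,\PP)$ every element of the coset $P$ lies at distance $1/2$ from the peripheral vertex $v_P$, so any two points of $H\subset P$ are joined by a geodesic of $\hat\Gamma$-length at most $1$ through $v_P$; such a geodesic has no interior $G$-vertex, so every $G$-vertex on it (the two endpoints) lies in $H$, and $H$ is relatively quasiconvex with constant $\kappa=0$. Note $H\cap P=H$ is infinite of infinite index in $P$, so $H$ is not fully relatively quasiconvex; this is exactly why the example is compatible with, and sharpens, Theorem~\ref{ldorhg}. For the filtered ends, $\tilde e(G,H)>0$ since $H\cong\Z$ has infinite index in $G$. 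To see $\tilde e(G,H)=1<\infty$, fix the torus lift $F_0\cong\EE^2$ through $e$, in which $H$ translates along a line $\ell$; for large $r$ the set $C_r(H)\cap F_0$ is the two half-planes on either side of $\ell$. These lie in one deep component: along the $b$-axis through a point $c^k\in H$ a surface lift $\Hyp^2$ is attached, and inside that lift the only approach to $H$ is near the single point $c^k$, so one may pass from the upper to the lower half-plane by an arc skirting $c^k$ at distance $>r$; every surface and torus lift connects back this way, so $C_r(H)$ has a single deep component.

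Finally, for linearity of $div(G,H)=\{\sigma^n_\rho\}$ I would exploit $[b,c]=e$ together with the undistortion of $P\cong\Z^2$ in the $\CAT(0)$ group $G$ (Theorem~\ref{csocg}). The crucial identity is $d_S(b^t c^s,H)=\min_k\abs{b^t c^{s-k}}_S=\min_j\abs{b^t c^{j}}_S=d_S(b^t,H)$, independent of $s$. Choosing $t_r$ with $d_S(b^{t_r},H)=r$ (possible since $t\mapsto d_S(b^t,H)$ is $1$-Lipschitz and unbounded, with $t_r=O(r)$ by undistortion), set $x=b^{t_r}$ and $y=b^{t_r}c^m$ with $m=O(nr)$ chosen so that $d_S(x,y)=\abs{c^m}_S\ge nr$. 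Then $x,y\in\partial N_r(H)$, and the horizontal path $s\mapsto b^{t_r}c^s$ from $x$ to $y$ keeps every vertex at distance exactly $r$ from $H$, so it lies in $C_{\rho r}(H)$ and has length $O(nr)$; thus $\sigma^n_\rho(r)\le O(nr)$. Conversely $\sigma^n_\rho(r)\ge d_S(x,y)\ge nr$ for every admissible pair, so each $\sigma^n_\rho$ is linear and $div(G,H)$ is linear. The main obstacle throughout is the verification of the Isolated Flats Property for $X$; once relative hyperbolicity is secured, the quasiconvexity, filtered-end, and divergence computations are direct.
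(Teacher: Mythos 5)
You have a structural problem before any mathematics is assessed: the statement you were asked to prove is Hruska's general theorem (Theorem~\ref{cat(0)wisp}) --- that \emph{every} group acting properly and cocompactly on \emph{any} $\CAT(0)$ 2--complex with the Isolated Flats Property is hyperbolic relative to its maximal virtually abelian subgroups of rank two --- and your proposal never engages with that statement. It opens by declaring it will ``feed a suitable $\CAT(0)$ model into Theorem~\ref{cat(0)wisp},'' i.e.\ it invokes the theorem to be proved as a black box, which is circular; and everything after that concerns the single group $G=\langle a_1,a_2,a_3,b,c\mid [a_1,a_2][a_3,b]=e,\ [b,c]=e\rangle$. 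A verified example cannot establish a universally quantified theorem, and nothing in your write-up addresses an arbitrary complex with isolated flats: in general one must show the flats fall into finitely many orbits, identify their stabilizers as maximal virtually abelian of rank two, and prove the coned-off Cayley graph is $\delta$--hyperbolic and fine --- this is the actual content of \cite{MR2033482} (via the Relatively Thin Triangles Property) and it is a substantial piece of work, not recoverable from one amalgam. Note also that the paper itself gives no proof of Theorem~\ref{cat(0)wisp}; it is imported verbatim as a citation and used exactly the way you use it, so there was never an internal proof to reconstruct.

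What you have actually written is, in substance, a proof of Theorem~\ref{ldorqcs}, and as such it tracks the paper's argument closely: the same splitting $G=\pi_1(\Sigma_2)\ast_{\langle b\rangle}\Z^2$ (the paper writes $G=G_1\ast_{\langle b_1\rangle=\langle b_2\rangle}P$), the same universal cover assembled from copies of $\Hyp^2$ and flat planes glued along lifts of $b$, the application of Theorem~\ref{cat(0)wisp} to get relative hyperbolicity with $\PP=\{P\}$, relative quasiconvexity of $H$ because $H\le P$, the claim $\tilde{e}(G,H)=1$ read off from the complex, and the horizontal path through $b^{r},b^{r}c,\dots,b^{r}c^{nr}$ giving $\sigma^n_{\rho}(r)\le nr$. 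Your treatment of the metric facts differs only cosmetically (you use the translation-invariance identity $d_S(b^tc^s,H)=d_S(b^t,H)$ where the paper computes $d_S(b^mc^n,H)=\abs{m}$ via homomorphisms to $\Z$), and your isolated-flats sketch --- distinct torus lifts meet a common $\Hyp^2$ piece only along disjoint, non-asymptotic lifts of the gluing geodesic --- is in fact more detailed than the paper's one-line assertion. So the verdict is: a reasonable proof of the wrong theorem, and an unfixably circular non-proof of the assigned one.
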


We now give the proof for Theorem~ \ref{ldorqcs}.

\begin{proof}

We are going to show that $G$ acts properly and cocompactly by isometry on a $\CAT(0)$ 2--complex with the Isolated Flats Property. It is obvious that $G=G_1\underset{{<b_1>=<b_2>}}{\mathrm{*}}P$ where $G_1=\langle a_1, a_2, a_3, b_1|[a_1,a_2][a_3,b_1]=e\rangle$ and $P=\langle b_2,c|[b_2,c]=e\rangle$. Let $X_1$ be the presentation 2--complex of $G_1$ and $X_2$ the presentation 2--complex of $P$. We build the 2--complex presentation for $G$ by identifying the 1--cell $b_1$ of $X_1$ and the 1--cell $b_2$ of $X_2$ into one 1--cell called $b$. Let $\tilde{X}_1$ and $\tilde{X}_2$ be the universal covers of $X_1$ and $X_2$ respectively. It is well-known that we can put a metric on $\tilde{X}_1$ such that $\tilde{X}_1$ becomes the 2--dimensional hyperbolic plane and $G_1$ acts properly and cocompactly on $\tilde{X}_1$ by isometry. Similarly, we can put a metric on $\tilde{X}_2$ such that $\tilde{X}_2$ becomes the 2--dimensional flat and $P$ acts properly and cocompactly on $\tilde{X}_2$ by isometry. It is obvious that the universal cover $\tilde{X}$ of $X$ is the union of copies of $\tilde{X}_1$ and $\tilde{X}_2$ such that a copy of $\tilde{X}_1$ intersects a copy of $\tilde{X}_2$ in a bi-infinite arc labeled by $b$. Thus, $\tilde{X}$ is a $\CAT(0)$ 2--complex with the Isolated Flats Property. Moreover, the group $G$ acts properly and cocompactly by isometry on $\tilde{X}$. Therefore, $G$ is a relatively hyperbolic group with respect to the subgroup $P$ by Theorem~ \ref{cat(0)wisp}.

By examining the construction of $\tilde{X}$, we can see that $\tilde{e}(G,H)=1$. Moreover, $H$ is a relatively quasiconvex subgroup since it is a subgroup of peripheral subgroup $P$. We now show that the relative lower divergence $div(G,H)$ is linear.

First we show that $\abs{b^n}_S=\abs{n}$. Let $m=\abs{b^n}_S$. Obviously, $m\leq \abs{n}$. There is a homomorphism $\Phi$ from $G$ to $\mathbb{Z}$ that maps every element in $S$ to the generator of $\mathbb{Z}$. Since $m=\abs{b^n}_S$, then there is a word $w$ in $S\cup S^{-1}$ with the length $m$ such that $b^n \equiv_G w$.
Therefore,
\[ b^n\equiv_G s_1s_2\cdots s_m \text{ where $s_i \in S\cup S^{-1}$}.\] 
Thus, 
\[\Phi(b^n)=\Phi(s_1)+\Phi(s_2)+\cdots+\Phi(s_m).\]

Since $\Phi(b^n)=n$ and $\Phi(s_i)=-1 \text{ or 1}$, then $\abs{n}\leq m $. Thus, $\abs{b^n}_S=m=\abs{n}$. Similarly, $\abs{c^n}_S=\abs{n}$

We now show that $d_S(b^mc^n,H)=\abs{m}$. Denote $d_S(b^mc^n,H)=\ell$. Obviously, $\ell\leq \abs{m}$. There is a group homomorphism $\Psi$ from $G$ to $\mathbb{Z}$ that maps $b$ to the generators of $\mathbb{Z}$ and the remaining elements in $S$ to 0. Suppose that $d_S(b^mc^n,H)=d_S(b^mc^n,c^{n'})$ for some $c^{n'}$ in $H$. Thus, there is a word $w'$ with the length $\ell$ such that $b^mc^n \equiv_G c^{n'}w'$.
Therefore,
\[ b^m c^n\equiv_G c^{n'}s'_1s'_2\cdots s'_{\ell} \text{ where $s'_i \in S\cup S^{-1}$}.\] 
Thus, \[\Psi(b^m)+\Psi(c^n)=\Psi(c^{n'})+\Psi(s'_1)+\Psi(s'_2)+\cdots+\Psi(s'_\ell).\]

Since $\Psi(b^m)=m$, $\Psi(c^n)=\Psi(c^{n'})=0$ and $\Psi(s_i)=-1 \text{, 0 or 1}$, then $\abs{m} \leq \ell$. Thus, $d_S(b^mc^n,H)=\abs{m}$.

Denote $div(G,H)=\{\sigma^n_{\rho}\}$. We will prove that 
$\sigma^{n}_{\rho}$ is bounded above by a linear function. More precisely, we will show $\sigma^n_{\rho}\leq nr$ for each $r>0$. 

We assume $r$ is an integer. Let $x=b^r$ and $y=b^rc^{nr}$. Then $x$ and $y$ lie in $\partial N_r(H)$ and $d_S(x,y)\geq nr$. Let $\gamma$ be the path with vertices 
$\{b^r, b^rc,b^rc^2,\cdots,b^rc^{nr}\}$. Then, $\gamma$ is a path in $C_r(H)$ connecting $x$ and $y$. Thus, $d_r(x,y)<\infty$. Moreover, $d_{\rho r}(x,y)\leq nr$ since the length of $\gamma$ is $nr$. Thus, $\sigma^n_{\rho}\leq nr$. Therefore, $\sigma^{n}_{\rho}$ is bounded above by a linear function. \qedhere

\end{proof}

\begin{thm}
\label{urdorhp}
Let $(G,\PP)$ be a relatively hyperbolic group and $H$ a subgroup of $G$ such that $0<\tilde{e}(G,H)<\infty$. We assume that $H$ is not conjugate to any infinite index subgroup of any peripheral subgroup. 
 Then $Div(G,H)$ is at least exponential.
\end{thm}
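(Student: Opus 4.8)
The plan is to reduce to Proposition~\ref{sche} whenever possible and to handle the ``peripheral'' case by hand. First I would invoke the Tits alternative for relatively hyperbolic groups (Theorem~\ref{TitsA}) together with the hypothesis to split into two cases. If $H$ is not conjugate to a subgroup of any peripheral subgroup, then (noting $H$ is infinite, since $\tilde{e}(G,H)>0$) Theorem~\ref{TitsA} produces a hyperbolic element $h\in H$; such an element is loxodromic on the coned-off Cayley graph and hence has infinite order, so $H$ contains an infinite order hyperbolic element and Proposition~\ref{sche} immediately gives that $Div(G,H)$ is at least exponential. In the remaining case $H$ is conjugate into some peripheral subgroup $P$, say $g^{-1}Hg\le P$; by hypothesis this conjugate is not of infinite index in $P$, so $[P:g^{-1}Hg]<\infty$. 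Then $H$ is commensurable up to conjugacy with $P$, so by the commensurability theorem (Theorem~\ref{poulrd}, parts (1) and (4)) $Div(G,H)\sim Div(G,P)$, and it suffices to prove that $Div(G,P)$ is at least exponential for the infinite peripheral subgroup $P$; here $0<\tilde{e}(G,P)<\infty$ follows from $0<\tilde{e}(G,H)<\infty$ by the same commensurability.

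For the peripheral case I would argue in the spirit of Case~2 of the proof of Theorem~\ref{ldorhg}, now also tracking an upper bound on $d_S(x,y)$ as in Proposition~\ref{sche}. Let $m=\tilde{e}(G,P)$ and let $\epsilon,R,\sigma,K,A$ be the constants of Lemma~\ref{lemman1}; fix $n\ge 10$. Using Lemma~\ref{pr} I would choose $P$-perpendicular geodesic rays $\gamma_0,\dots,\gamma_m$ issuing from points $p_0,\dots,p_m\in P$ whose pairwise $d_S$-distances all lie in a window $[nr,Mnr]$ for a constant $M=M(m)$. Since for large $r$ there are exactly $m$ deep components of $C_r(P)$, two of these rays, say $\gamma_i,\gamma_j$, meet the same deep component, and I set $x=\gamma_i(r)$, $y=\gamma_j(r)$, so that $x,y\in\partial N_r(P)$, $d_r(x,y)<\infty$, $d_S(x,P)=d_S(y,P)=r$, and $nr\le d_S(x,y)\le Mnr$. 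Let $c$ be a $\Gamma(G,S)$-geodesic from $x$ to $y$; for $r>A$ its endpoints lie outside $N_A(P)$ and $\ell(c)=d_S(x,y)\ge nr>9r=9\max\{d_S(x,P),d_S(y,P)\}$, so Lemma~\ref{lemman1}(4) supplies an $(\epsilon,R)$-transition point $w$ on $c$ with $d_S(w,P)\le A$. Finally, for any path $\gamma$ in $C_{\rho r}(P)$ joining $x$ and $y$, Lemma~\ref{lemman1}(3) gives a vertex $v\in\gamma$ with $d_S(v,w)\le K\log_2\abs{\gamma}+K$; as $d_S(v,P)\ge\rho r$ and $d_S(w,P)\le A$, this forces $K\log_2\abs{\gamma}+K\ge \rho r-A$, whence $\abs{\gamma}\ge 2^{\rho r/2K}$ once $r>r_0:=2(A+K)/\rho$. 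Thus $d_{\rho r}(x,y)\ge 2^{\rho r/2K}$, and since $d_S(x,y)\le Mnr$ we conclude $\delta^{Mn}_{\rho}(r)\ge 2^{\rho r/2K}$ for all large $r$, giving $e^r\preceq\delta^{Mn}_\rho$ and hence $Div(G,P)$ at least exponential.

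The main obstacle is the selection of $p_0,\dots,p_m\in P$ with all pairwise $d_S$-distances in the controlled window $[nr,Mnr]$: unlike the undistorted cyclic subgroup generated by a hyperbolic element in Proposition~\ref{sche}, a peripheral subgroup may be badly distorted in $G$, so the lower bound (which needs genuine $d_S$-separation) and the upper bound (which needs the points to stay within $O(nr)$ of $e$) pull against each other. I would resolve this by walking along a path in $P$ (for instance a geodesic ray in $\Gamma(P,T)$ with $T\subseteq S$): the value $d_S(e,\cdot)$ changes by at most $1$ per step and is unbounded along the path because $d_S$-balls in $G$ are finite, so it attains every large integer value, and I pick $p_i$ at the first time $d_S(e,\cdot)$ reaches about $2inr$. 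Two further routine points I would record are that a hyperbolic element indeed has infinite order, so that Proposition~\ref{sche} applies in the first case, and that the degenerate possibility of $H$ finite reduces, via $Div(G,H)\sim Div(G,e)$ (Lemma~\ref{qiv2}), to the divergence of $G$ itself; the substantive content is entirely contained in the two cases above.
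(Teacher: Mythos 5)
Your overall decomposition (infinite $H$ not conjugate into a peripheral; $H$ conjugate into a peripheral with finite index; $H$ finite) matches the paper's, and your first case is handled exactly as in the paper, by Theorem~\ref{TitsA} plus Proposition~\ref{sche}. Where you genuinely differ is the peripheral case. The paper conjugates so that $H$ is a finite index subgroup of a peripheral subgroup $P$, observes that such an $H$ is an infinite \emph{fully relatively quasiconvex} subgroup, applies Theorem~\ref{ldorhg} to conclude that $div(G,H)$ is at least exponential, and then converts this to the upper statement via Theorem~\ref{udld} ($div(G,H)\preceq Div(G,H)$ when $0<\tilde{e}(G,H)<\infty$). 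You instead pass to $Div(G,P)$ by commensurability and prove the exponential lower bound directly: your argument amounts to the final steps of Case~2 of the proof of Theorem~\ref{ldorhg} with the coset $gP$ replaced by $P$ itself (so Lemma~\ref{lemman1}(4) applies at once and the fully-quasiconvex bookkeeping with the constants $B,C$ disappears), combined with the ray-and-pigeonhole device of Theorem~\ref{udld} and Proposition~\ref{sche} to produce admissible pairs; your ``walking'' selection of $p_0,\dots,p_m$ correctly resolves the tension between the lower bound $d_S(x,y)\geq nr$ (needed for Lemma~\ref{lemman1}(4)) and the upper bound $d_S(x,y)\leq Mnr$ (needed to feed the pair into $\delta^{Mn}_\rho$), which is indeed the delicate point when $P$ is distorted. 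This route is correct and self-contained modulo one unproved step: $0<\tilde{e}(G,P)<\infty$ does \emph{not} follow from Theorem~\ref{poulrd}, which concerns divergence rather than filtered ends; you need commensurability invariance of $\tilde{e}$, a true and standard fact that is nowhere established in the paper. The paper's route sidesteps this by applying Theorem~\ref{ldorhg} to $H$ itself, needing only conjugation invariance.

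The genuine gap is your finite case. First, your parenthetical ``noting $H$ is infinite, since $\tilde{e}(G,H)>0$'' is a false inference: $\tilde{e}(G,H)>0$ says $H$ has infinite \emph{index}, not that $H$ is infinite (take $H$ trivial and $G$ one-ended), so finite $H$ is not an avoidable degeneracy but genuinely occurs under the hypotheses. Second, your treatment of it stops at the reduction $Div(G,H)\sim Div(G,e)$ and declares the remainder non-substantive, but what remains is precisely the assertion that the Gersten divergence of a relatively hyperbolic group is at least exponential --- this is Sisto's theorem, the very result the present theorem strengthens, and it is not a consequence of your two cases, since Proposition~\ref{sche} requires an infinite order hyperbolic element \emph{inside} $H$, which a finite $H$ cannot supply. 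The paper closes this case by citing Sisto; you must either do the same or prove it, e.g.\ by adapting your transition-point/logarithmic argument to the trivial subgroup. Finally, a small circularity worth fixing: you justify infinite order of the hyperbolic element from Theorem~\ref{TitsA} by saying it is loxodromic on the coned-off graph, but being non-conjugate into peripherals does not by itself imply loxodromic (torsion elements can be hyperbolic in this sense); the correct statement is that Theorem~\ref{TitsA}, as recorded in Lemma~\ref{he2}, already supplies an infinite order hyperbolic element.
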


\begin{proof}
If $H$ is a finite subgroup, then the relative upper divergence $Div(G,H)$ is equivalent to the upper divergence of $G$ by Theorem~ \ref{poulrd} and Remark \ref{sudofgg}. Also, the upper divergence of $G$ is at least exponential by Sisto\cite{Sisto}. Thus, $Div(G,H)$ is at least exponential. 

In the case that $H$ is conjugate to a finite index subgroup of some peripheral subgroup. We assume that $H$ is a finite index subgroup of some peripheral subgroup by Theorem~ \ref{poulrd}. Thus, $div(G,H)$ is at least exponential by Theorem~ \ref{ldorhg}. Also, $div(G,H)$ is dominated by $Div(G,H)$ by Theorem~ \ref{udld}. Therefore, the upper relative divergence $Div(G,H)$ is also at least exponential. 

If $H$ is an infinite subgroup that is not conjugate to any subgroup of any peripheral subgroup, $H$ contains a hyperbolic element by Theorem~ \ref{TitsA}. Thus, $Div(G,H)$ is at least exponential by Proposition~ \ref{sche}. \qedhere

\end{proof} 

\begin{rem}
In Theorem~ \ref{urdorhp}, if the group $G$ is finitely presented, then the upper divergence of $G$ is exactly exponential. Therefore, the upper relative divergence $Div(G,H)$ is also exponential when the subgroup $H$ is finite. However, it is still unknown whether the upper relative divergence $Div(G,H)$ is exactly exponential in general; or what conditions we need to put on the pair $(G,H)$ to make the relative upper divergence $Div(G,H)$ to be exactly exponential.
\end{rem}

\bibliographystyle{alpha}
\bibliography{Tran}
\end{document}